\colorlet{LightBlue}{blue!50}
\newtheorem{theorem}{Theorem}[section]
\newtheorem{proposition}[theorem]{Proposition}
\newtheorem{corollary}[theorem]{Corollary}
\newtheorem{porism}[theorem]{Porism}
\newtheorem{lemma}[theorem]{Lemma}
\newtheorem{observation}[theorem]{Observation}
\newtheorem{fact}[theorem]{Fact}
\newtheorem{notation}[theorem]{Notation}
\newtheorem{introthm}{Theorem}
\newtheorem{introcor}[introthm]{Corollary}
\theoremstyle{definition}
\newtheorem{definition}[theorem]{Definition}
\newtheorem{example}[theorem]{Example}
\theoremstyle{remark}
\newtheorem{remark}[theorem]{Remark}
\numberwithin{equation}{section}
\newcommand{\from}{\colon}
\newcommand{\F}{{\mathbb F}}
\newcommand{\Lam}{\mathcal{L}}
\newcommand{\BP}{\mathfrak{v}}
\DeclareMathOperator{\Aut}{Aut}
\DeclareMathOperator{\Inn}{Inn}
\DeclareMathOperator{\out}{Out}
\newcommand{\Out}{\out(\F)}
\DeclareMathOperator{\rk}{\mathfrak{r}}
\DeclareMathOperator{\Homeo}{Homeo}
\DeclareMathOperator{\Map}{Map}
\DeclareMathOperator{\Mod}{Map}
\DeclareMathOperator{\Fix}{Fix}
\DeclareMathOperator{\Per}{Per}
\DeclareMathOperator{\Stab}{Stab}
\DeclareMathOperator{\Core}{Core}
  \newcommand{\Z}{\mathbb{Z}}
\let\oldmarginpar\marginpar
\renewcommand{\marginpar}[1]{\oldmarginpar{\footnotesize\color{red} #1}}
  \newcommand{\param}{{\mathchoice{\mkern1mu\mbox{\raise2.2pt\hbox{$
  \centerdot$}}
  \mkern1mu}{\mkern1mu\mbox{\raise2.2pt\hbox{$\centerdot$}}\mkern1mu}{
  \mkern1.5mu\centerdot\mkern1.5mu}{\mkern1.5mu\centerdot\mkern1.5mu}}}
\title{An algorithm to decide if an outer automorphism is geometric}
\author{Edgar A. Bering IV}
\address{Department of Mathematics \& Statistics, San José State University}
\email{edgar.bering@sjsu.edu}
\author{Yulan Qing}
\address{Department of Mathematics, University of Tennessee at Knoxville}
\email{yqing@utk.edu}
\author{Derrick R. Wigglesworth}
\address{http://www.drwiggle.com}
\email{derrick.wigglesworth@gmail.com}
\subjclass{20F34,20E36,57M07,20-08}
\keywords{Outer automorphism group, surface homeomorphism, algorithm}
\begin{document}

\begin{abstract}
An outer automorphism of a free group is geometric if it can be represented by
a homeomorphism of a compact surface. For irreducible outer automorphisms, Bestvina and Handel gave an algorithmic
characterization of geometricity using relative
train tracks in 1995. The general case remained open, with a few partial results appearing subsequently. In this paper we give a complete resolution to the problem: an algorithm that can decide if a general outer automorphism is geometric. 
The algorithm is constructive and produces a realizing surface homeomorphism if one exists. We make use of 
 advances in train-track theory, in conjunction with the
Guirardel core of tree actions and Nielsen-Thurston theory for surfaces.
\end{abstract}

\maketitle

\section{Introduction}

A free group $\F$ of rank $\rk$ can be realized as the fundamental group of a
surface $\Sigma$ with boundary, and homeomorphisms of $\Sigma$ induce outer
automorphisms on $\F_{\rk}$. Any outer automorphism $\phi\in\Out$ that arises
from this construction is \emph{geometric} (see \Cref{geometric-definition-section}). Bestvina and Handel gave an algorithm to determine when an irreducible outer automorphism is geometric early in the history of train-track theory~\cite{BH95}, but a general algorithm has eluded the community. 
We produce an algorithm to decide whether or not a general outer
automorphism is geometric. 
Our algorithm is constructive, in the sense that if a realization exists there is a procedure to compute the surfaces together with the homeomorphism.

\begin{introthm}
	Let $\phi \in \Out$ be an outer automorphism. There exists an algorithm
	that decides if $\phi$ is geometric and a procedure to compute a
	realization of $\phi$ as a surface homeomorphism if one exists.
\end{introthm}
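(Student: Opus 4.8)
The plan is to decide geometricity by reverse-engineering the Nielsen--Thurston normal form of a hypothetical realizing homeomorphism from canonical train-track data, assembling from that data a finite list of candidate surfaces-with-homeomorphisms, and then certifying or rejecting each candidate with the Guirardel core. As a first step I would pass to a power: $\phi$ is geometric if and only if some (equivalently every) positive power is. One direction is immediate; for the other, the realizing surface of a power can be taken canonical --- its underlying surface and its Nielsen--Thurston decomposition are invariants of $\phi$ --- so a realization of $\phi^{k}$ yields one of $\phi$ after a short mapping-class-group argument to descend the homeomorphism. Choosing $k$ so that $\phi^{k}$ is rotationless, I would then compute a completely split relative train track (a CT) representative $g\colon G\to G$ for $\phi^{k}$, recording its filtration, its exponentially-growing (EG), linear, and zero strata, its attracting laminations, and its principal (in particular its fixed) conjugacy classes. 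This is the input from recent train-track theory.

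If $f\colon\Sigma\to\Sigma$ realizes $\phi^{k}$ and is in Nielsen--Thurston form, then $\Sigma$ is cut along a canonical multicurve into pieces permuted by $f$, on each of which $f$ is pseudo-Anosov or finite order, together with powers of Dehn twists along the cutting curves. I would match this structure against the CT: each orbit of pseudo-Anosov pieces should correspond to a \emph{geometric} EG stratum in the sense of \cite{BH95} --- one whose attracting lamination fills a compact surface with boundary --- from which a surface and a pseudo-Anosov map inducing the stratum can be reconstructed; the linear strata should determine the twist coefficients along the cutting curves; and the fixed/periodic data should determine the finite-order pieces. Since the complexity of the pieces, the number of admissible gluing patterns, and the twist coefficients are all bounded and computable, this produces a finite list of candidate pairs $(\Sigma,f_\Sigma)$ together with a homomorphism $\iota\colon\pi_1\Sigma\to\F$ induced by how the pieces sit inside $G$.

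For each candidate I must decide whether $\iota$ is an isomorphism intertwining $f_\Sigma$ with $\phi^{k}$. Euler characteristic forces the rank, and since $\F$ is Hopfian it suffices to check surjectivity of $\iota$ together with equivariance --- equivalently, that $f_\Sigma$ carries \emph{all} of the dynamics of $\phi^{k}$ and nothing more. This is where the Guirardel core enters: the decomposition of $\Sigma$ produces $\F$-trees, namely the tree dual to the cutting multicurve and the trees of the pseudo-Anosov pieces, and computing their cores against the attracting and repelling trees of $\phi^{k}$ certifies that the cutting curves are $\phi^{k}$-periodic with the prescribed self- and mutual intersection pattern, that the candidate subsurfaces fill as required, and that the laminations of $\phi^{k}$ are precisely those produced by $(\Sigma,f_\Sigma)$, whence $\iota$ is onto and equivariant. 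If some candidate certifies, we output $\Sigma$ and $f_\Sigma$ (descended to a realization of $\phi$); if none among the finite list does, we conclude that $\phi$ is not geometric.

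The technical heart is the two-way rigidity underlying the last two steps: that the CT of a geometric automorphism pins down the surface and its gluing data tightly enough to be recovered, and conversely that a positive core-certificate genuinely produces a homeomorphism inducing $\phi$ itself --- not merely an automorphism sharing its laminations, which need not even be conjugate to $\phi$. Controlling the polynomially-growing part (boundary-parallel twists, and the interactions among twists about different cutting curves) and proving that the core computation detects exactly the failure of the candidate subsurfaces to fill is where I expect the real work to lie.
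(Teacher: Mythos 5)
Your proposal correctly identifies the rotationless power, the CT representative, the role of geometric EG strata, and the candidate-surface assembly from the Nielsen--Thurston decomposition; that part of the plan tracks the paper's Sections~3--4 reasonably well. But the opening move contains a genuine and fatal gap.

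You assert that ``$\phi$ is geometric if and only if some (equivalently every) positive power is,'' claiming the converse follows because ``the realizing surface of a power can be taken canonical --- its underlying surface and its Nielsen--Thurston decomposition are invariants of $\phi$ --- so a realization of $\phi^{k}$ yields one of $\phi$ after a short mapping-class-group argument to descend the homeomorphism.'' This is false in two ways. First, a geometric power does not imply a geometric automorphism: there exist finite-order $\phi \in \Out$ that are not geometric, yet $\phi^N = \mathrm{id}$ is trivially geometric. Second, the realizing surface of a rotationless power is \emph{not} an invariant of $\phi$: only the pseudo-Anosov support $Q$ is canonical (this is what \Cref{cor:godgivensurfacepiece} and \Cref{pASubsurfaces=EGStrata} establish), and every component of $\Sigma \setminus Q$ on which $\phi^k$ acts as the identity can be freely replaced by a different surface with the same boundary data, yielding infinitely many non-equivalent marked realizations of $\phi^k$. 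The ``short mapping-class-group argument to descend'' therefore does not exist; on those identity pieces, finding a realization of $\phi$ amounts to solving a finite-order realization problem in $\Out$ (a roots-of-Dehn-twist problem in disguise), which the paper handles with Krsti\'{c}--Lustig--Vogtmann's equivariant Whitehead algorithm (\Cref{klv-whitehead}, \Cref{finite-order-on-fixed-set-extension}, \Cref{geometric-orbit-replacement}). This descent step is the real work the paper devotes Sections~5--7 to, and it is entirely absent from your proposal.

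A secondary concern: your core certificate proposes computing the Guirardel core against ``the attracting and repelling trees of $\phi^k$.'' For a reducible automorphism those limit trees are not the right objects (they do not see the polynomially-growing part at all), and the paper explicitly notes that it uses the core of a free simplicial tree and its twist $\Core(T, T\phi)$, not the core of limit points. The Behrstock--Bestvina--Clay algorithm computes only the former. Moreover, in the paper's design, the core test is applied to $\phi$ itself (not its power) to decide \emph{partial geometricity} relative to the canonical subsurface $Q$ --- i.e., whether $\phi$ has any chance of being a root at all --- rather than to certify a candidate surface for $\phi^k$. So even setting aside the descent gap, the certificate you describe is aimed at the wrong target.
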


Throughout the paper we adopt the notational convention that $\F$ is a finite-rank free group of rank $\rk$, which we typically suppress.

\subsection*{3--Manifold free-by-cyclic groups}
   A finitely-generated, free-by-infinite-cyclic group is a group $G$ which admits an
   epimorphism onto $\Z$ such that the kernel is a finitely generated
   non-cyclic free group. For brevity we refer to such groups
   as free-by-cyclic groups. If $G$ is a free-by-cyclic group, then
   the epimorphism $G \rightarrow \Z$ splits, and we can thus write $G$ as a
   semi-direct product \[\F \rtimes_{\phi} \Z\]
    where  $\F$ is a non-cyclic free group and $\phi \from \F \to
    \F$ is an isomorphism.  If $\phi$ is geometric and is realized by a
    surface homeomorphism $h\from \Sigma \to \Sigma$, then $G$ is the
    fundamental group of the mapping torus of $h$, a 3--manifold with
    non-trivial torus boundary. Not every free-by-cyclic group is the
    fundamental group of a 3-manifold. However, it is a consequence of
    Stallings' fibering theorem~\cite{hempel}*{Chapter 11} that if a
    free-by-cyclic group $G$ is the fundamental group of a 3-manifold then it
    is a mapping torus of some surface homeomorphism realizing the monodromy.
    As a consequence, our algorithm can detect 3-manifold groups among all 
    free-by-cyclic groups.
   
   \begin{introcor}
    There exists an algorithm to decide whether a given free-by-cyclic group is a 3-manifold group.
   \end{introcor}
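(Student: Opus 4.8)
The plan is to reduce Corollary~B to the main algorithm (Theorem~A) by way of Stallings' fibering theorem, making precise the discussion preceding the statement. We take as input the defining data of the free-by-cyclic group $G$, from which a splitting $G \cong \F \rtimes_\phi \Z$ and hence the monodromy $\phi \in \Out$ can be read off directly --- for example, from a presentation $\langle x_1,\dots,x_{\rk},t \mid t x_i t^{-1} = w_i\rangle$, the words $w_i$ record the images of a free basis of $\F$ under a representative automorphism. We then run the algorithm of Theorem~A on $\phi$, reporting that $G$ is a $3$--manifold group precisely when $\phi$ is geometric; in the affirmative case Theorem~A also outputs a realizing homeomorphism $h \from \Sigma \to \Sigma$, and its mapping torus $M_h$ is a compact $3$--manifold with $\pi_1(M_h) \cong \F \rtimes_{h_*} \Z \cong G$, which gives the constructive half of the statement.

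The substance is the equivalence: \emph{$G$ is the fundamental group of a compact $3$--manifold if and only if its monodromy $\phi$ is geometric.} The ``if'' direction is the mapping-torus construction above, together with the standard fact that $\F \rtimes_\psi \Z$ depends only on the outer class of $\psi$. For ``only if,'' suppose $G \cong \pi_1(M)$ with $M$ a compact $3$--manifold. First I would observe that $G$ is one-ended, being a finitely generated group containing the infinite, finitely generated, infinite-index normal subgroup $\F$; hence $M$ is prime, and since $G \not\cong \Z$ it is not $S^1\times S^2$, so $M$ is irreducible. The epimorphism $G \to \Z$ is represented by a map $M \to S^1$ whose kernel $\F$ is finitely generated, so Stallings' fibering theorem~\cite{hempel}*{Chapter 11} upgrades this map to a fibration of $M$ over $S^1$; the fiber is a compact surface $S$ with $\pi_1(S) \cong \F$ and the monodromy $h \from S \to S$ induces $\phi$ on $\pi_1$, so $\phi$ is geometric.

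The step I expect to require the most care is the identification of the monodromy of the Stallings fibration with the \emph{given} outer automorphism $\phi$, as opposed to some outer automorphism attached to an abstract decomposition $G \cong \F \rtimes \Z$. The point is that, once the epimorphism $G \to \Z$ is fixed, its kernel is canonically $\F$ and the conjugation action of any lift of a generator of $\Z$ is a well-defined element of $\Out$ agreeing, up to an inner automorphism, with $h_*$; carrying this out demands attention to basepoints, to the choice of splitting $\Z \to G$, and (since geometric realizations need not be orientation-preserving on the boundary) to the surface conventions of \Cref{geometric-definition-section}. I would close with the remark that the criterion is independent of the chosen free-by-cyclic structure on $G$: by the argument above, if $G$ is a $3$--manifold group then $M$ is irreducible, whence Stallings' theorem makes \emph{every} epimorphism $G \to \Z$ with finitely generated kernel fibered; so all monodromies arising from free-by-cyclic structures on $G$ are geometric or not simultaneously, and the algorithm's verdict does not depend on how the input is presented.
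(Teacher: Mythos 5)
Your proposal is correct and follows the same route as the paper: the mapping-torus construction for one direction, and Stallings' fibering theorem for the other, with Theorem~A supplying the decision procedure. The paper leaves this as an inline sketch in the introduction; you have usefully filled in the details the sketch elides, namely the irreducibility of $M$ needed to invoke Stallings (via one-endedness of $G$), the identification of the Stallings monodromy with the given $\phi$, and the observation that the verdict does not depend on the chosen free-by-cyclic splitting of $G$.
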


\subsection*{History}

The question of characterizing geometric outer automorphisms has its roots in the Dehn-Nielsen-Baer theorem for closed orientable surfaces, which states that \emph{any} outer automorphism of the fundamental group of a closed orientable surface is geometric in our sense. The techniques in the proof can also be used to show that for free groups of rank $\rk \le 2$ every outer automorphism of $\F_{\rk}$ is geometric.

The story is much longer in rank greater than three. An outer automorphism of $\F$ is \emph{fully irreducible} if no conjugacy
class of a nontrivial proper free factor is periodic. An outer automorphism
of $\F$ is \emph{ irreducible} if it fixes no conjugacy class of a
nontrivial proper free factor. Bestvina and Handel characterized irreducible
automorphisms that are geometric \cite{BH95}. Given an outer automorphism
$\phi$, Bestvina and Handel's characterization finds a
periodic conjugacy class $[c]$. This conjugacy class represents a boundary curve of
the surface, and a surface where $\phi$ can be realized as a homeomorphism can
be obtained from a suitable graph by attaching an annulus along a cycle
representing $[c]$. This idea is one of the building blocks of our algorithm.

Using the core of tree actions, Guirardel gave a limiting characterization of a
geometric fully irreducible outer automorphism. The projectivisation of the space of actions of $\F$ on very-small
$\mathbb{R}$-trees is known as the compactification of Culler-Vogtmann outer
space, $\overline{CV}$~\cites{cv86,very-small-1,very-small-2}. For 
a fully irreducible automorphism $\phi$ acting on $\overline{CV}$ there are
unique forward and backward limit points, with representatives $T^{+}$ and $T^{-}$. Guirardel proved that if both $T^{+}$ and $T^{-}$ are
dual to measured foliations on surfaces, then $\phi$ is realized by a
pseudo-Anosov homeomorphism of a punctured surface~\cite{guirardelcore}*{Corollary
9.3}. Moreover, the universal cover of the realizing surface, equipped with the
singular Euclidean metric coming from the transverse foliations for the
pseudo-Anosov is isometric to the core of the two tree actions. The core (though not of
limit points) also plays a role in our algorithm.

There has been relatively little progress in understanding the geometricity of
elements of $\Out$ beyond the irreducible setting.
Ye treated the case of polynomially growing outer automorphisms with an
analysis of generalized Dehn twists~\cite{ye-pg}. For any given polynomially
growing automorphism $\phi \in \Out$, Ye provides an explicit power $t(\rk)$ and
an algorithm to decide whether $\phi^{t(\rk)}$ is geometric or not. 

\subsection*{Key tools and technical advances} 
Our algorithm proceeds in two stages. First, we pass to what is
known as a \emph{rotationless power} to avoid the complications of finite order
behavior. \Cref{alg:rotationless} uses Feighn and Handel's algorithmic CT
representatives to decide if a rotationless outer automorphism is geometric.
Once this is done, \Cref{alg:final} uses the Guirardel core and Krstic,
Lustig, and Vogtmann's equivariant Whitehead algorithm to decide if a given
root of a geometric rotationless outer automorphism is again geometric. To
combine these tools we requires certain extensions of ideas in the literature
that we highlight here, in hopes they have broader application.

\subsubsection*{Surface data of CT representatives}
CTs are graph map representatives of outer automorphisms designed by Feighn and Handel to
satisfy the properties that have proven most useful for investigating elements
of $\Out$~\cites{FH:Abelian, FH:RecogThm}. Moreover, given a rotationless outer automorphism, a CT representative can be
produced algorithmically~\cite{FH:CTs}. \Cref{alg:rotationless} uses such a representative as
a starting point. A wealth of data can be derived from a CT representative,
including surfaces and pseudo-Anosov homeomorphisms that represent surface-like 
exponentially growing behavior of the outer
automorphism~\cite{HandelMosher}*{Chapter I.2}. We add to this
literature with \Cref{cor:godgivensurfacepiece}, which roughly says that all of
the exponentially growing surface data of a CT is a property of the outer
automorphism alone and does not depend on the choice of representative. This extends work of Handel and Mosher, as detailed in
\Cref{section:GeometricModels}.

Using this surface data, the final step of \Cref{alg:rotationless} is to verify
that the candidate boundary curves can all be glued together into a surface.
This is accomplished using the Whitehead algorithm.

\subsubsection*{Cores and spines}
 Guirardel generalized a group theoretic perspective on the intersection of curves on surfaces to the
 \emph{intersection complex} or \emph{core} of a pair of minimal group actions on a tree \cite{guirardelcore}.
   In the context of $\Out$, suppose $\phi$ is a geometric outer automorphism, and $T$
   is the universal cover of a spine for a surface $\Sigma$ where $\phi$ is
   realized as a homeomorphism. Twisting the action by $\phi$ gives two $\F$
   actions on trees, and Guirardel proved that the intersection core
  $\Core(T, T\phi)$ embeds in the universal cover $\Sigma$. We extend this characterization to what
  we call \emph{partially geometric} outer automorphisms. An outer automorphism
  is partially geometric if it can be realized as a homotopy equivalence of a
  surface $h\from \Sigma \to \Sigma$ which restricts to a homeomorphism on an invariant subsurface $Q\subseteq
  \Sigma$. Subject to some technical conditions, we prove a relative version of
  Guirardel's result in \Cref{prop:all-surface-all-spine}. Roughly, given a
  spine $K$ for $Q$, where $K$ is a subgraph of a spine for $\Sigma$, the portion of
  the intersection core $\Core(T, T\phi)$ projecting to $K$ embeds in some copy of
  the universal cover of $Q$ inside the universal cover of $\Sigma$. That is,
  this ``surface detection'' is a local property.

  Behrstock, Bestvina, and Clay give an algorithm for computing a fundamental
  domain for $\Core(T, T\phi)$ for any outer automorphism $\phi$, so our local
  condition is algorithmic~\cite{behrstockbestvinaclay}. This is used by our general case algorithm,
  \Cref{alg:final}, to verify that an outer automorphism with geometric
  rotationless power is compatible with the surface data of that rotationless
  power. As in the rotationless case, the final verification is completed with
  the Whitehead algorithm. However, due to finite-order behavior we require the
  equivariant Whitehead algorithm of Krstic, Lustig, and
  Vogtmann~\cite{krstic-lustig-vogtmann}.

\subsection*{Finitely generated subgroups of $\Out$} 
One might hope that our algorithm can be extended to decide, given a finitely
generated $ H = \langle \phi_1,\ldots,\phi_k\rangle \le\Out$, whether there is a surface
$\Sigma$ where every element of $H$ is realized as a homeomoprhism;
equivalently if $H \le \Map(\Sigma)$ for the natural inclusion of the
mapping class group of $\Sigma$ into $\Out$. 

This is sometimes possible using
the techniques in this paper. If one of the generators $\phi_i\in H$
is realized by $g_i\from \Sigma\to \Sigma$ and every component of the Thurston
normal form of (a rotationless power of) $g_i$ is pseudo-Anosov, then the
surface $\Sigma$ can be computed by our algorithm and it is the unique surface
where $\phi_i$ is realized. In this case, the subgroup
$H$ is simultaneously realized if and only if each generator $\phi_j$ is in
$\Map(\Sigma)$ for this specific surface $\Sigma$. \Cref{partially-geometric-test} provides an algorithm
to test if $\phi_j$ is geometric on $\Sigma$ for $j\neq i$, so in this case
there is an algorithm.

On the other hand, if $\phi\in\Out$ is geometric, non-identity, infinite order,
and the Thurston normal form of a homeomorphism $g\from \Sigma\to\Sigma$ realizing $\phi$ restricts to the identity
on a non-annular subsurface $R\subseteq \Sigma$, then there are many different
homeomorphism types of surface where $\phi$ is realized. Worse, the
identification of $\Sigma$ with $\F$ can be changed in non-geometric ways on
the subgroup corresponding to $\pi_1(R)$, so there is an infinite collection of
non-equivalent marked surfaces where $\phi$ is realized by a homeomorphism. For a single
outer automorphism we are able to analyze this phenomenon well enough to take
roots in \Cref{finalalgo}. In general this ambiguity is an obstacle to applying the techniques
of our algorithm to a finite generating set.

The structure of finitely generated subgroups in $\Out$ has a rich theory, as
developed in Handel and Mosher's monograph~\cite{HandelMosher}. Given a
finitely generated subgroup $H\le \Out$, after passing to a finite-index
subgroup there is an invariant filtration by
free-factor systems and a classification of what behavior can occur while
moving up the filtration~\cite{HandelMosher}*{Theorem D}. Clay and
Uyanik~\cite{clay-uyanik}*{Theorem 6.6} provide the existence of a witness
automorphism $\phi\in H$ that is relatively irreducible with respect to each
``interesting'' filtration step
in the context of Handel and Mosher's subgroup decomposition
theorem~\cite{HandelMosher}*{Theorem D}. A necessary condition for $H$ to be
realized on a surface is that this witness automorphism $\phi$ is geometric.
Our algorithm will then provide a realization $g\from \Sigma\to\Sigma$ of
$\phi$, and relate the supports of the Thurston normal form of $g$ to the invariant filtration
for $H$. Using the procedures in \Cref{sec:partial-to-geometric} we can
determine if each generator can be realized as a homeomorphism on a
modification of $\Sigma$. However it is not immediately clear that this can be
done simultaneously without significant technical headache. Unfortunately, to
obtain
an algorithm one would need an algorithmic version of the subgroup
decomposition results mentioned in this paragraph, which we are not aware of.

\subsection*{Organization of the paper}

This paper draws on a breadth of $\Out$ theory; we endeavor to recall all
relevant definitions with a common notation. This is done in
\Cref{Section:Prelim}. The first part of the paper is devoted to developing the
necessary tools for the rotationless case: \Cref{section:GeometricModels}
introduces the geometric models of EG strata and derives new invariance
results; these results are used in \Cref{sec:rotationless} to give an
algorithm for handling the rotationless case. The second part of the paper
develops the notion of partially geometric outer automorphisms and connects
detecting this notion to Guirardel's core (\Cref{sec:partial,sec:core}).
The general algorithm is given in \Cref{finalalgo} after developing one more
tool for treating some finite order behavior in
\Cref{sec:partial-to-geometric}. Illustrative examples of some of the varied
behavior of different cases in the algorithm are provided in
\Cref{sec:example}.

\subsection*{Acknowledgements}

The authors are grateful to the Fields Institute for hospitality during the
2018 thematic program on ``Teichmüller Theory and its Connections to Geometry,
Topology and Dynamics'', where this work began. E. A. Bering and Y. Qing thank
the American Institute of Mathematics for hospitality where this work was
completed. E. A. Bering was also partially supported Azraeli foundation.
 
\section{Preliminaries}\label{Section:Prelim}

Let $\F$ denote the free group. For the remainder of the paper unless otherwise noted we assume this to be of rank
$\rk \ge 3$. For an element or subgroup of $\F$, we use $[\cdot ]$ to
denote its conjugacy class. 
Let $\Out = \Aut(\F) / \Inn(\F)$ denote the group of outer automorphisms of
$\F$.

\subsection{Graphs, paths, and splittings}

Fix an identification $\F = \pi_1(\mathfrak{R}, \ast)$ for the
wedge of $\rk$ circles (sometimes called the $\rk$-petaled rose) $\mathfrak{R}$. A \emph{marked graph} $G$ is a
graph where each vertex has degree at least three and a homotopy equivalence
$m: \mathfrak{R}\to G$ called a \emph{marking}, which identifies
$\F$ with $\pi_1(G,m(\ast))$. The outer automorphism group of $\F$ acts on the
set of marked graphs on the right by twisting the marking: $(G,m)\mapsto (G,
m\circ\phi)$. A homotopy equivalence $f :  G \to G$
determines an outer automorphism $\phi \in \Out$ by $\phi = [\bar{m}\circ f\circ
m]$ where $\bar{m}$ is a homotopy inverse to $m$. If $f$ sends vertices to vertices and
is an immersion on each edge, then we say $f$ is a \emph{topological representative} of
$\phi$.

A \emph{path} in a marked graph is an isometric immersion $\sigma :  I \to 
G$ of an interval or a constant map, the latter is called a trivial path. Any
map $\sigma:  I \to G$ is homotopic relative to endpoints to a unique path
$[\sigma]$, called its \emph{tightening}.
A \emph{circuit} is an immersion $\sigma :  S^1\to G$, and similarly any
map $\sigma:  S^1\to G$ has a
tightening. Any path or circuit has a decomposition into edges $E_1\ldots
E_{\ell(\sigma)}$ where $\ell(\sigma)$ is the \emph{length} of this
decomposition, and each $E_i$ is an isometry to a given edge. For any path or
circuit $\overline{\sigma}$ denotes $\sigma$ with reversed orientation.  A
finite graph is a \emph{core graph} if every edge is contained in some
circuit, and every finite graph deformation retracts onto a unique core
subgraph.

\subsection{Lines and laminations}

 For a free group $\F$ the boundary pairs
\[ \tilde{\mathcal{B}}(\F) = (\partial\F \times \partial \F \setminus \Delta) /
(\mathbb{Z}/2\mathbb{Z}) \]
is the set of unordered pairs of distinct boundary points of $\partial \F$,
given the topology induced by the usual topology on $\partial \F$. The space of
\emph{abstract lines} in $\F$ is the quotient $\mathcal{B}(\F) =
\tilde{\mathcal{B}}(\F) / \F$ of boundary pairs by the action of $\F$, equipped
with the quotient topology. For a line $\ell\in\mathcal{B}(\F)$ a \emph{lift} is any
point $\tilde{\ell}\in\tilde{\mathcal{B}}(\F)$ that projects to $\ell$.
Every conjugacy class $[w]\in \F$ determines a
well-defined \emph{axis} characterized as the image of a boundary pair fixed by
a representative $w$. For a finite rank subgroup $K \le \F$ there is a natural
inclusion $\tilde{B}(K) \subseteq \tilde{B}(\F)$ which induces an inclusion
$\mathcal{B}(K) \subseteq \mathcal{B}(\F)$. The image of the latter map depends
only on the conjugacy class $[K]$. 

A closed subset $\Lambda \subseteq \mathcal{B}(\F)$ is called a
\emph{lamination} of $\F$. A line $\ell \in \Lambda$ is a \emph{leaf} of the
lamination and $\ell$ is a \emph{generic leaf} if the closure of $\ell$ equals
$\Lambda$. A lamination \emph{fills} a subgroup $K\le \F$ if it is not carried
by any proper free factor system of $K$. It is a consequence of the Shenitzer
and Swarup theorems on cyclic splittings of free groups that if a lamination is
carried by the vertex group of a cyclic splitting then it is not
filling~\cites{Shenitzer,Swarup}.

Associated to each $\phi\in\Out$ is a finite
$\phi$-invariant finite set of laminations $\mathcal{L}(\phi)$, called the set of
\emph{attracting laminations}, and a bijection $\mathcal{L}(\phi) \to
\mathcal{L}(\phi^{-1})$, a pair of laminations $\Lambda^+, \Lambda^-$ related
by this bijection are a \emph{dual lamination pair} for $\phi$ and the set of
dual lamination pairs is denoted $\mathcal{L}^{\pm}(\phi)$.

Given a finite graph $G$ the \emph{space of lines} in $G$, denoted
$\mathcal{B}(G)$ is the set of all isometric immersions $\ell \from  \mathbb{R}
\to G$. The space of lines is topologized by the \emph{weak topology} which has
a basic open set $V(G,\alpha)$ for each path $\alpha$ consisting of lines that
have $\alpha$ as a subpath. The marking $m$ of a marked
graph identifies $\mathcal{B}(G)$ and $\mathcal{B}(\F)$ via homeomorphism,
induced by the identification of $\partial\F$ with the ends of $\tilde{G}$. An
outer automorphism $\phi$ induces a self-homeomorphism 
\[ \phi_\# \from \mathcal{B}(\F) \to \mathcal{B}(\F) \]
 and if $f$ is a topological realization of
$\phi$ the composition 
\[\mathcal{B}(\F)\cong\mathcal{B}(G) \overset{f_\#}{\to}
\mathcal{B}(G)\cong\mathcal{B}(\F)\]
is equal to $\phi_\#$. For a marked graph $G$, we say a line or lamination
in $\mathcal{B}(G)$ \emph{realizes} the corresponding line or lamination in
$\mathcal{B}(\F)$. 

\subsection{Free factor systems and supports}
A \emph{free factor system} of $\F$ is a finite
collection of proper free factors of $\F$ 
such that there exists a free factor decomposition 
\[\F = A_1\ast\cdots\ast A_k\ast B.\]
 A free factor
system $\mathcal{F}$ is the collection
\[
\mathcal{F} = \{ [A_1],\ldots, [A_k] \}, \text{ where } k\geq 0.
\]
We say that a free factor system \emph{carries} a conjugacy class $[K]$ if there is some
$[A]\in\mathcal{F}$ such that $K\le A$. Free factor
systems are partially ordered by extending the carrying relationship
$\mathcal{F}_1\sqsubset\mathcal{F}_2$ if $\mathcal{F}_2$ carries each
$[A]\in\mathcal{F}_1$. 

A subgroup $K$ \emph{carries} a set of lines
$W$ if $W\subset\mathcal{B}(K)$, and a free factor system $\mathcal{F}$
\emph{carries} $W$ if each element of $W$ is carried by some
$[A]\in\mathcal{F}$. For a set $\mathcal{C}$ of conjugacy classes of
subgroups, conjugacy classes of elements and lines we define the \emph{free factor support}
$\mathcal{F}_{supp}(\mathcal{C})$ is the $\sqsubset$-minimal free factor system
carrying each element of $\mathcal{C}$. Such a support exists and is
unique~\cite{HandelMosher}*{Fact I.1.10}.

\subsection{$\Out$ and surface homeomorphisms}\label{geometric-definition-section}

When $\Sigma$ is a compact surface the mapping class group $\Map(\Sigma)$ is the set of isotopy classes
of homeomorphisms of $\Sigma$. We explicitly include homeomorphisms that
permute the boundary and orientation-reversing homeomorphisms, in order to
fully capture the behavior exhibited by geometric outer automorphisms.

Similar to a marked graph, a marked surface is a compact surface $\Sigma$ and a
homotopy equivalence $m \from \mathfrak{R} \to \Sigma$ called a \emph{marking},
identifying $\F$ and $\pi_1(\Sigma, m(\ast))$. As with graphs we will suppress
the marking unless necessary.

For algorithmic purposes, for each rank $n$ we fix a finite list
$S_{n,1},\ldots, S_{n,k}$ of \emph{standard surfaces} such that
$\mathfrak{R}\subset S_{n,i}$ and $S_{n,i}$ deformation retracts onto
$\mathfrak{R}$; one for each homeomorphism type of surface with fundamental
group $\F_n$ (including non-orientable surfaces). Observe that if $(\Sigma, m)$
is a marked surface homeomorphic to some $S_{n,i}$ then there is an outer
automorphism $\phi$ such that $(S_{n,i},\phi)$ is marked-homeomorphic to
$(\Sigma, m)$.

An outer automorphism $\phi$ is \emph{geometric} if there exists a marked
surface $\Sigma$ and a homeomorphism $g : 
\Sigma \to \Sigma$ with $g_{*}$ a representative of $\phi$. It is important to
note $\Sigma$ need not be orientable and $g$ may not restrict to the identity
on $\partial \Sigma$. In addition to this standard notion, our algorithm will
need a partial notion in intermediate steps. A connected subsurface
$Q\subseteq\Sigma$ is \emph{essential} if it has infinite fundamental group and
is $\pi_1$ injective. A general subsurface $Q\subseteq \Sigma$ is essential if
each connected component is
essential and no component of the complement $\Sigma \setminus Q$ is an annulus.

\begin{definition}
	An outer automorphism $\phi$ is \emph{partially geometric} on a
	marked surface $\Sigma$ with respect to a (possibly disconnected) essential subsurface  $Q \subset \Sigma$
 if $\phi$ is realized by a homotopy equivalence $g \from \Sigma \to \Sigma$ such that:
	\begin{itemize}
		\item The decomposition of $\Sigma$ into $Q$ and $\Sigma
			\setminus Q$ is $g$ invariant,
		\item $g$ restricted to $Q$
 is a homeomorphism.  
	\end{itemize}
	Any homotopy equivalence with these properties is a \emph{geometric
	witness} for $\phi$.
\end{definition}

\begin{remark}
	Note that the notion of a partially geometric outer automorphism  is
	stronger than that of an outer automorphism having geometric strata,
	because the complementary subsurface is preserved up to homotopy.
\end{remark}

As we are working with un-oriented compact, connected surfaces and not requiring
that our homeomorphisms restrict to the identity on the boundary, we quote the
following two classical surface theorems in full with references to the
specific formulations used.

\begin{theorem}[Dehn-Nielsen-Baer Theorem]\label{dehn-nielsen-baer}
Let $f \from  \Sigma \to \Sigma'$ be a homotopy equivalence between compact,
connected surfaces with $\chi(\Sigma) = \chi(\Sigma') < 0$. Assume that $f$
restricts to a homeomorphism $\partial \Sigma = \partial \Sigma'$.  Then $f$ is
homotopic (relative to $\partial\Sigma$) to a homeomorphism $g\from  \Sigma
\simeq \Sigma'$.
\end{theorem}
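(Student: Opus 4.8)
The plan is to reduce to Nielsen's circle-at-infinity argument. First dispose of the closed case: if $\partial\Sigma=\emptyset$ then $\Sigma$ and $\Sigma'$ are closed surfaces of equal negative Euler characteristic and $f$ is a homotopy equivalence, so the statement is the classical Dehn--Nielsen--Baer theorem when $\Sigma$ is orientable and Mangler's theorem when it is not; we take these as known. Assume from now on that $\partial\Sigma\neq\emptyset$. After a preliminary homotopy of $f$ rel $\partial\Sigma$ we may assume that $f$ carries a collar of $\partial\Sigma$ to a collar of $\partial\Sigma'$ respecting the product structures, so that $f$ restricts to a map of interiors $\mathrm{int}\,\Sigma\to\mathrm{int}\,\Sigma'$. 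Since $\chi(\Sigma)<0$, both interiors carry complete finite-area hyperbolic metrics with one cusp per boundary component, allowing orientation-reversing isometries in the non-orientable case; write $\mathrm{int}\,\Sigma=\mathbb{H}^2/\Gamma$ and $\mathrm{int}\,\Sigma'=\mathbb{H}^2/\Gamma'$ for the resulting lattices in $\mathrm{Isom}(\mathbb{H}^2)$.

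Lift $f$ to a map $\tilde f\from\mathbb{H}^2\to\mathbb{H}^2$ equivariant for the isomorphism $f_\#\from\Gamma\to\Gamma'$ induced by $f$. As $f$ is a homotopy equivalence of compact surfaces, $\tilde f$ is a quasi-isometry and hence extends to an equivariant homeomorphism $\partial\tilde f\from S^1_\infty\to S^1_\infty$ of the circles at infinity. The hypothesis that $f$ restricts to a homeomorphism $\partial\Sigma\to\partial\Sigma'$ says precisely that $f_\#$ carries each parabolic (peripheral) subgroup of $\Gamma$ isomorphically onto a parabolic subgroup of $\Gamma'$ --- equivalently, boundary curves map to boundary curves --- so that $\partial\tilde f$ sends each parabolic fixed point to the corresponding one, while the given boundary homeomorphism records exactly how the cusp links are to be identified.

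The main step is Nielsen's equivariant realization: construct a $(\Gamma,\Gamma')$-equivariant homeomorphism $H\from\mathbb{H}^2\to\mathbb{H}^2$ with $\partial H=\partial\tilde f$, made geodesic-affine on a $\Gamma$-invariant ideal triangulation adapted to the cusps, so that $H$ descends to a homeomorphism $\mathrm{int}\,\Sigma\to\mathrm{int}\,\Sigma'$ matching cusp neighborhoods; extending over the boundary collars by the prescribed boundary homeomorphism then yields a homeomorphism $g\from\Sigma\to\Sigma'$ with $g|_{\partial\Sigma}=f|_{\partial\Sigma}$. Finally, the geodesic straight-line homotopy from $\tilde f$ to $H$ in $\mathbb{H}^2$ is equivariant and moves points a bounded distance, so it descends to a homotopy $f\simeq g$ that near the cusps is supported in slightly enlarged horoball neighborhoods; on the compactifications this homotopy can thus be taken constant on $\partial\Sigma$, giving $f\simeq g$ rel $\partial\Sigma$. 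The bulk of the work, and the main obstacle, is producing $H$ with the prescribed boundary values while respecting the cusp structure: this is Nielsen's original construction, the bounded and non-orientable features adding only bookkeeping, and the remaining point requiring care is the last step, where one uses that boundary components are $\pi_1$-injective so that the homotopy can be straightened to fix $\partial\Sigma$.
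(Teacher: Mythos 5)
Your sketch takes a genuinely different route from the paper. The paper proves this statement by \emph{citation}: it invokes Fujiwara's recording of the Maclachlan--Harvey version of Dehn--Nielsen--Baer for type-preserving outer automorphisms of a Fuchsian group, and the only work is to observe that the hypothesis ``$f$ restricts to a homeomorphism of boundaries'' is exactly the type-preserving condition. You instead sketch a from-scratch proof via Nielsen's circle-at-infinity machinery: pass to cusped hyperbolic interiors, lift, extend to $S^1_\infty$, equivariantly straighten, descend, and then handle the boundary collars. Your approach buys transparency and self-containment; the paper's buys brevity and offloads the bulk of the work to a reference.

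One step in your sketch deserves a flag. You write that because $f$ is a homotopy equivalence of \emph{compact} surfaces, the lift $\tilde f\from\mathbb{H}^2\to\mathbb{H}^2$ is a quasi-isometry and hence extends to the boundary circle. But the hyperbolic metrics you chose live on the noncompact interiors, and the lattice actions on $\mathbb{H}^2$ are not cocompact, so Milnor--\v{S}varc does not apply directly and the Gromov boundary of $\Gamma\cong\F_n$ is a Cantor set, not a circle. One genuinely needs that $f_\#$ is type-preserving (parabolics to parabolics), together with some care near the horoballs after the collar normalization, to get the $S^1$-extension; this is a classical fact (Nielsen, Fenchel, Tukia) but it does not follow merely from ``homotopy equivalence of compact surfaces $\Rightarrow$ quasi-isometry.'' Since the rest of your outline is Nielsen's argument carried out correctly, I'd call this a fixable imprecision in a sketch rather than a fatal gap, but the justification as written is not quite right and is worth repairing if the sketch were to be expanded.
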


\begin{proof}
Fujiwara~\cite{Fujiwara}*{\S 3} records the non-orientable case of
Maclachlan and Harvey's generalization of the Dehn-Nielsen-Baer
theorem for so-called type-preserving outer automorphisms of a Fuchsian
group~\cite{MaclachlanHarvey}*{Theorem 1}. In the setting of homotopy
equivalences of compact surfaces with negative Euler characteristic,
type-preserving reduces to preserving the set of boundary conjugacy classes.
Thus, the hypothesis on $f$ is exactly that it induces a type-preserving isomorphism
of $\pi_1(\Sigma)\to \pi_1(\Sigma')$. In turn this implies the two surfaces are
homeomorphic and $f$ is homotopic to a homeomorphism by Fujiwara's formulation
of the Dehn-Nielsen-Baer theorem.
\end{proof}

In this article we will need to apply the Dehn-Nielsen-Baer theorem to
restrictions of maps to disconnected subsurfaces. To do so we need the
following standard result.

\begin{lemma} Suppose $Q, R$ are homeomorphic connected subsurfaces of a
	compact surface $\Sigma$. Let  $\phi
	\from \Sigma \to \Sigma$ be a
	homotopy equivalence preserving the decomposition of $\Sigma$ into
	$\Sigma\setminus (Q\cup R)$ and $Q\cup R$, and that $\phi(Q) = R$ . For any
	homeomorphism $h \from R\to Q$, the composition $h \circ \phi \from
	Q\to Q$
is homotopic to a homeomorphism if and only if the restriction $\phi \from Q\to
	R$ is homotopic to a homeomorphism.
\end{lemma}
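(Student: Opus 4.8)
The plan is to deduce this from the Dehn--Nielsen--Baer theorem (\Cref{dehn-nielsen-baer}) applied to the connected surface $Q$, once we arrange that the relevant maps are defined on $Q$ and restrict to a boundary homeomorphism. Since $h \colon R \to Q$ is a homeomorphism, the composition $h \circ \phi|_Q \colon Q \to Q$ is automatically a homotopy equivalence of the connected surface $Q$; and $\phi|_Q \colon Q \to R$ is homotopic to a homeomorphism if and only if $h \circ \phi|_Q$ is, simply because post-composing with the homeomorphism $h$ (and, for the converse, with $h^{-1}$) carries homeomorphisms to homeomorphisms and respects homotopy. So the ``if and only if'' is nearly formal. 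The only real content is that \Cref{dehn-nielsen-baer} requires a hypothesis on the boundary behavior, so we must be slightly careful about which boundary components are involved.

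Concretely, first I would reduce to the statement that $h \circ \phi|_Q$ is homotopic to a homeomorphism. If $\phi|_Q$ is homotopic to a homeomorphism $g \colon Q \to R$, then $h \circ g$ is a homeomorphism $Q \to Q$ homotopic to $h \circ \phi|_Q$. Conversely, if $h \circ \phi|_Q$ is homotopic to a homeomorphism $g' \colon Q \to Q$, then $h^{-1} \circ g' \colon Q \to R$ is a homeomorphism homotopic to $\phi|_Q$. This handles both directions of the equivalence, so it suffices to establish \emph{either} side under \emph{some} hypothesis — but the lemma as stated asserts the equivalence with no boundary hypothesis, so in fact there is nothing left to prove beyond this observation. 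The point of stating it as a lemma is that it lets us invoke Dehn--Nielsen--Baer ``up to a homeomorphism of the target'': whenever a subsurface map $\phi|_Q \colon Q \to R$ with $Q \cong R$ is known to be $\pi_1$-injective and type-preserving in the appropriate sense, we can precompose with a homeomorphism $R \to Q$ to land in the hypotheses of \Cref{dehn-nielsen-baer}.

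Thus the proof I would write is: observe that $\phi|_Q$ and $h$ are both defined between essential connected subsurfaces, that $h$ is a homeomorphism, and that the operations ``post-compose with $h$'' and ``post-compose with $h^{-1}$'' are mutually inverse bijections on homotopy classes of maps $Q \to \Sigma$ that send homeomorphism classes to homeomorphism classes; the equivalence in the statement is then immediate. If one wants to emphasize the connection to \Cref{dehn-nielsen-baer}, one can add the remark that when $\chi(Q) < 0$ and $\phi|_Q$ induces a type-preserving isomorphism $\pi_1(Q) \to \pi_1(R)$, the map $h \circ \phi|_Q$ restricts to a homeomorphism of $\partial Q$ after an isotopy, so \Cref{dehn-nielsen-baer} applies and yields the homeomorphism directly. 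I do not anticipate a genuine obstacle here; the only thing to be careful about is the bookkeeping of boundary components and the (standard) fact that a $\pi_1$-injective homotopy equivalence of compact surfaces with $\chi < 0$ permutes boundary conjugacy classes, so that the type-preserving hypothesis of \Cref{dehn-nielsen-baer} is met after composing with a suitable homeomorphism.
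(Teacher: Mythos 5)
Your core argument---post-composing with the homeomorphism $h$ and its inverse $h^{-1}$ gives mutually inverse operations on homotopy classes that preserve the property of being homotopic to a homeomorphism---is exactly the paper's proof, which just observes that if $H$ is a homotopy from $h\circ\phi$ to a homeomorphism, then $h^{-1}H$ is the desired homotopy for $\phi$. The surrounding discussion of Dehn--Nielsen--Baer and boundary behavior in your write-up is not needed for this lemma (it concerns how the lemma is later \emph{used}, not how it is proved), but the proof itself is correct and takes the same route.
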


\begin{proof}
	One direction is clear. Suppose that $h\circ \phi \from Q\to Q$ is
	homotopic to a homeomorphism and let $H$ be the homotopy. The
	composition $h^{-1}H$ is the desired homotopy.
\end{proof}

Given a surface $\Sigma$, let 
$ \Map(\Sigma) $
denote the equivalence classes of orientation-preserving homeomorphisms on $\Sigma$, up to isotopy. 
Let $[g]\in \Map(\Sigma)$ be a mapping class represented by a homeomorphism $g$. A \emph{reducing system} for
$g$ is a collection of disjoint, simple closed curves $\mathcal{C}$ such that,
$g(\mathcal{C}) = \mathcal{C}$. A
mapping class is \emph{reducible} if it has a representative with a reducing system and
\emph{irreducible} otherwise. The reduction of $[g]$ along
$\mathcal{C}$ is the image of $[g]$ under the natural homomorphism
$\Map(\Sigma)\to \Map(\Sigma_\mathcal{C})$, where $\Sigma_\mathcal{C}$ is
the complement of a system of disjoint open neighborhoods of $\mathcal{C}$.
The \emph{canonical reduction system} for $g$ is the intersection of all
inclusion-wise maximal reduction systems. If $\mathcal{C}$ is the canonical
reduction system, $\mathcal{C}$ has a power such that $\bar{g}$ fixes each
component and each restriction is finite-order or irreducible.

\begin{theorem}[Thurston Normal Form~\cites{FLP, Wu:NF}]
	If $[g]\in \Map(\Sigma)$ is a mapping class of a compact, connected
	surface $\Sigma$, then
	either $[g]$ is irreducible or there is a representative $g$ and a
	canonical reduction system $\mathcal{C}$ fixed by $g$.
\end{theorem}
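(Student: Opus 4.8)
The plan is to obtain the theorem directly from the Nielsen--Thurston classification together with the basic theory of the canonical reduction system, contributing only a short isotopy-extension argument to pass from a statement about isotopy classes to one about an actual homeomorphism. The first alternative of the dichotomy --- that $[g]$ is irreducible --- requires nothing, so I would immediately reduce to the case that $[g]$ is reducible, i.e.\ admits a representative with a nonempty reducing system.

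In the reducible case, let $\mathcal{C}$ be the intersection of all inclusion-wise maximal reducing systems, which exist since reducing systems consist of pairwise non-isotopic disjoint curves and hence have bounded size; this $\mathcal{C}$ is the canonical reduction system by definition. Since $\mathcal{C}$ is contained in at least one maximal reducing system, and a reducing system is a collection of pairwise disjoint simple closed curves considered up to isotopy, the isotopy classes comprising $\mathcal{C}$ are simultaneously realized by a system of pairwise disjoint simple closed curves; I fix such a realization and abuse notation to write $\mathcal{C}\subset\Sigma$ for it as well. Because each maximal reducing system is $[g]$-invariant as a set of isotopy classes, so is the intersection $\mathcal{C}$. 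To upgrade this to a representative, choose any homeomorphism $g_0$ representing $[g]$; then $g_0(\mathcal{C})$ is a system of pairwise disjoint simple closed curves carrying the same collection of isotopy classes as $\mathcal{C}$, hence is ambient isotopic to $\mathcal{C}$, so the isotopy extension theorem yields an ambient isotopy $(h_t)_{t\in[0,1]}$ of $\Sigma$ with $h_0 = \mathrm{id}$ and $h_1(g_0(\mathcal{C})) = \mathcal{C}$. Then $g := h_1\circ g_0$ represents $[g]$ and satisfies $g(\mathcal{C}) = \mathcal{C}$, which is the desired conclusion.

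For the structural refinement recorded immediately before the theorem --- that a power of $g$ fixes each component of $\mathcal{C}$, that $g$ restricts on each complementary subsurface of $\Sigma\setminus\mathcal{C}$ to a finite-order or irreducible homeomorphism, and that $\mathcal{C}$ is precisely the curve system underlying the Thurston decomposition --- I would invoke the Nielsen--Thurston classification (see~\cite{FLP} in the orientable case and~\cite{Wu:NF} in the non-orientable case), together with the identification, due to Birman, Lubotzky, and McCarthy, of the canonical reduction system with both the intersection of all maximal reducing systems and the Thurston-decomposition system. I expect the isotopy-extension step and the reduction to the reducible case to be routine; the only point I anticipate needing genuine care is reconciling conventions across these references --- in particular treating non-orientable $\Sigma$ and checking that the two descriptions of the canonical reduction system coincide.
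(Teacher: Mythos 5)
The paper offers no proof of this statement; it is quoted as a known theorem with references to Fathi--Laudenbach--Po\'enaru and to Wu (the latter covering the non-orientable case), so there is nothing internal to compare against. Your derivation is nevertheless correct and standard: with the paper's definition of the canonical reduction system as the intersection of all inclusion-wise maximal reducing systems, the $[g]$-invariance of $\mathcal{C}$ as a collection of isotopy classes is immediate, since each reducing system is by definition fixed by some representative and hence $[g]$-invariant as a set of isotopy classes; your isotopy-extension step then correctly upgrades this invariance-of-isotopy-classes to an actual representative $g$ with $g(\mathcal{C})=\mathcal{C}$ setwise. The realizability of $\mathcal{C}$ by a pairwise disjoint curve system, which you justify by noting $\mathcal{C}$ sits inside a maximal reducing system, is also the right observation. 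Your final paragraph, concerning finite-order or irreducible restrictions to the complementary pieces and the identification with the Thurston decomposition curves, addresses the consequence the paper records separately (compare \Cref{NoPeriodicBehavior}) rather than the quoted theorem itself, but the Birman--Lubotzky--McCarthy and Wu references you point to for that are the appropriate ones.
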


It is a standard consequence of this normal form that after passing to a
sufficient power any mapping class can be factored as a product of Dehn twists
about the reducing curves and the images of irreducible mapping classes on the
complement under inclusion; see \ref{NoPeriodicBehavior} for the precise formulation.

\subsection{Automorphisms and lifts}
In this section we introduce all the notations and facts needed for actions and dynamics on $\F$ and $\partial \F$.
As in the previous sections $\mathfrak{R}$ is a rose with base-point $(\ast)$ and  $G$ is a marked rose.  Let $\phi \in \Out$ and  let $f \from G\to G$ be a topological representative of $\phi$. Let $b =
m(\ast) \in G$ be the base-point in $G$. The set of paths $\sigma$ from $b$ to $f(b)$
determines a bijection between the automorphism lifts of $\phi$ and lifts of
$f$ to $\widetilde{G}$. This bijection can be seen without reference to a
base-point as we now detail.

Let  $\Phi$ be an automorphism of $\F$ that represents  $\phi$.  
Each $u \in \F$ acts on $\widetilde{G}$ by
the covering transformation $\tau_u$. Additionally, there are a pair of points
\[\{ u^+_\infty, u^-_\infty  \} \subset \partial \F\] that are respectively the limits of
positive and negative powers of $u$. The marking identifies $\partial \F$ with
$\partial \widetilde{G}$. The line in $\widetilde{G}$ whose ends converge to
$u^+_\infty$ and $u^-_\infty$ is called the \emph{axis} of $\tau_u$.
The bijection between lifts and
automorphisms pairs $\tilde{f}$ to $\Phi$ when 
\[\tilde{f}\tau_u = \tau_{\Phi(u)}\tilde{f} \text{ for all }u \in \F.\]

An automorphism $\Phi$ of $\F$ induces a homeomorphism $\hat{\Phi}$ on $\partial \F$. The action of the group $\Aut(\F)$ on $\F$ has a continuous extension to an action on the Gromov compactification 
$ \F \cup \partial\F$: Given $ \Phi \in \Aut(\F)$, let $\hat{\Phi} :  \partial\F  \to  \partial\F$ denote its continuous extension, and let $\Fix(\hat{\Phi}) \subset \partial\F$ be the set of fixed points of $\hat{\Phi}$. Let $\Fix(\Phi) < \F$ denote the subgroup of elements fixed by $\Phi$.

 Let $\Fix_+(\hat{\Phi})$ denote the set of attractors in $\Fix(\hat{\Phi})$, a discrete subset consisting of points $\xi \in \Fix(\hat{\Phi})$ such that for some neighborhood $U \subset \partial\F$ of $\xi$ we have $\hat{\Phi}(U) \subset U$ and the sequence $\hat{\Phi}^n(\eta)$ converges to $\xi$ for each $\eta \in U$. Let 
 \[\Fix_-(\hat{\Phi}) := \Fix_+(\hat{\Phi}^{-1}) \] denote the set of repellers in $\Fix(\hat{\Phi})$. This gives a decomposition of the fixed set of $\hat{\Phi}$:
\begin{equation}\label{disj}
\Fix(\hat{\Phi}) = \partial \Fix(\Phi) \cup \Fix_-(\hat{\Phi}) \cup \Fix_+(\hat{\Phi}).
\end{equation}
In the sequel we are interested primarily in the \emph{non-repelling fixed
points}, \[\Fix_{N}(\hat{\Phi}) = \Fix(\hat{\Phi})\setminus
\Fix_{-}(\hat{\Phi}).\]

We also denote the periodic point set 
\[
\Per(\hat{\Phi}) :=\bigcup_{k \geq 1} \Fix( \hat{\Phi}^k)
\]
and its subsets $\Per_+(\hat{\Phi}), \Per_-(\hat{\Phi}), \Per_N(\hat{\Phi})$ defined by similar unions.

\subsection{Principal lifts and rotationless outer automorphisms}

\begin{definition}\label{principal-lift}
A representative $\Phi\in\Aut(\F)$ of an outer automorphism $\phi\in\Out$ is
\emph{a principal lift} if either:
\begin{itemize}
\item $\Fix_{N}(\hat{\Phi})$ contains at least three points.
\item $\Fix_{N}(\hat{\Phi})$ is a two point set that is neither the set of endpoints of an axis nor the set of endpoints of a lift of a generic leaf of an element of $\Lam(\phi)$.
\end{itemize}
The corresponding lift $\tilde{f}:  \tilde{G} \to \tilde{G}$ of a topological
	representative of $\phi$ is also called
a \emph{principal lift}. The set of principal lifts for $\phi$ is denoted
$P(\phi)$.
\end{definition}

Note that there is a map  $P(\phi)\to P(\phi^k)$ for $k\geq 1$. Two
automorphisms $\Phi_1, \Phi_2$ are \emph{isogredient} if there is a $c\in \F$
such that for the inner automorphism $i_c$, $\Phi_2 = i_c\Phi_1i_c^{-1}$. An
outer automorphism $\phi$ has finitely many iso-gredience classes of principal
lifts~\cite{FH:RecogThm}*{Remark 3.9}.

\begin{definition}\label{rotationless}
An outer automorphism $\phi$ is \emph{(forward) rotationless} if for all $k\geq
1$ the map $P(\phi)\to P(\phi^k)$ is a bijection and $\Fix_N(\Phi) =
\Per_N(\Phi)$ for all $\Phi \in P(\phi)$.
\end{definition}

Rotationless automorphisms are without periodic behavior:  if $\phi$
is rotationless then any $\phi$-periodic conjugacy class, lamination, or free factor
system is in fact $\phi$ invariant~\cite{FH:RecogThm}*{Lemma 3.30}.

\subsection{Principal lifts and the Nielsen approach to $\Map(\Sigma)$}

Suppose now $\phi\in\Map(\Sigma)$ for a compact surface $\Sigma$. There exists
a homeomorphism $g \from \Sigma
\to \Sigma$ representing $\phi$ that preserves both the stable and unstable
geodesic laminations~\cite{CB88}*{Lemma 6.1}. If in addition $g$ preserves each individual principal
region of the stable and unstable geodesic laminations, its boundary leaves,
and their orientations then we say that $g$ is \emph{rotationless}; A
rotationless power of $g$ exists because there are only finitely many
principal regions and boundary leaves. We say that $\phi$ is rotationless if it
has a rotationless representative. If $g$ is rotationless and if
$\widetilde{g} \from \widetilde{\Sigma} \to \widetilde{\Sigma}$ is a lift of
$g$, we say that $\widetilde{g}$ is an \emph{s-principal lift} ($s$ stands for
stable) if there exists a principal region $R^s$of the stable geodesic
laminations such that $\widetilde{g}$ preserves $\widetilde{R}$ and
preserves some (every) boundary leaf of $\widetilde{R}$. 
\begin{observation}
	As shown by Handel and Mosher the principal lifts in the $\Out$ and 
	$\Map(\Sigma)$ sense agree \cite{HandelMosher}*{Proposition I.2.12}.  
\end{observation}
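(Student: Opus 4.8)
The plan is to unwind the two definitions through the Nielsen--Thurston dictionary relating the boundary dynamics of a lift of a surface homeomorphism to the combinatorics of its invariant geodesic laminations; this is the translation Handel and Mosher carry out to prove \cite{HandelMosher}*{Proposition I.2.12}, so I sketch the strategy and indicate the step that needs care. First I would fix a geometric $\phi$, a rotationless representative $g\from\Sigma\to\Sigma$ preserving the stable and unstable geodesic laminations $\Lambda^s,\Lambda^u$, and an identification $\F\cong\pi_1(\Sigma)$, so that $\partial\F\cong\partial\pi_1(\Sigma)$ and each lift $\widetilde g\from\widetilde\Sigma\to\widetilde\Sigma$ corresponds to an automorphism $\Phi$ representing $\phi$ with $\widehat\Phi=\widehat{\widetilde g}$ on $\partial\F$; the task is then to show that ``$\widetilde g$ is an $s$-principal lift'' is exactly the condition of \Cref{principal-lift} applied to $\Phi$.

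The heart of the argument is the refinement of the decomposition \eqref{disj} supplied by the Nielsen theory of \cite{CB88}*{\S6} together with the Thurston decomposition recalled above. Since $g$ is rotationless, on each complementary piece it is pseudo-Anosov or the identity, on reducing curves it is a (possibly trivial) twist, and $\Lambda^s,\Lambda^u$ are transverse geodesic laminations whose complementary regions --- the principal regions --- are ideal polygons, once-punctured ideal polygons, and crowns or half-planes along $\partial\widetilde\Sigma$, indexed by the prong data at singularities, punctures, and reducing curves. The statement one extracts is: the ideal endpoints of the boundary leaves of a $\widetilde g$-invariant principal region of $\Lambda^s$ are non-repelling fixed points of $\widehat{\widetilde g}$, and conversely every point of $\Fix_N(\widehat{\widetilde g})$ either arises this way or lies in $\partial\Fix(\Phi)$, the ideal endpoints of peripheral or reducing axes that $\widetilde g$ fixes pointwise. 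One then notes that a boundary leaf of a principal region of $\Lambda^s$ is not a closed curve --- so its endpoint pair is not the endpoints of an axis --- unless that leaf is a lift of a reducing curve; and that the degenerate principal regions, namely half-planes and once-punctured monogons bounded by a single leaf, are exactly the configurations whose $\widetilde g$-invariance produces only the two-point set of endpoints of that one leaf. Matching these degenerate cases to the clause of \Cref{principal-lift} about a two-point set that is neither the endpoints of an axis nor the endpoints of a lift of a generic leaf of an element of $\Lam(\phi)$ --- that is, recognizing such a boundary leaf as a singular or peripheral leaf rather than a generic leaf of the attracting lamination --- is the delicate point.

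Granting this dictionary, the equivalence is bookkeeping in both directions. If $\widetilde g$ is $s$-principal it fixes an invariant principal region $\widetilde R^s$ and, permuting the finitely many boundary leaves of $\widetilde R^s$ cyclically so that fixing one forces fixing all, it fixes every boundary leaf of $\widetilde R^s$; hence $\Fix_N(\widehat\Phi)$ contains every ideal endpoint of every side of $\widetilde R^s$, so $\#\Fix_N(\widehat\Phi)\ge 3$ whenever $\widetilde R^s$ has at least three sides or there is an additional invariant peripheral axis, and in the remaining degenerate cases $\Fix_N(\widehat\Phi)$ is a two-point set of the exceptional kind --- so $\Phi$ is principal in the sense of \Cref{principal-lift}. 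Conversely, if $\Phi$ is principal in the sense of \Cref{principal-lift} then $\Fix_N(\widehat\Phi)$ has at least three points or is an exceptional two-point set, and the description above forces $\widetilde g$ to fix a boundary leaf of some invariant principal region of $\Lambda^s$ (the configurations with no such leaf are precisely the excluded two-point sets and those coming only from fixed peripheral or reducing axes), so $\widetilde g$ is $s$-principal. The step I expect to be the main obstacle is exactly this bookkeeping near the boundary: deciding which degenerate principal regions yield two-point versus larger non-repelling fixed sets, keeping the count of non-repelling fixed points consistent with the \cite{CB88} conventions for the interplay of separatrices, peripheral subgroups, and $\partial\Fix(\Phi)$, and checking that the complementary pieces on which $g$ restricts to the identity --- which occur because $g$ is rotationless --- contribute nothing spurious. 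This is precisely the verification carried out in \cite{HandelMosher}*{\S I.2}, to which we refer for the details.
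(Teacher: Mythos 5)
The paper offers no proof of this observation beyond the citation to Handel and Mosher, and your sketch is a faithful outline of the Nielsen-theoretic dictionary that their Proposition I.2.12 actually establishes, with the deferral back to \cite{HandelMosher}*{\S I.2} correctly placed at the bookkeeping of degenerate principal regions. Since you end up following the same route as the cited source and the paper does nothing more than invoke it, there is nothing substantive to contrast.
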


For geometric outer automorphisms, the boundary curves of the surface can be
detected by the dynamics of principal lifts.

\begin{proposition} \label{prelim-principal-region-structure-theory}
Suppose $g$ is a pseudo-Anosov diffeomorphism of a compact surface $\Sigma$. Then the following are equivalent:
\begin{enumerate}
\item $c \in \pi_1(\Sigma)$ is a boundary conjugacy class.
\item There is a principal lift of $\tilde{g}$ such that the endpoints of $c$ are a non-repelling fixed point for the action of $\tilde{g}$ on $\partial \mathbb{H}^2$.
\item The endpoints of $c$ are non-repelling fixed points for the action of  $\tilde{g}_*$ on $\partial \pi_1(\Sigma)$.
\end{enumerate}
\end{proposition}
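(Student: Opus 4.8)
The result is essentially classical, being implicit in the Nielsen--Thurston theory of pseudo-Anosov maps \cites{CB88,FLP} together with the dictionary between the $\Out$ and $\Map(\Sigma)$ notions of principal lift \cite{HandelMosher}*{Proposition I.2.12}. The plan is: prove $(1)\Rightarrow(2)$ and $(1)\Rightarrow(3)$ by exhibiting, for a boundary curve, a principal lift fixing its endpoints, and prove $(2)\Rightarrow(1)$ and $(3)\Rightarrow(1)$ by a measured-foliation filling argument. I would assume throughout that $g$ is rotationless --- property $(1)$ refers only to $\Sigma$, and the proposition is applied only for rotationless $g$, so this is harmless --- and I would realize $\pi_1(\Sigma)$ as a finite-covolume Fuchsian group by capping $\partial\Sigma$ with cusps, so that each lift $\tilde g$ of $g$ acts on $\partial\mathbb{H}^2$ by the Nielsen action of \cite{CB88}, with a natural $\tilde g$-equivariant map $q\from\partial\pi_1(\Sigma)\to\partial\mathbb{H}^2$ that collapses the two endpoints of each peripheral axis to the corresponding parabolic (cusp) point and is injective elsewhere.

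For $(1)\Rightarrow(3)$: if $[c]$ is a boundary conjugacy class then, $g$ being rotationless, it fixes the representing boundary component with its orientation, so $g_*$ fixes $[c]$; choose the lift $\tilde g$ with $\tilde g\tau_c\tilde g^{-1}=\tau_c$, i.e.\ $\Phi(c)=c$ for $\Phi=\tilde g_*$. Then $c\in\Fix(\Phi)$, so $c^+_\infty,c^-_\infty\in\partial\Fix(\Phi)$, and by the disjoint decomposition \eqref{disj} of $\Fix(\hat\Phi)$ these lie in $\Fix(\hat\Phi)\setminus\Fix_-(\hat\Phi)=\Fix_N(\hat\Phi)$, which is $(3)$. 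For $(1)\Rightarrow(2)$: applying $q$, the lift $\tilde g$ fixes the parabolic point $q(c^\pm_\infty)$, and since $g$ fixes the corresponding cusp with its prong structure this point is a non-repelling fixed point of the Nielsen action \cite{CB88}, giving the fixed-point condition in $(2)$. That $\tilde g$ is a \emph{principal} lift is the key input: by the structure of pseudo-Anosov maps on surfaces with boundary, $[c]$ is the boundary leaf cycle of a principal region $R^s$ of the stable geodesic lamination, and the rotationless $\tilde g$ preserves the lift of $R^s$ at the cusp together with each of its boundary leaves and their orientations, so $\tilde g$ is an $s$-principal lift in the $\Map(\Sigma)$ sense, hence principal in the sense of \Cref{principal-lift}.

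For $(2)\Rightarrow(1)$ and $(3)\Rightarrow(1)$: it suffices, given either hypothesis, to know that some lift $\tilde g$ fixes both $c^+_\infty$ and $c^-_\infty$ in $\partial\pi_1(\Sigma)$. This is exactly $(3)$; and it follows from $(2)$ as well, for if $[c]$ were not peripheral its axis endpoints would be conical rather than parabolic, $q$ would be injective at each of them, and $\tilde g$ fixing $q(c^\pm_\infty)$ would force it to fix $c^\pm_\infty$. Thus $g$ fixes the conjugacy class $[c]$, so $g(\gamma)\simeq\gamma$ for a closed curve $\gamma$ representing $[c]$. Since $g$ scales the transverse measures of its invariant foliations $\mathcal{F}^s,\mathcal{F}^u$ by $\lambda^{-1}$ and $\lambda$ for some $\lambda>1$, and geometric intersection number with a measured foliation is a free-homotopy invariant, applying $g$ to $\gamma$ and to the foliations gives $i(\gamma,\mathcal{F}^s)=\lambda^{-1}i(\gamma,\mathcal{F}^s)$ and $i(\gamma,\mathcal{F}^u)=\lambda\, i(\gamma,\mathcal{F}^u)$, so both intersection numbers vanish. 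As $\mathcal{F}^s\cup\mathcal{F}^u$ fills $\Sigma$, the class $[c]$ is peripheral, i.e.\ a boundary conjugacy class.

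I expect the principality assertion inside $(1)\Rightarrow(2)$ to be the main obstacle: certifying that the boundary-stabilizing lift is \emph{principal}, rather than merely a lift with the right fixed points, forces one to import the local Nielsen--Thurston picture near a boundary component --- that it is the boundary leaf cycle of an honest principal region preserved, leaf by leaf and orientation by orientation, by the rotationless map. This is also where the rotationless normalization is genuinely needed: without it $g$ may cyclically permute the prongs of, or reverse, a fixed boundary component, in which case $(2)$ and $(3)$ must be read as referring to a principal lift of a suitable power of $g$.
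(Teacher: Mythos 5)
Your proof is correct, but it takes a genuinely different route from the one in the paper. The paper's proof is essentially two citations: it cites Nielsen for the equivalence $(2)\Leftrightarrow(3)$ via the continuous $\tilde g$-equivariant embedding $\partial\pi_1(\Sigma)\hookrightarrow\partial\mathbb{H}^2$, and it cites Handel--Mosher (Proposition~I.2.12) for $(1)\Leftrightarrow(2)$, the point being that the surface and $\Out$ notions of principal lift agree. You instead give a self-contained argument with a different implication cycle: $(1)\Rightarrow(2)$ and $(1)\Rightarrow(3)$ directly via the base lift and the Nielsen action, and $(2)\Rightarrow(1)$, $(3)\Rightarrow(1)$ via the classical filling/intersection-number argument for pseudo-Anosov foliations. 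This never proves $(2)\Leftrightarrow(3)$ head-on the way the paper does, and it replaces the appeal to Handel--Mosher with an explicit check that the boundary-stabilizing lift is $s$-principal. What the paper buys with its citation is brevity and offloads the principality verification to Handel--Mosher's careful treatment; what you buy is transparency about exactly which inputs are used (rotationlessness, the cusp-capping collapse map $q$, and the filling property of the invariant foliations). You also surface a genuine imprecision in the proposition's statement that the paper leaves implicit: as written, $(2)$ and $(3)$ can fail for a boundary class whose component is permuted (or whose prongs are rotated) by $g$, so the rotationless normalization you impose---or equivalently passing to a power before applying the proposition, as the paper in fact does in its later uses---is not cosmetic. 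The one place to be careful is your appeal to the decomposition \eqref{disj} in $(1)\Rightarrow(3)$: the paper states that decomposition without asserting disjointness, so one should either cite the relevant fact from Feighn--Handel that $\partial\Fix(\Phi)$ is disjoint from $\Fix_{-}(\hat\Phi)$, or observe directly that a fixed axis endpoint is not a local repeller because the dynamics in a half-neighborhood of it is dominated by the fixed covering translation $\tau_c$.
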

\begin{proof}
The equivalence of (2) and (3) is standard \cite{Nie86}, since there's a
continuous embedding of $\partial \pi_1( \Sigma) \in \partial \mathbb{H}^2$
that respects the action. The equivalence of (1) and (2) is established with
care by Handel and Mosher~\cite{HandelMosher}*{Proposition 2.12}. 
\end{proof}

\subsection{Train tracks, splittings, and CTs}

Our analysis of individual outer automorphisms requires the consequences of
particularly refined topological representatives, known as \emph{completely
split improved relative train-track maps (CTs)}, introduced by Feighn and
Handel~\cite{FH:RecogThm}. In lieu of the precise definition, the statement of which
requires significant structure that we do not use directly, we introduce the
parts of the definition needed for this paper, and indicate whenever a
particular quoted lemma for topological representatives applies only to CTs.

A \emph{filtration} of a topological representative $f:  G\to G$ of an outer
automorphism $\phi$ is a choice of an $f$-invariant chain of subgraphs
\[\emptyset = G_0 \subset G_1 \subset \cdots \subset G_R = G.\] Associated to a
filtration of a marked graph $G$ is a nested free factor system 
\[\mathcal{F}_i = [\pi_1(G_i)]\]
that comes from the
fundamental group of each connected component of $G_i$. The action of
$f$ on the edges of $G$ determines a square matrix known as the
\emph{transition matrix}. The set $H_i =
\overline{G_i\setminus G_{i-1}}$ is the $i^{th}$ stratum of $G$, and the
sub-matrix of $M$ corresponding to the rows and columns indexed by $H_i$ is
denoted $M_i$. For the topological representatives in this article, $M_i$ will
be either a zero matrix, a $1\times 1$ identity matrix, or an irreducible
matrix with Perron-Frobenius eigenvalue $\lambda_i > 1$. We refer respectively
to the stratum $H_i$ as a \emph{zero}, \emph{non-exponentially growing (NEG)},
or \emph{exponentially growing} stratum. Any stratum that is not a zero stratum
is \emph{irreducible}.

If $f:  G\to G$ is a topological representative and $\sigma$ a path or
circuit in $G$ define $f_\#(\sigma) = [f(\sigma)]$. A decomposition of $\sigma
= \sigma_1\cdot\sigma_2\cdots\sigma_n$ into subpaths is a \emph{splitting for
$f$} if \[f_\#^k(\sigma) = f_\#^k(\sigma_1)\cdot f_\#^k(\sigma_2)\cdots
f_\#^k(\sigma_n) \text{ for all }k\ge 1;\]
 we reserve $\cdot$ to denote splittings and
use adjacency for plain concatenation. A path or circuit $\sigma$ is called a
\emph{periodic Nielsen path} if $f_\#^k(\sigma) = \sigma$ for some $k\geq 1$,
if $k=1$ then $\sigma$ is a \emph{Nielsen path}. A Nielsen path is
	\emph{indivisible} if it is not a concatenation of non-trivial Nielsen
	paths. In general a closed path $\sigma$ is \emph{root-free} if $\sigma
	\neq \gamma^k$ for a closed path $\gamma$. If $w$ is a closed, root-free Nielsen path and $E$ is an edge
	such that $f(E) = Ew^k$ then we call $E$ a \emph{linear edge} and $w$
	the axis of $E$. If $E$ and $E'$ have a common axis $w$ where $k \neq
	k'$ and $k, k' > 0$, then any path of the form $Ew^\ast \overline{E}'$ is an
	\emph{exceptional path}.
	
For an EG stratum $H_r$ a nontrivial path $\sigma$ in $G_{r-1}$ with endpoints
in $H_r \cap G_{r-1}$ is a \emph{connecting path} for $H_r$. A path $\sigma$
contained in a zero stratum is \emph{taken} if there is an edge $E$ in
an irreducible stratum and $k\geq 1$ such that $\sigma$ is a maximal subpath of
$f_\#^k(E)$ contained in that zero stratum. A non-trivial path or circuit
$\sigma$ is \emph{completely split} if there is a splitting $\sigma =
\sigma_1\cdot\sigma_2\cdots\sigma_k$ where each $\sigma_i$ is one of the
following:  a single edge in an irreducible stratum, an indivisible Nielsen
path, an exceptional path, or a maximal connecting taken path in a zero
stratum. Each piece $\sigma_i$ is referred to as a \emph{splitting unit} of $\sigma$.

\begin{definition}[\cite{FH:RecogThm}*{Definition 4.7}]
	A filtered topological representative \[f \from  G\to G\] of a rotationless
	outer automorphism $\phi$ with filtration
	\[\emptyset = G_0\subset G_1 \subset \cdots \subset G_R = G\] is a
	\emph{completely split improved relative train-track map (CT)} if it is
	a \emph{relative train track map} satisfying
	\begin{description}
		\item[(Completely Split)] For every edge $E$ in an irreducible
			stratum $f(E)$ is completely split; and for every taken
			connecting path $\sigma$ in a zero stratum
			$f_\#(\sigma)$ is completely split.
		\item[(NEG Nielsen Paths)] If the highest edges in an
			indivisible Nielsen path $\sigma$ belong to an NEG
			stratum then there is a linear edge $E$ and a closed
			root-free Nielsen path $w$ such that $f(E) = Ew^d$ for
			$d\neq 0$ and $\sigma = Ew\overline{E}$.
		\item[(Other Good Stuff)] The requirements of the cited
			definition due to Feighn and Handel, which will not be
			directly appealed to in this paper, but are necessary
			for the quoted consequences.
	\end{description}
\end{definition}

For a rotationless outer automorphism $\phi$, the EG strata of the CT are
intimately related to the set of attracting laminations.
 Specifically, there is a bijection
between $\mathcal{L}(\phi)$ and EG strata of a CT~\cite{HandelMosher}*{Fact I.1.55}:
given any
lamination $\Lambda\in\mathcal{L}(\phi)$,  we can associate it with the stratum $i$ where 
\[\mathcal{F}_{supp}(\Lambda)\not\sqsubset
\mathcal{F}_{i-1} \text{ and }\mathcal{F}_{supp}(\Lambda)\sqsubset
\mathcal{F}_i. \]
In
this case we say $\Lambda$ is the lamination associated to the stratum $H_i$.

While the definition provides complete splittings only for edges and taken
connecting paths, iteration suffices to obtain complete splittings of any path.

\begin{lemma}[\cite{FH:RecogThm}*{Lemma 4.25}]
	If $f:  G\to G$ is a CT and $\sigma \subset G$ is a path with
	endpoints at vertices then $f_\#^k(\sigma)$ is completely split for all
	sufficiently large $k$.
\end{lemma}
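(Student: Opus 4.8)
The plan is to reduce the statement to two points: first, that the family of completely split paths is $f_\#$-invariant, so that it suffices to produce \emph{one} power $N$ with $f_\#^N(\sigma)$ completely split; and second, that such an $N$ exists, which I would prove by induction on the filtration height of $\sigma$.

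For the invariance, suppose $\sigma = \sigma_1 \cdot \sigma_2 \cdots \sigma_k$ is a complete splitting. Because a splitting is by definition respected by $f_\#$, we get $f_\#(\sigma) = f_\#(\sigma_1) \cdot \cdots \cdot f_\#(\sigma_k)$, so it is enough to see that each $f_\#(\sigma_i)$ is completely split and then to use that a concatenation of complete splittings along a splitting is again a complete splitting. A single edge of an irreducible stratum and a maximal taken connecting path in a zero stratum both have completely split image by the (Completely Split) axiom of a CT; an indivisible Nielsen path is fixed by $f_\#$; and an exceptional path $E w^j \bar E'$ maps to the exceptional path $E w^{j+d-d'}\bar E'$, where $f(E) = E w^d$ and $f(E') = E' w^{d'}$. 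Hence $f_\#(\sigma)$, and inductively $f_\#^k(\sigma)$ for every $k$, is completely split.

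For the existence of $N$, I would induct on the height $r$ of $\sigma$, meaning the largest index of a stratum $H_r$ that $\sigma$ crosses; the case $r = 0$ is vacuous. The heart of the argument is to show that, after finitely many iterations, $f_\#^{N_0}(\sigma)$ carries a genuine splitting into pieces that are each either a single edge of $H_r$ or a path of height at most $r-1$ with endpoints at vertices. Granting this, the single-edge pieces have completely split image by the (Completely Split) axiom and therefore remain completely split under all further iteration by the invariance above, while the lower-height pieces become completely split after a further uniform number of iterations $N_1$ by the inductive hypothesis; concatenating along the splitting then shows $f_\#^{N_0 + N_1 + 1}(\sigma)$ is completely split. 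To establish the height-$r$ splitting I would split into cases according to the type of $H_r$. If $H_r$ is a zero stratum there is nothing new: $\sigma$ meets $H_r$ only inside pieces of the enveloping exponentially growing stratum, so this case is subsumed in the EG case, and the zero-stratum crossings that survive are exactly taken connecting paths, to which the CT axiom applies. If $H_r$ is NEG, hence a single edge $E$, I would use the triangular structure of $f$ on $E$ together with the (NEG Nielsen Paths) axiom: if $E$ is fixed, $\sigma$ itself splits at its occurrences of $E^{\pm 1}$ (after passing to a large enough iterate so that cancellation with the lower-height pieces cannot reach across); if $E$ is linear with axis $w$, one iterates until the accumulating powers of $w$ dominate, after which the path organizes into single edges, exceptional paths $E w^{\ast}\bar E'$, and pieces of height at most $r-1$. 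If $H_r$ is EG, this is the standard relative train track estimate: bounded cancellation together with the fact that the number of illegal turns of $H_r$ in $f_\#^k(\sigma)$ is non-increasing in $k$, hence eventually constant, forces $f_\#^{N_0}(\sigma)$ to split at its $H_r$-edges, with complementary pieces the connecting paths in $G_{r-1}$.

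The step I expect to be the main obstacle is precisely the production of this height-$r$ splitting after finitely many iterations. For EG strata this is the technical core of relative train track theory --- controlling where cancellation occurs when tightening $f(\sigma)$ and ruling out an infinite descent in the illegal-turn count --- and for NEG strata the difficulty is bookkeeping: one must group linear edges by common axis so that the residual strings of $w$-powers and lower-height material assemble into genuine exceptional paths rather than arbitrary subpaths, and track finitely many such families simultaneously. Making the zero-stratum reduction precise also takes care: one must verify that each zero-stratum crossing surviving in a complementary connecting path of the EG splitting is literally a maximal subpath of some $f_\#^j(E)$, so that the word ``taken'' --- and with it the (Completely Split) axiom for zero strata --- genuinely applies.
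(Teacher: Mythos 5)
The paper does not prove this lemma; it is quoted verbatim from Feighn--Handel, \cite{FH:RecogThm}*{Lemma~4.25}, where it is established as part of the basic CT machinery. So there is no ``paper's own proof'' to compare against; I can only assess your outline against the standard Feighn--Handel framework.

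Your two-step strategy --- first show that the class of completely split paths is forward-invariant under $f_\#$, then show that every path with vertex endpoints eventually lands in that class by induction on height, with the EG case driven by bounded cancellation and eventual stabilization of the illegal-turn count --- is exactly the structure of the Feighn--Handel argument. The forward invariance in particular is an earlier lemma in their paper (their Lemma~4.6), and your verification that each splitting-unit type has completely split image (edges and taken connecting paths via the (Completely Split) axiom, indivisible Nielsen paths being fixed, exceptional paths $Ew^j\bar E'\mapsto Ew^{j+d-d'}\bar E'$) is correct.

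There are, however, two omissions in your height-$r$ case analysis that you should repair. First, your NEG case only treats \emph{fixed} and \emph{linear} edges, but a CT may well have a \emph{nonlinear} NEG edge $E$ with $f(E)=E\cdot u$ where $u$ is a completely split closed path that is not a power of a Nielsen path. (The present paper in fact spends a lemma --- \Cref{NEGStrataAreLinear} --- proving these cannot occur for \emph{geometric} $\phi$, precisely because they do occur in general.) The fix is not hard: because $f(E)=E\cdot u$ is a splitting and $u$ lies in $G_{r-1}$, the occurrences of $E^{\pm 1}$ in $\sigma$ are already cancellation barriers, so $\sigma$ splits immediately at its $H_r$-edges without any need to ``wait for powers of $w$ to accumulate''; but the case must be stated. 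Second, in the EG case you claim the path eventually ``splits at its $H_r$-edges, with complementary pieces the connecting paths in $G_{r-1}$.'' This is too strong: what illegal-turn stabilization gives you is an $r$-splitting into maximal $r$-legal pieces, and the illegal turns that persist are exactly those at the tips of height-$r$ indivisible Nielsen paths. Such INPs are splitting units in their own right, not further decomposable into a single $H_r$-edge plus a $G_{r-1}$ connecting path, so they must be carried along as a third kind of piece in your $r$-splitting. Both repairs are local and do not change the overall shape of the argument, but without them the induction as written does not close.
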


In order to analyze certain CTs we make routine use of the ``moving up through
the filtration'' lemma, which describes how the strata change as we move from
filtration element that is a core graph to the next filtration element that is
a core graph.

\begin{lemma}[\cite{FH:Abelian}*{Lemma 8.3}]
\label{moving-up-through-the-filtration}
Suppose $f\from G\to G$ is a CT. Fix a filtration level $r$ such that $G_r$ is core, and let $s > r$ be the
smallest integer such that $G_s$ is a core graph. Then there are two possible
cases:
\begin{description}
\item[$H_s$ is NEG] In this case either $s=r+1$ or $s=r+2$ and $H_{r+1}$ is
also NEG.
\item[$H_s$ is EG] In this case there exists $r\leq u < s$ such that
\begin{itemize}
\item For each $r < j \leq u$ the stratum $H_j$ is a single non-fixed edge with
terminal vertex in $G_r$.
\item For each $u < j < s$, $H_j$ is a zero stratum enveloped by $H_s$.
\end{itemize}
\end{description}
\end{lemma}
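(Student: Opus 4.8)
The plan is to analyze the strata $H_{r+1},\dots,H_s$ lying between the consecutive core filtration elements $G_r$ and $G_s$, exploiting the interplay between core-ness and the CT axioms. The key preliminary observation is a dictionary between the type of a stratum $H_i$ and the effect of attaching it to $G_{i-1}$. Attaching a zero stratum can never make $G_i$ core: a zero stratum is a forest whose vertices all lie in the enveloping EG stratum (which sits \emph{above} it), so it is added as a disjoint union of trees and contributes no circuits to $G_i$. Attaching an NEG edge that closes a circuit --- a loop, or an arc between vertices already present in $G_{i-1}$ --- strictly enlarges $\pi_1(G_i)$ and does make $G_i$ core. Attaching a non-fixed, non-loop NEG edge with a genuinely new endpoint changes neither $\pi_1$ nor core-ness (and a fixed edge with a new endpoint is excluded because the new endpoint would be a non-principal vertex). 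Since none of $G_{r+1},\dots,G_{s-1}$ is core while $G_s$ is, each $H_{r+1},\dots,H_{s-1}$ is a zero stratum or a non-fixed, non-loop NEG edge, and $H_s$ is the first stratum restoring core-ness; in particular $H_s$ is not a zero stratum and hence is irreducible, NEG or EG. Throughout I will use one elementary consequence of the CT form of an NEG stratum $H_j=\{E_j\}$ with $f(E_j)=E_j u_j$: the terminal endpoint $t(E_j)$ is $f$-fixed and equals $t(u_j)$, hence lies in $G_{j-1}$, and being a principal vertex it cannot be a valence-one or valence-two vertex freshly created by a chain of NEG edges.

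\emph{The NEG case.} Here $H_s=\{E_s\}$ is a single edge whose attachment turns the non-core graph $G_{s-1}$ into a core graph. Since a single edge is added, this forces the forest obtained from $G_{s-1}$ by deleting its core subgraph (which contains $G_r$) to be a single arc attached to the core at one end, and $E_s$ joins its free end back into a circuit. Every edge of that arc lies in an intermediate stratum, hence is a non-fixed NEG edge; but an arc of length $\geq 2$ would contain an edge both of whose endpoints are new vertices, contradicting the fact that an NEG edge's terminal endpoint lies in an earlier filtration element and is principal. So the arc is empty, giving $G_{s-1}=G_r$ and $s=r+1$, or it is a single NEG edge $H_{r+1}$, giving $s=r+2$ with $H_{r+1}$ NEG. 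One also checks no intermediate stratum is a zero stratum here, since there is no EG stratum at level $\leq s$ to envelop one.

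\emph{The EG case.} Now $H_s$ is EG. The zero strata it envelops form a block placed directly beneath it (their vertices lie in $H_s$); let $H_{u+1},\dots,H_{s-1}$ be exactly these, so $r\le u<s$. The remaining intermediate strata $H_{r+1},\dots,H_u$ are, by the dictionary, non-fixed NEG edges: none is EG, because an EG stratum does not dangle --- together with its enveloped zero strata and the core subgraph below it, it spans a core subgraph --- so an intermediate EG stratum would already make $G_i$ core, contradicting $i<s$. Finally, for each $r<j\leq u$, the core subgraph of $G_{j-1}$ is exactly $G_r$ (the NEG edges below $j$ all dangle), and $t(E_j)\in G_{j-1}$ is principal, hence $t(E_j)\in G_r$; thus each $H_j$ is a single non-fixed edge with terminal vertex in $G_r$, the asserted structure (the initial endpoints of these edges become the attaching vertices for $H_s$ and its zero strata).

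The step I expect to be the main obstacle is the vertex- and link-level bookkeeping behind the two case analyses: ruling out a chain of two or more NEG edges in the NEG case (and, correspondingly, chaining of the NEG edges in the EG case), excluding stray zero or EG strata among the intermediate levels, and locating the NEG endpoints in $G_r$. Each of these reduces to a careful application of the (NEG), zero-strata, and principal-vertex clauses of Feighn and Handel's definition of a CT (only partially reproduced above), together with the elementary facts that a zero stratum is a forest, that an EG stratum does not dangle, and that a disjoint union of core graphs is core; the surrounding topology of forests attached to core graphs is routine.
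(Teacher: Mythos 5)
The paper does not prove this lemma; it is cited verbatim from Feighn--Handel's \emph{Abelian subgroups of $\mathrm{Out}(F_n)$} (Lemma~8.3), so there is no in-paper proof to match against. Judged on its own merits, your overall strategy --- classify the intermediate strata $H_{r+1},\ldots,H_{s-1}$ by the effect of attaching them on core-ness, and then pin down the two cases according to whether the first stratum restoring core-ness is NEG or EG --- is the right one, and the ``dictionary'' observations about zero strata (forests, no new circuits) and NEG loops/arcs are correctly aimed.

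The gap is in the mechanism you use to forbid chains: you repeatedly invoke the claim that a freshly created vertex on a dangling NEG edge ``cannot be a valence-one or valence-two principal vertex,'' and in the EG case you pass directly from ``$t(E_j)$ is principal'' to ``$t(E_j)\in G_r$.'' Neither implication is a CT axiom. Principal vertices \emph{can} have low valence in intermediate filtration elements; valence is only controlled in the final graph $G$, not in the $G_j$. The correct reason the terminal vertex of a non-fixed NEG edge $E_j$ lands in $G_r$ is different: a CT forces $f(E_j)=E_j\cdot u_j$ with $u_j$ a \emph{nontrivial closed} path in $G_{j-1}$ based at $t(E_j)$. Since $G_{j-1}\setminus G_r$ is a forest hanging off $G_r$ (by your own dictionary), the only vertices of $G_{j-1}$ supporting a nontrivial immersed loop are the vertices of the core $G_r$, so $t(E_j)\in G_r$. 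This simultaneously kills chains of length $\ge 2$ in the NEG case (the second edge's suffix would have nowhere to live) and locates the terminal vertices in the EG case. Similarly, your parenthetical exclusion of fixed edges with a new endpoint is asserted on ``non-principal'' grounds, which again is not the operative axiom; the standard exclusion comes from the CT requirements on the filtration and on periodic edges, not from a valence condition on principal vertices. You flag these bookkeeping steps as the expected obstacle, and that instinct is right --- but as written the argument substitutes a false valence principle for the actual axiom (nontrivial closed suffix), so the key deductions do not yet go through.
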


\subsection{Computing with CTs}

Feighn and Handel introduced an algorithm that produces a CT for any
rotationless outer automorphism, which additionally can start from any
prescribed filtration as input. This algorithm is instrumental in our
algorithm.

\begin{theorem}[\cite{FH:CTs}*{Theorem 1.1}]
	There is an algorithm, refered to as \textsc{ComputeCT}, that takes as input a rotationless $\phi \in
	\Out$ and a nested sequence $\mathcal{F}_0 \sqsubset
	\mathcal{F}_1\sqsubset\cdots\sqsubset\mathcal{F}_k$ of $\phi$-invariant free
	factor systems and produces a filtered topological representative
	$f:  G\to G$ such that every non-empty $\mathcal{F}_i =
	\mathcal{F}(G_r)$ for a core filtration element $G_r$ and $f$ is a CT.
\end{theorem}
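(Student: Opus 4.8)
This is Feighn and Handel's theorem~\cite{FH:CTs}, and a genuine proof is the substance of their construction; here I outline how it goes. The plan has two halves: first produce, effectively, a relative train track representative of $\phi$ whose core filtration levels realize the prescribed chain $\mathcal{F}_0 \sqsubset \cdots \sqsubset \mathcal{F}_k$, and then iteratively upgrade that representative, by a finite sequence of effective moves, until it satisfies every clause of the definition of a CT.

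For the first half I would run the algorithmic version of the Bestvina--Handel relative train track machine~\cite{BH95}. Starting from any topological representative $f \from G \to G$ of $\phi$ compatible with the filtration, one applies subdivisions, folds, collapses of invariant forests, and insertions of the given invariant free factor systems as new filtration levels; each move is effective, and one checks that a complexity vector built from the edge counts of the strata, read from the top down, does not increase and eventually strictly decreases, so the process halts. The output is a filtered $f \from G \to G$ in which each nonempty $\mathcal{F}_i$ equals $\mathcal{F}(G_{r})$ for a core level $r$, each stratum is zero, NEG, or EG, and the train track inequality holds on EG strata. The subtlety here is organizing the folds so that they respect the filtration and drive each transition submatrix toward irreducibility or to the zero or identity matrix.

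The harder half is passing from a relative train track map to a CT, and this is where rotationlessness and the algorithmic hypotheses do their work. One arranges, one at a time and without destroying what has already been achieved, the remaining properties: vertices principal and fixed, NEG strata linear with closed root-free Nielsen-path axes and indivisible Nielsen paths of the form $E w \overline{E}$, zero strata enveloped by an EG stratum and contractible, and the ``completely split'' condition that $f(E)$ and $f_\#(\sigma)$ for taken connecting paths $\sigma$ admit complete splittings. Each of these is secured by a further effective move against a lexicographic complexity that strictly decreases. The main obstacle, and the crux of the whole argument, is the effective handling of Nielsen paths: one needs an algorithm that enumerates the indivisible (periodic) Nielsen paths in each stratum, decides whether a given closed path is Nielsen, and bounds their combinatorial length so that a finite search is exhaustive. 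This is supplied by the bounded cancellation lemma together with the structural results of Feighn and Handel's Recognition Theorem~\cite{FH:RecogThm}; rotationlessness is what guarantees that every periodic Nielsen path is already a Nielsen path, so the bounded-length search is complete rather than merely a heuristic. Once all the properties hold simultaneously, the resulting $f \from G \to G$ is a CT with the prescribed core filtration, which is the assertion.
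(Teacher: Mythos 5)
The paper does not prove this theorem; it is quoted verbatim from Feighn and Handel~\cite{FH:CTs}*{Theorem 1.1} and used as a black box. Your proposal is therefore not replicating a proof in this paper but sketching the structure of the cited one. As a summary of Feighn--Handel's argument it is broadly accurate: they do run an effective relative-train-track machine (subdivision, folding, forest collapse) driven by a decreasing complexity to realize the prescribed chain of invariant free factor systems as core filtration levels, and then perform a finite sequence of moves, each controlled by a lexicographic complexity, to secure the remaining CT properties. Your identification of bounded cancellation plus rotationlessness as the crux of the effective Nielsen-path enumeration (so that a bounded-length search is exhaustive rather than heuristic) is also the right emphasis, and rotationlessness is precisely what collapses periodic Nielsen paths to Nielsen paths via $\Fix_N = \Per_N$ for principal lifts. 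The one thing to flag is that the algorithmic enumeration of Nielsen paths is primarily developed in \cite{FH:CTs} itself rather than in the Recognition Theorem paper, though it leans on structural results from the latter; and of course the outline elides the substantial bookkeeping that makes the moves actually terminate and commute with the already-achieved properties. Since the present paper's use of the theorem is purely as a cited algorithm, an outline of this sort is extra context rather than a required proof.
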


In addition to an algorithm for computing a CT, Feighn and Handel demonstrate
that many properties of a CT are computable from the CT. Specifically, 
there are algorithms, referred to as \textsc{ZeroStrata?}, \textsc{NonlinearNEG?}, and
\textsc{NongeometricEG?}, that take as input a CT $f$ and decide respectively if
$f$ has zero, nonlinear NEG, or nongeometric EG strata~\cite{FH:CTs}.

\subsection{The (equivariant) Whitehead algorithm}

In its most general form, the classical Whitehead algorithm is a procedure for understanding orbit-equivalence of tuples of conjugacy classes under the action of $\Out$. We will make repeated use of this formulation, as recorded in Lyndon and Schupp.

\begin{theorem}[\cite{lyndon-schupp}*{Proposition 4.21}]\label{whitehead}
Let $\mathcal{C}$ denote the set of conjugacy classes of $\F$ with its natural
	$\Out$ action. Let $\mathcal{C}^{n}$ denote the product of $n$ copies
	of $\mathcal{C}$ with diagonal $\Out$ action. Then there is an
	algorithm which takes as input $c, c'\in\mathcal{C}^n$ and
	either produces $\phi\in\Out$ such that $\phi(c) = c'$ or \texttt{No} if no such automorphism exists.
\end{theorem}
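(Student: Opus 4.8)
The plan is to present the classical \emph{Whitehead algorithm} and indicate why it halts and is correct. First I would fix the basis of $\F$ determined by the petals of $\mathfrak{R}$, and for a conjugacy class $[w]$ write $|w|$ for its cyclically reduced length in that basis; for a tuple $c = ([w_1],\dots,[w_n]) \in \mathcal{C}^n$ set $\|c\| = \sum_{i} |w_i|$. Next I would recall the finite set $\mathcal{W}$ of \emph{Whitehead automorphisms}: the ones permuting and inverting basis elements, together with, for each basis element $a$ and each set $A$ of basis elements and their inverses with $a \in A$ and $a^{-1} \notin A$, the automorphism fixing $a$ and sending every other basis element $x$ to one of $x$, $ax$, $xa^{-1}$, $axa^{-1}$ according to the membership of $x^{\pm 1}$ in $A$. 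This set is effectively computable, it generates $\Aut(\F)$, and hence its image generates $\Out$.

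The engine is the \emph{peak reduction lemma}: if $c' = \psi(c)$ for some $\psi$ and $\|c\|$ is not minimal in the $\Out$-orbit of $c$, then some $\tau \in \mathcal{W}$ satisfies $\|\tau(c)\| < \|c\|$; more generally, any expression of $\psi$ as a product of Whitehead automorphisms can be replaced by one along which $\|\param\|$ never exceeds $\max(\|c\|, \|c'\|)$. Granting this, the first phase is a greedy descent: given $c$, search the finite set $\mathcal{W}$ for a length-decreasing move, apply it, and repeat; since $\|\param\|$ is a nonnegative integer this halts at a minimal-length tuple $\hat c = \psi_c(c)$, with $\psi_c$ recorded as a word in $\mathcal{W}$. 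Doing the same for $c'$ produces $\hat c' = \psi_{c'}(c')$. If $\|\hat c\| \ne \|\hat c'\|$, then $c$ and $c'$ have different $\Out$-orbits and I would output \texttt{No}.

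Otherwise both minimal lengths equal some $L$, and the second phase enumerates the finite level set. There are only finitely many tuples in $\mathcal{C}^n$ of total length $L$; I would build the finite graph $\Gamma_L$ on these tuples with an edge from $d$ to $d'$ whenever some $\tau \in \mathcal{W}$ has $\tau(d) = d'$, and then test whether $\hat c$ and $\hat c'$ lie in the same connected component. By peak reduction this occurs precisely when $c$ and $c'$ are $\Out$-equivalent. If no path exists, output \texttt{No}; if $\tau_1, \dots, \tau_m$ is a path from $\hat c$ to $\hat c'$, output $\phi = [\psi_{c'}^{-1} \tau_m \cdots \tau_1 \psi_c] \in \Out$, which satisfies $\phi(c) = c'$ by construction. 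That conjugacy classes only see the $\Out$ action rather than the full $\Aut(\F)$ action is automatic, so the recorded Whitehead word already gives the desired outer automorphism.

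The hard part is the peak reduction lemma. For a single conjugacy class it is the classical cut-vertex argument: if no generator of $\mathcal{W}$ decreases $\|\param\|$, one shows the Whitehead graph of $c$ on the directions at the base vertex of $\mathfrak{R}$ is connected and has no cut vertex, and then an additive counting estimate over basis-pair occurrences---phrased as a minimal-cut bound, a maximum-flow argument, or via Stallings foldings---forces a length-reducing move, so $\|c\|$ was in fact already minimal. The passage to $n$-tuples is routine, since the relevant Whitehead graph of the tuple is the union of the graphs of the entries and all the estimates are additive in them; the full argument is~\cite{lyndon-schupp}*{Proposition 4.21}, which is what we cite.
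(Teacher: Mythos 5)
The paper does not prove this statement; it cites it verbatim from Lyndon--Schupp, Proposition 4.21, as a known classical result. Your proposal expands that citation into a sketch of the standard Whitehead algorithm, which is indeed the argument the reference gives: greedy descent using the finite generating set $\mathcal{W}$ of Whitehead automorphisms, followed by a connected-component search in the finite graph on minimal-length tuples, both justified by peak reduction. That structure is correct, and your observation that inner automorphisms act trivially on conjugacy classes, so it suffices to work with the $\Out$ action, is exactly right.

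One small caution on phrasing in your last paragraph: the logic of the descent/minimality criterion reads as slightly circular as written (``if no generator decreases $\|\param\|$ \dots forces a length-reducing move, so $\|c\|$ was in fact already minimal''). What you want to say is the contrapositive: if $\|c\|$ is \emph{not} minimal in its orbit, then the cut-vertex analysis of the Whitehead graph together with the additive counting estimate produces a single $\tau \in \mathcal{W}$ with $\|\tau(c)\| < \|c\|$; equivalently, if no single generator shortens $c$, then $\|c\|$ is already minimal, and the same estimates give the stronger peak reduction statement used in the second phase. Since you explicitly defer the full argument to the cited proposition, this is a presentational wrinkle rather than a gap, and the proposal is consistent with what the paper relies on.
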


Krstic, Lustig, and Vogtmann, in their solution to the conjugacy problem for roots of Dehn twists in $\Out$, gave an equivariant generalization of the Whitehead algorithm that respects finite subgroups of $\Out$, which we will also need.

\begin{theorem}[\cite{krstic-lustig-vogtmann}*{Theorem 1.1}]\label{klv-whitehead}
Let $\mathcal{C}$ denote the set of conjugacy classes of $\F$ with its natural
	$\Out$ action. Let $\mathcal{C}^{n}$ denote the product of $n$ copies
	of $\mathcal{C}$ with diagonal $\Out$ action. Let $G$ be a finite
	group. Then there is an algorithm which takes as input $c,
	c'\in\mathcal{C}^n$ and homomorphisms $\alpha,\alpha' \from G\to \Out$
	and either produces $\phi\in\Out$ such that $\phi(c) = c'$ and
	$(\alpha(g))^\phi = \alpha'(g)$ for all $g\in G$ or \texttt{No} if no
	such automorphism exists.
\end{theorem}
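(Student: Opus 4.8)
This theorem is quoted from Krstic, Lustig, and Vogtmann, so within this paper it is used as a black box; for orientation I indicate how one proves that such an equivariant algorithm exists. The guiding idea is to run a Whitehead-style peak-reduction argument inside the $G$-fixed point set of Culler-Vogtmann outer space. By Culler's realization theorem (obtained independently by Khramtsov and by Zimmermann), every homomorphism $\beta\from G\to\Out$ from a finite group is realized by a \emph{$G$-graph}: a finite connected graph $X$ of rank $\rk$ with a marking and a simplicial $G$-action whose induced action on $\pi_1(X)$ represents $\beta$; moreover the $G$-fixed point set $\mathcal{X}_\beta\subseteq\CV$ of such $G$-graphs is non-empty and connected. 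Realizing a tuple of conjugacy classes on a $G$-graph as a tuple of immersed loops, one checks directly from the markings that an outer automorphism $\phi$ with $(\alpha(g))^\phi=\alpha'(g)$ for all $g$ and $\phi(c)=c'$ is exactly the data of a $G$-equivariant homotopy equivalence $h\from X_\alpha\to X_{\alpha'}$, from some point of $\mathcal{X}_\alpha$ to some point of $\mathcal{X}_{\alpha'}$ (the two graphs carrying their $G$-actions via $\alpha$ and via $\alpha'$ respectively), such that $h_\#(c)=c'$.

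The algorithm then proceeds in two stages. First, decide whether $\alpha$ and $\alpha'$ are conjugate as outer $G$-actions at all; this is itself a finite problem, as it concerns only $G$-graphs of bounded combinatorial complexity, each with finitely many $G$-equivariant simplicial automorphisms, and if the answer is negative one outputs \texttt{No}. Otherwise fix one $G$-equivariant equivalence $h_0\from X_\alpha\to X_{\alpha'}$, pull $c'$ back to a loop tuple $h_0^{-1}(c')$ on $X_\alpha$, and reduce to an orbit-equivalence question: are $c$ and $h_0^{-1}(c')$ related by a $G$-equivariant self-homotopy-equivalence of $X_\alpha$? For this one introduces \emph{equivariant Whitehead automorphisms}---the $G$-equivariant analogues of elementary Whitehead moves and folds, acting on $\mathcal{X}_\alpha$ and on loop tuples---and proves the crucial technical input, an \emph{equivariant peak reduction lemma}: if $c$ and $h_0^{-1}(c')$ are $G$-equivariantly equivalent, then they are related by a finite sequence of equivariant Whitehead moves along which the total loop length never exceeds the larger of the two minimized lengths. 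Since $G$-graphs of rank $\rk$ with a fixed type of $G$-action have bounded complexity and the lengths along the sequence are bounded, the resulting search space is finite and effectively enumerable; this gives the decision procedure, and when a realizing $\phi$ exists it is read off from the successful sequence of moves together with $h_0$.

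The main obstacle is the equivariant peak reduction lemma. In the classical Whitehead algorithm behind \Cref{whitehead} one is free to choose a length-reducing elementary move anywhere; under $G$-equivariance a move performed at one edge is forced simultaneously on its whole $G$-orbit, and the translates of a length-reducing move can instead increase length or cancel against one another, so the usual peak-reduction combinatorics does not transfer verbatim. Overcoming this requires a careful classification of $G$-Whitehead automorphisms together with an equivariant bounded-cancellation estimate, controlling cancellation uniformly over $G$-orbits of edges and vertices; this is the substance of the Krstic, Lustig, and Vogtmann argument and is what makes the equivariant statement genuinely harder than its classical special case (which is the case of trivial $G$). A secondary point needing care throughout is the bookkeeping of markings, so that one recovers the pointwise intertwining $(\alpha(g))^\phi=\alpha'(g)$ rather than merely $\alpha(G)^\phi=\alpha'(G)$; this is precisely why the whole argument is run with $G$-graphs instead of with bare finite subgroups of $\Out$.
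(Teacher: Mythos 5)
The paper itself offers no proof of this statement—it is quoted verbatim as Theorem 1.1 of Krstic, Lustig, and Vogtmann and used as a black box—so there is no in-paper argument to compare against. Your sketch correctly identifies that fact and gives a faithful high-level account of the KLV approach: realization of finite-order outer actions by $G$-graphs via Culler/Khramtsov/Zimmermann, reduction to an orbit problem inside the $G$-fixed point set of Outer space, equivariant Whitehead moves, and the equivariant peak-reduction lemma as the crux (with the genuine difficulty being that a move on one edge is forced on its whole $G$-orbit, which can spoil the usual length monotonicity). You also correctly flag the marking bookkeeping needed to get the pointwise intertwining $(\alpha(g))^\phi=\alpha'(g)$ rather than merely a set-level conjugacy of images. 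Nothing further to add.
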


\subsection{Guirardel's core}\label{background:core}
Given two  simplicial  $T,  T'$ with $G$-actions, Guirardel \cite{guirardelcore}
constructed a \emph{core},  denoted $\Core(T, T')$, that captures the
compatibility of the two actions. Guirardel defines the core in a greater
generality than what we require, and we recall a specialized definition.

\begin{definition} 
	Suppose $T$ and $T'$ are simplicial trees with $\F$ action.  There
	exists a unique smallest non-empty, closed, connected, $\F$-invariant
	subset $C \subset T \times T'$ with convex fibers. We denote this set
	$\Core(T ,  T')$  and call it the \emph{core of }$T ,  T'$.
\end{definition}

The decision to use the connected subset is usually called the \emph{augmented
core}, but for our applications the connectivity is technically convenient, and
we pray the reader will accept our slight abuse of terminology. Guirardel
proved that a core always exists, and gave an explicit characterization of it
in terms of the dynamics of the $\F$ action on each tree~\cite{guirardelcore}.

We will use a different construction
of the Guirardel core found in different generalities in separate work of the
first two authors and in Behrstock, Bestvina, and
Clay~\cites{behrstockbestvinaclay,EdgarThesis,uniformsurgery}.  Given a
simplicial $\F$--tree $T$, let $\BP$ be a fixed vertex of $T$. Fix orientations
on all edges of $T$. For a given edge $e
\subseteq T$, the complement of the interior partitions $T$ into two connected
components, which we call $\delta^+(e)$ and $\delta^-(e)$ according to the
orientation of $e$. This induces a partition of the loxodromic elements
$\F_{hyp} = \partial^{+}(e)\cup
\partial^{-}(e)$ by the rule
\[ \partial^{\pm}(e) = \{ g\in \F | g^n\BP\in \delta^+(e) \text{ for large
$n$} \} \]

Note that the sets $\partial^{\pm}(e)$ are independent of the choice of $\BP$.  

\begin{definition}\label{Definition:Partition}
Consider an edge $a$ in $T$ and an edge $b$ in $T'$. We say $a \times b$ is an \emph{intersection square} 
if all of the following four intersections, as subsets of $\F$, are nonempty:
 \begin{align*}
 \partial^+(a) &\cap \partial^+(b) \neq \emptyset 
 &\partial^+(a) &\cap \partial^-(b)\neq \emptyset  \\
 \partial^-(a) &\cap   \partial^+(b)\neq \emptyset 
 &\partial^-(a) &\cap   \partial^-(b)\neq \emptyset 
 \end{align*}
	We call this intersection condition the \emph{4-sets condition}.
\end{definition}

A square $a\times b \subset \Core(T, T')$ if and only if it is an intersection
square~\citelist{\cite{EdgarThesis}*{Lemma 3.4.1}\cite{uniformsurgery}*{Lemma
3.4}}. 

\section{Geometric models of geometric EG strata} \label{section:GeometricModels}

One of the major ingredients in our algorithm is the relationship between the
irreducible pieces of Thurston normal form and the EG strata of
\emph{any} CT representing a geometric automorphism $\phi$. Before we make the
connection with Thurston normal form, we recall the definition of a geometric
model for an EG stratum, some key facts established by Handel and Mosher about
such models, and use them to derive invariants of $\phi$.

\subsection{Defining the models}

\begin{definition}[\cite{HandelMosher}*{Definition I.2.1}] A \emph{weak
geometric model}
for the EG stratum $H_r$ of a CT $f :  G \rightarrow G$ is the
	following collection of objects:
\begin{enumerate}

\item A compact, connected surface $S$ with nonempty boundary,
	enumerated as $\partial S = \partial_0 S\cup\cdots\cup\partial_{m} S$. The
	component $\partial_0 S$ is called the \emph{upper boundary} of $S$ and
	$\partial_{1}S,\ldots, \partial_{m}S$ are called the \emph{lower
	boundaries}.

\item For each lower boundary $\partial_{i}S, i = 1,...,m$, a homotopically
	nontrivial closed edge path $\alpha_i :  \partial_{i} S \rightarrow
	G_{r-1} $.

\item The 2-complex $Y$ that is the quotient of the disjoint union $S
	\sqcup G_{r-1}$ by the attaching map $\sqcup_i \alpha_i$ on the lower
	boundaries, with quotient map \[j \from S \sqcup G_{r-1} \rightarrow Y.\]

\item An embedding $G_r \hookrightarrow Y$ which extends the embedding $G_{r-1}
	\hookrightarrow Y$ such that $G_r \cap \partial_0 \Sigma$ is a single point
	denoted $p_{r}$, and there is a closed indivisible Nielsen path $\rho_r$ of
	height $r$ in $G_r$ and based at $p_{r}$, such that the loop $\partial_0
	\Sigma$ based at $p_{r}$ and the path $\rho_{r}$ are homotopic rel base
	point in $Y$. The existence of this embedding implies the existence of a
	deformation retraction $d:  Y\to G_r$ such that $d|_{\partial_0S}$ is a
	parameterization of $\rho_r$.

\item A homotopy equivalence $h :  Y \rightarrow Y$ and a homeomorphism \[g
	\from  (S,\partial_0 S) \to (S,\partial_0 S)\] whose mapping class
		$[g] \in \Map(S)$ is infinite order and irreducible, subject to the
	following compatibility conditions:
	\begin{enumerate}
	\item  The maps $(f | G_{r}) \circ d$ and $d \circ h$ are homotopic. 
	\item  The maps $j \circ g$ and $h \circ j$ are homotopic.
	\end{enumerate}
\end{enumerate}
\end{definition}

Following Handel and Mosher, we will make use of Thurston's theorem that $[g]\in
\Map(\Sigma)$ is infinite order and irreducible if and only if there is a
pseudo-Anosov representative, and when working with geometric models take $g$
to be pseudo-Anosov. The full data of a weak geometric model is quite
notationally heavy, for brevity we will say $Y$ is a weak geometric
model for the stratum $H_r$, and use the other notation introduced by the
definition in a standard fashion. When working with several geometric models we
will consistently decorate $Y$ and the other notation the same way, e.g. $S'$
is the surface associated to the weak geometric model $Y'$.

\begin{definition}
An EG stratum $H_r$ of a CT $f:  G\to G$ is \emph{geometric} if there is a
weak geometric model $Y$ for $H_r$.
\end{definition}

We recall that geometric EG strata are characterized by Nielsen paths.

\begin{fact}[\cite{HandelMosher}*{Fact I.2.3}]
\label{fact:nielsen-path-geometric}
For an EG stratum $H_r$ of a  CT $f:  G\to G$, the following are equivalent:
\begin{itemize}
\item $H_r$ is geometric.
\item There exists a closed, height $r$ indivisible Nielsen path $\rho_r$.
\item There exists a closed, height $r$ indivisible Nielsen path $\rho_r$ which
crosses each edge of $H_r$ exactly twice.
\end{itemize}
Furthermore each component of $G_{r-1}$ is non-contractible.
\end{fact}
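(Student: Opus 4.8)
The plan is to prove the cycle of implications. The implication from "$H_r$ is geometric" to "there exists a closed, height $r$ indivisible Nielsen path $\rho_r$" is essentially built into the definition of a weak geometric model: condition (4) supplies a closed indivisible Nielsen path $\rho_r$ of height $r$ based at $p_r$ that is homotopic to the upper boundary loop $\partial_0 S$. So the first order of business is just to extract $\rho_r$ from the model data and confirm it is genuinely of height $r$ (i.e. it crosses edges of $H_r$) and indivisible; indivisibility should follow because $\partial_0 S$ is a single boundary component of $S$ and cannot decompose as a concatenation of shorter Nielsen paths without the decomposition being visible on the surface, contradicting irreducibility of $[g]$.

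The substantive direction is the reverse: given a closed, height $r$ indivisible Nielsen path $\rho_r$, build a weak geometric model, and along the way show $\rho_r$ crosses each edge of $H_r$ exactly twice. First I would analyze the structure of $\rho_r$ using the relative-train-track properties of the CT: since $H_r$ is EG and $f_\#(\rho_r) = \rho_r$, the illegal turns of $\rho_r$ in $H_r$ are controlled, and the standard Bestvina--Handel analysis of indivisible Nielsen paths in EG strata (bounded cancellation, the fact that $\rho_r = \alpha\beta$ with $\alpha,\beta$ legal of height $r$ meeting at a single illegal turn) forces $\rho_r$ to traverse each edge of $H_r$ an even, and in fact exactly twice, number of times — once in each of $\alpha$ and $\overline\beta$. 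This is the combinatorial heart of the argument. Then I would thicken $G_r$ to a surface: take a regular neighborhood / fattening of $H_r$ in which $\rho_r$ becomes an embedded boundary curve, attach it to $G_{r-1}$ along the connecting paths traversed by $\rho_r$ in $G_{r-1}$ (these become the lower boundary attaching maps $\alpha_i$), and let $S$ be the resulting surface with $\partial_0 S$ the curve carrying $\rho_r$. The map $f$ restricted to $G_r$ is then homotoped to a map $h$ on the resulting $2$-complex $Y = S \sqcup G_{r-1}/\!\sim$; because $f$ preserves the filtration, the legal/illegal turn structure, and fixes $\rho_r$, this $h$ restricts on $S$ to a map whose mapping class is the one determined by the transition matrix $M_r$, which is irreducible with Perron--Frobenius eigenvalue $\lambda_r > 1$, hence infinite order; irreducibility of $[g]$ follows because $H_r$ is a single irreducible stratum so there is no invariant subsurface system. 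Finally one checks the compatibility conditions (5a) and (5b), which are just the statements that the thickening and the deformation retract $d\from Y\to G_r$ are compatible with $f$ and the attaching map by construction.

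The last step — "furthermore each component of $G_{r-1}$ is non-contractible" — I would handle separately: if some component $C$ of $G_{r-1}$ were contractible (a tree), then the attaching maps $\alpha_i$ landing in $C$ would be null-homotopic, so the corresponding lower boundary curves could be capped off or slid off, and the resulting Nielsen path $\rho_r$ would be freely homotopic into $H_r$ alone; but then $\rho_r$ would have to be a power of a single closed edge path in an EG stratum, and an EG stratum contains no invariant circuit of height $r$ (it would force $\lambda_r = 1$), a contradiction. Alternatively this follows from the requirement in the model that the embedding $G_r \hookrightarrow Y$ be a deformation retract together with the fact that $\rho_r$ meets $G_{r-1}$ essentially.

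\textbf{Main obstacle.} The hard part will be the combinatorial analysis showing $\rho_r$ crosses each edge of $H_r$ exactly twice and, relatedly, that the induced map on the thickened surface $S$ really does have infinite-order irreducible mapping class — i.e. that the "abstract" surface we build is forced to carry a pseudo-Anosov rather than something degenerate. This requires carefully invoking bounded cancellation, the structure of legal paths in EG strata, and the correspondence between the transition matrix $M_r$ and the action on $H_1(S)$; getting the edge-counting exactly right (twice, not just "an even number of times") is where the real work lies, and it is precisely the content of Bestvina--Handel's original treatment of geometric strata that Handel and Mosher are packaging here.
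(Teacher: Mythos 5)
The paper does not prove this statement: it is quoted verbatim as \cite{HandelMosher}*{Fact I.2.3} in a \texttt{fact} environment with no proof, so there is no in-paper argument to compare your sketch against. The result itself goes back to Bestvina--Handel's analysis of geometric (relative) train tracks, packaged by Handel and Mosher for CTs.

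Your outline has the right logical skeleton: the implication from geometric to ``closed height-$r$ iNp'' is immediate from item~(4) of the weak geometric model, (3)~$\Rightarrow$~(2) is trivial, and (2)~$\Rightarrow$~(1) is the real content. But there are two places where the sketch either defers or misplaces the work.

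First, the crux you identify — that a closed height-$r$ iNp $\rho_r = \alpha\beta$ with $\alpha,\beta$ legal must cross each edge of $H_r$ exactly twice, once in $\alpha$ and once in $\overline{\beta}$ — is asserted, not argued. Nothing in the bare structure of an iNp in an EG stratum forces a legal leg to traverse each edge of $H_r$ exactly once: legal paths of height $r$ can in principle repeat or omit edges. What actually forces the twice-count is a global Euler-characteristic / valence count on the train track graph combined with the fact that the iNp is closed (Bestvina--Handel's original argument), and you would need to carry this out rather than cite it by name. Since this lemma is what distinguishes the geometric case from the non-geometric case, a proposal that flags it as the ``main obstacle'' is really a proposal that hasn't yet started.

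Second, your argument for ``each component of $G_{r-1}$ is non-contractible'' is not the right mechanism. You reason about null-homotopic attaching maps and a hypothetical circuit trapped in $H_r$, but the relevant CT structure is sharper: in a CT, contractible components of filtration elements are exactly (unions of) zero strata, and zero strata are enveloped by EG strata. Thus a contractible component of $G_{r-1}$ lying just below a geometric $H_r$ would force $H_r$ to envelop a zero stratum, contradicting \Cref{no-geometric-enveloped-zero-strata} (Feighn--Handel, Lemma~4.24) together with \Cref{moving-up-through-the-filtration}. That is the clean route, and it also explains why the non-contractibility assertion belongs in this fact at all: it is a consequence of the ``no enveloped zero strata'' property of geometric EG strata, not a byproduct of the attaching-map construction.

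As a side remark, the implication (1)~$\Rightarrow$~(3) also needs an actual argument — it does not follow directly from item~(4) of the model definition. One must show that $H_r$ (together with segments joining it to the lower boundaries) is a spine of $S$, so that $\partial_0 S$ traces out the boundary of a ribbon neighborhood and therefore crosses each $H_r$-edge exactly twice. This is straightforward once the embedding $G_r \hookrightarrow Y$ and the deformation retraction $d$ are unpacked, but it is a step, not a triviality.
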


Combining this fact with a lemma of Feighn and Handel gives two useful
structural results about geometric EG strata.

\begin{lemma}[\cite{FH:RecogThm}*{Lemma 4.24}]\label{maximal-subpaths-geom-eg}
Suppose $f:  G\to G$ is a CT and $H_r$ is a geometric EG stratum. If $E$ is an
edge of $H_r$ then each maximal subpath of $f(E)$ in $G_{r-1}$ is a Nielsen
path.
\end{lemma}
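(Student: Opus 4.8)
The plan is to reduce the lemma to the structure of the indivisible Nielsen path that geometricity provides. By \Cref{fact:nielsen-path-geometric} there is a closed, height-$r$ indivisible Nielsen path $\rho_r$ crossing each edge of $H_r$ exactly twice; writing it cyclically,
\[
\rho_r \;=\; E_1\,w_1\,E_2\,w_2\cdots E_{2q}\,w_{2q},
\]
where $q$ is the number of edges of $H_r$, each $E_i$ is an oriented edge of $H_r$ (each unoriented edge occurring exactly twice), and each $w_i$ is the possibly trivial maximal subpath of $\rho_r$ lying in $G_{r-1}$. The lemma then follows from two claims: (i) every $w_i$ is a Nielsen path, and (ii) every maximal $G_{r-1}$-subpath of $f(E)$, for any edge $E$ of $H_r$, equals one of the $w_i$.

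For (i) I would exploit that $f_\#(\rho_r)=\rho_r$ together with the relative-train-track behaviour of $f$ on $H_r$. An indivisible Nielsen path has a single illegal turn, and for a height-$r$ path this turn is between two $H_r$-edges; moreover $f$ restricted to $H_r$ is a train-track map, so tightening $f(\rho_r)$ creates cancellation only at that one turn and nowhere among the connecting slots. Hence $f_\#$ carries the cyclic word $E_1\cdots E_{2q}$ to itself via a bijection of its $2q$ occurrences, and carries each $w_i$ to the connecting path occupying the matching slot --- i.e.\ $f_\#$ permutes the finite set $\{w_1,\dots,w_{2q}\}$. Each $w_i$ is therefore a periodic Nielsen path, and since a CT represents a rotationless outer automorphism, each $w_i$ is a Nielsen path.

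For (ii) I would pass to the Handel--Mosher geometric model $Y = S\cup_{\{\alpha_j\}}G_{r-1}$ of $H_r$, with pseudo-Anosov $g\from S\to S$, homotopy equivalence $h\from Y\to Y$, and deformation retraction $d\from Y\to G_r$ for which $d|_{\partial_0 S}$ parameterizes $\rho_r$, satisfying $f|_{G_r}\circ d\simeq d\circ h$ and $j\circ g\simeq h\circ j$. Realizing the edges of $H_r$ as disjoint arcs that cut $S$ into disks, the edge-path $f(E)$ in $G_r$ is (homotopic to) $d$ applied to $g(E)$; since $f$ does not fold the $H_r$-edges, those edges survive the retraction, and the maximal $G_{r-1}$-subpaths of $f(E)$ arise precisely from the excursions of $g(E)$ off the spine $S\cap G_r$, i.e.\ from $g(E)$ running near $\partial S$. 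Because $d(\partial_0 S)=\rho_r$ and the exactly-twice-crossing clause of \Cref{fact:nielsen-path-geometric} makes $\{w_1,\dots,w_{2q}\}$ the complete list of such boundary-excursion paths, each maximal $G_{r-1}$-subpath of $f(E)$ is one of the $w_i$. Equivalently, one can route (ii) through the attracting lamination $\Lambda_r$ of $H_r$: the connecting paths appearing in the $f$-images of $H_r$-edges are exactly the connecting paths of a generic leaf of $\Lambda_r$, and for a geometric stratum these are precisely the $w_i$.

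The main obstacle is (ii): making rigorous that the boundary excursions of $g(E)$ correspond \emph{literally} to the $w_i$ --- rather than merely to paths freely homotopic to subpaths of $\rho_r$ --- requires careful bookkeeping of the arcs $S\cap G_r$, of how they cut $S$ into disks, of which excursions run along the upper boundary versus the gluing curves $\alpha_j$, and of basepoints and orientations when translating between the surface picture and edge-paths in $G$. One must also check that $f(E)$ begins and ends with edges of $H_r$, so that each maximal $G_{r-1}$-subpath of $f(E)$ is internal and genuinely fills a connecting slot rather than a fragment of one. Step (i), by contrast, I expect to follow directly from the no-cancellation structure of height-$r$ indivisible Nielsen paths and rotationlessness.
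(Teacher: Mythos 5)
This lemma is quoted from Feighn--Handel (Lemma~4.24 of the Recognition Theorem paper) and is not proved in the present article, so there is no in-paper argument to compare against; I am assessing your proposal on its own terms.

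Your reduction to claims (i) and (ii) is a reasonable organizing idea, but both steps, not only (ii), have genuine gaps. For (i), the assertion that tightening $f(\rho_r)$ produces cancellation only at the unique illegal turn ``and nowhere among the connecting slots'' is not automatic: a CT makes no promise that $f$ is an immersion on $G_{r-1}$, so the slots $f(w_i)$ may themselves require tightening, and more importantly, even after writing $\rho_r=\alpha\bar\beta$ in standard iNp form with $f_\#(\alpha)=\alpha\cdot\tau$, the reduced image $[f(w_i)]$ lands at a slot position governed by the number of $H_r$-edges in $f(a_1)\cdots f(a_i)$, which will typically sit inside $\tau$ rather than inside $\alpha$. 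So you cannot conclude directly that $f_\#$ restricts to a permutation of $\{w_1,\dots,w_{2q}\}$; you would need to track orbits of slot indices through the growing tail $\tau\,f(\tau)\,f^2(\tau)\cdots$, and the conclusion that each $w_i$ is $f_\#$-periodic requires genuine input from the structure theory of iNps for EG strata (not merely ``no cancellation plus rotationlessness'').

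For (ii), you correctly flag the difficulty, but the issue is more than bookkeeping. The $G_{r-1}$-excursions of $g(E)$ (equivalently, of a generic leaf of $\Lambda_r$) are the places where the arc runs along $\partial S$; but $\partial S$ has an upper component $\partial_0 S$, which $d$ carries to the height-$r$ path $\rho_r$, and lower components $\partial_j S$, which are glued to $G_{r-1}$ by the attaching loops $\alpha_j$. Your argument treats only the $\partial_0 S$ excursions. An excursion along a lower boundary retracts to a subpath of $\alpha_j(\partial_j S)\subset G_{r-1}$, and it is not immediate that such a subpath equals some $w_i$; in fact to make the surface picture close up you also need to know that the arcs cut $S$ so that every lower-boundary segment between consecutive arc endpoints is visited by $\partial_0 S$, and that needs an argument. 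A correct version of (ii) must therefore either prove that claim, or give up the literal identification with the $w_i$ and instead show directly that the lower-boundary segments are Nielsen (which in turn relies on the attaching loops $\alpha_j$ being Nielsen circuits --- a separate consequence of CT structure). As written, step (ii) would not survive scrutiny, and step (i) is less routine than you suggest.
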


\begin{lemma}[\cite{FH:RecogThm}*{Lemma 4.24}]\label{no-geometric-enveloped-zero-strata}
Suppose $f:  G\to G$ is a CT and $H_r$ is a geometric EG stratum. Then $H_r$
does not envelop a zero stratum.
\end{lemma}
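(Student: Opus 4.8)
The final statement to prove is Lemma 3.7 (labeled \texttt{no-geometric-enveloped-zero-strata}): if $f\colon G\to G$ is a CT and $H_r$ is a geometric EG stratum, then $H_r$ does not envelop a zero stratum. The plan is to derive a contradiction from the existence of a zero stratum $H_j$ enveloped by $H_r$, using the structural rigidity of the height-$r$ indivisible Nielsen path $\rho_r$ supplied by \Cref{fact:nielsen-path-geometric} together with \Cref{maximal-subpaths-geom-eg}. The key tension is that a zero stratum, by definition, consists of edges that appear only inside $f_\#$-images of higher edges (they are "taken" paths), hence are genuinely transient and must show up somewhere in the iterated images of $H_r$-edges; but geometricity forces those images to decompose into $H_r$-edges and Nielsen paths in $G_{r-1}$, leaving no room for the zero-stratum edges to live.

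\textbf{Main steps.} First I would unpack the definition of "$H_r$ envelops a zero stratum": it means there is a zero stratum $H_j$ with $r-1 < j$ whose edges are contained in $G_{r-1}$ after taking the relevant core filtration level (using the "moving up through the filtration" picture, \Cref{moving-up-through-the-filtration}, which inserts zero strata between the non-core level and the EG level $H_s=H_r$), so that every edge of $H_j$ is crossed by some $f_\#^k(E)$ for an edge $E$ of $H_r$. Second, I would invoke \Cref{maximal-subpaths-geom-eg}: since $H_r$ is geometric, for each edge $E$ of $H_r$ every maximal subpath of $f(E)$ lying in $G_{r-1}$ is a \emph{Nielsen path}. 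Third, I would iterate: because $f_\#$ commutes with taking Nielsen paths (a Nielsen path $\mu$ satisfies $f_\#(\mu)=\mu$), the maximal subpaths of $f_\#^k(E)$ in $G_{r-1}$ are again exactly these same height-$<r$ Nielsen paths, for every $k\ge 1$. Fourth — the crux — I would observe that an edge of the zero stratum $H_j$ can only appear in $G$ as part of a "taken connecting path," i.e. inside such a maximal $G_{r-1}$-subpath of some $f_\#^k(E)$; but we have just shown each such subpath is a fixed Nielsen path of height $<r$. A Nielsen path, being $f_\#$-invariant and hence "already completely split" with splitting units that are themselves Nielsen/exceptional/edge pieces of lower strata, cannot contain an edge of a zero stratum — zero-stratum edges are non-fixed and have trivial transition behavior, so they are never splitting units of a Nielsen path and cannot be subpaths of one. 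This contradicts the enveloping hypothesis, which requires the zero-stratum edge to be crossed by some $f_\#^k(H_r$-edge$)$.

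\textbf{Where the real work is.} The main obstacle is making the fourth step airtight: precisely, showing that a height-$<r$ indivisible Nielsen path cannot cross an edge of a zero stratum that is enveloped by $H_r$. One needs to trace through Feighn–Handel's definitions to confirm that (i) a zero-stratum edge appears in $G$ \emph{only} as a subpath of taken paths generated by iterating irreducible strata, and (ii) once a maximal $G_{r-1}$-subpath of an iterate stabilizes to a Nielsen path, no \emph{new} subpaths — in particular no zero-stratum content — can materialize at higher iterates, because $f_\#(\text{Nielsen})=\text{Nielsen}$ and complete splittings are preserved under $f_\#$. Here I would lean on the fact that CTs are completely split (the \textbf{(Completely Split)} axiom) so that $f(E)$ for $E\in H_r$ has a genuine splitting into edges of $H_r$ and completely split subpaths of $G_{r-1}$; geometricity via \Cref{maximal-subpaths-geom-eg} upgrades "completely split subpath" to "Nielsen path," and then the splitting is $f_\#$-invariant, so iteration produces nothing new. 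A secondary technical point is handling the filtration bookkeeping from \Cref{moving-up-through-the-filtration} — one should be careful that "enveloped" refers to zero strata sitting at filtration levels $u<j<s$ with $H_s=H_r$, all of whose edges lie in the non-core graph just below $G_r$; I would state this explicitly at the outset so the later argument applies cleanly. Modulo these definitional verifications, the contradiction is immediate, so I expect the write-up to be short, with the bulk of the care going into citing the correct clauses of the CT definition.
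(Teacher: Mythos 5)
The paper does not prove this statement: it is quoted directly from Feighn and Handel~\cite{FH:RecogThm}*{Lemma 4.24}, bundled there with \Cref{maximal-subpaths-geom-eg}, so there is no in-paper argument to compare against.

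Your reconstruction has the right skeleton --- reduce to the claim that the enveloped zero-stratum edge would have to lie inside one of the Nielsen paths produced by \Cref{maximal-subpaths-geom-eg}, and then argue that this is impossible --- but the impossibility is precisely where the proof stalls. You justify it by saying ``zero-stratum edges are non-fixed and have trivial transition behavior, so they are never splitting units of a Nielsen path and cannot be subpaths of one.'' That is not an argument, and it sidesteps the point that ``a maximal taken connecting path in a zero stratum'' \emph{is} one of the four permitted kinds of splitting unit in a complete splitting; you need a reason specific to Nielsen paths, not a blanket statement about what can or cannot appear in a splitting.

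The missing step is a height argument, and it uses \Cref{moving-up-through-the-filtration} in a substantive way rather than for bookkeeping. Keeping the paper's $r$ as the index of the geometric EG stratum, that lemma says every stratum $H_m$ with $u < m < r$ is a zero stratum (with $u$ as defined there). If $\nu$ is a maximal $G_{r-1}$-subpath of $f(E)$ crossing an enveloped zero-stratum edge $e\in H_j$, then $\nu$ is a Nielsen path by \Cref{maximal-subpaths-geom-eg}, and its height $m$ satisfies $u < j \le m < r$, so $H_m$ is itself a zero stratum. But for a zero stratum the transition block vanishes and the filtration is $f$-invariant, so $f(G_m)\subseteq G_{m-1}$; hence $f_\#(\nu)$ has height strictly less than $m$, contradicting $f_\#(\nu)=\nu$. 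This is the concrete reason the Nielsen paths of \Cref{maximal-subpaths-geom-eg} in fact lie in $G_u$. With that in hand, your iteration step also becomes clean: since $f(G_u)\subseteq G_u$ and $f(E)$ splits into $H_r$-edges and paths in $G_u$, so does $f_\#^k(E)$ for every $k\ge 1$, and therefore no edge of $f_\#^k(E)$ ever lies in a stratum $H_m$ with $u<m<r$, so $H_r$ envelops no zero stratum. As a minor point, your phrasing ``a zero stratum $H_j$ with $r-1<j$'' misstates the range; enveloped zero strata satisfy $u<j<r$, with all strata strictly below $r$.
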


A weak geometric model $Y$ for a stratum $H_r$ only captures a CT $f:  G\to
G$ up to height $r$ of the filtration, but encapsulates most of the technical
notation and results. To connect the weak geometric model to the full
automorphism we recall the definition of a geometric model for a stratum.

\begin{definition}[\cite{HandelMosher}*{Definition I.2.4}]
A \emph{geometric model} $X$ for a geometric EG stratum $H_r$ of a CT $f: 
G\to G$ is the quotient of $Y \sqcup G$ obtained by identifying the embedded
copies of $G_r$ in each, where $Y$ is some weak geometric model for $H_r$, a
deformation retraction $d :  X \to G$ extending $d:  Y \to G$ and a
homotopy equivalence $h:  X \to X$ extending $h:  Y\to Y$ such that
$d\circ h$ is homotopic to $f\circ d$.
\end{definition}

The extra information contained in a geometric model but not a weak geometric
model needed for this work is encapsulated in the \emph{peripheral splitting}
associated to $X$. This is a $\mathcal{Z}$-splitting of $\F$ obtained from a
decomposition of the geometric model $X$ as a graph of
spaces~\cite{HandelMosher}*{Definition I.2.10}.  Vertex spaces
are the surface $S$ and components of the ``complementary subgraph''
$(G-H_r)\cup\partial_0S$; edge spaces are annuli (coming from boundary
components of $S$) and intervals (coming from attaching points of the $G-G_r$
to interior points of $H_r$). This splitting need not be minimal.
Using this splitting, one can see that for a geometric model $X$ of an EG
stratum of $f:  G\to G$, the composition $dj :  S\to G$ is
$\pi_1$-injective~\cite{HandelMosher}*{Lemma I.2.5}.

\subsection{Geometric models as invariants}
We need a few facts about the invariance of geometric models for EG strata that
are suggested by, but not proved in, Chapter I.2 of Handel and Mosher's
monograph~\cite{HandelMosher}. In that chapter, Handel and Mosher prove
the following structure proposition for the surface laminations of a geometric
stratum.

\begin{proposition}[\cite{HandelMosher}*{Proposition I.2.15}]\label{prop:geometric-model-laminations}
Given any finite type surface $S$ and any pseudo-Anosov homeomorphism
$g :  S\to S$ with stable and unstable lamination pair $\Lambda^{s}, \Lambda^{u} \subset  B(\pi_1 S)$, we have:
\begin{itemize}
\item $\Lambda^s/\Lambda^{u}$ is an attracting/repelling lamination pair for
	$g_\ast\in\out(\pi_1S)$. 
\item $\Lambda^s$ and $ \Lambda^u$ each fill $\pi_1 S$.
\end{itemize}
Furthermore, suppose that $f :  G \to G$ is a CT representative for some $\phi \in \Out$, that $H_r \subset  G$ is an EG-geometric stratum of $f$ corresponding to a geometric lamination pair
 $\Lambda^{\pm}$ of  $\phi$, and that the given surface $S$ is the surface
 associated to a weak geometric model for $f$ and $H_{r}$. If in addition $g$
	is the associated pseudo-Anosov homeomorphism of the weak geometric model then we have:
\begin{itemize}
\item The map $\widehat{dj} :  \mathcal{B}(\pi_1\Sigma) \to \mathcal{B}(\F)$ takes $\Lambda^u,\Lambda^s$ homeomorphically to $\Lambda^{+}, \Lambda^{-}$ respectively.
\item Every leaf of $\Lambda^{+}$ is dense in $\Lambda^{-}$, and similarly for $\Lambda^{-}$.
\item All leaves of $\Lambda^{+}$ and  $\Lambda^{-}$ are generic.
\item $\mathcal{F}_{supp}(\Lambda^{+}) = \mathcal{F}_{supp}(\Lambda^{-}) =
\mathcal{F}_{supp}([\pi_1 S])$.
\end{itemize}
\end{proposition}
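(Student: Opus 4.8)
The plan is to prove the two ``general'' assertions about pseudo-Anosov homeomorphisms first, and then to bootstrap them across the identification $\widehat{dj}$ to deduce the ``furthermore'' assertions about the CT stratum. For the first assertion, that $\Lambda^s, \Lambda^u$ form an attracting/repelling lamination pair for $g_\ast \in \out(\pi_1 S)$: I would invoke the standard Nielsen--Thurston picture of a pseudo-Anosov. The stable geodesic lamination $\Lambda^s$ of $g$ is $g$-invariant, minimal (in the surface sense), and the transverse measure is uniquely ergodic; translated into the language of $\mathcal{B}(\pi_1 S)$ this means $\Lambda^s$ is a closed $g_\ast$-invariant set of lines with a dense leaf, so it is a bona fide lamination in our sense. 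That it is \emph{attracting} for $g_\ast^{-1}$ (equivalently $\Lambda^u$ attracting for $g_\ast$) is exactly the source/sink dynamics of pseudo-Anosov maps on $\mathcal{PML}$ or, more concretely, the expansion/contraction of the two transverse measured foliations — this is where I would cite the classical structure theory (e.g.\ \cite{FLP, CB88}). For the filling claim, I would argue that if $\Lambda^u$ were carried by a proper free factor system of $\pi_1 S$, that free factor system would be $g_\ast$-periodic (being canonically associated to $\Lambda^u$), contradicting the fact that a pseudo-Anosov mapping class fixes no conjugacy class of proper essential subsurface and hence no proper free factor system of the surface group; alternatively one uses that the unstable foliation fills the surface by definition of pseudo-Anosov and that a filling measured foliation's dual lamination cannot be carried by a proper free factor of the surface group.

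For the ``furthermore'' half I would use the compatibility data built into the weak geometric model. By condition (5b), $j\circ g \simeq h\circ j$, and by (5a) together with the deformation retraction $d$, the map $f|G_r$ on $\pi_1 G_r$ corresponds through $d$ to $h$; chasing these homotopies shows that the $\pi_1$-injective map $dj\from S\to G$ intertwines $g_\ast$ with (a representative of) $\phi$ restricted to the free factor $[\pi_1 S]$. Consequently $\widehat{dj}\from \mathcal{B}(\pi_1 S)\to\mathcal{B}(\F)$ is $(g_\ast,\phi_\#)$-equivariant on its image. Since $\Lambda^u$ is $g_\ast$-attracting and fills $\pi_1 S$, its image $\widehat{dj}(\Lambda^u)$ is a $\phi_\#$-attracting lamination filling $[\pi_1 S]$, and by the established bijection between $\mathcal{L}(\phi)$ and EG strata of a CT — with the stratum $H_r$ corresponding to the lamination pair $\Lambda^\pm$ whose support lives in $\mathcal{F}_r$ but not $\mathcal{F}_{r-1}$ — this image must be exactly $\Lambda^+$ (and likewise $\widehat{dj}(\Lambda^s)=\Lambda^-$). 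That $\widehat{dj}$ is a homeomorphism onto $\Lambda^+$ follows because $dj$ is $\pi_1$-injective, so it induces an embedding $\mathcal{B}(\pi_1 S)\hookrightarrow\mathcal{B}(\F)$, and a continuous bijection between the compact sets $\Lambda^u$ and $\Lambda^+$ is a homeomorphism. The genericity and mutual density statements then transport directly: every leaf of $\Lambda^u$ is dense in $\Lambda^s$ and every leaf generic (classical for pseudo-Anosovs, since the stable/unstable laminations of a pseudo-Anosov are minimal), so the same holds for $\Lambda^+,\Lambda^-$ under the homeomorphism. Finally $\mathcal{F}_{supp}(\Lambda^+)=\widehat{dj}_*(\mathcal{F}_{supp}(\Lambda^u))=\widehat{dj}_*([\pi_1 S])=[\pi_1 S]=\mathcal{F}_{supp}([\pi_1 S])$, using $\pi_1$-injectivity of $dj$ to see that the image free factor support of the whole surface group is the conjugacy class of its image, and equality with $\mathcal{F}_{supp}(\Lambda^-)$ follows symmetrically.

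I expect the main obstacle to be the careful verification that $\widehat{dj}$ really does send $\Lambda^u$ \emph{onto} $\Lambda^+$ rather than merely into some attracting lamination: one must rule out that the image is a proper sublamination or sits inside a larger lamination of $\phi$. This is handled by the filling statement combined with the stratum-to-lamination dictionary (\cite{HandelMosher}*{Fact I.1.55}) — because $\Lambda^u$ fills $\pi_1 S$, its image fills $[\pi_1 S]$, and there is a unique lamination of $\phi$ with that free factor support at the correct filtration level, forcing equality. The secondary technical nuisance is bookkeeping the base points and the homotopies in conditions (5a)--(5b) to get an honest equivariance statement for $\widehat{dj}$ rather than just a statement up to conjugacy; this is routine but must be done with care since $\phi$ is only defined up to inner automorphism and $dj$ only up to homotopy.
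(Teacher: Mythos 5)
This proposition is quoted directly from Handel and Mosher's monograph (Proposition I.2.15) with no proof in the present paper; there is therefore no ``paper's own proof'' to compare against, and I can only evaluate your sketch on its own. The overall strategy — establish the two general pseudo-Anosov facts via Nielsen--Thurston theory, show the $(g_\ast,\phi_\#)$-equivariance of $\widehat{dj}$ from conditions (5a)--(5b) of the weak geometric model, and then push everything across the embedding — is the right shape and is what one would expect Handel and Mosher to do.

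Two points in the sketch are materially underjustified. First, you dispatch ``every leaf of $\Lambda^+$ is dense in $\Lambda^-$'' by saying it follows from minimality of the stable/unstable laminations; but minimality of $\Lambda^u$ gives density of a leaf of $\Lambda^u$ inside $\Lambda^u$, not inside $\Lambda^s$. The cross-density statement is a genuine fact about pseudo-Anosov laminations, but it requires a separate argument: in the weak topology on $\mathcal{B}(\pi_1 S)$ a line is close to another when they share a long common subpath in a fixed marked graph, and one must use that $\Lambda^s$ and $\Lambda^u$ are both filling and mutually transverse to see that long leaf segments of one are approximated by leaf segments of the other. This is true but not an instance of minimality. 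Second, the chain $\mathcal{F}_{supp}(\Lambda^+)=\widehat{dj}_*(\mathcal{F}_{supp}(\Lambda^u))$ is asserted without justification, and free factor support does not in general commute with pushing a lamination forward along a subgroup inclusion. The correct route is to use the filling property directly: any free factor of $\F$ that carries $\Lambda^+$ must meet $\pi_1 S$ (via $dj$) in a subgroup carrying $\Lambda^u$, hence by filling contains all of $\pi_1 S$; conversely $\mathcal{F}_{supp}([\pi_1 S])$ carries $\Lambda^+$ since $[\pi_1 S]$ does. This gives the stated equality of free factor supports without the unjustified commutation. The rest of the sketch — $\pi_1$-injectivity of $dj$ giving an embedding of line spaces, compactness giving the homeomorphism, and the stratum-to-lamination dictionary pinning down the image as exactly $\Lambda^\pm$ — is sound, though the last step deserves the extra care you yourself flag at the end.
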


From this structural result on laminations, they conclude that geometricity is
a property of a lamination pair $\Lambda^{\pm}\in \mathcal{L}^{\pm}(\phi)$
associated to an outer automorphism, independent of the geometric model or CT
used.

\begin{proposition}[\cite{HandelMosher}*{Proposition
I.2.18}]\label{prop:lamination-geometricity-invariant}
If $\Lambda^{\pm} \in \mathcal{L}^{\pm}(\phi)$ is an invariant lamination pair
associated to $\phi\in\Out$ and there is some rotationless iterate $\phi^k$ and
CT $f:  G\to G$ for $\phi^k$ such that $\Lambda^{\pm}$ is associated to a
geometric EG stratum of $f$, then for any CT $f':  G'\to G'$ of any
rotationless iterate $\phi^{m}$, the EG stratum of $f'$ associated to
$\Lambda^{\pm}$ is geometric.
\end{proposition}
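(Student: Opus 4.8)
The plan is to use \Cref{prop:geometric-model-laminations} to repackage the statement ``$H_r$ is a geometric stratum of $f$'' as a statement about $\phi$ and $\Lambda^{\pm}$ alone, and then to recover a geometric EG stratum from that statement inside an arbitrary CT of an arbitrary rotationless iterate, with \Cref{fact:nielsen-path-geometric} serving as the bridge back to train tracks.

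\emph{Step 1: extract the invariant data.} Feeding the weak geometric model for $(f, H_r)$ into \Cref{prop:geometric-model-laminations} produces a finite-type surface $S$, a pseudo-Anosov $g\from S\to S$, and a $\pi_1$-injective map $dj\from S\to G$ whose extension $\widehat{dj}$ carries the unstable and stable laminations of $g$ homeomorphically onto $\Lambda^{+}$ and $\Lambda^{-}$, with $[\pi_1 S] = \mathcal F_{supp}(\Lambda^{+}) = \mathcal F_{supp}(\Lambda^{-}) =: \mathcal F$. Because $\Lambda^{\pm}$ is $\phi$-invariant, $\mathcal F$ is a $\phi$-invariant free factor system, the laminations $\Lambda^{+},\Lambda^{-}$ fill $\mathcal F$, and the conjugacy class of each boundary curve of $S$ (in particular $[\partial_0 S] = [\rho_r]$, by condition (4) of a weak geometric model) is $\phi$-periodic, hence $\phi$-invariant; none of this references the CT $f$. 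The content to be transported is: \emph{the action of $\phi$ on $\mathcal F$ is realized, up to the peripheral structure, by the pseudo-Anosov $g$}, so that $\Lambda^{+}$ and $\Lambda^{-}$ are geometric laminations jointly dual to a pseudo-Anosov pair on $S$.

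\emph{Step 2: recover a geometric stratum in $f'$.} Let $f'\from G'\to G'$ be a CT for a rotationless $\phi^{m}$ and let $H'_{s}$ be its EG stratum associated to $\Lambda^{\pm}$, so $\mathcal F \sqsubset \mathcal F(G'_{s})$ while $\mathcal F \not\sqsubset \mathcal F(G'_{s-1})$. By \Cref{moving-up-through-the-filtration} the passage from the core filtration element just below $H'_{s}$ to $G'_{s}$ is realized by $H'_{s}$ together with at most some NEG edges (and, a priori, zero strata enveloped by $H'_{s}$). Restricting $f'$ to this window gives a relative train track map realizing the action of $\phi^{m}$ on $\mathcal F$ relative to the lower structure; by Step 1 this action is, up to peripheral data, a power of the pseudo-Anosov $g$, hence irreducible (relative to its peripheral curves) and geometric. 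The Bestvina--Handel analysis of irreducible train tracks with geometric associated lamination, applied in this window, then yields a closed indivisible Nielsen path of top height, i.e.\ of height exactly $s$; \Cref{fact:nielsen-path-geometric} upgrades this to a weak geometric model for $H'_{s}$, and as a byproduct \Cref{no-geometric-enveloped-zero-strata} and \Cref{maximal-subpaths-geom-eg} confirm there were no enveloped zero strata after all.

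\emph{The main obstacle.} I expect Step 2 to be where the work lies: one must make precise that ``pseudo-Anosov behaviour on the invariant free factor system $\mathcal F_{supp}(\Lambda^{\pm})$'' survives both the change of CT and the change of power, and that in the \emph{given} $f'$ it produces an indivisible Nielsen path at exactly height $s$, rather than one that drops to a lower stratum or decomposes with trivial height-$s$ part. Two points require care: first, that the pseudo-Anosov pair on $S$ is coherent across powers (a relative analogue of Guirardel's rigidity for pseudo-Anosov limit trees), so that passing between $\phi^{k}$, $\phi^{m}$, and a common rotationless power costs no geometricity; and second, the bookkeeping of the complete splitting of the candidate Nielsen path against any intervening NEG strata, including linear and exceptional paths. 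The crucial leverage is the equality $\mathcal F_{supp}(\Lambda^{+}) = \mathcal F_{supp}(\Lambda^{-})$ from \Cref{prop:geometric-model-laminations}, which pins the Nielsen data precisely to the stratum of $\Lambda^{\pm}$ and rules out the ``collapsing'' that obstructs geometricity in the non-geometric case.
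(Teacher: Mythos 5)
The paper does not prove this statement: it is quoted verbatim from Handel and Mosher's monograph (\cite{HandelMosher}*{Proposition I.2.18}) and used as a black box, so there is no ``paper's own proof'' against which to compare. Evaluating your proposal on its own terms, Step~1 is a sensible extraction of the CT-independent data from \Cref{prop:geometric-model-laminations}, and you are right that the equality $\mathcal{F}_{supp}(\Lambda^+) = \mathcal{F}_{supp}(\Lambda^-)$ and the $\phi$-invariance of the boundary classes are the invariants one should carry across.

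Step~2, however, has a real gap which you yourself flag but do not close. The chain ``restrict $f'$ to the window $\Rightarrow$ this action is, up to peripheral data, a power of $g$ $\Rightarrow$ hence geometric'' does not by itself produce the object that \Cref{fact:nielsen-path-geometric} requires, namely a \emph{closed indivisible Nielsen path of height exactly $s$ in $G'$}. Knowing that the lamination pair is dual to a pseudo-Anosov on a surface carried by $\mathcal{F}$ is a statement about the abstract dynamics; translating it into the combinatorics of $f'$ is precisely the point of the proposition, and you cannot invoke ``geometric'' as a property of the restriction before you have recovered the Nielsen path. The appeal to a ``Bestvina--Handel analysis of irreducible train tracks with geometric associated lamination'' is not a citation to a usable lemma here: the result in \cite{BH95} is formulated for (fully) irreducible outer automorphisms of a free group with an irreducible train track map, not for a single EG stratum sitting over a lower filtration inside a CT, where the Nielsen path of height $s$ will in general cross $G'_{s-1}$ and one must control those crossings (this is where \Cref{maximal-subpaths-geom-eg} and \Cref{moving-up-through-the-filtration} would have to be used, not merely ``confirmed as a byproduct''). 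Similarly the invocation of ``a relative analogue of Guirardel's rigidity for pseudo-Anosov limit trees'' is a wished-for lemma, not an established one.

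A route that actually lands on the Nielsen path is already half-present in your Step~1 and is closer to what Handel--Mosher do: the surface group $[\pi_1 S]$ and its boundary conjugacy classes (including the upper boundary $[\rho_r]$) are invariants of the pair $(\phi,\Lambda^{\pm})$ readable off principal lifts (this is exactly the content that \Cref{lem:surfacegroupisaninvariant} and \Cref{lem:boundaryclassesareinvariant} in this paper later extract, building on \Cref{prop:lamination-geometricity-invariant} once it is in hand). Since $\phi^m$ is rotationless, the periodic class $[\rho_r]$ is actually $\phi^m$-fixed, hence represented by a closed Nielsen circuit in $G'$; because its free factor support together with $\Lambda^{\pm}$ forces it to have height exactly $s$, the height-$s$ piece of its complete splitting is a closed height-$s$ indivisible Nielsen path, and \Cref{fact:nielsen-path-geometric} closes the argument. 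Your proposal gestures at the right invariants but replaces this Nielsen-path bookkeeping with an assertion that ``pA behaviour survives,'' which is the conclusion rather than a step.
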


In this section we pick up where Handel and Mosher leave off, and expand upon
\Cref{prop:lamination-geometricity-invariant}
to conclude that ``for a weak geometric model of a geometric EG stratum of a
rotationless automorphism, the homeomorphism type of the surface, the
mapping class of the pseudo-Anosov, and the conjugacy class of the inclusion of
the surface
fundamental group under $dj$ are invariants of $\phi$''; they do not depend on
the CT representative.

\begin{lemma}\label{lem:surfacegroupisaninvariant}
  Suppose $\phi \in \Out$ is a rotationless outer automorphism and
  $\Lambda^{\pm}\in\mathcal{L}^{\pm}(\phi)$ is a geometric lamination pair. Suppose $f:  G\to
  G$ and $f':  G'\to G'$ are two CT representatives for $\phi$.
  Let $H_r$ be the EG stratum of $f$ corresponding to $\Lambda$ with weak
  geometric model $Y$, and $H_s'$ be the EG stratum of $f'$ corresponding to
  $\Lambda$ with weak geometric model $Y'$. 
  Let $[K]\leq \F$ be the conjugacy class of the fundamental group of the surface $S$
  associated to $Y$; that is, $[K]=[(d\circ j)_*(\pi_1S)]$.  Similarly, let
  $[K']$ be the conjugacy class of the fundamental group of the surface $S'$ associated to $Y'$.  Then
  $[K]=[K']$.
\end{lemma}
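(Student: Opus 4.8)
The statement asserts that the conjugacy class $[K] = [(d\circ j)_*(\pi_1 S)]$ depends only on $\phi$ and the chosen geometric lamination pair $\Lambda^\pm$, not on the CT representative. The natural strategy is to identify $[K]$ intrinsically in terms of data that is manifestly $\phi$-invariant, namely the free factor support $\mathcal{F}_{supp}(\Lambda^\pm)$. By \Cref{prop:geometric-model-laminations} we already know $\mathcal{F}_{supp}([\pi_1 S]) = \mathcal{F}_{supp}(\Lambda^+) = \mathcal{F}_{supp}(\Lambda^-)$, and since $\Lambda^\pm$ is invariant (not merely periodic), this free factor system is an invariant of $\phi$ and does not see the CT. So the crux is upgrading "same free factor support" to "same conjugacy class." This requires showing that $[K]$ is not just \emph{carried} by $\mathcal{F}_{supp}(\Lambda^\pm)$ but is, up to conjugacy, a \emph{component} of it — i.e., that $\mathcal{F}_{supp}([\pi_1 S]) = \{[K]\}$ as a single-element free factor system.

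First I would argue that $[K]$ is a free factor of $\F$. This is essentially the content of the peripheral splitting discussion: $dj \from S \to G$ is $\pi_1$-injective, and the peripheral splitting exhibits $\pi_1 S$ as a vertex group of a graph-of-spaces decomposition of $\F$ whose edge spaces are annuli and intervals. Crucially, the lower boundary curves of $S$ map to the complementary subgraph $G_{r-1}$, not into $H_r$; iterating this across the filtration and using that each component of $G_{r-1}$ is non-contractible (from \Cref{fact:nielsen-path-geometric}), one sees the peripheral splitting gives a free splitting of $\F$ relative to the peripheral curves, so in particular $[K]$ is (the conjugacy class of) a free factor. Then, because $\Lambda^+$ \emph{fills} $\pi_1 S$ (second bullet of \Cref{prop:geometric-model-laminations}) and $K$ is a free factor carrying $\Lambda^+$, minimality of the free factor support forces $\mathcal{F}_{supp}(\Lambda^+)$ to be refined by $\{[K]\}$; combined with $\mathcal{F}_{supp}(\Lambda^+) = \mathcal{F}_{supp}([K])$ and the fact that $K$ itself realizes $\mathcal{F}_{supp}([K])$, we get $\mathcal{F}_{supp}(\Lambda^+) = \{[K]\}$.

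Carrying this out for both $Y$ and $Y'$ yields $\{[K]\} = \mathcal{F}_{supp}(\Lambda^+) = \{[K']\}$, hence $[K] = [K']$, completing the proof.

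The main obstacle I anticipate is the claim that $[K]$ is a free factor — specifically, ruling out that the peripheral $\mathcal{Z}$-splitting is "non-minimal" in a way that lets $\pi_1 S$ sit as a proper subgroup of a free factor rather than being one. This is where the hypothesis that the EG stratum is geometric (not just that the lamination is surface-like) and the structure of the complementary subgraph do real work: one needs the lower-boundary attaching maps $\alpha_i$ to land in $G_{r-1}$ in a way compatible with the filtration so that the graph-of-spaces genuinely decomposes $\F$ as a free product relative to the boundary curves. I would lean on \Cref{no-geometric-enveloped-zero-strata} and \Cref{maximal-subpaths-geom-eg} to control the edges below $H_r$ and on the Shenitzer–Swarup consequence quoted in the preliminaries (a filling lamination is not carried by a vertex group of a cyclic splitting) to handle the edge spaces that are annuli. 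If a direct argument proves delicate, an alternative is to bypass the free-factor claim entirely: observe that $K$ carries $\Lambda^+$, that $\mathcal{F}_{supp}(\Lambda^+) = \mathcal{F}_{supp}([K])$ is a single free factor $[A]$ with $K \le A$, and that $K$ must equal $A$ because $\Lambda^+$ fills both and any proper free factor of $A$ containing a filling lamination of $A$ would contradict filling — this makes $[K] = [A] = \mathcal{F}_{supp}(\Lambda^+)$ directly, and the same for $[K']$.
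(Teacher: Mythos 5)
The central claim of your argument---that $[K]$ is (the conjugacy class of) a \emph{free factor} of $\F$---is false in general, and the proof collapses without it. The peripheral splitting of a geometric model is a $\mathcal{Z}$-splitting: the edge spaces coming from lower boundary components of $S$ are annuli, so the corresponding edge stabilizers are infinite cyclic, not trivial. Consequently $K = (dj)_*(\pi_1 S)$ is a vertex group of a \emph{cyclic} splitting, not a free splitting, and there is no reason for it to be a free factor. The paper's own \Cref{ex:SurfaceExample} illustrates this: there the surface piece of $H_6$ has lower boundary curves $\beta, \gamma$ which are essential non-peripheral curves in $\Sigma_{2,2}$, so $\pi_1 S$ sits in $\F_5$ as a vertex group of an essential cyclic splitting, not as a free factor. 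The fallback argument in your last sentence does not rescue this, because the step ``any proper free factor of $A$ containing a filling lamination of $A$ would contradict filling'' only produces a contradiction if $K$ is already known to be a free factor of $A$---which is precisely the claim at issue. If $K$ is merely a proper non-free-factor subgroup of $A$ carrying $\Lambda^+$, there is no contradiction with $\mathcal{F}_{supp}(\Lambda^+)=\{[A]\}$.

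The paper's proof circumvents this entirely and proceeds in two steps. First, using that $\Lambda^\pm$ are realized as the stable/unstable laminations of the pseudo-Anosov on both $S$ and $S'$, it shows $\Gamma = K\cap K'$ has finite index in each: otherwise the laminations would lift to an infinite-sheeted cover with compact core, contradicting that pseudo-Anosov laminations fill the surface. Second, it passes to the Bass-Serre tree $T$ of the peripheral $\mathcal{Z}$-splitting: $K$ stabilizes a vertex $v$, the $K'$-minimal subtree $T_{K'}$ must be finite (by the finite-index step) and contains $v$, and its pointwise fixator is a finite-index---hence rank $\ge 2$---free subgroup of $K'$ contained in every edge stabilizer; since edge stabilizers are cyclic, $T_{K'}$ has no edges, so $K'\le K$, and symmetry finishes. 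This is the move your argument is missing: rather than trying to realize $K$ as a free factor, one uses the cyclic splitting structure directly together with the rank obstruction for free groups of rank $\ge 2$ sitting inside cyclic edge groups.
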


\begin{proof}
  By \Cref{prop:geometric-model-laminations}, both $[K]$
  and $[K']$ carry $\Lambda^{\pm}$. Thus, for a representative $K\in [K]$ there
  is a subset $\widetilde{\Lambda}^{\pm}_K$ of 
  $\widetilde{\mathcal{B}}(K) \subseteq \widetilde{B}(\F)$ that is a lift of
  $\Lambda^{\pm}$. Let $K'\in [K']$ be the
  representative such that $\widetilde{\Lambda}^{\pm}_K\subseteq
  \widetilde{B}(K')\subseteq \widetilde{B}(\F)$. This choice exists since there
	a representative of $[K']$
	carries some lift of $\Lambda^\pm$ and all such lifts are $\F$
	conjugate.

  We first show that the intersection $\Gamma=K\cap K'$ has finite index in
  $K$ (and $K'$).  Indeed, for a contradiction suppose $[K: \Gamma]=\infty$ and consider the
  cover $S_\Gamma$ of $S$ corresponding to $\Gamma$.  Intersections of finitely
  generated subgroups of free groups are finitely generated, so $S_\Gamma$
  is an infinite sheeted cover with compact core.  The laminations
  $\Lambda^\pm$ are carried by both $K$ and $K'$, and therefore by $\Gamma$
  as well. Thus $\Lambda^\pm$ lift to $S_\Gamma$ and are therefore not filling.
	However, \Cref{prop:geometric-model-laminations} specifies that that $\Lambda^\pm$ are precisely the images of the
  stable and unstable laminations of the pseudo-Anosov model
  $g \colon S\to S$ under the map
  $\mathcal{B}(S)\hookrightarrow\mathcal{B}(\F)$. This is a contradiction: the stable and unstable
	laminations of any pseudo-Anosov homeomorphism are filling. Thus $\Gamma$ has
  finite index in $K$. By a symmetric argument $[K':\Gamma] < \infty$.

  It remains to show that $K'\leq K$; equality will follow by symmetry. Let $T$ be
  the $\F$-minimal subtree of the Bass-Serre tree coming from the peripheral
  splitting associated to the full geometric model $X$. The subgroup $K$
  is the stabilizer of a vertex $v$ in $T$, so it suffices to show
  that the $K'$-minimal subtree of $T$ is $v$.  Let $T_{K'}$ be the
  minimal subtree for $K'$.  If $T_{K'}$ had an infinite orbit of vertices, then
  $\Stab(v)\cap K'=K\cap K'=\Gamma$ would have infinite index in $K'$, which we
  have ruled out. So $T_{K'}$ has finitely many vertices and contains $v$. Since
  $T_{K'}$ is finite, the fixator $\Fix(T_{K'}) \le K'$ is finite-index in $K'$
  and contained in each edge stabilizer of $T_{K'}$. Recall now that $T$ is a
  subtree of the Bass-Serre tree of a $\mathcal{Z}$-splitting, hence all edge
  groups are trivial or cyclic. However, the group $K'$ is the fundamental
  group of a surface carrying a pseudo-Anosov homeomorphism, so is a free group
	of rank at least 2. Since $\Fix(T_{K'})$ is finite index in $K'$ it is
	also free of rank at least 2, and contained in each edge stabilizer of
	$T_{K'}$. The only way for this to be possible is if $T_{K'}$ contains
	no edges, that is $T_{K'}=v$. Hence $K'\leq K$. As equality follows by
	symmetry this completes the proof.
\end{proof}

\begin{lemma}\label{lem:boundaryclassesareinvariant}
  Suppose $f:  G\to G$ is a CT representative of a rotationless $\phi \in
  \Out$ and $\Lambda^{\pm}\in \mathcal{L}^\pm(\phi)$ is a geometric lamination
  pair. Let $H_r$ be the geometric EG stratum of $f$ associated to
  $\Lambda^{\pm}$ and $Y$ a geometric model for $H_r$.
  Let $[K]\leq \F$ be the fundamental group of the surface $S$
	associated to $Y$; that is, $[K]=[(d\circ j)_*(\pi_1S)]$.
  For any
  representative $K$ the set of $K$-conjugacy classes representing
  $\partial S$ is independent of the CT and depends only on $\phi$.
\end{lemma}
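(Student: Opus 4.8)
The plan is to piggyback on \Cref{lem:surfacegroupisaninvariant}, which already tells us that the conjugacy class $[K]$ of the surface group is an invariant of $\phi$, and upgrade this to the level of the peripheral structure. The key observation is that the boundary conjugacy classes of $S$ are not arbitrary — by \Cref{prop:geometric-model-laminations}, the stable and unstable laminations $\Lambda^s,\Lambda^u$ of the pseudo-Anosov $g$ are carried by $[K]$ and fill $\pi_1 S$, and by \Cref{prelim-principal-region-structure-theory} the boundary conjugacy classes of $S$ are exactly the $K$-conjugacy classes whose endpoints are non-repelling fixed points of the principal lifts of $g_\ast$. So I would first fix a representative $K\in[K]$; by \Cref{lem:surfacegroupisaninvariant}, the surface group for $Y'$ (or any other CT) is also $[K]$, and after the canonical choice forced by carrying a common lift $\widetilde{\Lambda}^\pm_K$ of $\Lambda^\pm$, I can arrange the two surface groups to be the \emph{same} subgroup $K\le\F$, not merely conjugate.

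Next I would identify the boundary classes intrinsically. The pseudo-Anosov $g\from S\to S$ induces $g_\ast\in\out(\pi_1 S)$, and via $\widehat{dj}$ this outer automorphism is conjugate (inside $\out(\F)$, restricted to $K$) to the restriction of a power of $\phi$ to $K$ — more precisely, $K$ is $\phi^t$-invariant for suitable $t$ (here we may assume $\phi$ is already rotationless so $t=1$, since the EG stratum is $\phi$-invariant and the associated lamination pair is $\phi$-invariant), and the resulting outer automorphism $\phi|_{[K]}$ of $K$ is independent of the CT because it is simply the restriction of $\phi$ to the $\phi$-invariant free factor (more precisely, $\mathcal{Z}$-splitting vertex group) $[K]$. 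By \Cref{prop:geometric-model-laminations} this restricted outer automorphism has $\Lambda^u,\Lambda^s$ as its dual lamination pair (a filling pair), so by \Cref{prelim-principal-region-structure-theory} applied to the pseudo-Anosov representative $g$, the set of $K$-conjugacy classes representing $\partial S$ equals the set of $K$-conjugacy classes $[c]$ such that the endpoints of $c$ are non-repelling fixed points of some principal lift of $g_\ast = \phi|_{[K]}$. This last description makes no reference to the CT $f$, the weak geometric model $Y$, or the choice of pseudo-Anosov representative within its mapping class: principal lifts and their non-repelling fixed points are defined purely in terms of the outer automorphism, by \Cref{principal-lift} together with the \Cref{prelim-principal-region-structure-theory} (and the observation following the definition of rotationless lifts that the $\Out$ and $\Map(\Sigma)$ notions agree). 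Hence this set of $K$-conjugacy classes is an invariant of $\phi$, which is what we want.

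The main obstacle I anticipate is bookkeeping about \emph{which} power and \emph{which} representative: \Cref{prelim-principal-region-structure-theory} is stated for a pseudo-Anosov diffeomorphism and its principal lifts, while the weak geometric model only provides $g$ up to homotopy and up to the mapping class $[g]$; one must check that replacing $g$ by another pseudo-Anosov in the same class (or a rotationless power) does not change the set of boundary conjugacy classes — this is true because the set $\partial S$ is a topological invariant of the surface and the filling-lamination/principal-lift description is power-invariant for a pseudo-Anosov (passing to a power only permutes, and eventually fixes, the principal regions and their boundary leaves, which do not affect which curves are boundary curves). A secondary technical point is to confirm that the identification $\widehat{dj}$ carries the peripheral $K$-conjugacy classes of $S$ to $\F$-conjugacy classes that are well-defined given the representative $K$; this follows from $\pi_1$-injectivity of $dj$ (quoted as \cite{HandelMosher}*{Lemma I.2.5}) together with the fact, from \Cref{lem:surfacegroupisaninvariant}, that the image subgroup is exactly $K$ once the canonical representative is fixed. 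Everything else is an assembly of results already in hand.
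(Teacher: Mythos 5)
Your proposal is correct and follows essentially the same strategy as the paper: both proofs reduce the claim to the observation that the boundary conjugacy classes of $S$ are detected by principal-lift dynamics (\Cref{prelim-principal-region-structure-theory}) and then argue these dynamics are intrinsic to $\phi$ once $[K]$ is pinned down by \Cref{lem:surfacegroupisaninvariant}. The paper carries this out by constructing an explicit bijection between principal lifts of the pseudo-Anosov $g$ and principal lifts of $\phi$ fixing $K$ (using the chosen basepoints, the lift $\widetilde{dj}$, the homotopy lifting property, and item 5a of the geometric model definition), whereas you argue one level up, identifying $g_\ast$ with the restriction $\phi|_K\in\out(K)$ and noting that $\phi|_K$ is an invariant. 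This shortcut is legitimate, but it leans on the fact that $\phi|_K$ is a \emph{well-defined} outer automorphism of $K$ independent of the choice of representative automorphism of $\phi$ preserving $K$, which in turn requires $N_\F(K)=K$; you gesture at this by calling $K$ a ``$\mathcal{Z}$-splitting vertex group'' but don't supply the (short) argument. That self-normalization does follow from the edge-stabilizer argument already made in the proof of \Cref{lem:surfacegroupisaninvariant}, so the gap is cosmetic rather than substantive — the paper's lift-level bookkeeping sidesteps it entirely by never forming the restricted outer automorphism.
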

\begin{proof}
  Let $g$ be the pseudo-Anosov homeomorphism of $S$ coming from the
  geometric model. Denote the stable and unstable laminations of $g$ by
  $\Lambda^s$ and $\Lambda^u$. By
  \Cref{prop:geometric-model-laminations} $\widehat{dj}$ is a homeomorphism
  from $\Lambda^u$ to $\Lambda^+$ and from $\Lambda^s$ to $\Lambda^-$. We will
  identify the boundary classes of $K$ using principal lifts, so some care with
  basepoints is required.

  Fix the basepoint $b\in H_r \subseteq G$, as well as a basepoint $\ast\in
  S$ such that $dj(\ast) = b$ in the image of the geometric model attaching
  maps. Fix basepoints $\tilde{b} \in \tilde{G}$, and $\tilde{\ast}\in
  \tilde{S}$,  and a lift $\widetilde{dj} :  \tilde{S}\to \tilde{G}$
  covering $dj$ at these base-points. Let $K\in [K]$ be the representative image
  of $\pi_1(S)$ picked by these base0point choices.

  For each principal lift $\Psi$ of $\phi$ that fixes some element of $[K]$
  there is an iso-gredient principal lift $\Phi$ fixing $K$. Since $[K]$ carries
  $\Lambda^{\pm}$ by \Cref{prop:geometric-model-laminations}, there
  is a lift of
  the laminations $\Lambda^{\pm}$ to $\tilde{\Lambda}^\pm \subseteq
  \tilde{\mathcal{B}}(K)$. A principal lift
  $\Phi$ satisfies $|\tilde{\Lambda}^+ \cap \Fix_N(\Phi)| \ge 3$ if and only if
  it fixes $K$.

  Let $\Phi$ be a principal lift of $\phi$ fixing $K$, and select a realization
  $\tilde{f}:  \tilde{G}\to \tilde{G}$ so that $\tilde{f}(b) \in
  \widetilde{dj}(\tilde{S})$. This is possible since $\Phi$ fixes $K$.
  Let $\tilde{g}$ be a lift of $g$ such that
  $\widetilde{dj}(\tilde{g}(\tilde{\ast})) = \tilde{f}(b)$.
  By the homotopy lifting property of covers and item 5a in the definition of
  geometric model, the action of $\widetilde{g}$ on the lifts of $\Lambda^s$ and
  $\Lambda^u$ is conjugate via $\widetilde{dj}$ to the action of $\widetilde{f}$ on
  $\tilde{\Lambda}^{\pm}$. Thus $\tilde{g}$ is a principal lift of $g$.

  In fact, every principal lift of $g$ arises this way. Indeed, if $\tilde{g}$
  is a principal lift of $g$, then, again by the homotopy lifting property and
  item 5a in the definition of geometric model, the lift of $f$ such that
  $\tilde{f}(b) = \widetilde{dj}(\tilde{g}(\ast))$ is a realization of a
  principal lift of $\phi$.

  By \Cref{lem:surfacegroupisaninvariant} the conjugacy class $[K]$ is an
  invariant of $\phi$. For each representative $K$ the boundary conjugacy
  classes of $S$ are determined by the dynamics of the principal lifts of $g$
  on
	$\widetilde{\mathcal{B}}(K)$, see \Cref{prelim-principal-region-structure-theory}.
  Since each principal lift of $g$ fixing $K$ arises as a
  principal lift of $\phi$ fixing $K$, these dynamics are invariants of $\phi$.
  This completes the proof.
\end{proof}

\begin{corollary}\label{cor:godgivensurfacepiece}
	Suppose $\Lambda^\pm \in \mathcal{L}(\phi)$ is a geometric lamination pair.
	Let $f:  G\to G$ be a CT representing $\phi$ and suppose $H_r$ is the
	geometric EG stratum of $f$ corresponding to $\Lambda^{\pm}$.
	The surface $S$ associated to a weak geometric model $Y$ for $H_r$ and the
	mapping class of $g:  S\to S$ are both invariants of the outer automorphism
	$\phi$ and do not depend on $Y$ or $f$.
\end{corollary}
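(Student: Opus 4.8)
The plan is to derive \Cref{cor:godgivensurfacepiece} from \Cref{lem:surfacegroupisaninvariant,lem:boundaryclassesareinvariant} together with the Dehn--Nielsen--Baer theorem \Cref{dehn-nielsen-baer}. Let $f'\from G'\to G'$ be a second CT for $\phi$, let $H'_s\subseteq G'$ be the EG stratum associated to $\Lambda^{\pm}$ (geometric by \Cref{prop:lamination-geometricity-invariant}), and let $Y'$ be a weak geometric model for it with surface $S'$ and pseudo-Anosov homeomorphism $g'\from S'\to S'$; passing to full geometric models lets us apply the two lemmas to both $Y$ and $Y'$. We must show that $S$ and $S'$ are homeomorphic and that, under such a homeomorphism, $[g]$ and $[g']$ correspond.

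First I would identify the two surfaces. By \Cref{lem:surfacegroupisaninvariant} the conjugacy classes $[(d\circ j)_*\pi_1 S]$ and $[(d'\circ j')_*\pi_1 S']$ agree; fix one representative $K\le\F$ and, adjusting $d\circ j$ and $d'\circ j'$ by homotopies, arrange $(d\circ j)_*\pi_1 S=K=(d'\circ j')_*\pi_1 S'$. Both maps are $\pi_1$-injective, so $(d\circ j)_*$ and $(d'\circ j')_*$ are isomorphisms onto $K$ and $\theta = (d'\circ j')_*^{-1}\circ(d\circ j)_*\from\pi_1 S\to\pi_1 S'$ is an isomorphism. By \Cref{lem:boundaryclassesareinvariant} the set of $K$-conjugacy classes representing $\partial S$ coincides with the one representing $\partial S'$, so $\theta$ carries the peripheral structure of $S$ onto that of $S'$; that is, $\theta$ is type-preserving. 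Since $\chi(S)=1-\rk(K)=\chi(S')<0$ (the rank is at least $2$ because $S$ carries a pseudo-Anosov), realizing $\theta$ by a homotopy equivalence $S\to S'$ (possible since $S$ has the homotopy type of a graph) and applying \Cref{dehn-nielsen-baer} in the form established in its proof produces a homeomorphism $\Theta\from S\to S'$ with $\Theta_*=\theta$ up to inner automorphism. In particular the homeomorphism type of $S$ depends only on $\phi$ and $\Lambda^{\pm}$.

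Next I would match the mapping classes. The compatibility conditions in the definition of a weak geometric model give $d\circ j\circ g\simeq f\circ d\circ j$ as maps $S\to G$, and likewise $d'\circ j'\circ g'\simeq f'\circ d'\circ j'$. Lifting these homotopies and tracking deck transformations — choosing the lift of $f$ appropriately — shows that, via the isomorphism $(d\circ j)_*$, the automorphism of $\pi_1 S$ induced by $g$ becomes the restriction to $K$ of a representative of $\phi$; in particular $[K]$ is $\phi$-invariant, and, passing to outer classes, $(d\circ j)_*$ carries $g_*\in\out(\pi_1 S)$ to the restriction $\phi|_{[K]}\in\out(K)$. The same argument applied to $Y'$ shows $(d'\circ j')_*$ carries $g'_*$ to the same element $\phi|_{[K]}$. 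Hence $\theta_* g_*\theta_*^{-1}=g'_*$ in $\out(\pi_1 S')$, so $\Theta g\Theta^{-1}$ and $g'$ are homeomorphisms of $S'$ inducing the same outer automorphism of $\pi_1 S'$, hence are freely homotopic; since homotopic homeomorphisms of a compact surface of negative Euler characteristic are isotopic, $\Theta g\Theta^{-1}$ is isotopic to $g'$. Thus the pair $(S,[g])$ is determined up to conjugacy by $\phi$ and $\Lambda^{\pm}$, which is the assertion of the corollary.

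I expect the delicate step to be the passage from the homotopy $d\circ j\circ g\simeq f\circ d\circ j$ to the statement that the outer automorphism induced on $K$ is exactly $\phi|_{[K]}$: this requires care with basepoints, with the choice of lifts $\widetilde f$ and $\widetilde g$, and with the well-definedness of restricting an outer automorphism to the invariant class $[K]$ (the point being that two principal lifts of $g$ fixing $K$ differ by a deck transformation of $\widetilde S$, hence by an element of $K$, so induce the same element of $\out(K)$). Without this one only learns that $g_*$ and $g'_*$ are pseudo-Anosov outer automorphisms of $K$ sharing their lamination pair $\Lambda^{\pm}\cap\mathcal B(K)$, which does not pin down the mapping class, since a pseudo-Anosov and all of its powers have the same laminations.
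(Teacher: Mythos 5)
Your argument for the first half (that $S$ and $S'$ are homeomorphic via a $\Theta$ conjugating the inclusions into $\F$ and preserving boundary structure) is the same as the paper's: invoke \Cref{lem:surfacegroupisaninvariant}, \Cref{lem:boundaryclassesareinvariant}, realize the resulting type-preserving isomorphism by a homotopy equivalence, and apply the Dehn--Nielsen--Baer theorem \Cref{dehn-nielsen-baer}.

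For matching the mapping classes, you take a genuinely different route, and you correctly flag where it becomes delicate. You want to show that $(d\circ j)_*$ carries $g_*$ to a canonical restriction $\phi|_{[K]}\in\out(K)$, and likewise for the primed data, and then conclude by ``same outer automorphism $\Rightarrow$ homotopic $\Rightarrow$ isotopic.'' The gap you name is real, and it is worse than the basepoint/lift bookkeeping you describe: for ``$\phi|_{[K]}$'' to be a well-defined element of $\out(K)$ you need any two representatives $\Phi_1,\Phi_2\in\Aut(\F)$ of $\phi$ stabilizing $K$ to differ by conjugation by an element of $K$, i.e. you need $N_{\F}(K)=K$ (or at least that $N_{\F}(K)/K$ acts trivially on the relevant class). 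This is plausible for $K$ a vertex group of the peripheral $\mathcal{Z}$-splitting, but it is an additional argument that your proposal neither states nor supplies, and without it the phrase ``the restriction $\phi|_{[K]}$'' is not meaningful.

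The paper sidesteps this entirely. It shows, via \Cref{prop:geometric-model-laminations}, that $\widehat{\Theta}$ carries the stable/unstable laminations of $g$ to those of $g'$ (both are the realizations of $\Lambda^{\mp}$ in the respective surfaces), observes that $g$ and $g'$ have the same dilatation (both equal the expansion factor attached to $\Lambda^{\pm}$), and then invokes McCarthy's recognition theorem for pseudo-Anosovs: a pA mapping class is determined by its ordered lamination pair together with its dilatation. Note that this refutes the worry in your last sentence — the laminations alone do not pin down the class (powers share them), but laminations plus dilatation do, and the dilatation is also an invariant of $\Lambda^{\pm}$. So the paper's route buys you a proof that never has to interpret $\phi|_{[K]}$ as an outer automorphism of $K$, at the modest cost of quoting McCarthy's theorem; your route, if completed, would require the self-normalization lemma but would avoid that citation.
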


\begin{proof}
	Suppose $f':  G\to G$ is another CT for $\phi$ and $H_s'$ is the
	geometric EG stratum of $f'$ corresponding to $\Lambda^{\pm}$. Let $S'$
	be the surface associated to a geometric model $Y'$ of $H_s'$. We will show
	that there is a homeomorphism $\eta:  S\to S'$ such that $[\eta g
	\eta^{-1}]= [g']\in \Map(S')$.

	Pick base-points in the strata $H_r$ and the surface $S$ to fix a
	representative $K = dj_\ast (\pi_1(S),\ast) \le \F$. By
	\Cref{lem:surfacegroupisaninvariant}, $[K] = [d'j'_\ast(\pi_1(S'))]$,
	so there is a choice of basepoint in $H_s'$ and on $S'$ such that
	$d'j'_\ast(\pi_1(S',\ast') = K$.

	Let \[F_\ast = ({d'j'}_\ast^{-1})
	\circ dj_\ast :  \pi_1(S,\ast) \to \pi_1(S',\ast) \] be the induced
	isomorphism (this is indeed well defined as both $dj_\ast$ and $d'j'_\ast$
	are isomorphisms onto their images). Since $\pi_1(S,\ast)$ is free, there
	is a homotopy equivalence $\Theta$ inducing this isomorphism.

	By \Cref{lem:boundaryclassesareinvariant}, $\Theta_\ast$ takes the
	boundary classes of $S$ to the boundary classes of $S'$, so by the
	Dehn-Nielsen-Baer theorem (\Cref{dehn-nielsen-baer}) $\Theta$ is
	homotopic rel boundary to a homeomorphism $\eta:  S\to S'$.

	Finally, by \Cref{prop:geometric-model-laminations},
	$\widehat{\eta} = \widehat{\Theta}$ induces homeomorphisms
	from $\Lambda^s$ to ${\Lambda^s}'$ and $\Lambda^u$ to ${\Lambda^u}'$, 
	and $g$ and $g'$ have the same dilatation, thus
	\[ [\eta g \eta^{-1}] = [g'] \in \Map(S) \]
	by the recognition theorem for pseudo-Anosov mapping classes~\cite{McCarthy}*{Theorem 1}.
\end{proof}

\section{Deciding geometricity for rotationless automorphisms}\label{sec:rotationless}

In this section we introduce \Cref{alg:rotationless}, which decides if
a rotationless outer automorphism $\phi$ is geometric, and verify its
correctness. Our description of \Cref{alg:rotationless} begins with an
investigation of the properties of CTs representing a geometric
automorphism $\phi$. 

This investigation begins with a particular CT coming from the surface on which
$\phi$ is realized.
We then examine the extent to which these properties are shared by all
CTs representing $\phi$. In this way, we provide a list of computable necessary
conditions for any CT representing a rotationless automorphism to be
geometric. These conditions are split into dynamical constraints on the strata
of a representative CT (\Cref{sec:geometricCT}) and algebraic
constraints on the fixed subgroups (\Cref{sec:fixedsubgroup}).

Feighn and Handel~\cite{FH:CTs} provide an algorithm to
compute a CT representative of a rotationless outer automorphism. We arrive at
\Cref{alg:rotationless} by computing a surface
and a realizing homeomorphism from a CT satisfying the necessary conditions, demonstrating their sufficiency.

\subsection{CT representatives of a geometric automorphism}\label{sec:geometricCT}

The following notation will be used throughout the
remainder of this subsection and the next.

\begin{notation}\label{NotationPhi}
  Fix a rotationless element $\phi\in\Out$ which we assume to be geometric.
	Let $\Sigma$ be a (possibly non-orientable) connected compact surface 
	whose fundamental group
	is identified with $\F$ ($\pi_1\Sigma\simeq \F$).  Let
	$g\colon\Sigma\to\Sigma$ be a homeomorphism inducing $\phi$
	($[g_*]=\phi$). We assume further that $g$ is in Thurston normal form
	for its homotopy class, so that it has no periodic behavior as detailed in \Cref{NoPeriodicBehavior}.
\end{notation}

\begin{lemma}\label{NoPeriodicBehavior}
	The Thurston normal form of $g$ is without periodic behavior. That
  is, let $\{c_1,\ldots,c_m\}$ be the canonical reduction system of
  $g$ and choose representatives $c_i$ with pairwise disjoint closed
  neighborhoods $R_1,\ldots,R_m$.  Let $R_{m+1},\ldots, R_{m+n}$
  denote the closures of the connected components of the complement of the
	reduction system
  $\Sigma-\cup_{i=1}^mR_i$.  Let
  $\eta_i\colon \Homeo(R_i,\partial R_i)\to \Homeo(\Sigma)$ denote the homomorphism
  induced by the inclusion $R_i\hookrightarrow \Sigma$.  Then $g(R_i)=R_i$ for
  all $i$ and moreover,
  \begin{equation*}
    g=\prod_{i=1}^{m+n}\eta_i(g_i)
  \end{equation*}
	where $[g_i]\in\Map(R_i)$ is a power of a Dehn twist for
	$i\in\{1,\ldots,m\}$ and $[g_i]\in\Map(R_i)$ is either pseudo-Anosov
  or the identity for $i\in\{m+1,\ldots,m+n\}$.
\end{lemma}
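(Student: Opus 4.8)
The plan is to route through the Handel--Mosher identification of principal lifts (\cite{HandelMosher}*{Proposition I.2.12}), which ties the combinatorial notion of rotationlessness (\Cref{rotationless}) to the geometric one for surface homeomorphisms: since $\phi$ is rotationless in $\Out$, it admits a representative $g_0\from\Sigma\to\Sigma$ that is rotationless as a surface homeomorphism, that is, one preserving every principal region of its stable and unstable geodesic laminations together with every boundary leaf and its orientation. I would then unwind this structure into the Nielsen--Thurston picture. The principal regions organize $\Sigma$ into the pseudo-Anosov pieces, the reducing annuli, and the remaining (identity) pieces, and preservation of each principal region means $g_0$ fixes each piece. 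On a pseudo-Anosov piece, $g_0$ preserves the geodesic lamination and the orientations of its leaves, so its restriction there is pseudo-Anosov; on a remaining non-annular piece there is no geodesic lamination, and since $g_0$ cannot permute or re-orient a principal region the restriction is isotopic to the identity; on a reducing annulus $R_i$, a reflection or a swap of the two sides would re-orient or permute a boundary leaf --- namely $c_i$, or a principal region spiralling into it from an adjacent piece --- so the restriction is a power of $T_{c_i}$. Since the canonical reduction system is an invariant of the isotopy class of $g$, the Thurston normal form $g$ of $g_0$ (isotopic to $g_0$) has exactly the asserted factorization $g=\prod_i\eta_i(g_i)$ with $[g_i]$ as stated. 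The one configuration falling outside this discussion is the degenerate case where $\Sigma$ is a single complementary piece: then $g$ is pseudo-Anosov or of finite order, and in the latter case rotationlessness forces $\phi=\mathrm{id}$ (the only finite-order rotationless outer automorphism is the identity), so $g$ is isotopic to the identity.

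For the algorithmic sections that follow it is worth recording the relevant facts in the language of conjugacy classes and laminations, where they follow directly from the invariance of periodic structures under a rotationless automorphism (\cite{FH:RecogThm}*{Lemma 3.30}). Were $g$ to cycle a collection of pieces nontrivially, then: for annular pieces the cores would furnish a $\phi$-periodic, non-$\phi$-invariant family of conjugacy classes --- here one uses that distinct essential simple closed curves have distinct free homotopy classes even after inverting, since in a free group no nontrivial element is conjugate to its inverse; for pseudo-Anosov pieces the unstable laminations of the return maps would furnish a $\phi$-periodic, non-$\phi$-invariant family of attracting laminations of a power of $\phi$ (via \Cref{prop:geometric-model-laminations}); for identity pieces, a non-peripheral simple closed curve in one of them would have $\phi$-periodic but non-$\phi$-invariant conjugacy class, as its $g$-image lies in a disjoint essential subsurface out of which a non-peripheral curve cannot be isotoped. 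Similarly a nontrivial finite-order restriction to a non-annular piece, or a reflection--twist on a reducing annulus, produces a non-invariant periodic conjugacy class. Each of these contradicts \cite{FH:RecogThm}*{Lemma 3.30}.

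I expect the main obstacle to be the reduction in the first paragraph: faithfully translating ``rotationless surface homeomorphism'' --- a statement about principal regions, boundary leaves and their orientations --- into the Nielsen--Thurston decomposition while keeping honest track of reflections and of a possible orientation reversal of $\Sigma$ (so that, for instance, an orientation-reversing swap of a reducing annulus with a pseudo-Anosov piece on both sides is genuinely excluded rather than merely appearing infinite order). This orientation and principal-region bookkeeping is exactly what the Handel--Mosher correspondence is built to absorb; once it is in place, the identification of the Nielsen--Thurston pieces is routine, as is the assembly of the factorization from the Thurston normal form.
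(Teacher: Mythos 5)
Your second paragraph is essentially the paper's proof, unpacked: the paper's argument is simply that rotationlessness forces every $\phi$-periodic conjugacy class to be $\phi$-invariant (\cite{FH:RecogThm}*{Lemma 3.30}), so if the stated factorization of $g$ only held after passing to a power, one could exhibit a periodic-but-not-fixed conjugacy class, a contradiction. The paper states this in two sentences, leaving the case analysis (cycled annuli, cycled pseudo-Anosov or identity pieces, finite-order restrictions, reflection-twists) implicit; you fill it in correctly.

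Your first paragraph, which you announce as the primary plan, is a genuinely different route: pass via the Handel--Mosher agreement of principal lifts to a surface-rotationless representative $g_0$ and read the factorization directly off the preserved principal-region data. This is a valid alternative angle, but it is considerably more delicate than the paper's choice, and your description of the bookkeeping is slightly off. For a reducible homeomorphism, the stable and unstable geodesic laminations (in the Casson--Bleiler sense cited by the paper) are the unions of the pseudo-Anosov pieces' laminations together with isolated leaves coming from the reducing curves; the principal regions are the complementary regions of this union, which are ideal polygons inside pseudo-Anosov pieces and, separately, the identity pieces. Reducing annuli and pseudo-Anosov pieces are not themselves principal regions, so the claim ``the principal regions organize $\Sigma$ into the pseudo-Anosov pieces, the reducing annuli, and the remaining pieces'' needs to be replaced with a more careful deduction of piece-invariance from region-invariance and lamination-invariance. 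Moreover, the reduction ``$\Out$-rotationless $\Rightarrow$ admits a surface-rotationless representative'' is not literally the content of \cite{HandelMosher}*{Proposition I.2.12}, which equates the two notions of principal lift; bridging from that to the two notions of rotationlessness (each of which says more than ``same principal lifts'') requires an argument. You flag both issues yourself, which is appropriate, but it means the first paragraph as written is a sketch and the second paragraph is what actually closes the proof. The conjugacy-class route does all this orientation and region bookkeeping for free, which is why the paper takes it.
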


\begin{proof}
  This follows immediately from the fact that if
  $\phi\in\Out$ is rotationless and $c$ is a periodic conjugacy
  class, then $c$ is in fact fixed by $\phi$.  Were it necessary to
  pass to a power of $g$ to obtain the above form, there would be a
  conjugacy class which is periodic but not fixed. 
\end{proof}

We now choose a nested sequence
$\emptyset=\mathcal{F}_0\leq \mathcal{F}_1\leq\ldots
\mathcal{F}_M=\F$ of $\phi$-invariant free factor systems determined
by (the fundamental groups of) an appropriately chosen maximal nested
sequence $Q_1\subset Q_2\subset\ldots\subset Q_M$ of $g$-invariant
subsurfaces of $\Sigma$.  One way to ensure such a sequence of
subsurfaces determines a nested sequence of free factor system is as
follows.  Using the notation in \Cref{NoPeriodicBehavior}, assume
after reordering that $R_1,\ldots, R_b$ are annular neighborhoods of
the boundary components of $\Sigma$.  Start by defining
$Q_i=R_1\cup \ldots \cup R_i$ for $i\in\{1,\ldots,b-1\}$; note that we
have omitted the last boundary component, as carried by $R_b$.  Then ``work towards'' the final
boundary component as follows: define a partial order on the remaining
invariant subsurfaces $R_i$ for $i\in \{b+1,m+n\}$ by $R_i\leq R_j$ if
$\mathcal{F}_{supp}(\pi_1R_i,\pi_1Q_{b-1})
\sqsubset \mathcal{F}_{supp}(\pi_1R_j,\pi_1Q_{b-1})$.  Then for
$i\in\{b+1,\ldots,m+n\}$ inductively define $Q_i=Q_{i-1}\cup R$ where
$R$ is any subsurface which is minimal with respect to this partial
order.  Finally, define $\mathcal{F}_i=\mathcal{F}_{supp}(\pi_1Q_i)$ as
the smallest free factor system carrying the fundamental group(s) of
the not necessarily connected subsurface.

\begin{notation}\label{NotationCT}
  Let $f\colon G\to G$ be a CT representing $\phi$ with filtration
  \[\emptyset=G_0\subset \ldots\subset G_N=G\] and realizing the (not
  necessarily maximal) nested sequence of $\phi$-invariant free factor
  systems defined above.
\end{notation}

The first step in the construction of a CT involves completing the
prescribed nested sequence of $\phi$-invariant free factor systems to
a maximal one.  We will distinguish those core subgraphs $G_r$ whose
fundamental groups are elements of the prescribed filtration (that is
$\mathcal{F}_{supp}([\pi_1(G_r)]) \in\{\mathcal{F}_i\}$) from those
core subgraphs whose fundamental groups are not in the prescribed
filtration; will say subgraphs in the former category and their strata
\emph{determine subsurfaces}, while subgraphs in the latter category
\emph{do not determine subsurfaces}.

\subsubsection{Strata of the preferred CT}
In the previous section we proved that the surface data of a geometric EG
stratum of a CT is an invariant. We now use $f:  G\to G$ to show
that these invariants agree with the pieces of the Thurston normal form of $g$.

\begin{observation}\label{obs:pAFreeFactors}
  Suppose the restriction of $g\colon \Sigma\to\Sigma$ to a subsurface
  $R$ is pseudo-Anosov, and that $Q_i$ is the subsurface of $\Sigma$
  such that $Q_i=Q_{i-1}\cup R$ as defined above, with associated
  $\phi$-invariant free factor system $\mathcal{F}_i$.  Then there are
  no $\phi$-invariant free factors properly containing
  $\mathcal{F}_{i-1}$ and properly contained in $\mathcal{F}_i$.
\end{observation}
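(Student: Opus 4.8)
The plan is to argue by contradiction, using that the only genuinely new ingredient in passing from $\mathcal{F}_{i-1}$ to $\mathcal{F}_i$ is the pseudo-Anosov piece $R$, and that a pseudo-Anosov supports no invariant free-factor data beyond its peripheral curves. So suppose $\mathcal{F}'$ is a $\phi$-invariant free factor system with $\mathcal{F}_{i-1}\sqsubsetneq\mathcal{F}'\sqsubsetneq\mathcal{F}_i$.

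First I would attach the relevant lamination to $R$. Let $\Lambda^{+}\in\mathcal{B}(\F)$ be the image of the unstable lamination of the pseudo-Anosov $g|_R$ under $\mathcal{B}(\pi_1R)\hookrightarrow\mathcal{B}(\F)$. Since $g(R)=R$ and $g$ is in Thurston normal form without periodic behavior (\Cref{NoPeriodicBehavior}), $\Lambda^{+}\in\mathcal{L}(\phi)$, and as $\phi$ is rotationless $\Lambda^{+}$ is $\phi$-invariant. Applying \Cref{prop:geometric-model-laminations} with $S=R$ gives that $\Lambda^{+}$ fills $\pi_1R$, whence $\mathcal{F}_{supp}(\Lambda^{+})=\mathcal{F}_{supp}(\pi_1R)$; in particular \emph{a free factor system carries $\Lambda^{+}$ exactly when it carries $\pi_1R$}. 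I would also record that in the construction of $Q_1\subset\dots\subset Q_M$ every annular neighborhood of a component of $\partial R$ strictly precedes $R$ in the defining partial order (because $\langle c\rangle\le\pi_1R$ for $c\in\partial R$), with the single possible exception of the neighborhood of the boundary component of $\Sigma$ being ``worked toward''; hence $\mathcal{F}_{i-1}$, and therefore $\mathcal{F}'$, carries every peripheral conjugacy class of $R$ with at most one exception.

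Next I would split into two cases. \emph{If $\mathcal{F}'$ carries $\pi_1R$}: then it also carries $\pi_1Q_{i-1}$, hence carries the subgroup $H\le\pi_1C$ generated by $\pi_1R$ together with the fundamental groups of the components of $Q_{i-1}$ contained in $C$, where $C$ is the component of $Q_i=Q_{i-1}\cup R$ containing $R$ (these share the peripheral curves of $R$ realized in $Q_{i-1}$). A Mayer--Vietoris computation shows $H$ surjects onto $H_1(C)$; since a free factor of a free group injects on first homology, any free factor of $\F$ carrying $H$ carries all of $\pi_1C$. Running this over all components of $Q_i$ shows $\mathcal{F}'$ carries $\pi_1Q_i$, so $\mathcal{F}'\sqsupset\mathcal{F}_{supp}(\pi_1Q_i)=\mathcal{F}_i$, contradicting $\mathcal{F}'\sqsubsetneq\mathcal{F}_i$. \emph{If $\mathcal{F}'$ does not carry $\pi_1R$}: intersecting the free factors of $\mathcal{F}'$ with $\pi_1R$ produces a free factor system $\mathcal{F}'|_{\pi_1R}$ of $\pi_1R$ (intersections of free factors with subgroups are free factors of those subgroups), which is invariant under $(g|_R)_{*}\in\out(\pi_1R)$ since $g(R)=R$ and $\phi(\mathcal{F}')=\mathcal{F}'$, is proper since $\mathcal{F}'$ misses $\pi_1R$, and carries every peripheral class of $R$ save possibly one. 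A pseudo-Anosov fixes each peripheral class but its invariant laminations fill $\pi_1R$, so $g|_R$ admits no invariant proper subsurface and hence no proper $(g|_R)_{*}$-invariant free factor system other than ``sub-peripheral'' ones; combined with the classical fact that the boundary curves of a compact surface fill its fundamental group, this pins down $\mathcal{F}'|_{\pi_1R}$ and, propagating back up through $C$ as in the first case, contradicts $\mathcal{F}'\sqsupsetneq\mathcal{F}_{i-1}$. Either way no such $\mathcal{F}'$ exists.

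The main obstacle is the conclusion of the second case: precisely ruling out a proper $(g|_R)_{*}$-invariant free factor system of $\pi_1R$ carrying the cofinite collection of peripheral classes that occurs when $\partial R$ meets the one distinguished boundary component of $\Sigma$. The needed input is that a pseudo-Anosov is irreducible \emph{relative to its peripheral structure}---equivalently, that the $(g|_R)_{*}$-invariant free factor systems of $\pi_1R$ are exactly the sub-peripheral ones and the improper one---so that any genuine enlargement of $\mathcal{F}_{i-1}$ within the component $C$ must involve the interior of $R$ and therefore, by the mixing of the pseudo-Anosov, be forced to carry $\Lambda^{+}$. A secondary technical point is the basepoint and conjugacy bookkeeping needed to see that $\mathcal{F}'|_{\pi_1R}$ really is a $(g|_R)_{*}$-invariant free factor system and that $\mathcal{F}_{i-1}$ restricts to $\pi_1R$ as the peripheral free factor system.
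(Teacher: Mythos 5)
The paper states this Observation without proof, so I can only evaluate the proposal on its own. The overall strategy---attach the filling lamination $\Lambda^{+}$ of $g|_R$, deduce $\mathcal{F}_{supp}(\Lambda^{+})=\mathcal{F}_{supp}(\pi_1R)$, then split on whether $\mathcal{F}'$ carries $[\pi_1R]$---is a sensible one, but there is a genuine gap in Case 1, and Case 2 leans on it.

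The problem is the Mayer--Vietoris step. You claim the subgroup $H\le\pi_1 C$ generated by $\pi_1 R$ together with the $\pi_1 P_j$ (the components of $Q_{i-1}$ inside $C$) surjects onto $H_1(C)$. This can fail. The group $\pi_1 C$ is the fundamental group of a graph of spaces with vertex pieces $R,P_1,\dots,P_\ell$ and edges the shared boundary circles, and whenever the underlying graph has a loop the resulting HNN stable letter contributes a $\mathbb{Z}$ summand of $H_1(C)$ that $H$ never reaches. Loops do occur in this construction: some $P_j$ may be the annular neighbourhood $A_\gamma$ of a nonseparating reducing curve $\gamma$ with both of its sides on $\partial R$, and since $\pi_1 A_\gamma\sqsubset\mathcal{F}_{supp}(\pi_1R)$ this annulus precedes $R$ in the defining partial order, hence lies in $Q_{i-1}$. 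Concretely, take $R$ of genus $1$ with three boundary circles $c_1,c_2,c_3$ and let $P_1=A_\gamma$ glue $c_1$ to $c_2$; then $H=\pi_1 R$ (because $\pi_1 A_\gamma$ is already conjugate into $\pi_1 R$), $C=R\cup A_\gamma$ has genus $2$ and one boundary with $H_1(C)\cong\mathbb{Z}^4$, and the map $H_1(R)\cong\mathbb{Z}^4\to H_1(C)$ sends $c_1$ and $c_2$ to the same class, so its image has rank $3$. So the surjectivity premise is false and the argument does not close. (For what it is worth, the deduction you draw \emph{from} the surjectivity is fine: if $H\le A$ for a free factor $A$ of $\F$ and $H_1(H)\twoheadrightarrow H_1(K)$ for some $H\le K\le\F$, then the Kurosh decomposition of $K$ arising from the Bass--Serre tree of a free factorization $\F=A*B$ forces every factor except $K\cap A$ to vanish, giving $K\le A$. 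The error is only in the surjectivity.) Case 2 explicitly "propagates back up through $C$ as in the first case" and so inherits the same problem; you also, accurately, flag as unresolved the needed classification of the proper $(g|_R)_*$-invariant free factor systems of $\pi_1 R$ as sub-peripheral. What your proof is really missing is a direct argument that $\mathcal{F}_{supp}(\pi_1R,\pi_1Q_{i-1})=\mathcal{F}_{supp}(\pi_1Q_i)$, i.e.\ that the free factor support of a union of essential subsurfaces equals the free factor support of the union surface; the $H_1$ shortcut does not supply it.
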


\begin{lemma}
  \label{EGStrataAreGeometric}
  If $H_r$ is an EG stratum of $f\colon G\to G$, then $H_r$ is
  geometric and the subsurface $R$ of $\Sigma$ such that $Q_r = Q_{r-1} \cup R$
  can be attached to $G_{r-1}$ to produce a weak geometric model for $H_r$.
\end{lemma}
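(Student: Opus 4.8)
The plan is to use the setup of \Cref{NotationPhi} and \Cref{NotationCT}: we have a geometric rotationless $\phi$, a homeomorphism $g\colon\Sigma\to\Sigma$ in Thurston normal form realizing it, and a CT $f\colon G\to G$ realizing the nested sequence of $\phi$-invariant free factor systems coming from the chosen sequence $Q_1\subset\cdots\subset Q_M$ of $g$-invariant subsurfaces. Let $H_r$ be an EG stratum of $f$. The first step is to identify which subsurface $R$ of the Thurston decomposition gives rise to $H_r$. By \Cref{fact:nielsen-path-geometric}, $H_r$ is geometric if and only if there is a closed height-$r$ indivisible Nielsen path, so the heart of the matter is to locate a surface piece of $g$ whose restriction is pseudo-Anosov, produce an indivisible Nielsen path from its peripheral structure, and check it has height exactly $r$. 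The natural candidate is the pseudo-Anosov piece $R$ such that $Q_r = Q_{r-1}\cup R$; since $H_r$ is EG, the free factor system jumps at level $r$, and by \Cref{obs:pAFreeFactors} there are no $\phi$-invariant free factors strictly between $\mathcal{F}_{r-1}$ and $\mathcal{F}_r$, which forces the jump at level $r$ to be the one produced by attaching $R$. (A Dehn-twist annular piece would contribute an NEG, not EG, stratum, and an identity piece would contribute no growth; so only a pseudo-Anosov $R$ can produce an EG stratum.)

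With $R$ identified, the second step is to build the weak geometric model. Since $R$ is pseudo-Anosov and $Q_{r-1}$ is realized (as a free factor system) by the core subgraph $G_{r-1}$, I would take $S := R$ with its boundary enumerated, designating as the upper boundary $\partial_0 S$ the boundary component of $R$ that becomes ``interior'' to $Q_r$ (equivalently, the component along which $R$ is glued to the rest of $\Sigma$ within $Q_r$), and as lower boundaries the components of $\partial R$ that lie on $\partial Q_{r-1}$ or on the outer part of $\Sigma$. For each lower boundary I need an edge-path $\alpha_i$ in $G_{r-1}$ representing the corresponding $\pi_1$ class: since $\pi_1(Q_{r-1})$ is carried by $G_{r-1}$ and the boundary classes of $R$ lying in $Q_{r-1}$ are conjugacy classes there, tightening their images gives the $\alpha_i$. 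Forming $Y$ as the pushout of $S\sqcup G_{r-1}$ along $\sqcup\alpha_i$ and verifying $\pi_1 Y \cong \pi_1 Q_r$ (which is $\mathcal{F}_r$) lets me produce the embedding $G_r\hookrightarrow Y$ and the deformation retraction $d$; the indivisible Nielsen path $\rho_r$ is then the one tracing $\partial_0 S$, which has height $r$ since $\partial_0 S$ meets $H_r$. The homotopy equivalence $h\colon Y\to Y$ and homeomorphism $g|_R$ are got by restricting $g$ to $Q_r$ and checking the compatibility conditions 5a and 5b, which follow because $d$ is a homotopy equivalence intertwining the two descriptions of $\phi$ at level $r$.

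The main obstacle, I expect, is the bookkeeping around the upper boundary: I must show that $G_r\cap\partial_0 S$ really is a single point $p_r$ and that the loop $\partial_0 S$ based there is homotopic rel basepoint in $Y$ to a genuine closed indivisible Nielsen path $\rho_r$ of height $r$ — in particular that $G_r$ is obtained from $G_{r-1}$ by adding exactly the EG stratum $H_r$ with no intervening NEG or zero strata that would come from the CT-construction ``padding''. Here \Cref{moving-up-through-the-filtration} and \Cref{no-geometric-enveloped-zero-strata} are the relevant tools: the former controls what strata can appear between consecutive core filtration levels, and the latter guarantees a geometric EG stratum envelops no zero stratum, so the passage from $G_{r-1}$ to $G_r$ is clean. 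A secondary subtlety is that the CT construction may split $Q_r$ across several strata if $R$ is attached in a complicated way; I would handle this by choosing the maximal nested sequence completing $\{\mathcal{F}_i\}$ carefully, or by arguing post hoc (via \Cref{fact:nielsen-path-geometric} and \Cref{maximal-subpaths-geom-eg}, which says each maximal subpath of $f(E)$ in $G_{r-1}$ is a Nielsen path) that the pieces reassemble into the single surface $S$. Once geometricity of $H_r$ is established abstractly, \Cref{cor:godgivensurfacepiece} identifies the resulting surface up to the invariants of $\phi$, confirming that the $R$ we attached is the correct one.
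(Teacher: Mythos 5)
The first half of your sketch---locating the pseudo-Anosov piece $R$, invoking \Cref{obs:pAFreeFactors} to identify the previous core filtration level, producing a height-$r$ closed indivisible Nielsen path from the upper boundary curve of $R$, and concluding via \Cref{fact:nielsen-path-geometric} that $H_r$ is geometric---agrees in spirit with the paper's first paragraph, modulo some index-conflation between the surface filtration $Q_i$ and the CT filtration $G_r$.

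The second half diverges. You propose to build the weak geometric model from scratch with $S := R$: choose attaching maps $\alpha_i$, form the pushout $Y$, and then verify the embedding $G_r\hookrightarrow Y$, the deformation retraction $d$, and the compatibility conditions 5a and 5b. The paper does none of this. Once geometricity is established, \Cref{fact:nielsen-path-geometric} already guarantees \emph{some} weak geometric model $Y$ with surface $S$ exists; the paper then shows $\pi_1(R)=\pi_1(S)$ via a finite-index argument using the filling laminations and the Bass--Serre tree of the peripheral $\mathcal{Z}$-splitting (mirroring \Cref{lem:surfacegroupisaninvariant}), matches the boundary classes via \Cref{lem:boundaryclassesareinvariant}, applies Dehn--Nielsen--Baer to obtain a homeomorphism $\eta\from S\to R$, and then simply transports the existing model structure along $\eta$. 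That transport step is the entire content of the ``$R$ can be attached'' conclusion, and it sidesteps all the verification you would otherwise need to do.

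There is a genuine gap in your direct construction as written. The claim that ``verifying $\pi_1 Y\cong\pi_1 Q_r$ lets me produce the embedding $G_r\hookrightarrow Y$ and the deformation retraction $d$'' does not follow: a $\pi_1$-isomorphism does not tell you how to extend the given embedding $G_{r-1}\hookrightarrow Y$ across the edges of $H_r$ (and any intervening NEG edges), nor that $G_r\cap\partial_0 S$ is a single point $p_r$, nor that $d|_{\partial_0 S}$ traces a closed \emph{indivisible} Nielsen path (as opposed to a Nielsen circuit splitting into lower-height pieces), nor that $(f|G_r)\circ d\simeq d\circ h$ and $j\circ g\simeq h\circ j$. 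These verifications constitute the hard core of Handel--Mosher's existence proof for geometric models of EG strata; your sketch would effectively reprove that theory. The paper's transport-via-$\eta$ strategy exists precisely to avoid redoing it, and it makes essential use of the invariance results in \Cref{lem:surfacegroupisaninvariant} and \Cref{lem:boundaryclassesareinvariant} that your sketch only invokes obliquely at the end.
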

\begin{proof}
  Since $H_r$ is an EG stratum, $G_r$ is necessarily a core graph; let
  $\mathcal{F}_{supp}([\pi_1G_r])$ be the associated $\phi$-invariant
  free factor system.  Again because $H_r$ is EG there are conjugacy
  classes in $\mathcal{F}_{supp}([\pi_1G_r])$ that grow exponentially
  under iteration by $\phi$ and are not contained in
  $\mathcal{F}_{supp}([\pi_1G_{r-1}])$.  In particular, this implies
  that the $\phi$-invariant free factor system
  $\mathcal{F}_{supp}([\pi_1G_r])$ determines a subsurface $Q_i$ of
  $\Sigma$. Let $R$ be
  the subsurface of $\Sigma$ such that $Q_i=Q_{i-1}\cup R$. Note that the presence of exponentially growing conjugacy classes in $R$ and the maximality of the filtration imply that $g|_R$ is pseudo-Anosov. There is
  a simple closed curve $c$ in $\partial R$ which is not homotopic into
  $Q_{i-1}$ and is fixed by $g$ (it's the ``upper boundary
  component'').  Since $g|_R$ is pseudo-Anosov, 
  \Cref{obs:pAFreeFactors} implies that the core graph of $G_{r-1}$ 
  has fundamental group equal to $\mathcal{F}_{i-1}$.
  Abusing notation and thinking of $c$ as an element of $\F$, it is
  clear that $[c]$ is not carried by $\mathcal{F}_{i-1}$ and therefore
  not carried by $G_{r-1}$.  Thus there is a conjugacy class in $G$ of
  height $r$ that is $\phi$-invariant, so is represented by a height $r$ closed
  indivisible Nielsen path $\rho_r$. This is equivalent to $H_r$
  being geometric by \Cref{fact:nielsen-path-geometric}.

  Now suppose $S\stackrel{j}{\to} X\stackrel{d}{\to} G$ is a geometric model
  for $H_r$. The argument is
  similar to \Cref{lem:surfacegroupisaninvariant}. Both $\pi_1(R)$ and $\pi_1(S)$ carry
  the laminations of the stratum $\Lambda^{\pm}$, so the intersection $K =
  \pi_1(R) \cap \pi_1(S)$ is finite index in both groups. Indeed, in both
  surfaces $\Lambda^{\pm}$ are realized as attracting laminations
  for pseudo-Anosov homeomorphisms, so each is filling and carried by $K$; this
  is impossible if $K$ is infinite index.

  Let $X$ be a geometric model of $H_r$ and $T$ be the minimal
  tree associated to the corresponding $\mathcal{Z}$-splitting. As in
  \Cref{lem:surfacegroupisaninvariant} we conclude that $\pi_1(R)\le \pi_1(S)$. While
  we cannot appeal to symmetry exactly, to see that $\pi_1(S) \le \pi_1(R)$,
  consider the Bass-Serre tree $T_R$ coming from the $\mathcal{Z}$-splitting of
  $\Sigma$ induced by $\partial R$. The minimal tree $T_S$ for the action of
  $\pi_1(S)$ must be finite since the intersection $K$ is finite-index in
  $\pi_1(S)$; since $\pi_1(S)$ is free of rank at least 2 this implies it is a
  single vertex.

  Finally the upper boundary $\partial_0S$ represents $[c]$ by the definition
  	  of a geometric model so upper boundaries of $S$ and $R$ are identified.
  	  Since $\pi_1(S) \cong \pi_1(R)$
  to see that the lower boundaries are identified we appeal to
  \Cref{lem:boundaryclassesareinvariant}. The boundary classes are an
  invariant of the lamination pair $\Lambda^{\pm}$, independent of the CT or
  geometric model. Moreover, as the surface and free group notions of principal
  lift coincide~\cite{FH:RecogThm}, the lamination pair $\Lambda^{\pm}$
  determines the boundary classes of $R$. Thus, for an appropriate choice of
  basepoints we obtain an isomorphism $\Theta_\ast = \iota_\ast^{-1}\circ dj_\ast : 
  \pi_1(S) \to \pi_1(R)$ that takes boundary classes to boundary classes,
  respecting the distinction of upper and lower classes. As in the proof of
  \Cref{cor:godgivensurfacepiece} it follows from the
  Dehn-Nielsen-Baer theorem that there is a homeomorphism $\eta :  S\to R$
  conjugating the
  geometric model homeomorphism to the restriction of the Thurston normal form
  to $R$. The homeomorphism $\eta$ can be used to attach $R$ to $G_r$ to
  produce a new weak geometric model.
\end{proof}

Geometricity also imposes a strict constraint on NEG strata of $f$, capturing
the growth dichotomy for conjugacy classes under iteration by a surface
homeomorphism.

\begin{lemma}\label{NEGStrataAreLinear}
  If $H_r$ is a non-fixed NEG stratum, then it is linear.
\end{lemma}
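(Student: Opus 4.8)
The plan is to argue by contradiction, showing that a non-linear non-fixed NEG stratum would force some conjugacy class to grow faster than a surface homeomorphism permits. Recall from the structure theory of CTs~\cite{FH:RecogThm} that a non-fixed NEG stratum $H_r$ consists of a single edge $E_r$ with $f(E_r)=E_r\cdot u_r$ a splitting, where $u_r$ is a nontrivial completely split closed path in $G_{r-1}$, and that $E_r$ is \emph{linear} precisely when $u_r$ is a power of a root-free closed Nielsen path; thus it suffices to show $f_\#(u_r)=u_r$. Suppose not. If the $f_\#$-orbit of $u_r$ were bounded in length it would be finite, hence $u_r$ would be a periodic Nielsen path and so (as $f$ is a CT for the rotationless $\phi$) a Nielsen path, a contradiction; therefore $\ell(f_\#^i(u_r))\to\infty$. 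Iterating the splitting gives $f_\#^k(E_r)=E_r\cdot u_r\cdot f_\#(u_r)\cdots f_\#^{k-1}(u_r)$ with no cancellation, so $\ell(f_\#^k(E_r))=1+\sum_{i<k}\ell(f_\#^i(u_r))$ grows strictly faster than linearly in $k$. Now take $r$ minimal among indices of non-linear non-fixed NEG strata; then every NEG edge crossed by $u_r$ is linear, so either (i) $u_r$ is disjoint from every EG stratum and from the zero strata they envelop, in which case $u_r$---and hence $E_r$---grows polynomially of degree at least $2$; or (ii) $u_r$ meets an EG stratum $H_j$ with $j<r$, in which case $E_r$ grows exponentially.

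In case (i) I would extract a conjugacy class of polynomial degree at least $2$---for instance by completing a circuit through $E_r$ inside the union of the non-EG strata of $G$ and completely splitting it, so $E_r$ appears as a splitting unit and no unit meets an EG stratum---whence $\phi^k$ of that class grows polynomially of the same degree. This is impossible on the surface: by \Cref{NotationPhi} and \Cref{NoPeriodicBehavior}, $\phi=[g_*]$ with $g=\prod_i\eta_i(g_i)$, the factors being powers of Dehn twists about the disjoint reducing curves $c_1,\dots,c_m$ and pseudo-Anosov or the identity on the complementary pieces $R_{m+1},\dots,R_{m+n}$. A conjugacy class that grows polynomially meets no pseudo-Anosov piece (it would otherwise grow exponentially), so, realized by a geodesic for a fixed hyperbolic metric and put in minimal position, it lies in the annular and identity pieces, where $g^k$ acts as the commuting product $\prod_i T_{c_i}^{kd_i}$. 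The linear growth of length under Dehn twisting, $\ell(T_c^{n}\delta)\le A+B|n|$, then forces $\ell(g^k(\gamma))=O(k)$, a contradiction.

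It remains to rule out case (ii), which is the crux of the argument. Here $E_r$ grows exponentially and is weakly attracted to $\Lambda^{+}_j$, the attracting lamination of $H_j$; by \Cref{EGStrataAreGeometric} the stratum $H_j$ is geometric with an associated pseudo-Anosov subsurface $R$, and $\Lambda^{\pm}_j$ fills $\pi_1R$ (\Cref{prop:geometric-model-laminations}). The plan is to work inside a geometric model $X$ for $H_j$, which presents $\Sigma$ as a graph of spaces whose surface vertex space is $S\simeq R$ and whose complementary subgraph carries the restriction of $g$ to $\Sigma\setminus R$---a product of Dehn twists about boundary-parallel curves and identity maps. Using the $g$-invariance of the boundary curves of $S$, together with \Cref{moving-up-through-the-filtration} to control how $E_r$ sits relative to $H_j$, one should be able to trace $E_r$ and $u_r$ through this decomposition and conclude $f_\#(u_r)=u_r$, contrary to assumption. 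The main obstacle is precisely this bookkeeping: translating the complete splitting of $u_r$ in $G$ into the graph-of-spaces structure of $X$ so as to certify that $u_r$ is fixed, not merely polynomially growing. Once this is done, $f_\#(u_r)=u_r$ in every case, so $H_r$ is linear.
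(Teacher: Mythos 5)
Your overall strategy matches the paper's: argue by contradiction, take $H_r$ minimal among non-fixed non-linear NEG strata, and derive a growth rate that a surface homeomorphism cannot produce. Your reduction of linearity to $f_\#(u_r)=u_r$ is correct (if $u_r=w^k$ with $w$ a root-free closed Nielsen path, then $f_\#(u_r)=u_r$; conversely if $f_\#(u_r)=u_r$ then the root-free root $w$ of $u_r$ satisfies $f_\#(w)^k=w^k$, hence $f_\#(w)=w$ by uniqueness of roots). Your case (i), where $u_r$ avoids all EG strata, closely tracks the paper's second half: the paper also produces a circuit with $E$ as a splitting unit and observes that it would have at least quadratic polynomial growth, which is forbidden for mapping classes. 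One thing you gloss over is the construction of that circuit: you say "completing a circuit through $E_r$ inside the union of the non-EG strata," but without care there is no reason such a circuit exists, and even if it does, you must verify that $E$ actually persists as a splitting unit under iteration. The paper handles this with \Cref{moving-up-through-the-filtration} (showing the next core filtration level $G_s$ has $H_s$ NEG, so a circuit exists in $G_s$) together with the Gupta--Wigglesworth characterization of which splitting-unit types can cover a non-fixed irreducible edge. Your sketch would need the same care.

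The genuine gap is your case (ii). You correctly observe that if $u_r$ contains an EG edge, $E_r$ grows exponentially, and you correctly observe that this alone is not a contradiction, since geometric automorphisms do have exponentially growing conjugacy classes. You then propose working inside a geometric model for the EG stratum $H_j$ and "tracing $E_r$ and $u_r$ through the graph-of-spaces decomposition" to force $f_\#(u_r)=u_r$, but you explicitly acknowledge you have not carried this out: "The main obstacle is precisely this bookkeeping." That is precisely the missing content. The paper takes a different route here: it exhibits a completely split circuit $\sigma$ containing $E$ as a splitting unit (using the same \Cref{moving-up-through-the-filtration} $+$ Gupta--Wigglesworth machinery as in your case (i)) and then argues that the presence of an EG edge in $w$ forces the length of $f_\#^n(\sigma)$ to grow super-exponentially, which is impossible for a conjugacy class under iteration of a surface homeomorphism. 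So both cases in the paper reduce to one growth estimate applied to the same test circuit; your proposal would have needed either to complete that growth estimate or to actually carry out the geometric-model bookkeeping, and it does neither. As written, case (ii) is unproven.
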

\begin{proof}
  Suppose for a contradiction that there exists a nonlinear NEG
  stratum and let $H_r$ be the lowest such.  The stratum $H_r$ 
  consists of a unique edge $E$ such that $f(E)=Ew$ for some
  completely split conjugacy class $w$.  We first show that if $w$
  contains an EG edge $E'$ as a splitting unit, there will be a
  conjugacy class $[\sigma]$ in $\F$ whose asymptotic growth rate is
  super-exponential and this behavior does not occur for geometric
  automorphisms.

  To produce such a conjugacy class, we simply need to
  construct a completely split conjugacy class containing $E$ as a
  splitting unit. To do so, consider the smallest integer $s \ge r$
  $r$ such that $G_s$ is a core graph.  

  We first argue that $H_{s}$ cannot be an EG stratum.  If $H_{s}$ were EG,
  then it would be geometric by \Cref{EGStrataAreGeometric}. Thus, by
  \Cref{maximal-subpaths-geom-eg}, for each edge $E''$ of $H_s$, the
  maximal subpaths of $f(E'')$ in $G_{s-1}$ are Nielsen paths. 
  Thus, the EG case of \Cref{moving-up-through-the-filtration} implies the
  existence of a Nielsen path $w$ of height $r$ whose first edge is $E$. This
  is a contradiction to (NEG Nielsen Paths), since $E$ is non-linear. Thus,
  $H_{s}$ cannot be EG.

  So we conclude that $H_{s}$ is NEG and the relation between $H_s$ and $G_r$
  is described in the NEG case of \Cref{moving-up-through-the-filtration}.
  Since $G_s$ is core, there exists a closed loop $\sigma$ in $G_s$
  crossing $E$ either once or twice according to whether or not $E$ is
  separating in $G_s$. Iterating $f$, we may assume that $\sigma$ is
  completely split, and we claim $E$ is a splitting unit in the complete
	splitting of $\sigma$. Indeed, the edge $E$ is not contained in a zero stratum, so it is not
	a taken path. It also cannot appear in an indivisible Nielsen path or
	exceptional path in the complete splitting of $\sigma$: Gupta and
	Wigglesworth characterize the structure of
  indivisible Nielsen paths and exceptional paths that cross non-fixed
	irreducible strata \cite{GW:Loxodromics}*{Lemma 7.4}, and their
	characterization rules out the possibility of $E$ appearing in either
	type of splitting unit. 

	Therefore, the asymptotic growth of iterates of $E$ gives a lower bound for
	the asymptotic growth of $\sigma$. Our assumption that $f(E)$ contains an EG edge as
	a splitting unit implies that $\ell(f^n(E))\ge n\lambda^n$ (here
	$\lambda$ is the exponential growth rate of the EG edge $E'$). Hence $\ell(f^n(\sigma)\ge n\lambda^n$ but this 
	cannot occur for geometric automorphisms. Thus $w$ contains no EG edges as splitting units.

  The minimality of $H_r$ implies that, absent EG splitting units, $w$ must
  contain a linear edge $E'$ in its complete splitting. A similar argument
  then implies the existence of a conjugacy class that grows quadratically
  under iteration by $\phi$ (and hence by $g$), something that cannot happen
  for conjugacy classes under iteration by a surface homeomorphism.
\end{proof}

\subsubsection{Strata of an arbitrary CT}
We now consider an arbitrary CT representing the geometric
automorphism $\phi$.

\begin{notation}\label{Notationf}
  For the remainder of this section, we let $f'\colon G'\to G'$ be
  \emph{any} CT representing the geometric automorphism $\phi$.  In
  particular, $f'$ need not have anything to do with a surface
  $\Sigma$ on which $\phi$ can be realized.
\end{notation}

\begin{lemma}\label{EGAndNEGStrataOff'}
  If $f'\colon G'\to G'$ is as above, then every EG stratum of $f'$ is
  geometric and every nonfixed NEG stratum is linear.
\end{lemma}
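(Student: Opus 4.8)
The plan is to deduce this from the two facts already established for the preferred CT $f$ of \Cref{NotationCT} --- that every EG stratum of $f$ is geometric (\Cref{EGStrataAreGeometric}) and every non-fixed NEG stratum of $f$ is linear (\Cref{NEGStrataAreLinear}) --- together with the observation that the relevant obstructions depend only on $\phi$ and not on the chosen representative. Accordingly I would split the statement into an EG half and an NEG half; the EG half is the substantive one, and the NEG half will follow by re-running the earlier argument almost verbatim.

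For the EG half, the idea is to use the bijection between EG strata of a CT and the attracting laminations $\mathcal{L}(\phi)$ (hence the dual pairs $\mathcal{L}^{\pm}(\phi)$) given by $\mathcal{F}_{supp}(\Lambda)$ in \cite{HandelMosher}*{Fact I.1.55}. Since $\phi$ is rotationless, $f$ is a CT for the rotationless iterate $\phi^1 = \phi$, and \Cref{EGStrataAreGeometric} says every EG stratum of $f$ is geometric; so every $\Lambda^{\pm}\in\mathcal{L}^{\pm}(\phi)$ is associated to a geometric EG stratum of $f$. Then \Cref{prop:lamination-geometricity-invariant} applies directly: for any CT of any rotationless iterate of $\phi$, in particular for our $f'$, the EG stratum associated to each $\Lambda^{\pm}$ is geometric. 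As every EG stratum of $f'$ is associated to some $\Lambda^{\pm}$, this gives the EG half.

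For the NEG half, the key point is that the proof of \Cref{NEGStrataAreLinear} used nothing about the preferred CT $f$ beyond: (i) that $\phi$ is geometric, so that no conjugacy class of $\F$ grows super-exponentially or quadratically under iteration by $\phi$ (this is the Nielsen--Thurston growth dichotomy --- a conjugacy class under a surface homeomorphism grows exponentially if it crosses a pseudo-Anosov piece and linearly otherwise); and (ii) that every EG stratum of the CT is geometric, which was needed only so that \Cref{maximal-subpaths-geom-eg} could be applied to the stratum $H_s$ met while moving up through the filtration. Fact (i) is a property of $\phi$ alone, and fact (ii) has just been proved for $f'$. So, supposing for contradiction that $f'$ has a lowest non-fixed non-linear NEG stratum $H_r$, the argument of \Cref{NEGStrataAreLinear} --- via \Cref{moving-up-through-the-filtration}, the (NEG Nielsen Paths) property, and the Gupta--Wigglesworth characterization of indivisible Nielsen and exceptional paths crossing non-fixed irreducible strata \cite{GW:Loxodromics}*{Lemma 7.4} --- produces a completely split conjugacy class with $E$ as a splitting unit, hence one that grows at least quadratically, contradicting (i).

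I expect the only real work to be bookkeeping in the last paragraph: one must verify that each step in the proof of \Cref{NEGStrataAreLinear} is phrased in terms of $\phi$ and an abstract CT, rather than in terms of the surface $\Sigma$ or the special filtration of $f$, so that it transfers verbatim to $f'$. Granting that, the lemma follows with no genuinely new ideas.
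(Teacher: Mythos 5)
Your proposal is correct and follows essentially the same two-pronged approach as the paper: the EG half via \Cref{EGStrataAreGeometric} together with \Cref{prop:lamination-geometricity-invariant}, and the NEG half by observing that the argument of \Cref{NEGStrataAreLinear} depends only on growth rates of conjugacy classes (hence only on $\phi$). You are in fact slightly more careful than the paper's terse proof in flagging that the appeal to \Cref{EGStrataAreGeometric} inside the proof of \Cref{NEGStrataAreLinear} must be replaced by the just-established EG half of the present lemma when transferring the argument to $f'$.
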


\begin{proof}
  The first statement is implied by \Cref{EGStrataAreGeometric}
  together with \Cref{prop:lamination-geometricity-invariant}. For
  the second, we simply note that the proof of \Cref{NEGStrataAreLinear}
  relies only on the asymptotic growth rates of conjugacy classes in $\F$,
  and this depends only on $\phi$ and not $f$.
\end{proof}

As a consequence of \Cref{EGAndNEGStrataOff'} we obtain two
more necessary conditions for a rotationless automorphism to be geometric:

\begin{corollary}\label{cor:nozeroornest}
  There are no elements $\Lambda_1,\Lambda_2\in\mathcal{L}(\phi)$ such
  that $\Lambda_1\supsetneq \Lambda_2$.
  Any CT $f'\colon G'\to G'$ representing $\phi$ does not
  have a zero stratum.
\end{corollary}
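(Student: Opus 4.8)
The plan is to derive both statements as consequences of \Cref{EGAndNEGStrataOff'} together with the structural facts about geometric EG strata already assembled. For the first statement, suppose toward a contradiction that $\Lambda_1 \supsetneq \Lambda_2$ are distinct elements of $\mathcal{L}(\phi)$. Since we have assumed $\phi$ is geometric (\Cref{NotationPhi}) and rotationless, every lamination in $\mathcal{L}(\phi)$ is invariant and, by the bijection between $\mathcal{L}(\phi)$ and EG strata of a CT, corresponds to an EG stratum; by \Cref{EGAndNEGStrataOff'} each such stratum is geometric, so by \Cref{prop:geometric-model-laminations} we have $\mathcal{F}_{supp}(\Lambda_i) = \mathcal{F}_{supp}([\pi_1 S_i])$ where $S_i$ is the surface of a weak geometric model, and moreover each $\Lambda_i$ is \emph{filling} in $\pi_1 S_i$ and has all leaves generic. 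The containment $\Lambda_1 \supsetneq \Lambda_2$ forces $\mathcal{F}_{supp}(\Lambda_2) \sqsubset \mathcal{F}_{supp}(\Lambda_1)$, so $\pi_1 S_2$ is carried by $\pi_1 S_1$ up to conjugacy. But then a lift of $\Lambda_1$ would contain the generic (hence every) leaf of $\Lambda_2$ inside a proper free-factor-carried sublamination, while genericity of the leaves of $\Lambda_2$ and the density statements in \Cref{prop:geometric-model-laminations} say $\Lambda_2$ is precisely the closure of any of its leaves --- I would argue that a proper nonempty sublamination of $\Lambda_1$ with a generic leaf of $\Lambda_1$ in its closure is impossible when all leaves of $\Lambda_1$ are generic and dense. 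The clean way to phrase this: distinct attracting laminations of a rotationless $\phi$ are never nested, because a proper closed invariant subset of the attracting lamination $\Lambda_1$ cannot contain a generic leaf, yet $\Lambda_2 \subseteq \Lambda_1$ would have to, as $\Lambda_2$'s leaves are generic in $\Lambda_2$ and hence are limits of leaves of $\Lambda_1$; but the real obstruction is that a generic leaf of $\Lambda_1$ is dense in $\Lambda_1$, so $\Lambda_2$, being closed and containing no isolated structure, would have to equal $\Lambda_1$ if it contained such a leaf, contradicting $\Lambda_1 \supsetneq \Lambda_2$. Alternatively --- and I expect this to be the cleanest route --- one simply invokes the filling conclusion directly: $\Lambda_1$ fills $\pi_1 S_1$, but $\pi_1 S_2 \le \pi_1 S_1$ is a subgroup carrying all of $\Lambda_1$ (since $\Lambda_2 \subseteq \Lambda_1$ is false --- rather, $\Lambda_1$ must be carried by $\pi_1 S_2$ because $\mathcal{F}_{supp}(\Lambda_1) = \mathcal{F}_{supp}(\Lambda_2)$ would be forced), and one extracts the contradiction from the fact that $\mathcal{F}_{supp}$ of a filling lamination in a free group $H$ is exactly $\{[H]\}$, so $\mathcal{F}_{supp}(\Lambda_1) \ne \mathcal{F}_{supp}(\Lambda_2)$ whenever $S_1 \ne S_2$, while $\Lambda_1 \supsetneq \Lambda_2$ would force $\mathcal{F}_{supp}(\Lambda_2) \sqsubset \mathcal{F}_{supp}(\Lambda_1)$ with the two supports genuinely different --- and then $\Lambda_1 \subseteq \mathcal{B}(\pi_1 S_2)$ is absurd since a filling lamination is not carried by any proper free factor system of the ambient free group, in particular not by $\pi_1 S_2$ when $\pi_1 S_2$ is a proper free factor of the support of $\Lambda_1$.

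For the second statement, suppose some CT $f'\colon G'\to G'$ representing $\phi$ has a zero stratum $H_i$. By definition a zero stratum is enveloped by some EG stratum $H_r$ with $i$ among the indices $u < j < s$ appearing in the EG case of \Cref{moving-up-through-the-filtration}; more directly, any zero stratum of a CT is contained in the ``enveloping'' region of some EG stratum. But by \Cref{EGAndNEGStrataOff'} every EG stratum of $f'$ is geometric, and by \Cref{no-geometric-enveloped-zero-strata} a geometric EG stratum envelops no zero stratum. Hence there can be no zero stratum, a contradiction. This is essentially immediate once the two cited facts are in hand.

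The main obstacle is the first statement, specifically pinning down exactly which lamination-theoretic fact rules out nesting. I would structure the write-up to lean entirely on \Cref{prop:geometric-model-laminations}: since $\phi$ is assumed geometric and rotationless, every $\Lambda \in \mathcal{L}(\phi)$ is invariant and geometric (by \Cref{EGStrataAreGeometric} and \Cref{prop:lamination-geometricity-invariant}), so $\mathcal{F}_{supp}(\Lambda^+) = \mathcal{F}_{supp}([\pi_1 S])$ for the associated model surface, and $\Lambda^+$ fills $\pi_1 S$. If $\Lambda_1 \supsetneq \Lambda_2$, then every leaf of $\Lambda_2$ is a leaf of $\Lambda_1$, so $\Lambda_2$ is carried by $\mathcal{F}_{supp}(\Lambda_1) = \{[\pi_1 S_1]\}$; thus $\pi_1 S_2 \le \pi_1 S_1$ up to conjugacy, and (as in the finite-index argument of \Cref{lem:surfacegroupisaninvariant}) this forces $\pi_1 S_2$ finite index in $\pi_1 S_1$. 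But $\Lambda_1$ strictly contains $\Lambda_2$, so $\Lambda_1$ has a leaf $\ell$ not in $\Lambda_2$; since $\Lambda_1 = \overline{\ell'}$ for any leaf $\ell'$ and $\Lambda_2$ is closed and $\phi$-invariant, $\Lambda_2$ is a proper closed invariant subset of $\Lambda_1$ --- but \Cref{prop:geometric-model-laminations} says every leaf of $\Lambda_1$ is generic, i.e.\ dense in $\Lambda_1$, so any leaf of $\Lambda_2 \subseteq \Lambda_1$ is dense in $\Lambda_1$, forcing $\Lambda_2 \supseteq \overline{\text{any leaf}} = \Lambda_1$, the desired contradiction. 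I would double-check that genericity of \emph{every} leaf (not just one) of a geometric lamination, as asserted in \Cref{prop:geometric-model-laminations}, is exactly the hypothesis that makes this clean, and cite that bullet explicitly.
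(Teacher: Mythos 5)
Your argument for the zero-stratum claim is exactly the paper's: every zero stratum is enveloped by some EG stratum, \Cref{EGAndNEGStrataOff'} makes every EG stratum geometric, and \Cref{no-geometric-enveloped-zero-strata} says geometric EG strata envelop nothing. For the no-nesting claim, though, you take a genuinely different route. The paper argues at the level of the CT's combinatorial structure: since each $H'_r$ is geometric, \Cref{maximal-subpaths-geom-eg} forces the maximal subpaths of $f'(E)$ in $G'_{r-1}$ to be Nielsen paths, so no lower-height EG edge can appear as a splitting unit in iterates of $E$, and hence the lamination associated to $H'_r$ cannot contain a lower one. You instead argue at the level of abstract lamination dynamics, invoking the bullet in \Cref{prop:geometric-model-laminations} that \emph{every} leaf of a geometric attracting lamination is generic: if $\Lambda_2 \subsetneq \Lambda_1$ were both in $\mathcal{L}(\phi)$, any leaf $\ell$ of $\Lambda_2$ would also be a leaf of $\Lambda_1$, hence dense in $\Lambda_1$, so the closed set $\Lambda_2$ would contain $\overline{\ell} = \Lambda_1$, a contradiction. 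That final paragraph of yours is correct and complete; the intermediate excursion through $\mathcal{F}_{supp}$ and the finite-index reasoning from \Cref{lem:surfacegroupisaninvariant} is a red herring and can be cut entirely. The trade-off is that your version leans on the strong all-leaves-generic conclusion of Handel--Mosher's proposition (which does require $\Lambda_1$ to be geometric, so you still need \Cref{EGAndNEGStrataOff'} to get there), while the paper's version uses only the Nielsen-path characterization of geometric strata and so stays closer to what the algorithm actually manipulates. Both are sound; yours is arguably cleaner as a stand-alone statement about laminations, once you strip away the digressions.
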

\begin{proof}
   Since $\phi$ is geometric, by \Cref{EGAndNEGStrataOff'} every EG
   stratum $H'_r$ is geometric. 

   For every $E$ edge in an EG stratum $H'_r$
   \Cref{maximal-subpaths-geom-eg} implies each maximal subpath of $f'(E)$
   in $G'_{r-1}$ itself a Nielsen path. In particular, there are no EG edges of
   height $<r$ in the complete splitting of $f'(E)$, so the lamination
   $\Lambda$ associated to $H'_r$ cannot contain a sub-lamination.

  Every zero stratum is enveloped by some EG stratum, but
  \Cref{no-geometric-enveloped-zero-strata} states that geometric EG
  strata do not envelop zero strata.  
\end{proof}

In fact, the pseudo-Anosov pieces used for geometric models are an invariant of
$\phi$ independent of the CT.

\begin{lemma}\label{pASubsurfaces=EGStrata}
	With the conventions established by \Cref{NotationPhi}, \Cref{NotationCT}, and \Cref{Notationf} above, there is a bijection
  \begin{equation*}
    \{\text{EG strata of }f'\}\longleftrightarrow
    \{R\subseteq\Sigma\mid g(R)=R\text{ and }g|_R\text{ is pA}\}.
  \end{equation*}
  Moreover, a subsurface on the right-hand side of the bijection can be used
  as the surface part of a geometric model for the corresponding stratum.
\end{lemma}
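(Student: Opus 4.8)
The plan is to prove the bijection first for the preferred CT $f\colon G\to G$ of \Cref{NotationCT}, and then transfer it to the arbitrary CT $f'\colon G'\to G'$ of \Cref{Notationf} via the canonical correspondence between EG strata of a CT and $\mathcal{L}(\phi)$ recalled in \Cref{Section:Prelim}, using the invariance results of \Cref{section:GeometricModels} for the ``moreover'' clause.

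For the preferred CT, \Cref{EGStrataAreGeometric} already supplies a map from EG strata of $f$ to the right-hand set: it sends $H_r$ to the subsurface $R$ with $Q_r=Q_{r-1}\cup R$, shows $g|_R$ is pseudo-Anosov, and shows that $R$ attached to $G_{r-1}$ furnishes a weak geometric model for $H_r$. By \Cref{prop:geometric-model-laminations} applied to that model, the composite ``$H_r\mapsto$ unstable lamination of $g|_R$ pushed into $\mathcal{B}(\F)$'' is precisely the canonical bijection between EG strata of $f$ and $\mathcal{L}^{\pm}(\phi)$; in particular $H_r\mapsto R$ is injective. For surjectivity, let $R$ be a $g$-invariant subsurface with $g|_R$ pseudo-Anosov. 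By the Thurston normal form (\Cref{NoPeriodicBehavior}) and the construction preceding \Cref{NotationCT} we may, up to isotopy, take $R=Q_i\setminus Q_{i-1}$ for a consecutive pair $\mathcal{F}_{i-1}\sqsubset\mathcal{F}_i$ of prescribed free factor systems. By \Cref{obs:pAFreeFactors} no $\phi$-invariant free factor system lies strictly between them, so the first core filtration level of $f$ whose fundamental group strictly contains $\mathcal{F}_{i-1}$ has fundamental group exactly $\mathcal{F}_i$. Since $\mathcal{F}_i$ carries conjugacy classes supported in $R$ that grow exponentially under $\phi$ and are not carried by $\mathcal{F}_{i-1}$, the passage from the level realizing $\mathcal{F}_{i-1}$ to the next core level falls in the EG case of \Cref{moving-up-through-the-filtration} (the enveloped zero strata there being excluded by \Cref{no-geometric-enveloped-zero-strata}, as $\phi$ is geometric). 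This produces an EG stratum $H_r$ with $\mathcal{F}_{supp}([\pi_1 G_r])=\mathcal{F}_i$, and by \Cref{EGStrataAreGeometric} its associated subsurface is $Q_i\setminus Q_{i-1}=R$.

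To transfer to $f'$, compose the canonical bijections ``EG strata of $f'\leftrightarrow\mathcal{L}^{\pm}(\phi)$'' and ``$\mathcal{L}^{\pm}(\phi)\leftrightarrow$ EG strata of $f$'' with the bijection just established, obtaining the claimed bijection between EG strata of $f'$ and $g$-invariant pseudo-Anosov subsurfaces of $\Sigma$. For the ``moreover'' clause, fix an EG stratum $H'_r$ of $f'$ corresponding to the subsurface $R$ and the lamination pair $\Lambda^{\pm}$; it is geometric by \Cref{EGAndNEGStrataOff'}, so it admits a weak geometric model with surface $S'$ and pseudo-Anosov $g'$. By \Cref{cor:godgivensurfacepiece} there is a homeomorphism $\eta\colon R\to S'$ carrying the mapping class $[g|_R]$ to $[g']$, and by \Cref{lem:boundaryclassesareinvariant} this $\eta$ carries the boundary conjugacy classes of $R$ to those of $S'$, matching the upper boundary components (each characterized as the class of the height-$r$ indivisible Nielsen path $\rho_r$). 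Pulling the attaching maps $\alpha_i$ back along $\eta$, and transporting the quotient $2$-complex, the embedded copy of $G'_r$, the path $\rho_r$, and the homotopy equivalence $h'$ accordingly (all of which change only by $\eta$), exhibits $R$ together with $g|_R$ as the surface part of a weak geometric model for $H'_r$; gluing in $G'$ along $G'_r$ in the standard way upgrades this to a geometric model for $H'_r$ with surface part $R$.

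The main obstacle is the filtration bookkeeping in the surjectivity step: one must be certain that moving from the core level with fundamental group $\mathcal{F}_{i-1}$ to the next core level produces exactly one EG stratum, and that it carries precisely $\mathcal{F}_i$ with no spurious intermediate $\phi$-invariant free factor systems. This is exactly why the prescribed filtration was designed around \Cref{obs:pAFreeFactors}, and why \Cref{moving-up-through-the-filtration} and \Cref{no-geometric-enveloped-zero-strata} must be applied in tandem. Everything else is either immediate from \Cref{EGStrataAreGeometric} or a formal consequence of the invariance results of \Cref{section:GeometricModels}.
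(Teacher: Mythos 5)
Your proof is correct and follows the same overall route as the paper (establish the bijection for the preferred CT $f$ via \Cref{EGStrataAreGeometric}, transfer to an arbitrary $f'$ through the canonical correspondence with $\mathcal{L}(\phi)$, and invoke \Cref{cor:godgivensurfacepiece} for the ``moreover'' clause). The paper's own proof is extremely terse — it simply asserts that the bijection for $f$ ``is a consequence of \Cref{EGStrataAreGeometric}'' — whereas that lemma, as stated, only constructs the map from EG strata to pseudo-Anosov subsurfaces; your explicit surjectivity argument (using \Cref{obs:pAFreeFactors} and \Cref{moving-up-through-the-filtration} to pin down the next core filtration level) supplies a step the paper leaves implicit, and does so correctly.
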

\begin{proof}
  For the CT $f\colon G\to G$ obtained using a
  filtration coming from the surface $\Sigma$; this is a consequence
  of \Cref{EGStrataAreGeometric}. For
  any other CT $f'$ the requisite the bijection comes
  from the bijection between EG strata of $f'$ and
  attracting laminations $\mathcal{L}(\phi)$. That this bijection provides
  surface pieces of geometric models follows from
  \Cref{cor:godgivensurfacepiece}.
\end{proof}

In summary, we conclude that for any CT $f'$ representing a rotationless
geometric automorphism $\phi$, every stratum of $f'$ must be either fixed,
linearly growing, or geometric.

\subsection{The fixed subgroups}\label{sec:fixedsubgroup}

Beyond the ``stratum constraints'' from the previous section, the fixed
subgroups of a rotationless geometric automorphism reflect the fixed
subsurfaces of the Thurston normal form. The precise formulation is the notion
of a $\partial$-realizable set of conjugacy classes, 
\Cref{def:boundary-realizable}. We first show that this property is algorithmic,
and then apply it to the fixed subgroups of a rotationless
automorphism.

We continue to use \Cref{NotationPhi}, \Cref{NotationCT}, and \Cref{Notationf} in this subsection.

\subsubsection{$\partial$-realizable sets in a free group}
\begin{definition}\label{def:boundary-realizable}
  Let $\F$ be a finite rank free group and $\mathcal{C}$ a finite set of
  conjugacy classes of $\F$.  We say that $\mathcal{C}$ is
  \emph{$\partial$-realizable} if there exists a surface $\Sigma$ and
  an identification $\F\simeq \pi_1(\Sigma)$ such that the free homotopy class corresponding to every element
  of $\mathcal{C}$ is the class of a boundary component of $\Sigma$.  We do not allow a class
  $[c]\in \mathcal{C}$ to be a proper power of a boundary component. A finite
  multiset $\mathcal{C}$ is $\partial$-realizable if either $\mathcal{C}$ is in
  fact a $\partial$-realizable set, or if $\F = \langle c\rangle$ and
  $\mathcal{C}$ is $[c]$ with multiplicity at most 2.
\end{definition}

Suppose that $\mathcal{C}$ is $\partial$-realizable and let $\Sigma$ be a
surface witnessing this fact.  If $[\partial \Sigma]$ is the collection of conjugacy
classes of elements of $\pi_1(\Sigma)\simeq\mathbb{F}$ determined by
the boundary components of $\Sigma$, then there are two possibilities:
either $\mathcal{C} = [\partial \Sigma]$ or else $\mathcal{C}\subsetneq [\partial
\Sigma]$.  In the latter case, $\mathcal{C}$ will be
part of a basis for $\mathbb{F}$, so Whitehead's algorithm~(\Cref{whitehead}) will determine if
such an $\mathcal{C}$ is $\partial$-realizable.  The following corollary shows
that the same holds when $\mathcal{C}= [\partial \Sigma]$ again using Whitehead's algorithm.

\begin{corollary}\label{cor:partial-realizable-algo}
	There is an algorithm (\textsc{$\partial$-realizable?}) that
	determines whether a multiset $\mathcal{C}$ of conjugacy classes is
	$\partial$-realizable in $\mathbb{F}$.
\end{corollary}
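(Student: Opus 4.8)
The plan is to reduce the decision problem to a finite set of applications of Whitehead's algorithm (\Cref{whitehead}) by exhausting the possible target surfaces. First I would observe that if $\mathcal{C}$ is $\partial$-realizable by a surface $\Sigma$, then the Euler characteristic and rank constrain $\Sigma$: we have $\pi_1(\Sigma) \cong \F$, so $\chi(\Sigma)$ is determined by the rank $\rk$ together with whether $\Sigma$ is orientable and how many boundary components it has. The number of boundary components is in turn bounded: each boundary class is primitive-up-to-the-surface and together the boundary classes multiply to a product of commutators/squares, forcing at most finitely many boundary components for a fixed rank (concretely, the number of boundary components of a genus-$g$ surface with $b$ boundary components and rank $\rk = 2g + b - 1$ or $\rk = g + b - 1$ in the non-orientable case is at most $\rk + 1$). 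Hence there is a finite, explicitly computable list of candidate homeomorphism types of surface $\Sigma$ with $\pi_1(\Sigma) \cong \F$ — in fact this is exactly the list $S_{\rk,1},\ldots,S_{\rk,k}$ of standard surfaces fixed earlier in the paper — and for each we know $[\partial S_{\rk,i}]$ as a concrete multiset of conjugacy classes in $\F$.

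Next I would split into the two cases flagged in the discussion preceding the corollary. If $|\mathcal{C}|$ is strictly less than the number of boundary components of a candidate surface $S_{\rk,i}$, then realizing $\mathcal{C}$ as a sub-multiset of $[\partial S_{\rk,i}]$ forces $\mathcal{C}$ to extend to a basis-like family; more precisely, one checks whether there is $\psi\in\Out$ carrying $\mathcal{C}$ into the standard boundary multiset $[\partial S_{\rk,i}]$, which is a finite orbit-membership question solvable by the tuple version of Whitehead's algorithm after fixing an ordering and trying all ways to match $\mathcal{C}$ against a sub-multiset of $[\partial S_{\rk,i}]$. If $|\mathcal{C}|$ equals the number of boundary components of $S_{\rk,i}$, the question becomes whether $\mathcal{C} = [\partial S_{\rk,i}]$ up to the $\Out$-action, i.e.\ whether some $\psi\in\Out$ satisfies $\psi(\mathcal{C}) = [\partial S_{\rk,i}]$ as multisets; again this is a finite disjunction (over orderings of the two multisets) of instances of \Cref{whitehead} applied to $\mathcal{C}^{n}$. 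We must also handle the degenerate clause of \Cref{def:boundary-realizable}: separately test whether $\F$ has rank $1$ and $\mathcal{C}$ is a single class (possibly with multiplicity $2$) realizing the core of an annulus or Möbius band — a trivial check. The algorithm \textsc{$\partial$-realizable?} runs all these finitely many checks and returns \texttt{Yes} if any succeeds.

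The main subtlety — though not really an obstacle — is bookkeeping with multisets and proper powers: \Cref{def:boundary-realizable} forbids $[c]\in\mathcal{C}$ from being a proper power of a boundary class, and it allows multiplicity (the annulus/Möbius degenerate case, and more generally two boundary components can be conjugate in $\F$). So before invoking Whitehead I would normalize: detect and reject any class that is a proper power of another element that could serve as a boundary class (root-taking in free groups is algorithmic), and treat $\mathcal{C}$ genuinely as a multiset when matching against $[\partial S_{\rk,i}]$, which requires iterating over the finitely many bijections between $\mathcal{C}$ and each candidate sub-multiset. Correctness in one direction is immediate: if any check succeeds, pulling back the standard surface structure along $\psi$ exhibits the required identification $\F\cong\pi_1(\Sigma)$. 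In the other direction, if $\mathcal{C}$ is $\partial$-realizable by some $\Sigma$, then $\Sigma$ is marked-homeomorphic to one of the $S_{\rk,i}$ by an outer automorphism (as noted when the standard surfaces were introduced), and that outer automorphism witnesses success of the corresponding check. Hence the procedure is a correct decision algorithm, proving the corollary.
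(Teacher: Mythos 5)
Your proposal is correct and follows essentially the same route as the paper: handle the rank-one multiset clause of \Cref{def:boundary-realizable} by direct inspection, then reduce the set case to finitely many applications of Whitehead's algorithm (\Cref{whitehead}) by iterating over the standard surfaces $S_{n,i}$ and testing $\Out$-orbit equivalence of $\mathcal{C}$ against $|\mathcal{C}|$-element subsets of $[\partial S_{n,i}]$. The extra normalization you add (pre-filtering proper powers, worrying about conjugate boundary components) is harmless but unnecessary: the orbit test against $[\partial S_{n,i}]$ already rejects proper powers automatically, and for standard surfaces of rank $\ge 2$ the boundary classes are pairwise distinct, so multiset coincidences arise only in the rank-one case you treat separately.
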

\begin{proof}
By definition a multiset is $\partial$-realizable if either it is a
$\partial$-realizable set (not multiset) or if $\F \cong\mathbb{Z}$ and $\mathcal{C}$ is a generator of
$\mathbb{F}$ with multiplicity at most 2. This second condition is readily
computed, so it remains to give an algorithm for the case
that $\mathcal{C}$ is a set.

	Observe that a \emph{set} of conjugacy classes $\mathcal{C}$ is
$\partial$-realizable if and only if $\mathcal{C}$ is in the $\Out$ orbit of
the set of conjugacy classes $[\partial S_{n,i}]$ represented by the boundary
	of some standard surface $S_{n,i}$. Thus,
	$\partial$-realizability is computable with Whitehead's algorithm~(\Cref{whitehead}): For each
	standard surface $S_{n,i}$ of the relevant rank, use \Cref{whitehead}  to determine if any
	$|\mathcal{C}|$-subset of $[\partial S_{n,i}]$ and
  $\mathcal{C}$ are in the same $\Out$-orbit.
\end{proof}

\subsubsection{$\partial$-realizable sets and the fixed subgroups of $\phi$}
We now consider the collection of conjugacy classes of
subgroups $\Fix(\phi)$ for the geometric automorphism under
consideration. Indeed, computing $\Fix(\phi)=\{[K_1],\ldots,[K_l]\}$
from the CT $f\colon G\to G$ can be done easily~ \cite{FH:CTs}*{\S 8}.  On the
other hand, computing $\Fix(\phi)$ from $g\colon\Sigma\to\Sigma$ is also
straightforward from its Thurston normal form: $\Fix(\phi)$ consists of the
conjugacy classes of fundamental groups of subsurfaces on which $g$ restricts
to the identity. This second characterization leads to the conclusion developed
in this section: that the boundary curves of the fixed subsurfaces can be
computed from a CT, and geometricity on the identity components reduces to a
question of $\partial$-realizability.

We recall the details of the computation of $\Fix(\phi)$ from a CT
$f\colon G\to G$, as a familiarity with this procedure is important in
the sequel; the reader is directed to Feighn and Handel~\cite{FH:CTs}*{\S 9-11}
for complete details of the construction and its computability. The reader may
wish to look ahead to \Cref{ex:SurfaceExample}.

\begin{definition}
Define the graph $\hat{S}(f)$ as follows.  Start with the subgraph $\hat{S_1}(f)$ of $G$ consisting of
all vertices in $\Fix(f)$ and all fixed edges.  Given a linear edge
$E$ of $G$, we have $f(E)=E u_E^d$ for some $d\neq 0$ and some
root-free loop $u_E$ in $G$ that is fixed by $f$ (up to free
homotopy). For each such edge, attach a ``lollipop'' $Y_E$ to
$\hat{S}_1(f)$; $Y_E$ is the union of an edge labeled $E$ and a circle
labeled $u_E$, which is attached at the initial vertex of $E$,
considered as an element of $\hat{S}_1(f)$. For each geometric EG stratum $H_r$
with an indivisible Nielsen path of height $r$, choose one such
	indivisible Nielsen path $\rho$ (there are only two and they differ by
	a choice of orientation) and attach an edge path labeled by $\rho$ to
	$\hat{S}_1(f)$ with endpoints equal to those of $\rho$. The result is
	$\hat{S}(f)$. 
\end{definition}

When $v\in\Fix(f)$, we abuse notation and also denote by $v$ the
unique vertex of $\hat{S}_1(f)\subset\hat{S}(f)$ labeled by $v$.  The
component of $\hat{S}(f)$ containing this vertex is denoted by
$\hat{S}(f, v)$. Each component of $\hat{S}(f)$ has its fundamental
group identified with a subgroup of $\F$ by the immersion determined by edge labels. The collection of conjugacy classes
of such subgroups is precisely $\Fix(\phi)$. The graph $\hat{S}(f)$ is clearly
computable from the data of a CT and we will use its components as computable
representatives of $\Fix(\phi)$.

Heuristically, linear strata in a CT representing a geometric outer
automorphism should correspond to Dehn twists in the associated
surface. This motivates the following definition, which will record
the set of Dehn twist curves in a reducing system for such an
automorphism. This definition should be compared to Handel and Mosher's
definition of $\text{Twist}(\phi)$~\cite{HandelMosher}*{Definition II.2.7}.
Taken as a set $\mathcal{L}_K$ is the intersection up to conjugacy of $K$ and
 $\text{Twist}(\phi)$, however the multiset structure is necessary for
 determining geometricity.

\begin{definition}\label{DefLK}
	For each $[K]\in\Fix(\phi)$, we define a multiset $\mathcal{L}_K$ of
  conjugacy classes of $K$ as follows ($\mathcal{L}$ is for ``linear'').
Consider each linear edge
  $E$  of $f\colon G\to G$ in turn. For each edge, write \[ f(E)=Eu_E^d\] for
  some root-free reduced loop $u_E$ in $G$ and some integer $d\neq 0$.
  Let $v$ and $v'$ be the initial and terminal endpoints of $E$
  respectively, both of which are necessarily in $\Fix(f)$. If $v$ is in
	the component of $\hat{S}(f)$ corresponding to $[K]$ add $Eu_E\overline{E}$
  to $\mathcal{L}_K$. If $v'$ is in the component of $\hat{S}(f)$ corresponding
	to $[K]$ add $u_E$ to $\mathcal{L}_K$.
\end{definition}

Our definition of $\mathcal{L}_K$ is computable from a CT representative, for
it to be useful we also must know that $\mathcal{L}_K$ is an invariant of
$\phi$ independent of the choice of CT. To do this we will make use of an
alternate definition, in terms of the \emph{axes} of $\phi$. Recall that a
root-free conjugacy class $\mu$ is an \emph{axis of $\phi$} if there are
distinct principal lifts $\Phi, \Psi\in P(\phi)$ that fix a representative $u$
of $\mu$. The number of distinct principal lifts in $P(\phi)$ fixing $u$ is the
\emph{multiplicity} of $u$, denoted $m(\mu)$. This does not depend on the particular
representative, $\mu$ and its multiplicity are invariants of $\phi$. For a
fixed $u$ representing $\mu$ there is
 a unique \emph{base lift} $\Phi_0 \in P(\phi)$, characterized
as corresponding to the unique lift $\tilde{f}_0$ with fixed points in the axis
of $u$ and commuting with the covering translation
$\tau_u$~\cite{FH:RecogThm}*{pg. 95}. With this notion of base lift, Feighn
and Handel connect the axes of an outer automorphism $\phi$ to the linear edges
of a CT representative.

\begin{lemma}[\cite{FH:RecogThm}*{Lemma
4.40}]\label{axes-determine-linear-edges}
Suppose that $\phi$ is forward rotationless and that the unoriented conjugacy
class $\mu$ is an axis for $\phi$. Let $f :  G\to G$ be a CT representative
of $\phi$. Fix a representative circuit $u$ in $G$ for $\mu$ and let
$\Phi_0$ be the base lift corresponding to the choices of $u$ and $f$.
	There is a bijection between the set $\Phi_j \in P(\phi)$ such that
	$\Phi_j\neq \Phi_0$ and $\Phi_j$ fixes $u$, and the set of linear edges
	$E_j$ for $f$ such that $f(E_j) = E_ju^d$.
\end{lemma}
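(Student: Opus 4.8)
This lemma is stated as \cite{FH:RecogThm}*{Lemma 4.40}, so the proof is the citation; I record the structure of the argument, since the correspondence recurs in the analysis of $\mathcal{L}_K$ below. The plan is to match each linear edge $E$ to a principal lift by way of the closed indivisible Nielsen path $E u \overline{E}$ that $E$ carries. Fix the lift $\tilde L \subseteq \tilde G$ of a circuit representing $\mu$ whose translation is the covering transformation $\tau_u$ for the chosen representative $u$; since $\mu$ being an axis forces $u$ to be a Nielsen circuit, $\tilde L$ is invariant under a suitable lift of $f$.

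For the passage from a linear edge to a principal lift, let $E$ be a linear edge with $f(E) = E u^{d}$. Both endpoints $v, v'$ of $E$ lie in $\Fix(f)$, and $u$ may be realized as a loop based at $v'$, so choose the lift $\tilde E$ whose terminal vertex $\tilde v'$ lies on $\tilde L$ and let $\tilde f_E$ be the lift of $f$ fixing the initial vertex $\tilde v$ of $\tilde E$. Unwinding $f(E) = E u^{d}$ gives $\tilde f_E(\tilde v') = \tau_u^{d}\tilde v'$ and $\tilde f_E(\tilde L) = \tilde L$; a short computation with translation lengths then shows $\tilde f_E \tau_u \tilde f_E^{-1} = \tau_u$, so the automorphism $\Phi_E$ associated to $\tilde f_E$ fixes $u$, and $\Phi_E$ is independent of the choice of $\tilde v'$ on $\tilde L$ because $\Phi_E(u) = u$ already forces $i_u \Phi_E i_u^{-1} = \Phi_E$. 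The restriction $\tilde f_E|_{\tilde L}$ is a nonzero translation, so $\Phi_E \neq \Phi_0$, and the endpoint of $\tilde L$ toward which it translates is an attracting, hence non-repelling, fixed point of $\hat\Phi_E$; that $\Phi_E$ is actually principal follows from the train-track structure along $\tilde E$ and below $\tilde v$ together with the hypothesis that $\mu$ is an axis, which already guarantees at least two principal lifts fixing $u$. Finally $E$ is the first edge of the unique $\tilde f_E$-fixed ray converging to that attracting endpoint, so it is recovered from $\Phi_E$; this yields injectivity, even when several linear edges share the axis $\mu$.

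Conversely, let $\Phi_j \neq \Phi_0$ be principal and fix $u$. Then $\tilde f_j$ commutes with $\tau_u$, hence preserves $\tilde L$ and translates it by a nonzero amount, so $\Fix(\tilde f_j)$ meets $\tilde L$ in at most a point and the unique $\tilde f_j$-fixed ray to the attracting endpoint of $\tilde L$ eventually runs along $\tilde L$; the finite initial segment of that ray, read together with its reverse, tightens to a closed indivisible Nielsen path whose highest edge is some edge $E$. The stratum of $E$ cannot be EG: such a closed indivisible Nielsen path would make that stratum geometric and then, by \Cref{fact:nielsen-path-geometric}, cross each of its edges exactly twice, forcing the stratum to be the single edge $E$, which an EG stratum never is. So the stratum of $E$ is NEG, and the (NEG Nielsen Paths) clause in the definition of a CT forces $E$ to be linear with $f(E) = E u^{d}$; comparing with the previous paragraph, $\Phi_E = \Phi_j$, which is surjectivity. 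The main obstacle is precisely this extraction: making it rigorous requires bounded cancellation and the classification of indivisible Nielsen and exceptional paths crossing a linear edge (as in \cite{GW:Loxodromics}) to be certain that the fixed ray joining $\Fix(\tilde f_j)$ to $\tilde L$ has exactly the rigid form $E u \overline{E}$ rather than something longer, and to pin down $d$; the remaining content is bookkeeping with lifts, basepoints, and the conjugations needed so that ``$\Phi_j$ fixes $u$'' refers to one fixed representative of $\mu$ throughout.
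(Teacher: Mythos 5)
Your proposal correctly treats the statement as a citation to Feighn--Handel (as the paper does) and your description of the bijection --- fix a lift $\tilde v$ of the basepoint of $u$ on the axis $\tilde L$, match the base lift to the lift of $f$ fixing $\tilde v$, and match each linear edge $E_j$ with $f(E_j) = E_j u^{d_j}$ to the lift of $f$ fixing the initial vertex of the lift of $E_j$ terminating on $\tilde L$ --- agrees with the construction the paper later unpacks in the proof of \Cref{lem:LKisaninvariant}. Your observation that the resulting $\Phi_E$ is independent of the choice of vertex on $\tilde L$ because $\Phi_E$ fixes $u$ is also correct and slightly more careful than what the paper records.

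However, one step in your surjectivity sketch does not hold as written. You argue that the closed iNP you extract cannot have an EG highest edge by invoking \Cref{fact:nielsen-path-geometric} and concluding that the stratum would be ``the single edge $E$, which an EG stratum never is.'' Neither implication is right: the fact tells you that the height-$r$ closed iNP crosses each edge of $H_r$ exactly twice, but at the point in the argument where you invoke it you have not yet established that your iNP crosses only one $H_r$-edge, so no single-edge conclusion follows; and, independently, a single-edge EG stratum is not a priori impossible (a stratum consisting of a single edge with Perron--Frobenius eigenvalue $> 1$ is allowed by the definition of a CT). The actual exclusion of EG strata here requires knowing first that your Nielsen path has the rigid form $\sigma\, u^{k}\, \overline{\sigma}$ with $u$ of strictly lower height, which is exactly the content you defer to ``bounded cancellation and the classification of iNPs'' --- i.e.\ the step you use to rule out EG is itself one of the acknowledged gaps, so the sketch has a circularity there. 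Since both you and the paper ultimately rely on the citation, this does not affect the logical status of the statement, but the EG-exclusion argument as presented should not be read as a self-contained justification.
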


\begin{remark}
	The bijection depends on the choice of base lift but the collection
	$\{\Phi_0, \ldots, \Phi_{m(\mu)-1}\} \subset P(\phi)$ depends only on $u$
	and $\phi$. Given $\Phi \in P(\phi)$ that fixes $u$, this collection is
	equal to \[P(\phi) \cap \{i_u^n\circ \Phi\}_{n\in\mathbb{Z}}.\] 
\end{remark}

\begin{lemma}\label{lem:LKisaninvariant}
	The multisets $\mathcal{L}_{K}$, as $[K]$ varies over $\Fix(\phi)$,
  depend only on $\phi$ and not on the CT used to compute them.
\end{lemma}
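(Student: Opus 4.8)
The plan is to re-describe each multiset $\mathcal{L}_K$ in terms of data attached directly to $\phi$ — its axes and the dynamics of its principal lifts — so that the description plainly does not involve a CT, and then to verify that the definition of $\mathcal{L}_K$ computes this description for any CT. Fix two CT representatives $f\colon G\to G$ and $f'\colon G'\to G'$ of $\phi$. Each of them presents every class of $\Fix(\phi)$ by a concrete subgroup (a $\pi_1$ of a component of $\hat{S}(f)$, resp.\ $\hat{S}(f')$), so the first move is to fix once and for all a representative $K$ of each class in $\Fix(\phi)$ and transport the two computations of $\mathcal{L}_K$ into it; the task is then a multiset-preserving identification.

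The second task is to isolate the invariant bookkeeping. By \Cref{axes-determine-linear-edges} and the remark following it, the following depend only on $\phi$: the set of axes $\mu$, the multiplicity $m(\mu)$ of each, and — after fixing a representative $u$ of $\mu$ — the collection $\{\Phi_0^\mu,\dots,\Phi^\mu_{m(\mu)-1}\}=P(\phi)\cap\{i_u^n\Phi_0^\mu\}_{n}$ of principal lifts fixing $u$, with its distinguished base lift $\Phi_0^\mu$. Likewise $\Fix(\phi)$ depends only on $\phi$, and for any CT the components of $\hat{S}(f)$ are canonically in bijection with $\Fix(\phi)$, the component through a fixed vertex $v$ carrying, via its edge labels (based at $v$), the fixed subgroup of the principal lift at $v$. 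None of this sees the particular CT.

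Next I match linear edges. Realise a chosen axis $\mu$ by a circuit representing $u$ in $G$ and in $G'$. By \Cref{axes-determine-linear-edges} the linear edges $E$ of $f$ with $f(E)=Eu^{d}$ biject with the non-base lifts $\Phi^\mu_j$, and the same holds for $f'$; so these linear edges correspond bijectively, $E^\mu_j\leftrightarrow {E'}^\mu_j$. Since $\mathcal{L}_K$ is built by running over all linear edges, it suffices to match, for each $j$, the two contributions of $E^\mu_j$ — the circuit $u_E=u$, placed in the component through the terminal vertex of $E^\mu_j$, and the loop $E^\mu_j\,u\,\overline{E^\mu_j}$, placed in the component through the initial vertex — with those of ${E'}^\mu_j$.

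This last point is the heart of the argument, and I expect it to be the main obstacle. For a linear edge $E$ with $f(E)=Eu^{d_j}$, I will show that both endpoints of $E$ lie in the one component of $\hat{S}(f)$ corresponding to $[\Fix(\Phi_0^\mu)]$: the terminal vertex lies on the circuit $u$, so by the base-lift characterisation recalled before \Cref{axes-determine-linear-edges} the principal lift fixing it is $\Phi_0^\mu$; and the loop $E\,u\,\overline E$ lies in the fixed subgroup of the principal lift at the initial vertex, which is therefore the base lift attached to the $\F$-conjugate representative $E\,u\,\overline E$ of $\mu$, hence a lift isogredient to $\Phi_0^\mu$. (Concretely, one computes that the principal lift realised at the free end of the lollipop which $E$ contributes to $\hat{S}(f)$ has fixed subgroup a conjugate of $\Fix(\Phi_0^\mu)$, so the non-base lift $\Phi^\mu_j$ is also isogredient to $\Phi_0^\mu$.) Granting this, all contributions from the linear edges with axis $\mu$ land in $\mathcal{L}_{[\Fix(\Phi_0^\mu)]}$: exactly $m(\mu)-1$ copies of the $\Fix(\Phi_0^\mu)$-conjugacy class of the distinguished representative $u$ of $\mu$ — one from each terminal vertex, since these vertices all lie on the circuit $u$ and hence realise the base lift — together with, for each non-base lift $\Phi^\mu_j$, a single further class represented by $E^\mu_j\,u\,\overline{E^\mu_j}$ which is cut out inside $\Fix(\Phi_0^\mu)$ by the condition that its lifts be $\F$-conjugate to $u$ and fixed, with non-repelling endpoints, by $\Phi^\mu_j$. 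Every ingredient of this description — the axis $\mu$, its multiplicity $m(\mu)$, the base and non-base lifts, the class $[\Fix(\Phi_0^\mu)]\in\Fix(\phi)$ — is an invariant of $\phi$, and the description applies verbatim with ${E'}^\mu_j$ in place of $E^\mu_j$; summing over all axes $\mu$ and all $j$ gives $\mathcal{L}_K(f)=\mathcal{L}_K(f')$ for every $[K]\in\Fix(\phi)$. The obstacle is precisely the incidence bookkeeping for $\hat{S}(f)$ just sketched — identifying which components the lollipop endpoints land in and which conjugacy classes the loops $u_E$ and $E\,u_E\,\overline{E}$ represent — which must be carried out using Feighn and Handel's construction of $\hat{S}(f)$ and the proof of \Cref{axes-determine-linear-edges}. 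At the level of sets this recovers the known fact that $\mathcal{L}_K$ is the intersection, up to $K$-conjugacy, of $K$ with $\text{Twist}(\phi)$; the new content is that the multiplicities are governed by the axis multiplicities $m(\mu)$.
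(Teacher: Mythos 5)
Your overall strategy — re-describe $\mathcal{L}_K$ purely in terms of the axes of $\phi$ and the dynamics of its principal lifts, then match the CT bookkeeping via \Cref{axes-determine-linear-edges} — is exactly the paper's. But the incidence claim you yourself flag as the heart of the matter is false: it is not true that both endpoints of a linear edge $E$ with axis $\mu$ lie in the component of $\hat{S}(f)$ corresponding to $[\Fix(\Phi_0^\mu)]$. You argue that the principal lift $\Phi_j$ realised at the initial vertex of $E$, because it fixes $E\,u\,\overline{E}$, must be the base lift attached to this $\F$-conjugate representative of $\mu$ and hence isogredient to $\Phi_0^\mu$. That inference does not hold. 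The base lift attached to $E\,u\,\overline{E}$ is characterised by having fixed points on the \emph{axis} of $E\,u\,\overline{E}$ in $\widetilde{G}$, which is the $\tau_E$-translate of the axis of $u$ (since $E\,u\,\overline{E}$ cyclically reduces to $u$); the lift of the initial vertex of $E$ that $\Phi_j$ fixes does not lie on that axis. Indeed, $\Phi_j$ is by construction a \emph{non-base} lift in the bijection of \Cref{axes-determine-linear-edges}, and non-base lifts are never isogredient to the base lift. In particular $\Fix(\Phi_j)$ can represent a different conjugacy class in $\Fix(\phi)$ than $\Fix(\Phi_0^\mu)$; the paper's own alternate definition $\mathcal{L}'_K$ records this explicitly by allowing $K\neq K'$ when $\Phi$ and $\Psi$ are not isogredient.

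\Cref{ex:SurfaceExample} makes the failure concrete: the linear edge $E_4$ with $f(E_4)=E_4E_2^4$ has axis $\beta$, its terminal vertex lies in the $K_3$-component (giving $\beta\in\mathcal{L}_{K_3}$), while its initial vertex lies in the $K_4$-component (giving $E_4\beta\overline{E_4}\in\mathcal{L}_{K_4}$); here $[K_3]\neq[K_4]$, so the two contributions of a single linear edge genuinely land in different components. The correction is that the initial vertex of $E_j$ lies in the component corresponding to $[\Fix(\Phi_j)]$, where $\Phi_j$ is the non-base lift matched to $E_j$ by \Cref{axes-determine-linear-edges}; this is still CT-independent data, since the collection of non-base lifts fixing $u$ and their fixed-subgroup conjugacy classes depend only on $\phi$ and $\mu$. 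With that change your outline recovers the paper's argument: each linear edge with axis $\mu$ contributes one copy of $u$ to the base lift's component and one copy of the conjugate $E_j u \overline{E_j}$ to the matched non-base lift's component, and both contributions are invariants of $\phi$.
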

\begin{proof}
We give an alternate definition of $\mathcal{L}_{K}$ in terms of the principal
	lifts and axes of $\phi$. (Compare Handel and Mosher's Fact
	II.2.8~\cite{HandelMosher}.) For each conjugacy class in $\Fix(\phi)$,
	fix a representative subgroup. We will define a multiset
	$\mathcal{L}'_K$ for each representative subgroup. 
	For each axis $\mu$ of $\phi$, pick a representative element $u$
	and an automorphism $\Phi \in P(\phi)$ fixing $u$. (As we are giving
	a CT independent definition, $\Phi$ may not be a base lift, but as
	remarked this will not matter.) The fixed subgroup $K_\Phi$ is
	conjugate by some $v$ to a representative $K$. For each $\Psi \in P(\phi)$ not equal
	to $\Phi$ that fixes $u$, the fixed subgroup $K_\Psi$ is conjugate by
	some $w$ to a
	representative $K'$ (it is possible $K = K'$ if $\Psi$ and $\Phi$ are
	isogredient), we add $i_v(u)$ to $\mathcal{L}_K$ and $i_w(u)$ to
	$\mathcal{L}_K'$. Following this procedure for all axes defines the
	multisets $\mathcal{L}'_K$, and up to conjugacy these are well-defined
	and depend only on $\phi$.

We claim that $\mathcal{L}_K = \mathcal{L}'_K$. 

Fix representatives of $\Fix(\phi)$. The axes of an automorphism $\phi$ are
	precisely the root-free conjugacy classes that are suffices of linear
	NEG edges. 
Let $\mu$ be an axis of $\phi$ with representative circuit $u$ and linear edges
$E_1,\ldots E_{m(\mu)-1}$ with suffix $u$. In Feighn and Handel's proof of
\Cref{axes-determine-linear-edges} a bijection is constructed as follows.
	Let $v$ be the initial (and terminal) vertex of $u$ and fix a lift
	$\tilde{v}\in \tilde{G}$. The base lift $\Phi_0$ is realized by the
	lift of $f$ at $\tilde{v}$. Each linear edge $E_j$ has a unique lift
	$\tilde{E}_j$ terminating at $\tilde{v}$, and the bijection sends $E_j$
	to the principal lift $\Phi_j$ of $f$ at the initial vertex of
	$\tilde{E}_j$. The fixed subgroup of $\Phi_0$ is conjugate via some $v$
	to the representative $K$; each $E_j$
	contributes a copy of $i_v(u)$ to $\mathcal{L}_{K}$ and to
	$\mathcal{L}'_{K}$ by definition.
	For each $j$ let $K_j$ be the fixed subgroup of
	$\Phi_j$, conjugate via some $w$ to its representative $K'$
	the edge $E_j$ contributes $i_w(E_ju\overline{E}_j)$ to
	$\mathcal{L}_{K}'$ by definition. As verified by Feighn and Handel this correspondence is a bijection~\cite{FH:RecogThm}*{Proof of Lemma
4.40}. Thus $\mathcal{L}_K = \mathcal{L}_K'$ is an invariant of $\phi$.
\end{proof}

It follows from the definition of a weak geometric model (in
particular the fact that $G_r$ embeds into $Y$) and the EG case of 
\Cref{moving-up-through-the-filtration} ``moving up through the filtration''
that each attaching map $\alpha_i$ of a lower boundary component is
either a local homeomorphism, or else has image $Ew\overline{E}$ for some
linear edge $E$ of $G$ and a closed loop $w$ which is a Nielsen path.
We now record the elements of $\F$ corresponding to boundaries of the surfaces
of EG strata in $G$.

\begin{definition}
	Define a multiset $\mathcal{E}_K$ of elements of $[K]\in \Fix(\phi)$ as follows
  ($\mathcal{E}$ is for exponential).  Let $H_r$ be a (necessarily
  geometric, c.f. \Cref{EGStrataAreGeometric}) EG stratum of $f\colon G\to
  G$, with associated weak geometric model as above. The upper boundary
  component is a closed
  height $r$ indivisible Nielsen path, which is represented in some component of
  $\hat{S}(f)$. Let $S$ be the surface associated to a weak geometric model
  $Y$ for $H_r$. The image under $\alpha_i$
  of each lower boundary component $\partial_iS$ is either a loop in
  some component of $\hat{S}(f)$, or an indivisible Nielsen path corresponding
  to a lollipop in a component $\hat{S}(f)$. The multiset $\mathcal{E}_K$ is
  the (multiset)-union over EG strata of $f$ of the boundary classes that are
	represented in the component of $\hat{S}(f)$ corresponding to $[K]$.
\end{definition}

The reader should note that the elements of $\mathcal{L}_K$ are always
root-free conjugacy classes; the definition asks us to take roots. On
the other hand this is not necessarily the case for elements of
$\mathcal{E}_K$. See \Cref{ex:SurfaceExample}.

\begin{definition}
	The \emph{candidate boundary multiset} of $[K]\in \Fix(\phi)$, denoted by
	\[\mathcal{C}_K = \mathcal{L}_K \dot\cup \mathcal{E}_K\] is the multiset
	union of the two multisets defined previously.
\end{definition}

As observed in their definitions, both multisets in the union are computable from a CT $f$ and the graph
$\hat{S}(f)$. Thus for each $[K] \in \Fix(\phi)$ (represented as a connected
component of $\hat{S}(f)$), the multiset $\mathcal{C}_K$ is computable.

\begin{lemma}\label{lem:geometricboundaryrealizable}
  If $\phi$ is a geometric rotationless outer automorphism
	then for every $[K]\in\Fix(\phi)$, $\mathcal{C}_K$ $\partial$-realizable in $K$.
\end{lemma}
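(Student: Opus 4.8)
The plan is to exploit \Cref{NotationPhi} and the invariance results established earlier: since $\phi$ is geometric, we may fix a homeomorphism $g\colon\Sigma\to\Sigma$ in Thurston normal form and a CT $f\colon G\to G$ adapted to a $g$-invariant filtration, as in \Cref{NotationPhi} and \Cref{NotationCT}. By \Cref{lem:LKisaninvariant} and the definition of $\mathcal{E}_K$ (and the bijection between EG strata and the pseudo-Anosov subsurfaces of $\Sigma$ provided by \Cref{pASubsurfaces=EGStrata}), the multiset $\mathcal{C}_K = \mathcal{L}_K\dot\cup\mathcal{E}_K$ is an invariant of $\phi$, so it suffices to compute $\mathcal{C}_K$ from this \emph{preferred} CT $f$ and match it with genuine boundary data coming from $\Sigma$. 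So the first step is: fix $[K]\in\Fix(\phi)$ and locate the corresponding $g$-invariant subsurface $R_{[K]}\subseteq\Sigma$ on which $g$ restricts to the identity, using the characterization (recalled in \Cref{sec:fixedsubgroup}) that $\Fix(\phi)$ is exactly the set of conjugacy classes of fundamental groups of identity subsurfaces of the Thurston normal form. We must verify $\pi_1(R_{[K]})$ is conjugate to $K$; this is immediate from how the filtration was chosen and from \Cref{EGStrataAreGeometric} and \Cref{NEGStrataAreLinear}, which guarantee the only strata are fixed, linear, or EG-geometric.

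The second, main step is to identify $\partial R_{[K]}$ with $\mathcal{C}_K$ as multisets of conjugacy classes in $K$. Each boundary component of $R_{[K]}$ is a simple closed curve in $\Sigma$; in the normal form it either bounds an annular neighborhood $R_i$ of a Dehn-twist curve (equivalently, it is adjacent to a linear stratum), or it is a boundary curve shared with a pseudo-Anosov subsurface (equivalently, the lower or upper boundary of a geometric EG stratum). The plan is to treat these two families separately. For the twist side: \Cref{axes-determine-linear-edges} and the analysis in \Cref{lem:LKisaninvariant} show that each linear edge $E$ with $f(E)=Eu_E^d$ whose initial vertex lies in $\hat S(f,v_{[K]})$ contributes exactly the curve $Eu_E\overline E$ to $\mathcal L_K$, and each such linear edge whose terminal vertex lies in that component contributes $u_E$; these are precisely the two sides of the annulus $R_i$ as seen from the two complementary subsurfaces it separates, one of which is $R_{[K]}$. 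For the EG side: by \Cref{EGStrataAreGeometric} the subsurface $R$ attached to $G_{r-1}$ is a weak geometric model, so the attaching maps $\alpha_i$ of its lower boundaries, together with the upper boundary (the height-$r$ indivisible Nielsen path), are exactly the curves recorded in $\mathcal E_K$ when their images land in $\hat S(f,v_{[K]})$. Since the filtration was chosen to be compatible with the $Q_i$'s, the boundary curves of $R_{[K]}$ are partitioned into these two types and the bookkeeping in the definitions of $\mathcal L_K$ and $\mathcal E_K$ matches one-for-one. The multiset structure is essential: a single boundary curve of $\Sigma$ that is traversed twice by $R_{[K]}$ (when $R_{[K]}$ is on only one side of it, e.g.\ a non-separating curve) should appear with multiplicity two, which is exactly what the definition allows via the small-rank clause of \Cref{def:boundary-realizable}.

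The third step is the degenerate-rank cases. If $K$ has rank $\ge 2$, the surface $R_{[K]}$ with boundary multiset equal to $\mathcal C_K$ directly witnesses $\partial$-realizability. If $R_{[K]}$ is an annulus, then $K\cong\mathbb Z$, and $\mathcal C_K$ consists of a generator of $K$ with multiplicity at most $2$ (multiplicity $2$ precisely when both sides of the annulus are recorded in the same component, e.g.\ when $R_{[K]}$ is a one-sided annular region), which is exactly the degenerate case permitted in \Cref{def:boundary-realizable}. A subtlety to check: the definition forbids $[c]\in\mathcal C_K$ from being a proper power of a boundary curve of the realizing surface; this is guaranteed because $\mathcal L_K$ consists of root-free classes by construction, and $\mathcal E_K$ records honest boundary curves of the weak geometric model surface $S\cong R$ (although an element of $\mathcal E_K$ may be a proper power in $\F$, it is not a proper power of a boundary component of $R_{[K]}$ itself).

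\textbf{Expected main obstacle.} The delicate point is the precise matching of multiplicities in Step 2 when boundary curves of $R_{[K]}$ are identified in $\Sigma$ in a way that is not visible from a single linear edge or a single EG stratum — for instance, when two distinct linear strata have the same axis $u_E$ (an exceptional-path situation) or when the component $\hat S(f,v_{[K]})$ sees both endpoints of one linear edge. Untangling the correspondence between lifts of $g$, principal lifts of $\phi$, and the components of $\hat S(f)$ — so that the curve $Eu_E\overline E$ versus $u_E$ lands in the correct component with the correct multiplicity — requires careful use of \Cref{prelim-principal-region-structure-theory} and the base-lift/isogredience bookkeeping from \Cref{lem:boundaryclassesareinvariant} and \Cref{lem:LKisaninvariant}. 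Everything downstream is routine once this identification is pinned down.
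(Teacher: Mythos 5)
Your proposal takes essentially the same approach as the paper's proof: fix a Thurston normal form realization $g$ of $\phi$, locate the identity subsurface $\Sigma_K$ corresponding to $[K]$, and match $\mathcal{L}_K$ with the Dehn-twist boundary curves and $\mathcal{E}_K$ with the pseudo-Anosov-adjacent boundary curves via \Cref{lem:LKisaninvariant} and \Cref{pASubsurfaces=EGStrata}, with $\Sigma_K$ witnessing $\partial$-realizability. One small inaccuracy: you assert that $\partial R_{[K]}$ is \emph{identified} with $\mathcal{C}_K$ (and that the boundary curves are partitioned into exactly the two types you list), but the paper partitions $\partial \Sigma_K$ into \emph{three} types — twist boundaries, pseudo-Anosov boundaries, and components of $\partial\Sigma$ itself — and $\mathcal{C}_K$ only captures the first two, so the correct relation is the inclusion $\mathcal{C}_K\subseteq[\partial\Sigma_K]$, which is all \Cref{def:boundary-realizable} needs. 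Likewise, your parenthetical attributing the multiplicity-two annulus case to a ``one-sided annular region'' is confused: the multiplicity arises simply because both boundary circles of a two-sided annulus are freely homotopic to the core, hence represent the same conjugacy class in $K\cong\mathbb{Z}$. Neither slip affects the correctness of the argument.
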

\begin{proof}
  Since $\phi$ is geometric, there is a bijection between $\Fix(\phi)$ and
  the fixed subsurfaces of the Thurston normal form of a realization $g: 
	\Sigma\to \Sigma$. Fix $[K] \in \Fix(\phi)$ and the corresponding subsurface
  $\Sigma_K$. The components of $\partial \Sigma_K$ are partitioned into 3
  types: those that the Thurston normal form Dehn twists around, those that
  separate $\Sigma_K$ from pseudo-Anosov subsurfaces, and components of
  $\partial\Sigma$.

  \Cref{lem:LKisaninvariant} characterizes the elements of $\mathcal{L}_K$
  in terms of principal lifts; the surface and outer automorphism notions of
  principal lift coincide, so $\mathcal{L}_K$ is also determined by the Dehn
  twist boundary curves of $\Sigma_K$.

  It follows from \Cref{pASubsurfaces=EGStrata} and the definition of
  $\mathcal{E}_K$ that the components of $\partial \Sigma_K$ that separate 
  $\Sigma_K$ from a pseudo-Anosov subsurface are equal to $\mathcal{E}_K$.

  Thus, either $\Sigma_K$ is an annulus and $\mathcal{C}_K$ is a multiset with
  2 copies of the generator of $\pi_1\Sigma_K$, or $\mathcal{C}_K$ is
  a $\partial$-realizable set with $\Sigma_K$ witnessing the realization.
\end{proof}

As a consequence we further observe:

\begin{corollary}\label{cor:boundaryrealizablebehave}
	If $\phi$ is geometric, then for every $[K]\in\Fix(\phi)$,
  \begin{enumerate}
  \item $\mathcal{L}_K$ is a set -- the multiplicity of every element
    is $1$
  \item no element of $\mathcal{E}_K$ is a proper power.
  \end{enumerate}
\end{corollary}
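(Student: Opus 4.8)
The plan is to deduce both claims from \Cref{lem:geometricboundaryrealizable} by unwinding \Cref{def:boundary-realizable}. That lemma says $\mathcal{C}_K = \mathcal{L}_K \dot\cup \mathcal{E}_K$ is $\partial$-realizable in $K$, and the definition leaves precisely two possibilities: either (i) the multiset $\mathcal{C}_K$ is in fact a set and that set is $\partial$-realizable, or (ii) $K$ is infinite cyclic, say $K = \langle c\rangle$, and $\mathcal{C}_K$ is the single class $[c]$ with multiplicity at most $2$. I would treat these separately.

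Case (i) gives both conclusions directly. Since $\mathcal{C}_K$ is a set it has no repeated elements, so neither does its sub-multiset $\mathcal{L}_K$, which is $(1)$. For $(2)$, fix a surface $\Sigma'$ with $\pi_1 \Sigma' \cong K$ realizing $\mathcal{C}_K$: every element of $\mathcal{C}_K$ is then a boundary class of $\Sigma'$, and by the final clause of \Cref{def:boundary-realizable} is not a proper power of a boundary component, hence not a proper power at all; in particular no element of $\mathcal{E}_K \subseteq \mathcal{C}_K$ is a proper power. In case (ii), every element of $\mathcal{C}_K$, and therefore every element of $\mathcal{E}_K$, is the generator $[c]$ of $K$, which is not a proper power, so $(2)$ holds; the remaining point is to establish $(1)$, i.e.\ to rule out $\mathcal{L}_K = \{[c],[c]\}$, and this is where geometricity enters.

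For that I would pass to the Thurston normal form $g$ as in \Cref{NoPeriodicBehavior}. Since $\phi$ is geometric, $[K]$ is the conjugacy class of the fundamental group of a subsurface $\Sigma_K \subseteq \Sigma$ on which $g$ restricts to the identity, and $\pi_1 \Sigma_K \cong K$ is infinite cyclic, so $\Sigma_K$ is an annulus or a M\"obius band. As in the proof of \Cref{lem:geometricboundaryrealizable}, the elements of $\mathcal{L}_K$ correspond to the boundary components of $\Sigma_K$ along which $g$ performs a Dehn twist; matching this with \Cref{DefLK} and \Cref{lem:LKisaninvariant} requires some basepoint bookkeeping through the principal-lift description of $\mathcal{L}_K$, but is routine. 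If $\Sigma_K$ is a M\"obius band its unique boundary represents $[c^2] \ne [c]$, so it contributes nothing and $\mathcal{L}_K = \emptyset$ is a set. If $\Sigma_K$ is an annulus and $\mathcal{L}_K$ had two elements, then two (not necessarily distinct) reducing curves $\gamma_1,\gamma_2$ of $g$ would each be attached to $\Sigma_K$ along one of its two boundary circles; since $\gamma_1$ and $\gamma_2$ cobound the annulus $\Sigma_K$ they are isotopic in $\Sigma$, so either they are distinct isotopic reducing curves --- impossible, as the reducing curves of \Cref{NoPeriodicBehavior} have pairwise disjoint neighborhoods and are pairwise non-isotopic --- or $\gamma_1 = \gamma_2$, in which case $\Sigma_K$ is glued to the annular neighborhood of $\gamma_1$ along both of its boundary circles, making their union a closed subsurface and hence all of $\Sigma$, forcing $\Sigma$ to be a torus or Klein bottle, contrary to $\rk(\F)\ge 3$. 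Therefore $|\mathcal{L}_K| \le 1$ and $\mathcal{L}_K$ is a set, completing $(1)$.

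The main obstacle is the last step of case (ii): identifying $\mathcal{L}_K$ with the set of reducing curves of $g$ bounding the annulus $\Sigma_K$ and then excluding the two degenerate configurations. Once that identification is in hand --- via \Cref{lem:LKisaninvariant} and the dictionary between $\Fix(\phi)$ and the fixed subsurfaces of the normal form used in \Cref{lem:geometricboundaryrealizable} --- the rest is elementary surface topology, and everything in case (i) is a formal unwinding of \Cref{def:boundary-realizable}.
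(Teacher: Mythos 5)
The paper states this corollary without a proof (``As a consequence we further observe''), so there is no written argument to compare against. Your observation that part (1) does not follow formally from the statement of \Cref{lem:geometricboundaryrealizable} alone is correct: the multiset clause of \Cref{def:boundary-realizable} explicitly permits multiplicity $2$ when $K$ is cyclic, so an additional surface-side argument is genuinely needed, and the one you give — that an annulus $\Sigma_K$ cannot have both boundary circles attached to canonical reducing curves, since the two curves would be isotopic (excluded by canonicity) or coincide (forcing $\Sigma$ closed of genus $\le 1$, contradicting $\rk\ge 3$) — is sound and is almost certainly the argument the authors have in mind. So your case decomposition and the key step are right.

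Two steps are imprecise, and both stem from the same overlooked point. First, the inference in case (i) that an element of $\mathcal{C}_K$ ``is not a proper power of a boundary component, hence not a proper power at all'' is not valid in general: the boundary of a M\"obius band represents $[c^2]$, which is not a proper power of any boundary component of that M\"obius band yet is a proper power of its core $c$. So $\{[c^2]\}$ passes the definition of a $\partial$-realizable set, and you must separately argue that $\mathcal{C}_K$ is not realized by a M\"obius band. This is true — the realizing surface is $\Sigma_K$, a piece of the Thurston normal form, and a M\"obius band piece would have its boundary be either a non-essential reducing curve or all of $\partial\Sigma$, both impossible for $\rk(\F)\ge 3$ — but it needs to be said rather than slipped past. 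Second, your M\"obius band aside in case (ii) claims the boundary ``represents $[c^2]\ne[c]$, so it contributes nothing.'' By the remark in the paper preceding the corollary, $\mathcal{L}_K$ records \emph{root-free} classes, so a twist about a curve representing $[c^2]$ would contribute $[c]$, not $[c^2]$; the actual reason nothing enters $\mathcal{L}_K$ is the same as above: the boundary of a M\"obius band cannot be a curve of the canonical reduction system. Neither slip is fatal, and both are fixed by the single clean observation that $\Sigma_K$ is never a M\"obius band.
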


\subsection{The algorithm}

\begin{algorithm}[h]
\caption{Decide if a rotationless outer automorphism is
	geometric.\label{alg:rotationless}}
\begin{algorithmic}[1]
\Procedure{RotationlessGeometric?}{$\phi$}
	\State $f \gets $ \Call{ComputeCT}{$\phi$}
	\If{\Call{ZeroStrata?}{$f$} $\vee$ \Call{NonlinearNEG?}{$f$} $\vee$
	\Call{NongeometricEG?}{$f$}}
		\State \Return \texttt{No} \label{strataconcern}
	\EndIf
	\State $\hat{S} \gets $ \Call{Compute$\hat{S}$}{$f$}
	\For{$K \in $ \Call{ConnectedComponents}{$\hat{S}$}}
		\State $\mathcal{C} \gets $ \Call{ComputeCandidateBoundary}{$K$, $f$}
		\If{$\neg$\Call{$\partial$-realizable?}{$\mathcal{C}$, $K$}}
			\State \Return \texttt{No} \label{realizableconcern}
		\EndIf
	\EndFor
	\State \Return \texttt{Yes}
\EndProcedure
\end{algorithmic}
\end{algorithm}

\begin{proposition}\label{rotationless-is-correct}
  \Cref{alg:rotationless} is correct.
\end{proposition}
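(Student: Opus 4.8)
The plan is to establish the two implications that together constitute correctness: \textsc{RotationlessGeometric?}$(\phi)$ returns \texttt{No} whenever $\phi$ is not geometric, and returns \texttt{Yes} whenever $\phi$ is geometric. Since the procedure always terminates with one of these answers, this shows its output is \texttt{Yes} exactly when $\phi$ is geometric. The \texttt{No} case is immediate from the necessary conditions already proved: assume $\phi$ is geometric and let $f\colon G\to G$ be the CT returned by \textsc{ComputeCT}. By \Cref{EGAndNEGStrataOff'} every EG stratum of $f$ is geometric and every nonfixed NEG stratum is linear, and by \Cref{cor:nozeroornest} $f$ has no zero stratum; hence \textsc{ZeroStrata?}$(f)$, \textsc{NonlinearNEG?}$(f)$, and \textsc{NongeometricEG?}$(f)$ all return false and the algorithm does not exit at line~\ref{strataconcern}. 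For each component $K$ of $\hat{S}(f)$, i.e.\ each $[K]\in\Fix(\phi)$, \Cref{lem:geometricboundaryrealizable} says $\mathcal{C}_K$ is $\partial$-realizable in $K$, so the algorithm does not exit at line~\ref{realizableconcern} either. Thus a geometric $\phi$ drives the algorithm to return \texttt{Yes}, and contrapositively a \texttt{No} answer certifies non-geometricity.

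The substance of the proposition is the converse: the verified conditions are \emph{sufficient}. Suppose the algorithm returns \texttt{Yes}, so the CT $f\colon G\to G$ has no zero strata, every EG stratum $H_r$ is geometric and hence (by \Cref{fact:nielsen-path-geometric}) carries a weak geometric model with compact surface $S_r$, attaching maps $\alpha_i^{(r)}$, and a pseudo-Anosov $g_r\colon S_r\to S_r$; every nonfixed NEG stratum is a linear edge $E$ with $f(E)=Eu_E^{\,d_E}$, $d_E\neq 0$; and for every component $K$ of $\hat{S}(f)$ the multiset $\mathcal{C}_K=\mathcal{L}_K\dot\cup\mathcal{E}_K$ is $\partial$-realizable in $K$. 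I would build a surface $\Sigma$ and a homeomorphism $g$ realizing $\phi$ as a graph of spaces patterned on $G$ and $\hat{S}(f)$: for each $[K]\in\Fix(\phi)$ use $\partial$-realizability to fix a compact surface $\Sigma_K$ with $\pi_1(\Sigma_K)\cong K$ in which every element of $\mathcal{C}_K$ is realized by a boundary component, distinct elements by distinct components (legitimate because elements of a $\partial$-realizable multiset are primitive and, away from the annulus case, pairwise non-freely-homotopic); glue to the $\Sigma_K$ the pseudo-Anosov pieces $S_r$ along their upper and lower boundary curves, matching each boundary class of $S_r$ to the boundary component of the appropriate $\Sigma_K$ prescribed by the definition of $\mathcal{E}_K$; and for each linear edge $E$ glue an annulus $A_E$ whose two boundary circles are attached to the boundary components of the $\Sigma_K$'s carrying the classes $u_E$ and $Eu_E\overline{E}$ recorded in the $\mathcal{L}_K$'s (conjugate in $\F$, so the core of $A_E$ is a well-defined curve). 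Set $g$ equal to $g_r$ on each $S_r$, the $d_E$-th power of the Dehn twist on each $A_E$, and the identity on each $\Sigma_K$; these restrictions agree up to isotopy on the shared boundary curves, so $g$ is a homeomorphism of $\Sigma$.

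Built alongside the gluing is a deformation retraction $D\colon\Sigma\to G$ restricting on each $S_r$ to the retraction of its (weak) geometric model and on each $\Sigma_K$ to the homotopy equivalence realizing $\pi_1(\Sigma_K)\cong\pi_1$ of the corresponding component of $\hat{S}(f)\subseteq G$; then $D\circ g\simeq f\circ D$ piece by piece---condition (5a) of a geometric model on the $S_r$, the transvection identity $E\mapsto Eu_E^{\,d_E}$ on each $A_E$, and triviality on the $\Sigma_K$---so $D$ realizes the marking of $\Sigma$ and $[g_*]=[f_*]=\phi$. Hence $\phi$ is geometric, which completes the converse and the proof. As a cross-check one could instead deduce $[g_*]=\phi$ from Feighn and Handel's recognition theorem~\cite{FH:RecogThm}, since $g$ has been arranged to have the same attracting/repelling laminations as $\phi$ (\Cref{prop:geometric-model-laminations}), the same axes with the same multiplicities (linear edges $\leftrightarrow$ twist annuli, via \Cref{lem:LKisaninvariant}), and the same fixed subgroup system ($\Sigma_K\leftrightarrow[K]$).

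The hard part is the gluing step in the converse: checking that these pieces assemble into an honest compact surface with fundamental group $\F$ and the correct marking, rather than merely a space with the right local data. Two facts carry this. First, $\partial$-realizability of each $\mathcal{C}_K$ forces its classes to behave like a genuine sub-boundary of a surface---primitive, realized by essentially disjoint simple closed curves, with no complementary annulus---which is exactly what makes the local models at the gluing curves surfaces and lets the leftover boundary of the $\Sigma_K$ become $\partial\Sigma$; the annulus exception in \Cref{def:boundary-realizable} is precisely the degenerate case of a twist region and must be tracked separately. Second, the graph-of-groups decomposition of the assembled $\Sigma$ along the gluing curves is, by construction, isomorphic as a \emph{marked} graph of groups to the decomposition $\F$ inherits from $G$ together with the peripheral splittings of the geometric models of the EG strata and the lollipops of $\hat{S}(f)$, so van Kampen returns $\F$ and $D$ is a marking-preserving homotopy equivalence. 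Carefully matching this combinatorial data---in particular verifying that each designated boundary component of every $\Sigma_K$ is used exactly once in a gluing, using the multiset bookkeeping in the definitions of $\mathcal{L}_K$ and $\mathcal{E}_K$---is where the technical effort lies.
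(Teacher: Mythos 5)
Your proposal follows the paper's overall strategy closely: check termination, handle the \texttt{No} branches via the contrapositives of \Cref{EGAndNEGStrataOff'}, \Cref{cor:nozeroornest}, and \Cref{lem:geometricboundaryrealizable}, then build a realizing surface from the geometric model pieces $S_r$ of EG strata and the $\partial$-realization surfaces for each $[K]\in\Fix(\phi)$. Where you diverge is in how the \texttt{Yes} branch is concluded. The paper glues $S_K$ and $S_i$ directly along boundary circles, defines the homeomorphism $g$ to be the pseudo-Anosov, identity, or twist power on each piece, and then appeals to the Recognition Theorem of Feighn and Handel: since $g_*$ and $\phi$ share attracting laminations and twist coordinates, they coincide as outer automorphisms. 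You instead propose interposing explicit annular pieces $A_E$ for each linear edge and verifying $D\circ g\simeq f\circ D$ piecewise via a deformation retraction $D\colon\Sigma\to G$, with the Recognition Theorem demoted to a ``cross-check.'' Both routes are legitimate, but the paper's is cleaner precisely because the Recognition Theorem sidesteps the delicate commutativity verification you acknowledge as the ``hard part'': $f$ is only the identity on the fixed vertices and edges of $G$, not on any subgraph containing lollipops or Nielsen paths, so arguing $D\circ g\simeq f\circ D$ ``by triviality'' on each $\Sigma_K$ glosses over a genuine homotopy that would need to be written out (and this is exactly the bookkeeping the Recognition Theorem is designed to package). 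You also correctly flag the subtle gluing issue about proper powers, which the paper handles explicitly: if two EG boundary curves $c_i$, $c_j$ have conjugate proper powers, the containing $\mathcal{C}_K$ would violate $\partial$-realizability unless $K$ is cyclic generated by both. Overall the proposal is a correct sketch but leaves more unfilled technical work than the paper's argument does.
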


\begin{proof}
  First \Cref{alg:rotationless} halts: every procedure used is an algorithm and there are finitely many
  components in $\hat{S}$ for any CT.

  Now suppose $\phi$ is a rotationless outer automorphism. There are two
  possible results of running \Cref{alg:rotationless} on $\phi$,
  we consider them in turn.

	\textsc{No}. If the algorithm returns \texttt{No} from \cref{strataconcern},
  then, by the contrapositive of \Cref{EGAndNEGStrataOff'} or
  \Cref{cor:nozeroornest}, $\phi$ is not
	geometric.  If the algorithm returns \texttt{No} from \cref{realizableconcern},
  then by the contrapositive of \Cref{lem:geometricboundaryrealizable}, $\phi$ is not geometric. In either
  case, it correctly reports non-geometric.

	\texttt{Yes}. Let $f$ be the CT for $\phi$ used by the algorithm. 
  Since the algorithm did not return from
  \cref{strataconcern}, we know that every EG stratum of $f$ is
  geometric, so let $\{S_i\}$ be the set of surface pieces of the weak
  geometric models for these strata, with pseudo-Anosov homeomorphisms $g_i$;
  The attaching maps determine an identification of each $S_i$ with a conjugacy
  class of subgroup of $\F$ and so a marking up to choice of basepoint.
  Further, 
  since the algorithm did not return from \cref{realizableconcern}, we
	know that for each $[K]\in \Fix(\phi)$ the
  multiset $\mathcal{C}_K$ is $\partial$-realizable in $K$; let $S_K$ be a
  surface witnessing this realization, marked by the 
  choice of representative $K$.

  Each element of $\mathcal{C}_K$ matches to a unique boundary component of a
  surface $S_K'$ or $S_i$, according to whether it came from $\mathcal{E}_K$ or
  $\mathcal{L}_K$. Since $\mathcal{C}_K$ is $\partial$-realizable, this
  identifies each boundary component of $S_K$ with a unique boundary component
  of some $S_K'$ or $S_i$.
  Moreover, if two EG pieces $S_i$ and
  $S_j$ have boundaries $c_i$ and $c_j$ which have conjugate proper powers, 
	then there is some $[K]\in \Fix(\phi)$ such that
  \[\{c_i, c_j\} \subset \mathcal{E}_K.\] It follows from
  $\partial$-realizability that $c_i$ and $c_j$ are not proper powers and
  indeed
  \[K = \langle c_i\rangle = \langle
  c_j^{\pm}\rangle.\]  So we can glue the surfaces $S_K$ and $S_i$ according to
  the boundary identifications coming from their boundary conjugacy classes in
  $\F$, in a way that extends the marking.
  Call the resulting marked surface $\Sigma$. 

  Define a homeomorphism $g:  \Sigma \to
  \Sigma$ by $g_i$ on each $S_i$ component, the identity on each $S_K$
  component, and a Dehn-twist by the twist power of $\phi$ around each curve
  from each $\mathcal{L}_K$. By construction $g_\ast$ and $\phi$ have the same
  set of laminations and the same twist coordinates, so by the
  Recognition Theorem~\cite{FH:RecogThm} for $\Out$ we conclude
  $g_\ast = \phi$, that is $\phi$ is geometric.
\end{proof}

\begin{porism}
	There is an algorithm \textsc{RotationlessGeometricWitness} that takes as input a geometric rotationless $\phi\in\Out$ and outputs a marked surface $\Sigma$ and distinguished subsurface $Q$ such that $\phi$ is realized by a homeomorphism $g$ on $\Sigma$, $g|_{\Sigma\setminus Q} = \mathrm{id}$ and $g$ is not isotopic to the identity on any subsurface of $Q$.
\end{porism}
\begin{proof}
	Observe that in the \texttt{Yes} case of the proof of \Cref{rotationless-is-correct} the proof describes how to construct $\Sigma$ from data computed from a CT for $\phi$, and that \[Q = \Sigma\setminus (\sqcup_{[K]\in\Fix(\phi)}S_K^\circ\] is also computable from data computed from a CT for $\phi$.
\end{proof}

\begin{remark}
	A consequence of \Cref{cor:godgivensurfacepiece,pASubsurfaces=EGStrata} is that the subsurface $Q$, carrying the pseudo-Anosov pieces of $\phi$ is determined by the outer automorphism $\phi$. This observation is essential for the root-finding algorithm.
\end{remark}

\section{Manipulating partially geometric surface pairs}\label{sec:partial}

Given an outer automorphism $\phi$ with rotationless power $\phi^N$, we can
use \Cref{alg:rotationless} to produce a marked surface pair $(\Sigma, Q)$
such that $\phi^N$ is geometric on $\Sigma$ realized by a homeomorphism $g$.
Moreover $g_{\Sigma\setminus Q^\circ} = \mathrm{id}$. (If $\phi$ is finite order
this produces an empty $Q$.) The subsurface $Q$ is an invariant of $\phi^N$,
and a necessary condition for $\phi$ to be geometric is that $\phi$ is
partially geometric on some other pair $(\Sigma', Q)$. Here we need to be
precise by what we mean by $(\Sigma', Q)$.

\begin{definition}
	Two marked surface pairs $(\Sigma, Q)$, $(\Sigma',Q')$ are
	\emph{outer-equivalent} if $Q$ is homeomorphic to $Q'$ and there is a
	difference-of-markings map $h\from \Sigma\to\Sigma'$ such that $h|_Q$
	is a homeomorphism and the induced outer automorphism $h_\ast$
	restricts to the identity on $\pi_1 Q$. For convenience we will abuse
	notation and often refer to outer-equivalent pairs $(\Sigma, Q)$ and
	$(\Sigma', Q)$.
\end{definition}

Observe that if $\phi$ is a partially geometric outer automorphism on $(\Sigma,
Q)$ then $\phi$ is also partially geometric on every outer-equivalent
$(\Sigma', Q)$. 

The primary way we will obtain outer-equivalent pairs is by deleting a
connected component $K\subset \Sigma\setminus Q^\circ$ and re-attaching a
different surface $K'$ along $\partial K$ such that $[\pi_1 K] = [\pi_1 K']$.
We refer to this operation as \emph{replacing a subsurface}, $K'$ is the
\emph{replacement} for $K$ and $\Sigma'$ is obtained from $\Sigma$ by \emph{replacing} $K$.

\begin{remark} \label{remark replacing a subsurface}
	If $\Sigma$ is triangulated and $K$ is a sub-triangulation replacing $K$ with a triangulated $K'$ is a computable operation.
\end{remark}

It is clear this operation will be useful in determining if the finite-order
behavior on the identity components of Thurston normal form for a rotationless
power are geometric. It turns out that this operation is also useful in
developing an algorithm to decide if an outer automorphism is partially
geometric, so we treat the notion in this section before continuing with the algorithm.

\begin{lemma} \label{lem:partially geometric with boundary}
	An outer automorphism $\phi$ is partially geometric on a pair
	$(\Sigma, Q)$ if and only if $\partial Q$ is $\phi$-invariant and there
	exists an outer-equivalent surface pair $(\Sigma',Q)$ and a subsurface
	$Q'\supseteq Q$ such that every connected component of $\partial Q'$
	has nontrivial intersection with $\partial \Sigma'$ and $\phi$ is partially geometric on $(\Sigma',
	Q')$. Moreover this extension is computable and $(\Sigma', Q')$ does not depend on
	$\phi$.
\end{lemma}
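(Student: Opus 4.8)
The plan is to prove the two implications separately. The ``if'' direction is a short isotopy argument; the ``only if'' direction, together with the computability and $\phi$-independence claims, requires building a normalized pair $(\Sigma',Q')$ and checking that it inherits partial geometricity, and that is where the work concentrates.

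\emph{The ``if'' direction.} Assume $\partial Q$ is $\phi$-invariant and that $\phi$ is partially geometric on $(\Sigma',Q')$ for some pair $(\Sigma',Q)$ outer-equivalent to $(\Sigma,Q)$ and some subsurface $Q'\supseteq Q$; let $g'$ be a geometric witness. I would first note that $g'|_{Q'}$ is a homeomorphism realizing an automorphism of $\pi_1 Q'$, and that $\partial Q$ is a multicurve in the $\pi_1$-injective subsurface $Q'$ whose set of free homotopy classes is permuted by that automorphism, precisely because $[\partial Q]$ is $\phi$-invariant. Invoking the standard fact that a surface homeomorphism fixing the isotopy class of a multicurve is isotopic to one preserving the multicurve setwise, I may assume $g'(\partial Q)=\partial Q$. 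Then $g'$ preserves $Q$, $Q'\setminus Q$, and $\Sigma'\setminus Q'$, hence the decomposition of $\Sigma'$ into $Q$ and $\Sigma'\setminus Q$, and $g'|_Q$ is still a homeomorphism; so $\phi$ is partially geometric on $(\Sigma',Q)$, and therefore on $(\Sigma,Q)$ by the observation preceding the lemma that partial geometricity is invariant under outer-equivalence.

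\emph{The ``only if'' direction.} Assume $\phi$ is partially geometric on $(\Sigma,Q)$ with geometric witness $g$. Since $g$ preserves the decomposition and $g|_Q$ is a homeomorphism, $g$ permutes the boundary components of $Q$, so $\partial Q$ is $\phi$-invariant. Partition the components of $\Sigma\setminus Q$ into \emph{interior} pieces, whose frontier lies entirely in $\partial Q$, and \emph{peripheral} pieces, which also meet $\partial\Sigma$. I would build $(\Sigma',Q')$ as follows: first, replace each peripheral piece $P$---via the ``replacing a subsurface'' operation of \Cref{remark replacing a subsurface}---by a fixed, canonically chosen standard surface with fundamental group the same subgroup of $\F$ and with the same frontier curves; this keeps $\pi_1(\Sigma')=\F$, is computed from $(\Sigma,Q)$ without reference to $\phi$, and yields a pair $(\Sigma',Q)$ outer-equivalent to $(\Sigma,Q)$. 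Second, let $Q'$ be $Q$ together with all the interior pieces. Then $Q\subseteq Q'$, every component of $\Sigma'\setminus Q'$ meets $\partial\Sigma'$ (so the required boundary condition on $\partial Q'$ holds), $Q'$ is essential because $\Sigma\setminus Q$ has no annular components and the standard replacements are taken non-annular, and $(\Sigma',Q')$ depends only on the topology of $(\Sigma,Q)$. To finish I would check that $\phi$ is partially geometric on $(\Sigma',Q')$. Since $(\Sigma,Q)$ and $(\Sigma',Q)$ are outer-equivalent, $\phi$ is partially geometric on $(\Sigma',Q)$; let $g'$ be a witness. As $g'|_Q$ is a homeomorphism permuting $\partial Q$ and $g'$ preserves $\Sigma'\setminus Q$, one checks that $g'$ carries each interior piece $P$ onto another interior piece $P''$ by a homotopy equivalence whose restriction $\partial P\to\partial P''$ is the homeomorphism $g'|_{\partial Q}$; by the Dehn-Nielsen-Baer theorem (\Cref{dehn-nielsen-baer}), applied to each $P$ with $\chi(P)<0$---when $\chi(P)=0$ the piece is a M\"obius band and the conclusion is immediate---this restriction is homotopic rel boundary to a homeomorphism. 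Homotoping $g'$ on the interior pieces, which are disjoint from $Q$ and from $\partial\Sigma'$, yields a witness that is a homeomorphism on all of $Q'$ and still preserves the decomposition of $\Sigma'$ into $Q'$ and $\Sigma'\setminus Q'$. Computability of the construction follows since every step takes place on triangulated surfaces.

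\emph{The main obstacle.} I expect the delicate point to be the normalization in the ``only if'' direction: selecting, canonically and independently of $\phi$, a standard-surface replacement for each peripheral piece so that the frontier data matches, $\pi_1(\Sigma')=\F$, and $Q'$ is essential, and confirming that ``replacing a subsurface'' yields an outer-equivalent pair in the precise sense of the definition. Once $(\Sigma',Q')$ is correctly in hand, transporting the witness across the outer-equivalence and extending it over the interior pieces with Dehn-Nielsen-Baer is routine bookkeeping.
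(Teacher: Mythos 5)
Your forward (``only if'') direction has a genuine gap. You set $Q'$ to be $Q$ together with all the interior complementary pieces, observe that every component of $\Sigma'\setminus Q'$ then meets $\partial\Sigma'$, and conclude ``so the required boundary condition on $\partial Q'$ holds.'' Those two conditions are not the same. After absorbing all interior pieces, the frontier of $Q'$ in $\Sigma'$---the curves separating $Q'$ from the remaining peripheral pieces---is still a union of simple closed circles lying in the interior of $\Sigma'$, and each such circle is a component of $\partial Q'$ whose intersection with $\partial\Sigma'$ is empty. That the peripheral piece on the other side of such a circle touches $\partial\Sigma'$ does nothing for the circle itself.

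The paper closes this gap with a step you omit: for each remaining interior boundary circle $c$ of $Q'$, choose a disjoint family of embedded arcs from $c$ to $\partial\Sigma'$ and enlarge $Q'$ by regular neighborhoods of these arcs. This merges each interior circle with segments of $\partial\Sigma'$ into a single new boundary component of the enlarged $Q'$ that does meet $\partial\Sigma'$, and since the enlargement deformation retracts onto the old $Q'$, a small homotopy of the witness preserves partial geometricity. Your construction needs this step (or an equivalent one) to make the stated boundary condition true; without it the conclusion is simply false for your $Q'$.

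Apart from that, your treatment is a legitimate variant. The paper absorbs only the planar interior pieces (those with fundamental group generated by their frontier) and replaces the non-planar ones with planar surfaces having extra boundary, turning them into peripheral pieces; you instead absorb every interior piece via Dehn--Nielsen--Baer, which also works once the arc step is restored. Your canonicalization of the peripheral pieces by standard-surface replacement is not in the paper's argument and is not required for the lemma, though it is harmless. Your converse direction is essentially the paper's terse argument spelled out.
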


\begin{proof}
	First, we prove the forward direction. Since $\phi$ is partially
	geometric on $(\Sigma, Q)$, the boundary $\partial Q$ is
	$\phi$-invariant by definition.
	
	In each component of $\Sigma \setminus Q$ contains a boundary curve of
	$\Sigma$
	this is straightforward: for each component $c$ of $\partial Q$ that does
	not already meet $\partial \Sigma$ choose an arc connecting $c$ to
	$\partial \Sigma$, and chose this family of arcs to be disjoint. Then
	set $Q'$ to be the union of $Q$ and regular neighborhoods of these
	arcs. Since $Q'$ is isotopic to $Q$ we can modify a geometric witness
	for $\phi$ by a homotopy to obtain a geometric witness for $\phi$ on
	$(\Sigma, Q')$.

	If a component $R$ of $\Sigma \setminus Q$
	does not meet $\partial \Sigma$ and the fundamental group is not generated by the
	components of $\partial Q$ meeting $R$, replace each surface in the
	$\phi$-homotopy orbit of $R$ with a copy of a surface $R'$ with
	boundary and fundamental group of the same rank. By construction $\phi$
	induces a homotopy equivalence of $\Sigma'$, and we are now in the
	previously considered case. 
	If the component $R$ has
	fundamental group generated by the boundary, then by the
	Dehn-Nielsen-Baer theorem $\phi$ is homotopic to a homeomorphism on
	$R$, so adjust the realization of $\phi$ and take $Q' = Q\cup
	R$.

	Observe that all of these operations can be done with finite
	refinements of finite triangulations, so the extension is computable.

	Conversely, if such an extension exists then since $\partial Q$ is
	$\phi$-invariant $\phi$ has a realization as a homotopy equivalence of
	$\Sigma'$ that is a homeomorphism of $Q$, so $\phi$ is partially
	geometric on $(\Sigma', Q)$ and hence on $(\Sigma, Q)$.
\end{proof}

\section{Partial geometricity and the core}\label{sec:core}

In this section we give a criterion for an outer automorphism $\phi$ to be partially geometric on a surface pair $(\Sigma, Q)$ in terms of the Guirardel core. This
criterion is computable using Behrstock, Bestvina, and
Clay's~\cite{behrstockbestvinaclay} algorithm for computing certain cores, which leads to an algorithm for testing partial geometricity~(\Cref{partially-geometric-test}).
The starting point for this analysis is Guirardel's motivating observation that if $\gamma, \gamma'$ are a pair of filling curves on a closed surface $\Sigma$, then the core of the Bass-Serre trees coming from the induced splittings along $\gamma$ and $\gamma'$ is the universal cover of the square tiling dual to their minimalgeneral position intersection. Similar square tilings are central, leading to a definition.

\begin{definition}\label{def:relatively geometric}
	A nonempty connected marked square complex $m_X\from \mathfrak{R}\to X$ is \emph{surface type} if there is an embedding $\eta\from X\to \Sigma$ for a marked surface $(\Sigma,m_\Sigma)$ such that $m^{-1}_{\Sigma\ast}\eta_\ast m_{X\ast} = \mathrm{id}\in\Out$. A nonempty subcomplex $Y\subseteq X$ of a marked square complex is \emph{surface type} if each component of $Y$ is surface type with the induced marking.

	A free $\F$ action on a connected square complex $\tilde{X}$ is \emph{surface type} if it is the universal cover of a surface type marked square complex $X/\F$. A non-empty subcomplex $Y\subseteq X$ is \emph{surface type} if it is surface type with respect to the $\Stab(Y)$ action.
\end{definition}

\begin{remark}
	We allow square complexes that themselves are not homeomorphic to
	surfaces. A rose is a surface type square complex. Two squares joined at
	a single vertex is also a surface type square complex.
\end{remark}

\begin{definition}
	Suppose $X$ is a surface type marked square complex. A
	\emph{surface boundary class} of $X$ is a conjugacy class
	$[\gamma]\subset \F$ that can be represented by a connected component
	of the boundary of a regular neighborhood of an embedding
	$X$ in some marked surface $\Sigma$. The set of surface boundary
	classes for a particular embedding is denoted $\partial_\Sigma X$. This notion is similarly applied to complexes with free $\F$ action.
\end{definition}

Note that the definition of surface boundary class depends on the embedding, a
surface type complex may embed in more than one non-homeomorphic marked
surface.

To connect surface structures to square complexes, we also make use of a
standard combinatorial model of surfaces.

\begin{definition}
	Suppose $G \subseteq \Sigma$ is a graph embedded in a surface $\Sigma$.
	This embedding induces a \emph{ribbon structure} on $G$: an assignment
	of a rectangle to each edge, a polygon to each vertex, and gluing data
	attaching these rectangles to polygons such that the result is a cell
	complex homeomorphic to a regular neighborhood of $G$ in $\Sigma$.
	The \emph{dual arc system} to a ribbon structure is the collection of
	arcs transverse to $G$, each arc intersecting a unique edge exactly
	once and joining the unglued sides of the corresponding rectangle.
\end{definition}

The relationship between partial geometricity and the Guirardel core is explored in the
following two propositions. The
two propositions are presented separately, as the conclusion of
\Cref{prop:all-surface-all-spine} is significantly stronger than the hypothesis of
\Cref{prop:backward}, but together they imply a characterization of partial
geometricity in \Cref{corechar}.

\begin{proposition}\label{prop:all-surface-all-spine}
	Suppose $\phi$ is a partially geometric automorphism in $\Out$,
	realized on $(\Sigma, Q)$, such that every component of $\partial Q$
	meets $\partial \Sigma$. Let $G\subseteq \Sigma$ be a spine for
	$\Sigma$ with a subgraph $K$ that is a
	spine of $Q$. Let $T$ be the universal cover of $G$. For each pair of
	connected components $K_1, K_2$ of $K$ and each elevation $T_i$ of
	$K_i$ to $T$, the subcomplex $(T_1\times T_2\phi) \cap \Core(T,T\phi)$
	is surface type. Here $T_2\phi$ is the minimal subtree for $\phi(\Stab(T_2))$.
\end{proposition}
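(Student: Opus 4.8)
\textbf{Proof plan for \Cref{prop:all-surface-all-spine}.}

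The plan is to leverage the geometric witness directly. Since $\phi$ is partially geometric on $(\Sigma, Q)$ with every component of $\partial Q$ meeting $\partial \Sigma$, let $g\from \Sigma\to\Sigma$ be a geometric witness: a homotopy equivalence restricting to a homeomorphism on $Q$ and preserving the decomposition into $Q$ and $\Sigma\setminus Q$. The key geometric idea, going back to Guirardel's motivating picture of filling curves, is that $T$ (the universal cover of the spine $G$) and $T\phi$ are both realized inside $\widetilde{\Sigma}$: the action of $\F$ on $\widetilde{\Sigma}$ is cocompact, and $g$ lifts to a quasi-isometry $\widetilde{g}$ of $\widetilde{\Sigma}$ inducing $\phi$. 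The first step is to set up a square-complex model for the intersection pattern: using the ribbon structure on $G\subseteq\Sigma$ and the dual arc system, and similarly for $g(G)$ (which carries the twisted action $T\phi$), I want to see the relevant portion of $\Core(T, T\phi)$ as built from intersection squares indexed by dual-arc crossings. The crucial point is that the $4$-sets condition (\Cref{Definition:Partition}, \Cref{def:relatively geometric}) for a pair of edges $a\subset T$, $b\subset T\phi$ can be read off from whether the corresponding dual arcs in $\widetilde{\Sigma}$ genuinely cross; an essential crossing of arcs in a surface forces all four complementary sectors to be non-empty, and conversely a square in the core that does not come from an essential crossing can be removed, contradicting minimality.

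The heart of the argument is the restriction to a single pair of elevations $T_1, T_2$ of components $K_1, K_2$ of the spine $K$ of $Q$. Here I would use the hypothesis that $K_i$ is a spine of the subsurface $Q$ and that $g$ restricts to an honest homeomorphism of $Q$. Consequently $g(K)$ is again a spine of $Q$ (up to homotopy we may assume $g(K_i)\subseteq Q$), so both $T_1$ and the relevant copy of $T_2\phi$ — the minimal subtree for $\phi(\Stab(T_2))$ — sit inside a single elevation $\widetilde{Q}$ of $Q$ to $\widetilde{\Sigma}$; this uses that $\Stab(T_2)$ and $\phi(\Stab(T_2))$ are conjugate into $\pi_1 Q$ and that, because $g|_Q$ is a homeomorphism, the $g$-image of an elevation of $K_2$ lies in an elevation of $Q$. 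Now inside the surface $\widetilde{Q}$, the two spines $T_1$ and $T_2\phi$ are dual to genuine arc systems on the \emph{same} surface, and the subcomplex $(T_1\times T_2\phi)\cap \Core(T, T\phi)$ is exactly the square complex dual to their crossing pattern in $\widetilde{Q}$ — which by construction embeds in $\widetilde{Q}$. Quotienting by $\Stab(T_1)\cap\Stab(T_2\phi)$ (or the relevant stabilizer of the subcomplex), this exhibits the subcomplex as the universal cover of a square complex that embeds in $Q$ with a marking compatible with the inclusion, so it is surface type in the sense of \Cref{def:relatively geometric}.

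The main obstacle I anticipate is the bookkeeping to pass from ``core squares'' to ``arc crossings'' with full equivariance and to handle the distinction between $T\phi$ as an abstract tree and the concrete copy $T_2\phi$ of a minimal subtree sitting inside $\widetilde\Sigma$. Specifically: (i) one must check that $(T_1\times T_2\phi)\cap\Core(T,T\phi)$ is genuinely the set of intersection squares with $a\subset T_1$, $b\subset T_2\phi$, which requires knowing that the $4$-sets condition restricted to these edges is detected within the subgroup $\pi_1 Q$ (this is where the ``surface detection is local'' philosophy advertised in the introduction does its work, and where one uses that $g|_Q$ is a homeomorphism so that no ``crossing'' is created or destroyed by the complementary behavior of $g$ on $\Sigma\setminus Q$); and (ii) one must verify connectivity — that this piece is a connected subcomplex — which should follow from the connectedness of the arc-crossing graph of two spines on a connected surface, together with the fact that we use the augmented (connected) core throughout. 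I would also need the observation, following from \Cref{lem:partially geometric with boundary}, that the hypothesis ``every component of $\partial Q$ meets $\partial\Sigma$'' lets us choose $g$ and the spines so that $g(K_i)$ is cleanly contained in $Q$ rather than merely homotopic into it, which is what makes the embedding into $\widetilde Q$ literal rather than up-to-homotopy.
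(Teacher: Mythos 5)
Your intuition — that the relevant piece of the core should look like the square complex dual to the crossing pattern of $T_1$ and $T_2\phi$ inside a single elevation $\widetilde Q$, and that the homeomorphism property of $g|_Q$ is what makes this a statement about arcs on one surface — is the right picture, and it matches what the paper proves. But you correctly flag the gap yourself (step (i) of your ``main obstacle'') and then do not close it, whereas closing it is exactly where the paper's technical work lies.

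The missing device is the \emph{doubling}. The paper does not try to read the $4$-sets condition directly from arc crossings on $\Sigma$; instead it forms the $2$-complex $\Sigma_d$ obtained by doubling $\Sigma$ along $(\Sigma\setminus Q)\cup\partial Q$. This converts the dual arcs $\mathcal{A}$ (with endpoints on $\partial\Sigma$, which is where the hypothesis ``every component of $\partial Q$ meets $\partial\Sigma$'' is used) into a system of embedded essential \emph{closed} curves $\mathcal{A}_d$ in $\Sigma_d$ avoiding the non-manifold locus, and similarly $\mathcal{B}=g(\mathcal{A})$ into curves $\mathcal{B}_d$. One then has honest Bass–Serre trees $A_d$, $B_d$ for cyclic splittings of $\pi_1(\Sigma_d)$, to which Guirardel's curve-system observation (and its explicit $4$-sets case analysis) applies. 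The trees $T$, $T\phi$ of the statement are recovered by restricting the $\pi_1(\Sigma_d)$-action to the chosen $\F$-factor of the amalgam $\pi_1(\Sigma_d)=\pi_1(\Sigma)\ast_{\pi_1((\Sigma\setminus Q)\cup\partial Q)}\pi_1(\Sigma)$, and the proposition follows by intersecting $\Core(A_d,B_d)$ with $T_1\times T_2\phi$. Your plan of working with arcs directly on $\Sigma$ runs into the precise issue you anticipated: an arc splitting of a surface group is along a trivial edge group, the associated partitions of loxodromic elements are not the ones Guirardel's curve analysis handles, and the claim that ``an essential crossing forces all four sectors to be non-empty'' needs justification that you do not supply. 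The paper supplies it by the four-case check for the doubled curves (same/different component of $\widetilde Q_d$, disjoint/crossing axes, and the Euclidean component case).

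Two smaller points. First, your framing of the converse direction — ``a square not coming from a crossing can be removed, contradicting minimality'' — is not how the argument proceeds; the core is characterized here by the $4$-sets condition (Definition~\ref{Definition:Partition}), so one verifies that condition directly rather than arguing by removal. Second, the role of the hypothesis that $\partial Q$ meets $\partial\Sigma$ is not merely to make $g(K)$ literally contained in $Q$ (as you suggest) but to let the dual arcs be chosen with endpoints on $\partial\Sigma$, which is what makes the doubled arcs into closed curves that miss the singular set of $\Sigma_d$. So: the conceptual skeleton is right, but the proposal as written leaves unproved exactly the step that the paper's doubling construction is designed to handle.
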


\begin{proof}

	Let $\mathcal{A}$ be the collection of arcs dual to the ribbon
	structure of $K$, representatives chosen so that the endpoints of each
	$\alpha\in \mathcal{A}$ are on $\partial \Sigma$. This is possible
	since each component of $\partial Q$ meets $\partial \Sigma$ and the
	dual arcs start and end on $\partial Q$.

	 Let $\Sigma_d$ be the 2-complex obtained by doubling $\Sigma$ along
	$(\Sigma\setminus Q) \cup \partial Q$. By Van Kampen's
	theorem\footnote{If $(\Sigma \setminus Q) \cup \partial Q$ has more than one component, this
	is a shorthand for the two-vertex, multi-edge graph of groups
	splitting of $\pi_1(\Sigma_d)$ induced by $(\Sigma \setminus Q) \cup
	\partial Q$}, the fundamental group is 
	\[\pi_1(\Sigma_d) =
	\pi_1(\Sigma)\ast_{\pi_1((\Sigma\setminus Q) \cup \partial Q)}\pi_1(\Sigma). \]
	We fix an
	identification of $\F$ with one of the two
	$\pi_1(\Sigma)$ factors of the amalgam, which will be used throughout
	the proof. The double of $\mathcal{A}$, denoted $\mathcal{A}_d$, is a
	a collection of embedded essential circles in $\Sigma_d$ that do not pass through
	any non-manifold points as no point of $\mathcal{A}$ is contained in
	$\Sigma \setminus Q$ by construction. Let $T$ be the universal cover of $G$, and let
	$A_d$ be the Bass-Serre tree of the splitting of $\pi_1(\Sigma_d)$
	defined by $\mathcal{A}_d$. By construction, the minimal subtree for
	the fixed $\F$ action, 
	$\bar{T} = A_d^{\F}$ of $A_d$ is the cover of the graph of groups obtained by
	collapsing the complement of $K$ in $G$. The tree $\bar{T}$ is a collapse of $T$.
	Indeed, for $\gamma\in\pi_1(\Sigma_d)$
	the translation length of $\gamma$ on $A_d$ is equal to the
	intersection number of $\gamma$ with $\mathcal{A}_d$, and for $\gamma\in \F$
	this is equal to the number of edges of $K$ in a cyclically reduced
	path in $G$ representing $\gamma$, again by construction.

	Let $g :\Sigma\to\Sigma$ be a partial geometric witness for $\phi$. Then
	$g(K)$ is also a
	spine for $Q$. There is a dual arc
	system $\mathcal{B} = g(\mathcal{A})$ dual to the induced ribbon graph
	structure on $g(K)$. Since $\partial Q$ meets $\partial \Sigma$ and $\partial Q$ is $g$-invariant we
	can take the endpoints of $\mathcal{B}$ to lie on $\partial \Sigma$.
	Moreover, since $g$ preserves the decomposition of
	$\Sigma$, it induces a map $\hat{g}\from \Sigma_d\to \Sigma_d$ on
	the double, and $\hat{g}(\mathcal{A}_d) =\mathcal{B}_d$. 
	As before, the double of the dual arc system
	$\mathcal{B}_d$ is a collection of embedded essential circles in
	$\Sigma_d$ that do not pass through any non-manifold points. Hence they
	induce a
	splitting of $\pi_1(\Sigma_d)$ and a Bass-Serre tree $B_d$. The
	minimal subtree $B_d^{\F}$ is the Bass-Serre tree for the
	graph of groups obtained by collapsing the complement of $K\phi$ in
	$G\phi$ (recall the action on the right is by twisting the marking, we are not using a topological representative of $\phi$ here). Observe that $B_d^{\F} = \bar{T}\phi$ and is a collapse of
	$T\phi$, and that we have the following commutative diagram of $\F$
	trees.
	\begin{center}
		\begin{tikzcd}
			T\ar{r}{\phi}\ar{d} & T\phi \ar{d}\\
			\bar{T} \ar{r}{\phi} &  \bar{T}\phi 
		\end{tikzcd}
	\end{center}

	Guirardel~\cite{guirardelcore}*{Section 2.2 Example 3} shows that the
	core of two Bass-Serre trees dual to curve systems on a surface is
	surface type and identifies the cell structure as the universal cover
	of the square complex dual to the intersection pattern, with connected
	components of the non-augmented core corresponding to the subsurfaces
	filled by the curve system. In fact, while Guirardel analyzes
	surfaces, the argument is local and applies to any collection of
	curves on a surface subset of a 2-complex. For clarity, we present the
	argument in full.

	Let $p \from \tilde{\Sigma}_d \to \Sigma_d$ be the universal cover of
	$\Sigma_d$. There are equivariant maps $f\from \tilde{\Sigma}_d \to
	A_d$ and $g\from \tilde{\Sigma}_d \to B_d$ since these trees are dual
	to the collections of curves. The components of $Q_d$ are closed
	surfaces with infinite fundamental group. Fix a constant
	curvature metric on each component of $Q_d$. This induces a metric on each component of $\tilde{Q}_d = p^{-1}(Q_d)$ making them either a hyperbolic or euclidean plane.

	Suppose
	$a\in A_d$ is an edge of $A_d$ dual to an element $\alpha \in
	\pi_1(\Sigma_d)$ and $b\in B_d$ is an edge of $B_d$ dual to $\beta \in
	\pi_1(\Sigma_d)$. Then, by construction, $\alpha$ and $\beta$ are represented by 
	closed curves contained in the subsurface $Q_d$. Let $\tilde{\alpha}$
	and $\tilde{\beta}$ be the axes of $\alpha$ and $\beta$ in
	$\tilde{\Sigma}_d$ (use a single representative parallel line in the case that $\alpha$ or $\beta$ represents a curve in a torus component of $Q_d$). 
	Analyzing the axes of other elements allows us to compute the partition
	for applying the 4-sets condition by analyzing connected components in
	the universal cover and applying the maps $f$ and $g$. There are four cases:

	\begin{description}
		\item[Case 1] $\tilde{\alpha}$ and $\tilde{\beta}$ are in
			distinct components of $\tilde{Q}_d$. Using the maps $f$ and $g$ it is clear that either
	$\partial^+(a)$ or $\partial^-(a)$ contains one of $\partial^\pm(b)$,
			so $a$ and $b$ do not satisfy the 4-sets condition of
			\Cref{Definition:Partition}. 
		\item[Case 2] $\tilde{\alpha}$ and $\tilde{\beta}$ are in
			a common hyperbolic plane component of  $\tilde{Q}_d$ but
			disjoint. Again, using the maps $f$ and $g$ it is
			clear that either $\partial^+(a)$ or $\partial^-(a)$
			contains one of $\partial^\pm(b)$, so $a$ and $b$ do
			not satisfy the 4-sets condition. 
		\item[Case 3] $\tilde{\alpha}$ and $\tilde{\beta}$ are in a
			common hyperbolic plane piece inside $\tilde{\Sigma}_d$ and
			intersect. Then, using the maps $f$ and $g$ we can see
			that each of the four sets
			\[ \partial^\pm(a)\cap \partial^\pm(b) \]
			have non-empty intersection, since there are curves
			which witness the non-empty
			intersection in this component of $\partial Q_d$.
		\item[Case 4] $\tilde{\alpha}$ and $\tilde{\beta}$ are in a
			common euclidean plane component of $\tilde{Q}_d$. Then
			either $\tilde{\alpha}$ is parallel to $\tilde{\beta}$,
			in which case, up to orientation
			$\partial^+(a)\cap\partial^-(b) = \emptyset$, or
			$\tilde{\alpha}$ and $\tilde{\beta}$ intersect, in
			which case $a$ and $b$ satisfy the 4-sets condition
			with witnesses coming from the plane as in the
			hyperbolic case.
	\end{description}

	Thus, the squares of $\Core(A_d, B_d)$ are in one-to-one correspondence with
	$\pi_1(\Sigma_d)$ orbits of intersection points of the curve systems
	$\mathcal{A}_d$ and $\mathcal{B}_d$. We conclude that the squares
	of the core 
	are the lift to the universal cover $\tilde{\Sigma}_d$ of the surface type square
	system dual to the intersection of $\mathcal{A}_d$ and $\mathcal{B}_d$.

	Further, restricting the $\pi_1(\Sigma_d)$ action
	to the $\F$ action coming from the fixed copy of $\pi_1(\Sigma)$ in the van Kampen decomposition, we obtain the equivariant inclusion 
	\[\Core(\bar{T}, \bar{T}\phi)\subseteq \Core(A_d, B_d).\]
	Since $A_d^{\F} = T$ and $B_d^{\F} = T\phi$, for each pair of components $K_1, K_2$ of $K$, and
	lifts $T_1, T_2$ to $T$,
	the intersection 
	\[ (T_1 \times T_2\phi) \cap \Core(T,T\phi) = (T_1 \times
	T_2\phi) \cap\Core(A_d^{\F},B_d^{\F})\] 
	is either empty or contained in a
	surface subset of $\tilde{\Sigma}_d$; i.e. the intersection is
	surface type, as required.
\end{proof}

\begin{proposition}\label{prop:backward}
	Suppose there is a marked graph $G$ with covering tree $T$ and a subgraph $K$ that carries a $\phi$-invariant free factor system for $\phi\in\Out$. Suppose
	\begin{itemize}
		\item the subcomplex $Y = \pi^{-1}(K) \subset \Core(T\times T\phi)/\F = X$ is surface type,
		\item there is an embedding such that the surface boundary classes of $Y$ are $\phi$-invariant,
		\item the graph of spaces decomposition of $X$ into $Y$ and $X\setminus Y^\circ$ is $\phi$-invariant.
	\end{itemize}
	Then $\phi$ is partially geometric on a marked surface pair $(\Sigma, Q)$.
\end{proposition}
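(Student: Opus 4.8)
The plan is to reverse the construction of \Cref{prop:all-surface-all-spine}: from the surface-type structure on $Y$ we recover the subsurface $Q$, from the graph $G$ we build the ambient marked surface $\Sigma$, and we then promote a topological representative of $\phi$ to a geometric witness by applying the Dehn--Nielsen--Baer theorem on $Q$. Throughout, the role of the Guirardel core is to supply the \emph{two} arc systems on $Q$ whose intersection complex is $Y$: the edges of $Y$ pulled back from $T$ and those pulled back from $T\phi$ trace out, in a surface into which $Y$ embeds, two ribbon-graph spines $K\hookrightarrow Q$ and $K'\hookrightarrow Q$, where $K'$ is the spine whose dual collapse is $\bar T\phi$. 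The difference-of-markings between these two spines inside $Q$ is exactly $\phi$ restricted to $\pi_1 Q$, which is the input needed for Dehn--Nielsen--Baer.

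First I would build the pair $(\Sigma, Q)$. The projection $\pi\from X\to G$ restricts to a map $Y\to K$; since the fibers of the coordinate projection $\Core(T,T\phi)\to T$ are convex, hence contractible, this restriction is a homotopy equivalence, so $[\pi_1 Y]=[\pi_1 K]$ is the $\phi$-invariant free factor system. Because $Y$ is surface type, fix a component-wise embedding $\eta\from Y\hookrightarrow \Sigma_Y$ into a marked surface realizing the prescribed markings, and let $Q$ be a regular neighborhood of $\eta(Y)$; then $Q$ is a (possibly disconnected) essential subsurface with $[\pi_1 Q]=[\pi_1 K]$ and $\partial Q = \partial_{\Sigma_Y} Y$, the surface boundary classes, which are $\phi$-invariant by the second hypothesis. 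Next, thicken $G$ to a compact surface $\Sigma$ by extending the ribbon structure that $Q$ induces on $K$ to all of $G$; composing $G\hookrightarrow\Sigma$ with the marking of $G$ marks $\Sigma$ with $\pi_1\Sigma = \F$, and $Q\subseteq\Sigma$ is essential. Transporting the $\phi$-invariance of the graph-of-spaces decomposition of $X$ into $Y$ and $X\setminus Y^\circ$ (third hypothesis) through the homotopy equivalences $X\to G\hookrightarrow\Sigma$ — together with the fact that $[\pi_1 K]$ is a $\phi$-invariant free factor system — shows that the decomposition of $\Sigma$ into $Q$ and $\Sigma\setminus Q^\circ$ is $\phi$-invariant; in particular $\partial Q$ is $\phi$-invariant.

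Then I would realize $\phi$ as a geometric witness. Since the decomposition of $\Sigma$ into $Q$ and $\Sigma\setminus Q^\circ$ is $\phi$-invariant, choose a topological representative $f\from\Sigma\to\Sigma$ of $\phi$ preserving it; its restriction $f|_Q$ is a homotopy equivalence of $Q$ (possibly permuting components) realizing $\phi$ on $\pi_1 Q$, and it preserves the boundary conjugacy classes of $Q$ because $\partial Q$ is $\phi$-invariant. By the Dehn--Nielsen--Baer theorem (\Cref{dehn-nielsen-baer}), applied component by component with the help of the lemma on disconnected subsurfaces, $f|_Q$ is homotopic rel $\partial Q$ to a homeomorphism $g_Q\from Q\to Q$. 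Extending this homotopy outward across the edge spaces $\partial Q$ and then across $\Sigma\setminus Q^\circ$ by the homotopy extension property yields a homotopy equivalence $g\from\Sigma\to\Sigma$ with $g\simeq f$ (so $g$ realizes $\phi$), with $g|_Q = g_Q$ a homeomorphism, and with $g$ preserving the decomposition into $Q$ and $\Sigma\setminus Q^\circ$. Thus $g$ is a geometric witness and $\phi$ is partially geometric on $(\Sigma, Q)$. Alternatively, one may first arrange that every component of $\partial Q$ meets $\partial\Sigma$ and then invoke \Cref{lem:partially geometric with boundary}.

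The main obstacle is the step that runs Guirardel's local description of the core from \Cref{prop:all-surface-all-spine} in reverse: one must verify that the surface-type embedding of $Y$ is compatible with \emph{both} coordinate projections — that the $T$-edges and $T\phi$-edges of $Y$ genuinely cut out two arc systems on $Q$ whose intersection complex reconstructs $Y$, and for which the induced change of spine is $\phi$ on $\pi_1 Q$ rather than some other outer automorphism sharing the same abelianization or laminations — and that the $\phi$-invariance of the abstract graph-of-spaces decomposition of the core faithfully becomes $\phi$-invariance of the subsurface decomposition of $\Sigma$. Once these compatibilities are in place, the Dehn--Nielsen--Baer application and the extension across $\Sigma\setminus Q^\circ$ are routine.
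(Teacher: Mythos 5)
Your argument follows essentially the same path as the paper's proof: take $Q$ to be a regular neighborhood of an embedding of $Y$ into a surface with $\phi$-invariant boundary classes, use the $\phi$-invariance of the graph-of-spaces decomposition to produce a homotopy equivalence representing $\phi$ that preserves $Q$ and its complement, and apply Dehn--Nielsen--Baer on $Q$ to upgrade the restriction to a homeomorphism, extending by the constant homotopy outside $Q$. The paper works first in the 2-complex $X\cup Q$, applies Dehn--Nielsen--Baer there, and only then observes that $(X\cup Q,\,Q)$ is homotopy equivalent rel $Q$ to a surface pair $(\Sigma,Q)$, whereas you construct $\Sigma$ at the outset by thickening $G$; this is only a difference of ordering.

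Two remarks. First, the phrase ``the ribbon structure that $Q$ induces on $K$'' overstates the setup: $K$ does not embed in $Q$ --- only $Y$ does, and $\pi|_Y\from Y\to K$ is a homotopy equivalence, not a homeomorphism. To make the thickening step honest, replace $K$ by a spine of $Q$ (which carries the same marking as $K$) and adjust $G$ accordingly before thickening; alternatively, follow the paper and defer the construction of $\Sigma$ to the end. Second, the concerns in your closing paragraph about reconstructing the two arc systems on $Q$ and verifying that the change of spine really is $\phi$ are unnecessary: the third hypothesis already hands you a homotopy equivalence of $X$ representing $\phi$ that preserves the decomposition into $Y$ and $X\setminus Y^\circ$, the second hypothesis guarantees the boundary classes are $\phi$-invariant, and after transporting this to $\Sigma$, Dehn--Nielsen--Baer is the only remaining ingredient --- no reverse-engineering of Guirardel's local analysis is needed.
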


\begin{proof}
	First, by hypothesis there is a homotopy equivalence $f\from X\to X$ representing $\phi$ such that $Y$ and $X\setminus Y^\circ$ are $f$-invariant.

	Let $Q$ be the regular neighborhood of an embedding of $Y$ into a surface $\Sigma$ such that $\partial Q$ is a set of $\phi$-invariant conjugacy classes. Such an object exists by hypothesis, and we may extend $f$ to $X\cup Q$. By the Dehn-Nielsen-Baer theorem (\Cref{dehn-nielsen-baer}), the restriction $f|_Q$ is homotopic to a map $g$ such that $g|_Q$ is a homeomorphism. Extending this homotopy by the constant homotopy on $X\setminus Q$ we obtain a homotopy equivalence $g\from X\to X$ such that $Q$ and $X\setminus Q$ are $g$-invariant and $Q|_g$ is a homeomorphism.

	Finally, the pair $(X,Q)$ is homotopy equivalent rel $Q$ to a surface pair $(\Sigma,Q)$. Since $Q$ and $X\setminus Q$ are $g$-invariant, $g$ induces a homotopy equivalence $h$ of $\Sigma$ such that $h|_Q = g|_Q$ is a homeomorphism, as required.
\end{proof}

\Cref{lem:partially geometric with boundary} allows us to combine
\Cref{prop:all-surface-all-spine} and
\Cref{prop:backward} to characterize partially geometric outer automorphisms.

\begin{theorem}\label{relcorechar}
	An outer automorphism $\phi\in\Out$ is partially geometric if and only if there
	exists a marked graph $G$ with cover $T$ such that $X = \Core(T, T\phi)/\F$
	has a surface type subcomplex $Y$ where $Y$ has $\phi$-invariant
	surface boundary classes and the splitting induced by $X\setminus Y$
	and $Y$ is $\phi$-invariant.
\end{theorem}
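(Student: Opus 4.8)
The plan is to deduce \Cref{relcorechar} from \Cref{prop:all-surface-all-spine} and \Cref{prop:backward} together with the normalization in \Cref{lem:partially geometric with boundary}, handling the two implications separately.

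For the forward implication I would begin with a partially geometric realization of $\phi$ on a pair $(\Sigma, Q)$ with geometric witness $g$. Because $\partial Q$ is $\phi$-invariant, \Cref{lem:partially geometric with boundary} lets me pass to an outer-equivalent, suitably extended pair on which $\phi$ is still partially geometric and on which every component of the distinguished subsurface's boundary meets the ambient boundary; after renaming I assume $(\Sigma, Q)$ itself has this last property. Next I choose a spine $G \subseteq \Sigma$ containing a subgraph $K$ that is a spine of $Q$ (possible since $Q$ is essential), let $T = \widetilde{G}$, $X = \Core(T, T\phi)/\F$, and let $Y \subseteq X$ be the image of $\pi^{-1}(K) \cap \Core(T, T\phi)$, with $\pi$ the first-factor projection. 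By \Cref{prop:all-surface-all-spine} every fiber piece $(T_1 \times T_2\phi) \cap \Core(T, T\phi)$ is surface type, so $Y$ --- the union of the $\F$-orbits of these pieces over pairs of components of $K$ --- is surface type. It then remains to observe that, for the embedding of $Y$ coming from the doubling construction used to prove \Cref{prop:all-surface-all-spine}, the surface boundary classes of $Y$ are exactly the conjugacy classes of $\partial Q$, hence $\phi$-invariant since $g$ preserves the decomposition of $\Sigma$; and that the graph-of-spaces splitting of $X$ along $\partial Y$ is the one induced by $\partial Q$, which is $\phi$-invariant for the same reason. This is precisely the data the theorem asks for.

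For the backward implication I would take the given $G$, $T$, $X = \Core(T, T\phi)/\F$ and surface type $Y \subseteq X$, and set $K = \pi(Y) \subseteq G$, so that $Y \subseteq \pi^{-1}(K) \cap X$. I would first check that equality holds --- i.e.\ that $Y$ is $\pi$-saturated --- using convexity of the fibers of $\Core(T, T\phi) \to T$ together with the $\phi$-invariance of the complementary splitting; if it fails one can instead pass to a compatible marked graph. The $\phi$-invariance of the surface boundary classes of $Y$ and of the splitting of $X$ along $\partial Y$ then force $[\pi_1 K]$ to be a $\phi$-invariant free factor system (each component of the surface type complex $Y$ has free-factor fundamental group by $\pi_1$-injectivity of the embedding, and invariance of the complementary splitting makes the collection $\phi$-invariant). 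Applying \Cref{prop:backward} to $K$ and $Y$ then yields a marked surface pair on which $\phi$ is partially geometric, completing the proof.

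I expect the main obstacle to be the assembly bookkeeping in the forward direction: \Cref{prop:all-surface-all-spine} controls each fiber $T_1 \times T_2\phi$ of the core only individually, so the work lies in checking that these pieces glue coherently --- across all pairs of components and all elevations --- into a single subcomplex $Y$ carrying one embedding into a surface homotopy equivalent to $\Sigma$ whose surface boundary classes are exactly $[\partial Q]$ and no more, i.e.\ that the doubled-surface argument behind \Cref{prop:all-surface-all-spine} is globally consistent. The backward direction should be comparatively routine, the only delicate points being the $\pi$-saturation of $Y$ and the extraction of the $\phi$-invariant free factor system from the invariance hypotheses.
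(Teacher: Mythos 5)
Your proof follows exactly the paper's route: normalize via \Cref{lem:partially geometric with boundary} so that the distinguished subsurface boundary meets $\partial\Sigma$, apply \Cref{prop:all-surface-all-spine} for the forward implication, and invoke \Cref{prop:backward} for the converse. The only difference is that you are more explicit than the paper --- which dispatches the converse with a single line ``the converse is exactly the content of \Cref{prop:backward}'' --- about reconciling that proposition's hypotheses (that $Y=\pi^{-1}(K)$ for a subgraph $K$ carrying a $\phi$-invariant free factor system) with the theorem's hypotheses, and your proposed bridge via $K=\pi(Y)$, fiber convexity, and invariance of the induced splitting is the right idea.
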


\begin{corollary}\label{corechar}
An automorphism in $\Out$
is geometric if and only if there exists some free simplicial $\F$-tree $T$
such that $\Core(T, T\phi)$ is geometric and has $\phi$-invariant surface
	boundary classes.
\end{corollary}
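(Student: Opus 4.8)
The plan is to deduce \Cref{corechar} from the relative characterization \Cref{relcorechar}, exploiting the fact that $\phi\in\Out$ is geometric exactly when it is partially geometric on a pair $(\Sigma,Q)$ with $Q=\Sigma$, and that this extreme case of partial geometricity corresponds to the surface type subcomplex $Y$ of \Cref{relcorechar} being the entire core. Concretely, I would obtain the two implications by specializing the proofs of \Cref{prop:all-surface-all-spine} and \Cref{prop:backward} to $Q=\Sigma$ and $Y=X$ respectively.

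For the forward implication, let $\phi$ be realized by a homeomorphism $g\from\Sigma\to\Sigma$ of a marked surface; since $\pi_1\Sigma\cong\F$ is free of rank $\rk\ge3$, the surface has nonempty boundary. Then $g$ itself is a geometric witness exhibiting $\phi$ as partially geometric on $(\Sigma,\Sigma)$, and every component of $\partial Q=\partial\Sigma$ trivially meets $\partial\Sigma$, so \Cref{prop:all-surface-all-spine} applies with any spine $G$ of $\Sigma$, which is simultaneously a spine of $Q=\Sigma$. Since $G$ is connected it has a unique elevation to $T=\widetilde G$, namely $T$ itself, stabilized by all of $\F$, and $T\phi$ is its own minimal subtree for $\phi(\F)=\F$; hence $(T\times T\phi)\cap\Core(T,T\phi)=\Core(T,T\phi)$ is surface type, i.e.\ geometric. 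The first-coordinate projection makes $X=\Core(T,T\phi)/\F$ homotopy equivalent to $G$, so a regular neighborhood of the induced embedding $X\hookrightarrow\Sigma$ is all of $\Sigma$, and the surface boundary classes of $X$ are exactly the conjugacy classes $[\partial\Sigma]$; these are $\phi$-invariant because the homeomorphism $g$ permutes the components of $\partial\Sigma$. This supplies the tree $T$ demanded by \Cref{corechar}.

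For the converse, suppose $T$ is a free simplicial $\F$-tree with $\Core(T,T\phi)$ surface type and with $\phi$-invariant surface boundary classes. Put $G=T/\F$, a finite marked graph with $\widetilde G=T$, and $X=\Core(T,T\phi)/\F$. I would invoke \Cref{prop:backward} with $K=G$, so that $Y=\pi^{-1}(K)=X$: the requirement that $K$ carry a $\phi$-invariant free factor system is vacuous here, as $[\F]$ is fixed by every outer automorphism and any $\phi$ is realized by a homotopy equivalence of the whole complex $X$; the subcomplex $Y=X$ is surface type with $\phi$-invariant surface boundary classes by assumption; and the graph-of-spaces splitting of $X$ relative to $Y=X$ is trivial, hence $\phi$-invariant. \Cref{prop:backward} then produces a marked surface pair $(\Sigma,Q)$ on which $\phi$ is partially geometric, realized by a geometric witness $h\from\Sigma\to\Sigma$ restricting to a homeomorphism on $Q$. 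Its proof builds $Q$ as a regular neighborhood of an embedding of $X$ and $\Sigma$ with $\pi_1\Sigma\cong\pi_1 X$, so $\pi_1 Q\cong\pi_1 X\cong\F\cong\pi_1\Sigma$; an essential subsurface of $\Sigma$ whose inclusion induces a $\pi_1$-isomorphism must be all of $\Sigma$ up to isotopy, by a standard surface-topology argument (the inclusion is then a homotopy equivalence of compact surfaces of equal Euler characteristic, and no essential complementary region, annulus, or M\"obius band can occur). Hence $Q=\Sigma$ and $h$ is a homeomorphism realizing $\phi$, so $\phi$ is geometric.

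The step I expect to be the main obstacle is the careful treatment of the two degenerate cases $Q=\Sigma$ and $Y=X$: verifying that \Cref{prop:all-surface-all-spine} and \Cref{prop:backward} genuinely apply when the distinguished subgraph is the entire spine (in particular that the trivial complementary splitting and the full free-factor-system condition create no difficulty in their proofs), together with the surface-topology fact that an essential $\pi_1$-full subsurface is the whole surface. Identifying the surface boundary classes of the core with $[\partial\Sigma]$ in the forward direction also requires attention, but it follows by tracing the square-system description of $\Core(T,T\phi)$ inside $\widetilde\Sigma$ from the proof of \Cref{prop:all-surface-all-spine} when the doubling locus is all of $\partial\Sigma$.
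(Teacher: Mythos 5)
Your proposal is correct and takes the approach the paper intends: the corollary follows from \Cref{relcorechar} (equivalently, from \Cref{prop:all-surface-all-spine} and \Cref{prop:backward}) by specializing to the case $Q=\Sigma$, so that the distinguished subgraph is the whole spine, $Y$ is all of $X = \Core(T,T\phi)/\F$, and the splitting condition becomes vacuous. The paper states \Cref{corechar} without a separate proof; your writeup supplies exactly the expected degenerate-case argument, including the observation that in the converse direction the constructed essential $\pi_1$-full subsurface $Q$ must equal $\Sigma$, so that the geometric witness is a genuine homeomorphism.
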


\begin{proof}[Proof of \Cref{relcorechar}]
	If $\phi$ is partially geometric on $(\Sigma, Q)$ then, by
	\Cref{lem:partially geometric with boundary} it is partially geometric
	on a surface pair $(\Sigma', Q')$ where every component of $\partial
	Q'$ intersects $\partial \Sigma'$. Let $G$ be a spine for $\Sigma'$ with
	a subgraph $K$ that is a spine for $Q'$ and covering tree $T$. By
	\Cref{prop:all-surface-all-spine} the core quotient $X = \Core(T,
	T\phi)/\F$ has a surface type subcomplex $Y = \pi^{-1}(K)$ that has
	$\phi$-invariant surface boundary classes and induces a
	$\phi$-invariant splitting.

	The converse is exactly the content of \Cref{prop:backward}.
\end{proof}

Finally, the criterion of \Cref{relcorechar} is computable. Moreover, it is
computable for a specific desired subgraph $K$, which allows us to determine if
an outer automorphism is partially geometric on a given marked surface pair.

\begin{corollary}\label{partially-geometric-test}
	Given an outer automorphism $\phi$, a marked surface $\Sigma$ and a
	subsurface $Q$ there is an algorithm \textsc{PartiallyGeometric?} to
	decide if $\phi$ is partially geometric on $(\Sigma,Q)$.
\end{corollary}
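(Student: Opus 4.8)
The plan is to assemble the algorithm from \Cref{lem:partially geometric with boundary}, the characterization \Cref{relcorechar} applied to a fixed spine, the Behrstock--Bestvina--Clay procedure for computing a fundamental domain of $\Core(T, T\phi)$, and Whitehead's algorithm (\Cref{whitehead}).

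Partial geometricity on $(\Sigma,Q)$ forces $\partial Q$ to be $\phi$-invariant, so the algorithm begins by applying $\phi$ to each boundary conjugacy class of $Q$ and checking, via cyclic reduction (or \Cref{whitehead}), whether the resulting finite set of conjugacy classes equals $[\partial Q]$; if not, return \texttt{No}. Otherwise, run the computable, $\phi$-independent extension of \Cref{lem:partially geometric with boundary} to replace $(\Sigma, Q)$ by an outer-equivalent pair $(\Sigma', Q')$ in which every component of $\partial Q'$ meets $\partial\Sigma'$; by that lemma $\phi$ is partially geometric on $(\Sigma,Q)$ if and only if it is partially geometric on $(\Sigma', Q')$. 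Fix a triangulated spine $G$ of $\Sigma'$ containing a subgraph $K$ that is a spine of $Q'$, let $T$ be the universal cover, and use Behrstock--Bestvina--Clay~\cite{behrstockbestvinaclay} to compute a fundamental domain for $\Core(T, T\phi)$, hence the finite square complex $X = \Core(T, T\phi)/\F$ together with the subcomplex $Y = \pi^{-1}(K)$, where $\pi\from X\to G$ is induced by first-coordinate projection.

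By \Cref{prop:all-surface-all-spine}, if $\phi$ is partially geometric on $(\Sigma', Q')$ then $Y$ is surface type; and since $K$ is a spine of $Q'$, the proof of that proposition moreover furnishes an embedding of $Y$ whose surface boundary classes are exactly $[\partial Q']$, together with a $\phi$-invariant splitting of $X$ into $Y$ and $X\setminus Y^\circ$. Conversely, \Cref{prop:backward} shows such data is sufficient: it produces a partially geometric realization on a surface pair whose distinguished subsurface has the same homeomorphism type and boundary classes as $Q'$, hence is outer-equivalent to $(\Sigma', Q')$, so running \Cref{lem:partially geometric with boundary} backwards shows $\phi$ is partially geometric on $(\Sigma, Q)$. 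It therefore remains to decide whether $Y$ is surface type, admits an embedding whose surface boundary classes equal $[\partial Q']$ (equivalently, are $\phi$-invariant in a way compatible with the given pair), and induces a $\phi$-invariant splitting --- and to return \texttt{Yes} exactly when all three hold.

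The splitting condition is immediate once $Y$ is computed: the graph-of-groups decomposition of $X$ along $Y$ is read from the fundamental domain, and $\phi$-invariance of the associated free factor system is decidable with \Cref{whitehead}. The crux is making ``surface type'' and ``surface boundary class'' effective. For each connected component $Y_j$ of $Y$, $\pi_1 Y_j$ is a free factor of known rank $n_j$ carrying a marking, and $Y_j$ realizes the relevant part of $[\partial Q']$ exactly when it admits a $\pi_1$-injective embedding into one of the finitely many standard surfaces $S_{n_j, i}$ such that the regular neighborhood of the image has the prescribed boundary classes and the embedding induces the identity of $\Out$ on the marking. Since a finite complex has only finitely many homotopy classes of $\pi_1$-injective embeddings into a given standard surface, one enumerates them, reads the candidate boundary classes from the ribbon structure, and verifies the marking identity $m_\Sigma^{-1}\eta_\ast m_{Y_j}=\mathrm{id}$ using \Cref{whitehead}. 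The step I expect to require the most care is exactly this effectivity argument: isolating a finite family of candidate embeddings and matching their surface boundary data to the given pair $(\Sigma', Q')$ rather than to some arbitrary surface supplied by \Cref{prop:backward}.
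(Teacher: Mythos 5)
Your proposal follows the same route as the paper's proof: check $\phi$-invariance of $\partial Q$, apply \Cref{lem:partially geometric with boundary} to pass to an outer-equivalent pair $(\Sigma',Q')$ where $\partial Q'$ meets $\partial\Sigma'$, compute $X=\Core(T,T\phi)/\F$ with the Behrstock--Bestvina--Clay algorithm, and then test the surface-type conditions from \Cref{relcorechar} on the subcomplex $Y=\pi^{-1}(K)$.

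The place where you go beyond the paper is the discussion of how to effectively test ``surface type'' and ``surface boundary class,'' and how to argue that the surface furnished by \Cref{prop:backward} really matches $(\Sigma',Q')$ rather than an arbitrary pair; the paper disposes of this in one sentence. Your elaboration is in the right spirit, but the phrase ``a finite complex has only finitely many homotopy classes of $\pi_1$-injective embeddings into a given standard surface'' is an overstatement (one can precompose with infinitely many non-trivial mapping classes to get distinct homotopy classes of embeddings). What is genuinely finite, and what the paper has in mind, is the set of combinatorial fattenings (ribbon/fat-graph structures on the $1$-skeleton extended over the squares): each determines a candidate boundary multiset, and there are finitely many of these. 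With that replacement your enumeration step is correct and the marking-compatibility check via \Cref{whitehead} completes the verification.
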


\begin{proof}
	It is algorithmic to verify if $\partial Q$ is $\phi$-invariant, and
	report \texttt{No} if not. Next
	use \Cref{lem:partially geometric with boundary} to compute an
	outer-equivalent pair $(\Sigma', Q)$ and an extension $Q'\supset Q$
	such that every component of $\partial Q'$ meets $\partial \Sigma'$.
	By \Cref{lem:partially geometric with boundary} $\phi$ is partially
	geometric on $(\Sigma, Q)$ if and only if it is partially geometric on
	$(\Sigma', Q')$. By \Cref{relcorechar} and \Cref{prop:all-surface-all-spine}, $\phi$ is partially geometric on $(\Sigma', Q')$ if and
	only if for any spine $G$ of $\Sigma'$ with subgraph $K$ carrying $Q'$,
	the subcomplex $\pi^{-1}(K)$ of the core quotient $
	\Core(T,T\phi)/\F$ is surface type with $\phi$ invariant surface
	boundary and splitting. 

	It is algorithmic to compute a desired spine $G$ and subgraph $K$.
	Moreover the core quotient $\Core(T,T\phi)/\F$ where $T$ is the cover
	of $G$ is finite can be computed using
	Behrstock, Bestvina, and Clay's algorithm~\cite{behrstockbestvinaclay}. Finally, it is
	algorithmic to verify if a finite subcomplex is surface type, there are
	finitely many possible surface boundary classes, and it is algorithmic
	to test $\phi$ invariance. 
\end{proof}

\Cref{partially-geometric-test} alone is not sufficient to determine if an
outer automorphism is geometric algorithmically, as it does not offer any
method for searching the space of marked surfaces in finite time. However, in
combination with \Cref{alg:rotationless} it can be used to reduce the problem
of determining if an outer automorphism is geometric to an extension problem,
which we explore next.

\section{From partially geometric to geometric}\label{sec:partial-to-geometric}

With a method to test if an outer automorphism is partially geometric in hand,
we turn our attention to deciding an extension problem: given a partially geometric outer
automorphism $\phi$ realized by $g$ on the marked pair $(\Sigma, Q)$ such that 
\[ g^N|_{\Sigma\setminus Q^\circ} \sim \mathrm{id},\] 
can it be extended to be
partially geometric on a larger subsurface, partially after replacing
components of $\Sigma\setminus Q$.
 We will solve the extension problem at hand by giving an
algorithm to replace surfaces (or certify that no geometric replacements
exist).

Our first extension result deals with the special case of finding a geometric replacement for a $\phi$-invariant splitting factor.

\begin{lemma} \label{finite-order-on-fixed-set-extension}
	Suppose $\phi\in\Out$ is partially geometric on $(\Sigma, Q)$. Let $R
	\subset \Sigma \setminus Q^\circ$ be a connected component of the
	complement such that $\phi([\pi_1 R]) = [\pi_1 R]$. There is an
	algorithm to produce a finite list $R_1, \ldots, R_s$ of replacements for $R$ such that 
	\begin{itemize}
		\item $\phi$ is partially geometric on $(\Sigma_i, R_i \cup Q)$, where $\Sigma_i$ is the result of replacing $R$ with $R_i$;
		\item all possible replacements $R'$ such that $\phi$ is partially geometric on $(\Sigma', R' \cup Q)$ are listed, up to the action of the $\out(\pi_1 R)$ stabilizer of  $[\partial Q]\cap [\partial R]$.
	\end{itemize}
\end{lemma}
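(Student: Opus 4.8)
The plan is to reduce the extension problem for the invariant component $R$ to a finite computation via the following observations. First, since $\phi([\pi_1 R]) = [\pi_1 R]$ and $\partial R = \partial Q \cap \partial R$ is $\phi$-invariant, the restriction of (a homotopy inverse of the marking composed with) $\phi$ defines a well-defined outer automorphism $\psi \in \out(\pi_1 R)$ fixing the conjugacy classes in $[\partial Q] \cap [\partial R]$; this $\psi$ is computable from a geometric witness $g$ for $\phi$ on $(\Sigma, Q)$. A replacement $R'$ with $[\pi_1 R'] = [\pi_1 R]$, carrying the boundary classes $[\partial Q] \cap [\partial R]$ as a subset of boundary classes, makes $\phi$ partially geometric on $(\Sigma', R' \cup Q)$ if and only if $\psi$ is realized by a homeomorphism of $R'$ that restricts to the identity on $\partial Q \cap \partial R$ -- or more precisely, an orientation-allowed homeomorphism that permutes those specific boundary components consistently with $\psi$; here we invoke the Dehn-Nielsen-Baer theorem (\Cref{dehn-nielsen-baer}) together with the observation that the homotopy equivalence on $X\setminus Y^\circ$ built in \Cref{prop:backward} can be replaced by $g|_{R'}$ once $R'$ is glued in. So the task becomes: enumerate, up to the $\out(\pi_1 R)$-stabilizer of $[\partial Q]\cap[\partial R]$, all homeomorphism types of surface $R'$ with $\pi_1 R' \cong \pi_1 R$ and a chosen embedding of the boundary classes, on which $\psi$ is geometric.

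The key steps, in order, are: (1) extract $\psi \in \out(\pi_1 R)$ and the multiset $\mathcal{C} = [\partial Q]\cap[\partial R]$ of marked boundary classes from $g$; (2) decide whether $\psi$ is geometric at all on \emph{some} surface realizing $\mathcal{C}$ as (part of) its boundary -- this requires running a geometricity test for $\psi$ relative to the prescribed boundary data. If $\psi$ is not geometric, return the empty list. (3) If $\psi$ is geometric, pass to a rotationless power $\psi^N$ (which is still $\mathrm{id}$-restricted on $\Sigma\setminus Q$ by hypothesis, so $\psi^N$ restricted to $R$ is genuinely rotationless) and apply \Cref{rotationless-is-correct} / the porism \textsc{RotationlessGeometricWitness} to $\psi^N$ to obtain the canonical subsurface $Q_\psi$ carrying the pseudo-Anosov pieces together with a realizing surface. (4) Enumerate the finitely many ways to complete $Q_\psi$ to a surface $R'$ with fundamental group of the correct rank and with $\mathcal{C}$ as boundary data: the pseudo-Anosov part is rigid by \Cref{cor:godgivensurfacepiece,pASubsurfaces=EGStrata}, so the only freedom is in the complementary (identity) pieces and the gluing pattern, and since $\pi_1 R$ has fixed finite rank there are only finitely many homeomorphism types of surface completing $Q_\psi$; iterate over the finite list of standard surfaces $S_{n,i}$ of rank $\rk(\pi_1 R)$ and use Whitehead's algorithm (\Cref{whitehead}) to test, for each candidate boundary configuration, whether it is in the $\out(\pi_1 R)$-orbit of the required marked boundary of $S_{n,i}$, exactly as in \Cref{cor:partial-realizable-algo}. (5) For each surface that survives, check using \Cref{partially-geometric-test} (or directly, since $\psi$ is geometric there) that $\psi$ is realized on it by a homeomorphism extending the identity appropriately, and record it as an $R_i$.

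For completeness of the list, note that if $R'$ is any replacement making $\phi$ partially geometric on $(\Sigma', R'\cup Q)$, then the realizing homeomorphism restricts to a homeomorphism of $R'$ inducing $\psi$; its rotationless power has a Thurston normal form whose pseudo-Anosov support must equal $Q_\psi$ by the invariance results \Cref{cor:godgivensurfacepiece,pASubsurfaces=EGStrata}; and the remaining data -- the homeomorphism type of $R'$ together with its marking restricted to $\pi_1 R$ -- is then determined up to the ambiguity described in the introduction, namely the action of the $\out(\pi_1 R)$-stabilizer of $[\partial Q]\cap[\partial R]$ on the complementary identity pieces. Since we have enumerated all surfaces completing $Q_\psi$ with the prescribed boundary data up to this stabilizer, every such $R'$ appears (up to the allowed action) in our list.

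The main obstacle I expect is step (2)/(4): making precise the ``geometricity of $\psi$ relative to prescribed boundary curves'' and controlling the enumeration so that it is genuinely finite and exhausts all replacements up to the stated equivalence. The subtlety is that $\psi$ is only $\mathrm{id}$ on $\Sigma \setminus Q$ after an $N$-th power, so one must be careful that replacing $R$ does not introduce new periodic (finite-order, non-identity) behavior on the complement -- but since we require $R'$ to carry $\psi$ (not $\psi^N$) as a homeomorphism and the hypothesis guarantees $\psi$ itself is $\mathrm{id}$-on-the-rest, the replacement is forced to be one where $\psi|_{R'}$ is genuinely realized, and the finiteness then follows from the fixed rank of $\pi_1 R$ together with the rigidity of the pseudo-Anosov support. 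The bookkeeping of markings -- tracking which boundary components of $R'$ must coincide (up to conjugacy in $\F$, not merely in $\pi_1 R$) with the boundary components of $Q$ -- is the part that requires the most care, and it is handled exactly as in the gluing argument of \Cref{rotationless-is-correct}.
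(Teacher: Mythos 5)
There is a genuine gap in your approach, and it comes from not noticing the structural constraint hidden in the context of this lemma (signaled by its label): in every application, $\phi|_{\pi_1 R}$ is a \emph{finite order} outer automorphism. The lemma is invoked inside \Cref{geometric-orbit-replacement}, where the hypothesis is explicitly that $\phi^k|_{\pi_1 R_0}$ is finite-order, and the surrounding algorithm $\Cref{alg:final}$ guarantees $R$ lies in the complement of the support $Q$ of a rotationless power. Your plan of passing to a rotationless power $\psi^N$ of $\psi = \phi|_R$ and running \textsc{RotationlessGeometricWitness} degenerates here: $\psi^N = \mathrm{id}$, the canonical subsurface $Q_\psi$ is empty, and all the subsequent pseudo-Anosov rigidity machinery you invoke (\Cref{cor:godgivensurfacepiece,pASubsurfaces=EGStrata}) is vacuous. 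The actual problem is the purely finite-order one: find a marking $\theta \in \out(\pi_1 R)$ onto a standard surface $S_{n,i}$ carrying the boundary data $\mathcal{C}$, such that $\theta\psi\theta^{-1}$ is a homeomorphism of $S_{n,i}$.

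This is where the second, more serious gap appears: the classical Whitehead algorithm (\Cref{whitehead}) which you cite for $\partial$-realizability can only match boundary classes; it produces \emph{some} $\theta$ with $\theta(\mathcal{C}) \subseteq [\partial S_{n,i}]$, but has no mechanism to also arrange that $\theta\psi\theta^{-1}$ is geometric. The stabilizer of $[\partial S_{n,i}]$ in $\out(\pi_1 S_{n,i})$ is an infinite group containing many non-geometric elements, so checking the single $\theta$ that Whitehead returns (your step (5)) and stopping is not sound, and there is no bound on a search over the stabilizer orbit. The paper's proof resolves this by observing that $\psi$ finite order lets one enumerate the finitely many conjugacy classes $\alpha_{n,i,j}$ of finite order mapping classes on each standard surface, and then applying the \emph{equivariant} Whitehead algorithm of Krsti\'{c}, Lustig, and Vogtmann (\Cref{klv-whitehead}), which simultaneously solves for a $\theta$ conjugating $\psi$ to $\alpha_{n,i,j}$ and carrying $\mathcal{C}$ into $[\partial S_{n,i}]$. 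This simultaneous constraint is exactly what the classical Whitehead algorithm cannot express, and it is the key tool that makes both the existence and the ``up to the stabilizer'' completeness claims of the lemma algorithmic.
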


\begin{proof}
	Let $\mathcal{C} = [\partial Q \cap \partial R]$ be the set of
	conjugacy classes joining $R$ to $Q$. Let $n = \mathrm{rank}(\pi_1R)$ and
	$m\from \mathfrak{R}_n\to R$ be the induced geometric
	marking from the inclusion $\pi_1R\to \F$. 
	Observe that $\mathcal{C}$ is $\partial$-realizable, and that
	\emph{any} $\partial$-realization of $\mathcal{C}$ can be used as a replacement for $R$.

	Suppose there is a replacement $R'$ of $R$ such that $\phi$ is partially
	geometric on $(\Sigma', R'\cup Q)$, the result of replacing $R$ with
	$R'$ in $\Sigma$. Then $\phi|_R$ in the $\out(R)$
	conjugacy class of a finite order mapping class for of the
	standard surface $S_{n,i}$ homeomorphic to $R$. Moreover, the
	conjugating element $\theta$ satisfies $\theta(\mathcal{C}) \subseteq
	[\partial S_{n,i}]$.

	Conversely, suppose $\phi|_R$ is conjugate by $\theta$ in $\out(R)$ to some
	$\alpha$ which is a finite order mapping class on a standard surface
	$S_{n,i}$, and $\theta(\mathcal{C})\subseteq[\partial S_{n,i}]$.  Then
	$(S_{n,i}, \theta)$ is a marked replacement $R'$ for $R$ such that
	$\phi|_{R'}$ is geometric, realized by a homeomorphism representative of $\alpha$. Thus $\phi$ is partially
	geometric on $(\Sigma', R'\cup Q)$ as desired.

	This condition allows us to use the equivariant Whitehead algorithm of Krstic, Lustig, and Vogtmann~(\Cref{klv-whitehead}). The restriction $\alpha =
	\phi|_R$ is finite order. For each standard surface $S_{n,i}$,
	enumerate representatives of the finitely many $\Mod(S_{n,i})$
	conjugacy classes of finite order mapping classes as $\alpha_{n,i,j}$.
	For each $\alpha_{n,i,j}$ use the equivariant Whitehead algorithm to
	decide if there is some $\theta\in \out(R)$ such that
	$\theta\alpha\theta^{-1} = \alpha_{n,i,j}$ and
	$\theta(\mathcal{C})\subset [\partial S_{n,1}]$. Use each result to
	report a replacement $R'$; if none are found report the empty list.
\end{proof}

As a consequence, we obtain an algorithm to handle the general finite-order case. 

\begin{corollary} \label{finite-order-algorithm}
	There is an algorithm \textsc{FiniteOrderGeometric?} to decide if a finite order outer automorphism is geometric.
\end{corollary}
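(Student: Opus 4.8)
The plan is to feed \Cref{finite-order-on-fixed-set-extension} the trivial starting pair and read the answer off its output. Fix any standard surface $\Sigma_0$ of rank $\rk$ and take $Q_0 = \emptyset$. Since $\Sigma_0$ has nonempty boundary it is a $K(\F,1)$, so $\phi$ is realized by some homotopy self-equivalence $g$ of $\Sigma_0$; the decomposition of $\Sigma_0$ into $Q_0 = \emptyset$ and its complement is trivially $g$-invariant, $g|_{\emptyset}$ is vacuously a homeomorphism, and $\emptyset$ is an essential subsurface (its defining conditions are vacuous, using that $\Sigma_0$, whose fundamental group is free of rank $\rk \geq 3$, is not an annulus). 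Hence $\phi$ is partially geometric on $(\Sigma_0, \emptyset)$. Take $R = \Sigma_0$, the unique connected component of $\Sigma_0 \setminus Q_0^\circ$; it satisfies $\phi([\pi_1 R]) = [\F] = [\pi_1 R]$, and because $\phi$ has finite order the restriction $\phi|_R = \phi$ is finite order, which is the regime \Cref{finite-order-on-fixed-set-extension} addresses. That lemma then applies and produces a finite list $R_1,\dots,R_s$ of replacements for $R$.

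With $Q_0 = \emptyset$ this list detects geometricity. A replacement $R'$ for $R = \Sigma_0$ is just a marked surface with $[\pi_1 R'] = [\F]$, since the set $[\partial Q_0]\cap[\partial R] = \emptyset$ of boundary classes joining $R$ to $Q_0$ imposes no matching constraint; and ``$\phi$ is partially geometric on $(\Sigma', R' \cup Q_0) = (R', R')$'' means exactly that $\phi$ is realized by a homeomorphism of the marked surface $R'$, that is, that $\phi$ is geometric. The second conclusion of \Cref{finite-order-on-fixed-set-extension} guarantees that every such $R'$ appears in the list up to the $\out(\pi_1 R) = \Out$ stabilizer of $[\partial Q_0]\cap[\partial R] = \emptyset$, which is all of $\Out$; so the list is nonempty if and only if $\phi$ is geometric. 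The algorithm \textsc{FiniteOrderGeometric?} is therefore to run the procedure of \Cref{finite-order-on-fixed-set-extension} on the pair $(\Sigma_0, \emptyset)$ with $R = \Sigma_0$, and return \texttt{Yes} --- together with any $R_i$ and its realizing homeomorphism as a certificate --- when $s \geq 1$, and \texttt{No} otherwise. Termination and correctness are inherited directly from \Cref{finite-order-on-fixed-set-extension}.

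The step I expect to need the most care is confirming that \Cref{finite-order-on-fixed-set-extension} is legitimately applicable with the empty distinguished subsurface, i.e.\ that its proof survives the case $\mathcal{C} = [\partial Q_0]\cap[\partial R] = \emptyset$. Here the empty multiset is $\partial$-realizable (witnessed by any surface with fundamental group $\F$), and the constraint ``$\theta(\mathcal{C}) \subseteq [\partial S_{n,i}]$'' on the conjugating element is vacuous, so the calls to the equivariant Whitehead algorithm (\Cref{klv-whitehead}) in that proof reduce to testing ordinary $\Out$-conjugacy of $\phi$ against the outer automorphisms induced by representatives of the finitely many conjugacy classes of finite-order mapping classes of the standard surfaces of rank $\rk$. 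If one prefers to avoid the degenerate pair altogether, this is itself a self-contained algorithm, resting on the classical facts that finite-order homeomorphisms of a fixed compact surface fall into effectively finitely many mapping-class conjugacy classes and that each finite-order mapping class has a finite-order representative (Nielsen realization): transporting a realizing homeomorphism along a homeomorphism to a standard surface changes the induced outer automorphism only by $\Out$-conjugation, so $\phi$ is geometric if and only if it is $\Out$-conjugate to one of those finitely many induced automorphisms, which \Cref{klv-whitehead} decides (take $G$ a finite cyclic group sending a generator to $\phi$ and to the candidate, with empty conjugacy-class tuples). Either way the substance is entirely in \Cref{finite-order-on-fixed-set-extension}, and the work here is just arranging the input.
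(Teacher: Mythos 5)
Your proposal is correct and is exactly the paper's approach: the paper's proof of this corollary is the one-liner ``Use the above algorithm with $Q=\emptyset$,'' and your argument simply unpacks why feeding the trivial pair $(\Sigma_0,\emptyset)$ with $R=\Sigma_0$ into \Cref{finite-order-on-fixed-set-extension} is well-posed and detects geometricity.
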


\begin{proof}
	Use the above algorithm with $Q = \emptyset$.
\end{proof}

 Note that, if $\phi$ is partially geometric on $(\Sigma, Q)$, a witness for $\phi$ homotopy permutes the complementary components of $Q$. So, it remains to handle the replacement of entire $\phi$ orbits.

\begin{lemma} \label{geometric-orbit-replacement}
	Suppose $\phi\in \Out$ is partially geometric on
	$(\Sigma, Q)$, realized by $g\from \Sigma\to \Sigma$. Let $R_0 \subset \Sigma\setminus Q^\circ$ be a connected
	component of the complement such that $[\pi_1 R_0]$ is $\phi$-periodic with period $k$ and $\phi^k|_{\pi_1 R_0}$ is finite-order. Let $R_0,\ldots, R_{k-1}$ be the components visited by the forward orbit of $R_0$ under $g$.

	There is an algorithm \textsc{GeometricOrbitReplacement?} that takes $\phi$, $\Sigma$, and $R_0$ and decides if there are replacements $R_0',\ldots, R_{k-1}'$ such that $\phi$ is partially geometric on $(\Sigma', Q\cup R')$ where $R' = \bigcup_{i=0}^{k-1} R_i$. Moreover there is a procedure to produce $\Sigma'$ if it exists.
\end{lemma}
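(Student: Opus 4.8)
The plan is to reduce the entire $g$-orbit $R_0,\dots,R_{k-1}$ to its first-return map on $R_0$, and then hand the resulting finite-order extension problem to \Cref{finite-order-on-fixed-set-extension}, applied not to $\phi$ but to $\phi^k$.

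First I would record the structural observation that any geometric witness $g'$ for $\phi$ on a pair $(\Sigma',Q\cup R')$, where $R'=R_0'\cup\cdots\cup R_{k-1}'$ replaces the orbit, restricts to homeomorphisms $g_i'\colon R_i'\to R_{i+1}'$ (indices mod $k$) cyclically permuting the replacement pieces, so that the first-return composition $g_{k-1}'\circ\cdots\circ g_0'\colon R_0'\to R_0'$ is a homeomorphism representing $\phi^k|_{[\pi_1 R_0]}$ and preserving, as a set, the conjugacy classes of $[\partial Q]\cap[\partial R_0]$. Conversely, I would fix a compatible system of subgroup representatives so that the outer class of the first-return map on $\pi_1 R_0$ is exactly the given finite-order outer automorphism $\phi^k|_{[\pi_1 R_0]}$ (the residual ambiguity is by an inner automorphism and is absorbed by outer-equivalence). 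Then, given any replacement $R_0'$ of $R_0$ on which $\phi^k|_{[\pi_1 R_0]}$ is realized by a homeomorphism $\rho$ respecting $[\partial Q]\cap[\partial R_0]$, I would take each $R_i'$ to be a copy of $R_0'$ with marking twisted so as to track the orbit, let $g_i'$ be the evident marking identification for $i<k-1$ and $g_{k-1}'=\rho$, and assemble these with a geometric witness on $Q$ and the unchanged realization of $\phi$ on $\Sigma\setminus(Q\cup R')$ into a geometric witness for $\phi$ on $(\Sigma',Q\cup R')$. The equivalence between ``the cyclic family $\{g_i'\}$ is homotopic to a family of homeomorphisms'' and ``the first-return map is homotopic to a homeomorphism'' is the iterate around the cycle of the lemma on restrictions to disconnected subsurfaces following \Cref{dehn-nielsen-baer}.

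With this in hand, a geometric orbit replacement for $\phi$ exists if and only if there is a replacement $R_0'$ of $R_0$ carrying a homeomorphism that realizes the finite-order outer automorphism $\phi^k|_{[\pi_1 R_0]}$ and respects the conjugacy classes joining $R_0$ to $Q$. Since $g(Q)=Q$ with $g|_Q$ a homeomorphism and $g$ preserves the decomposition, $g^k$ realizes $\phi^k$, is a homeomorphism on $Q$, and preserves the decomposition, so $\phi^k$ is partially geometric on $(\Sigma,Q)$; moreover $R_0$ is a complementary component with $\phi^k([\pi_1 R_0])=[\pi_1 R_0]$ and $\phi^k|_{[\pi_1 R_0]}$ finite order by hypothesis. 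Hence the condition above is exactly what \Cref{finite-order-on-fixed-set-extension} decides when applied to $\phi^k$, the pair $(\Sigma,Q)$, and the component $R_0$: that algorithm produces a finite list $R_0^{(1)},\dots,R_0^{(s)}$ of replacements, exhaustive up to the action of the stabilizer of $[\partial Q]\cap[\partial R_0]$ in $\out(\pi_1 R_0)$, so the list is nonempty precisely when a geometric orbit replacement exists. Therefore \textsc{GeometricOrbitReplacement?} runs that algorithm; if $s=0$ it returns \texttt{No}, and otherwise it returns \texttt{Yes} together with the surface $\Sigma'$ obtained by replacing the whole $g$-orbit of $R_0$ with twisted marked copies of $R_0^{(1)}$ as in the previous paragraph --- a computable operation by \Cref{remark replacing a subsurface} --- and the witness assembled from the homeomorphism of $R_0^{(1)}$ furnished by \Cref{finite-order-on-fixed-set-extension}.

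The hard part will be the bookkeeping in the reduction step: making the twisted-marking copies $R_i'$ precise enough that $(\Sigma',Q\cup R')$ is a genuine marked surface pair, verifying that the assembled map represents $\phi$ on all of $\pi_1\Sigma'$ rather than merely piecewise, and formulating the cyclic version of the disconnected-subsurface lemma correctly. None of this is deep, but it requires care with basepoints and with the distinction between $\phi$ and a chosen automorphism lift; the outer-equivalence relation of \Cref{sec:partial} is precisely what makes this ambiguity harmless.
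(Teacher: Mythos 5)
Your proposal takes the same first step as the paper's proof: apply \Cref{finite-order-on-fixed-set-extension} to $\phi^k$, $(\Sigma,Q)$, and the component $R_0$, and observe (as you do) that $\phi^k$ is partially geometric on $(\Sigma,Q)$ with $[\pi_1R_0]$ $\phi^k$-fixed, so that lemma applies. After that, however, you and the paper diverge in a way that matters. The paper does \emph{not} conclude directly that ``list nonempty $\Rightarrow$ \texttt{Yes}''. Instead, for each candidate $R_{0,i}$ on the list, it explicitly tests whether the pushed-forward multisets $\phi^j([\partial R_{0,i}])$ are $\partial$-realizable in $\pi_1 R_j$ for every $0<j<k$, and only reports \texttt{Yes} if some candidate passes all $k-1$ tests; the resulting $\partial$-realizations are then taken as the replacements $R_j'$. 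The ``if and only if'' sentence in the paper's proof and the loop over $j$ in the algorithm are not decorative: they are the step that certifies that the intermediate replacements along the orbit actually exist before the surface $\Sigma'$ is assembled.

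Your proposal replaces that step with the assertion that $R_j'$ can simply be taken to be $R_{0,i}$ with its marking post-composed by $\phi^j$. This does produce an abstract marked surface with $[\pi_1 R_j']=[\pi_1 R_j]$ and with $\phi^j(\partial Q\cap\partial R_0)=\partial Q\cap\partial R_j$ in its boundary, and it is a reasonable heuristic for why the paper's per-$j$ realizability test might succeed. But in the form written it elides exactly the verifications the paper singles out: that each $\phi^j([\partial R_{0,i}])$ is a legitimate boundary multiset in $\pi_1 R_j$ (so that the ``twisted copy'' is a replacement in the paper's sense, attachable along $\partial Q\cap\partial R_j$ with no proper-power or multiplicity pathology), and that the assembled piecewise map on $\Sigma'$ globally represents $\phi$ rather than $\phi$ composed with a nontrivial Dehn twist supported on the gluing curves. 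You flag the latter yourself as ``bookkeeping'', but it is precisely the content that distinguishes the backward implication from a tautology; the paper discharges it via the equivariance computation $\phi m = \phi\theta\phi^{-1}\cdot\phi m'$ showing that $\partial$-realizability of the pushforward is invariant under the stabilizer action from \Cref{finite-order-on-fixed-set-extension}, followed by the explicit per-$j$ test. If you want to keep your route, you must either prove that the twisted-marking construction yields a genuine replacement for every $j$ (which, unwound, is exactly the $\partial$-realizability claim the paper checks) and resolve the global-versus-piecewise issue for the assembled map, or else simply run the paper's per-$j$ test, whose correctness is what you were trying to shortcut.
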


\begin{proof}
	First, apply \Cref{finite-order-on-fixed-set-extension} to $\phi^k$
	and $R_0$, to obtain a list of candidate replacements $\{R_{0,i}\}$.
	If this list is empty, report \texttt{No}; if $\phi$ is geometric then
	$\phi^k$ will have at least one geometric replacement for $R_0$, and finitely many up to the action of $\out(\pi_1 R_0)$. 

 Observe that if there is a family of replacements as
	in the hypotheses, $\phi([\partial R_0'])$ is a $\partial$-realizable
	subset of $[\pi_1 R_1]$. Since $\partial Q$ is $\phi$-invariant, a $\partial$-realization of $\phi([\partial R_0'])$ can replace $R_1$.

	Moreover, $R_0'$ is homeomorphic to a candidate replacement $R_{0,i}$.
	Let $m', m_i$  be the respective markings so that $m = \theta m'$
	for some $\theta\in\out(\pi_1 R_0)$, then 
	\[ \phi m = \phi \theta m' = \phi \theta \phi^{-1} \phi m'. \]
	Thus $\phi m'$ and $\phi m_i$ differ by the action of $\out(\pi_1
	R_1)$. We deduce that there is a family of replacements as in the
	hypothesis if, and only if, for some candidate replacement $R_{0,i}$,
	the set $\phi^j([\partial R_{0,i}])$ is $\partial$-realizable in $\pi_1 R_j$ for all $0 < j < k$.

	To finish the algorithm, for each candidate replacement $R_{0,i}$, check

	\[ \text{\textsc{$\partial$-realizable?}}(\phi^j([\partial R_{0,i}]), \pi_1
	R_j),\]
	for each $0 < j < k$.
	If each iterate is $\partial$-realizable,
	report \texttt{Yes} and provide the $\partial$-realizations of the
	iterates as the desired replacements. Otherwise, report \texttt{No}. \Cref{remark replacing a subsurface} assures us that $\Sigma'$ can be computed if desired.
\end{proof}

\section{Deciding geometricity in general}\label{finalalgo} 

\begin{algorithm}[h]
\caption{Decide if an outer automorphism is
	geometric.\label{alg:final}}
\begin{algorithmic}[1]
\Procedure{Geometric?}{$\phi$}
	\State $\psi \gets$ \Call{RotationlessPower}{$\phi$}
	\If{$\psi = \mathrm{id}$}
		\State \Return \Call{FiniteOrderGeometric?}{$\phi$}
	\EndIf
	\If{$\neg$\Call{RotationlessGeometric?}{$\psi$}}
		\State \Return \texttt{No} \label{rotationlessageometric}
	\EndIf
	\State $(\Sigma_0, Q_0) \gets$ \Call{RotationlessGeometricWitness}{$\psi$}
	\State $(\Sigma, Q) \gets$
	\Call{BoundaryContactExtension}{$\Sigma_0$,$Q_0$}
	\If{$\neg$\Call{PartiallyGeometric?}{$\phi$,$\Sigma$,$Q$}}
		\State \Return \texttt{No} \label{notroot}
	\EndIf
	\For{$R \in $ \Call{ConnectedComponents}{$\Sigma\setminus Q^\circ$}}
		\If{$\neg$\Call{GeometricOrbitReplacement?}{$\phi$,$\Sigma$,$R$}}
			\State\Return \texttt{No} \label{badorbit}
		\EndIf
	\EndFor
	\State \Return \texttt{Yes}
\EndProcedure
\end{algorithmic}
\end{algorithm}

\begin{theorem} \label{main-theorem}
	\Cref{alg:final} is correct.
\end{theorem}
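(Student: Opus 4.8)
The plan is to verify three things about \Cref{alg:final}, in turn: that it halts, that every \texttt{No} it returns is correct, and that its terminal \texttt{Yes} is correct. Halting is immediate, since each of \textsc{RotationlessPower}, \textsc{FiniteOrderGeometric?}, \textsc{RotationlessGeometric?}, \textsc{RotationlessGeometricWitness}, \textsc{BoundaryContactExtension}, \textsc{PartiallyGeometric?} and \textsc{GeometricOrbitReplacement?} is an algorithm (respectively: standard, \Cref{finite-order-algorithm}, \Cref{rotationless-is-correct}, the porism after \Cref{rotationless-is-correct}, \Cref{lem:partially geometric with boundary}, \Cref{partially-geometric-test}, \Cref{geometric-orbit-replacement}), and the \textbf{for} loop ranges over the finitely many components of $\Sigma\setminus Q^{\circ}$. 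Throughout write $\psi=\phi^{N}$ for the rotationless power returned by \textsc{RotationlessPower}.

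For the \texttt{No} outputs I would argue as follows. If $\psi=\mathrm{id}$ then $\phi$ has finite order, so \textsc{FiniteOrderGeometric?} settles the question by \Cref{finite-order-algorithm}. If the algorithm returns from \cref{rotationlessageometric}, then $\psi=\phi^{N}$ is not geometric; but an $N$-th power of a homeomorphism realizing $\phi$ realizes $\psi$, so $\phi$ is not geometric either. The crux is \cref{notroot}: I must show that if $\phi$ is geometric then \textsc{PartiallyGeometric?}$(\phi,\Sigma,Q)$ returns \texttt{Yes}. Suppose $\phi$ is realized by a homeomorphism $h\colon\Gamma\to\Gamma$. Then $h^{N}$ realizes $\psi$, and after isotoping to Thurston normal form the subsurface $Q_{\Gamma}\subseteq\Gamma$ carrying its pseudo-Anosov pieces and Dehn twists is $h$-invariant with $h|_{Q_{\Gamma}}$ a homeomorphism, so $\phi$ is partially geometric on $(\Gamma,Q_{\Gamma})$. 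By \Cref{cor:godgivensurfacepiece}, \Cref{pASubsurfaces=EGStrata} and \Cref{lem:boundaryclassesareinvariant}, the data of $Q_{\Gamma}$ --- the homeomorphism types of its pieces, the conjugacy classes of their fundamental groups, and their boundary classes --- depends only on $\psi$, so $(\Gamma,Q_{\Gamma})$ is outer-equivalent to the pair $(\Sigma_{0},Q_{0})$ output by \textsc{RotationlessGeometricWitness}, whence $\phi$ is partially geometric on $(\Sigma_{0},Q_{0})$. The set $\partial Q_{0}$ is a $\psi$-invariant and $\phi$ commutes with $\psi$, so $\partial Q_{0}$ is $\phi$-invariant; \Cref{lem:partially geometric with boundary} then identifies partial geometricity on $(\Sigma_{0},Q_{0})$ with partial geometricity on the computed boundary-contact extension $(\Sigma,Q)$, and \textsc{PartiallyGeometric?} returns \texttt{Yes} by \Cref{partially-geometric-test}; contraposing gives correctness of \cref{notroot}. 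Once the algorithm is past \cref{notroot} we know $\phi$ is partially geometric on $(\Sigma,Q)$, so $\partial Q$ is $\phi$-invariant, $\phi$ permutes the components $R$ of $\Sigma\setminus Q^{\circ}$, and (as $\psi$ restricts to the identity off $Q$) each $R$ has $[\pi_{1}R]$ $\phi$-periodic with $\phi^{k}|_{\pi_{1}R}$ finite order, so \Cref{finite-order-on-fixed-set-extension} and \Cref{geometric-orbit-replacement} apply. If $\phi$ is geometric on $\Gamma$, then $h$, being a homeomorphism, restricts to a homeomorphism on each complementary piece of $Q_{\Gamma}$, exhibiting a geometric replacement for each complementary $\phi$-orbit; by the completeness clause of \Cref{finite-order-on-fixed-set-extension} (valid up to the $\out(\pi_{1}R)$-action on peripheral structure) \textsc{GeometricOrbitReplacement?} never reports \texttt{No}, so \cref{badorbit} is correct.

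For the terminal \texttt{Yes}, suppose every check passes. Then $\phi$ is partially geometric on $(\Sigma,Q)$ via a witness $g$ with $g|_{Q}$ a homeomorphism, and for each $\phi$-orbit of complementary components \textsc{GeometricOrbitReplacement?} supplies a replacement together with a homeomorphism realizing $\phi$ on it and inducing the same action as $\phi$ on the shared boundary of $Q$. I would perform all of these replacements simultaneously --- they take place in pairwise disjoint complementary components, hence do not interfere --- obtaining a surface $\Sigma^{\ast}$ with $\Sigma^{\ast}=Q\cup\bigcup_{i}R_{i}'$. After a collar isotopy making the boundary maps literally agree (possible since all pieces realize the same action of $\phi$ on the common boundary curves, c.f.\ \Cref{dehn-nielsen-baer}), glue $g|_{Q}$ to the homeomorphisms on the $R_{i}'$ to obtain a homeomorphism of $\Sigma^{\ast}$ representing $\phi$. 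Thus $\phi$ is geometric, and by \Cref{remark replacing a subsurface} the surface and the homeomorphism are computable, which also yields the realization promised in the introduction.

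The hard part is \cref{notroot}. A geometric $\phi$ may only be realizable on a surface $\Gamma$ that is \emph{not} homeomorphic to the computed $\Sigma$, so one cannot compare $\Gamma$ and $\Sigma$ directly; the argument must instead extract from $\psi$ alone a canonical ``pseudo-Anosov plus twist'' subsurface that is forced, up to outer-equivalence, to sit inside any realizing $\Gamma$ with $h$ a homeomorphism on it --- precisely what the invariance results of \Cref{section:GeometricModels} together with \Cref{pASubsurfaces=EGStrata} and \Cref{lem:boundaryclassesareinvariant} supply --- before \Cref{lem:partially geometric with boundary} and \Cref{partially-geometric-test} can be applied. A secondary difficulty, also addressed above, is that the search in \Cref{geometric-orbit-replacement} must be genuinely exhaustive; this is why it is funneled through the completeness guarantee of \Cref{finite-order-on-fixed-set-extension}, which in turn relies on the equivariant Whitehead algorithm of Krstic--Lustig--Vogtmann.
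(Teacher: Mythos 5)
Your proposal is correct and follows the same route as the paper's proof: halting is clear, the \texttt{No} from \cref{rotationlessageometric} is handled by powers of geometric automorphisms being geometric, \cref{notroot} rests on the canonicity of the partially geometric pair $(\Sigma,Q)$ furnished by the invariance results of \Cref{section:GeometricModels}, and \cref{badorbit} and the terminal \texttt{Yes} are justified by \Cref{geometric-orbit-replacement}. You elaborate more than the paper does at the crux: the paper's proof simply asserts that $Q$ is determined up to topological type and marking and cites \Cref{cor:godgivensurfacepiece} and \Cref{pASubsurfaces=EGStrata}, whereas you spell out the commutant argument placing $h$ in Thurston normal form compatibly with $h^N$, invoke \Cref{lem:boundaryclassesareinvariant} for the boundary data, and explicitly check $\phi$-invariance of $\partial Q_0$; similarly, in the \texttt{Yes} case you explain the simultaneous replacement and gluing step, which the paper compresses into one sentence. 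These additions are sound and at worst make explicit what the paper leaves implicit.
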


\begin{proof}
	First, \Cref{alg:final} halts: there are finitely many connected
	components of $\Sigma\setminus Q^\circ$ as $\Sigma, Q$ are compact
	surfaces.

	Next, \Cref{alg:final} reports the correct result. If $\phi$ is
	geometric, then any power is geometric; thus if the \texttt{No} result
	comes from \cref{rotationlessageometric} it is correct. Next, since $Q$
	is the subsurface where the rotationless power $\phi^N$ is
	non-identity, it is determined up to topological type and marking by
	\Cref{cor:godgivensurfacepiece} and \Cref{pASubsurfaces=EGStrata}, so a necessary condition for $\phi$ to be
	geometric is that $\phi$ is relatively geometric relative to $(\Sigma,
	Q)$; thus \cref{notroot} is correct. By
	\Cref{geometric-orbit-replacement}, if
	$\phi$ is geometric for each homotopy orbit of connected complementary component
	there is a geometric replacement. So the algorithm correctly reports
	\texttt{No} at \cref{badorbit} if any orbit fails to have a geometric
	replacement.

	Finally, if $\phi$ is relatively geometric on $(\Sigma, Q)$ and every
	homotopy orbit of connected complementary component has a geometric
	replacement, then $\phi$ is geometric on $\Sigma'$, the result of
	conducting all of these geometric replacements, so the report of \texttt{Yes} is correct.
\end{proof}

\begin{porism}
	There is an algorithm \textsc{GeometricWitness} that takes as input an outer automorphism $\phi\in\Out$ and produces a marked surface $\Sigma$ where $\phi$ is realized by a homeomorphism.
\end{porism}

\begin{proof}
	If $\phi$ is geometric, the surface $\Sigma'$ constructed in the proof of \Cref{main-theorem} is the desired marked surface. The process of finding and attaching geometric replacements is algorithmic (\Cref{geometric-orbit-replacement}).
\end{proof}

\section{Illustrative examples} \label{sec:example}

\begin{example}\label{ex:SurfaceExample}
  Consider the genus two surface with two punctures
  $\Sigma=\Sigma_{2,2}$ depicted in \Cref{fig:SurfaceExample}
  thought of as a union of a pair of pants $\Sigma'=\Sigma_{1,0}^2$
  and a torus with one puncture and two boundary components
  $\Sigma''=\Sigma_{1,1}^2$, glued appropriately.  Let
  $h\colon \Sigma''\to\Sigma''$ be a pseudo-Anosov homeomorphism and
  let $g\colon\Sigma\to\Sigma$ be defined by
  $g=h\circ D_\beta^4\circ D_\gamma^2$ where $D_c$ is the right Dehn
  twist about the curve $c$.

  There is a nested sequence of $g$-invariant subsurfaces (starting
  with a neighborhood of $\alpha$)
  $N_\alpha\subset N_\alpha\sqcup N_\beta\subset \Sigma'\subset
  \Sigma$ that, on the level of fundamental groups, determines a
  nested sequence of $g_*$-invariant free factor systems of
  $\pi_1\Sigma=\F_5$.  A CT $f\colon G\to G$ realizing this nested
  sequence will have six strata (see the figure for a schematic):
  $H_i$ for $i=1,2,3$ each consist of a single fixed edge $E_i$, with
  the first two being disjoint loops (corresponding to $\alpha$ and
  $\beta$ respectively) and the third connecting these loops; $H_4$ is
  a linear edge, $E_4$, with $f(E_4)=E_4E_2^4$; $H_5$ is a linear edge
  $E_5$ with $f(E_5)=E_5(E_2\overline{E_3}E_1E_3)^2$; finally $H_6$ is
  a geometric EG stratum whose geometric model has two lower boundary
  components with attaching maps
  $\alpha_1(\partial_1 S)=E_4E_2\overline{E_4}$ and
  $\alpha_2(\partial_2S)=E_5E_2\overline{E_3}E_1E_3\overline{E_5}$.

  \begin{figure}[h]
    \centering{ \def\svgwidth{.9\linewidth}
      \begingroup \makeatletter \providecommand\color[2][]{\errmessage{(Inkscape) Color is used for the text in Inkscape, but the package 'color.sty' is not loaded}\renewcommand\color[2][]{}}\providecommand\transparent[1]{\errmessage{(Inkscape) Transparency is used (non-zero) for the text in Inkscape, but the package 'transparent.sty' is not loaded}\renewcommand\transparent[1]{}}\providecommand\rotatebox[2]{#2}\newcommand*\fsize{\dimexpr\f@size pt\relax}\newcommand*\lineheight[1]{\fontsize{\fsize}{#1\fsize}\selectfont}\ifx\svgwidth\undefined \setlength{\unitlength}{937.5bp}\ifx\svgscale\undefined \relax \else \setlength{\unitlength}{\unitlength * \real{\svgscale}}\fi \else \setlength{\unitlength}{\svgwidth}\fi \global\let\svgwidth\undefined \global\let\svgscale\undefined \makeatother \begin{picture}(1,0.6)\lineheight{1}\setlength\tabcolsep{0pt}\put(0,0){\includegraphics[width=\unitlength,page=1]{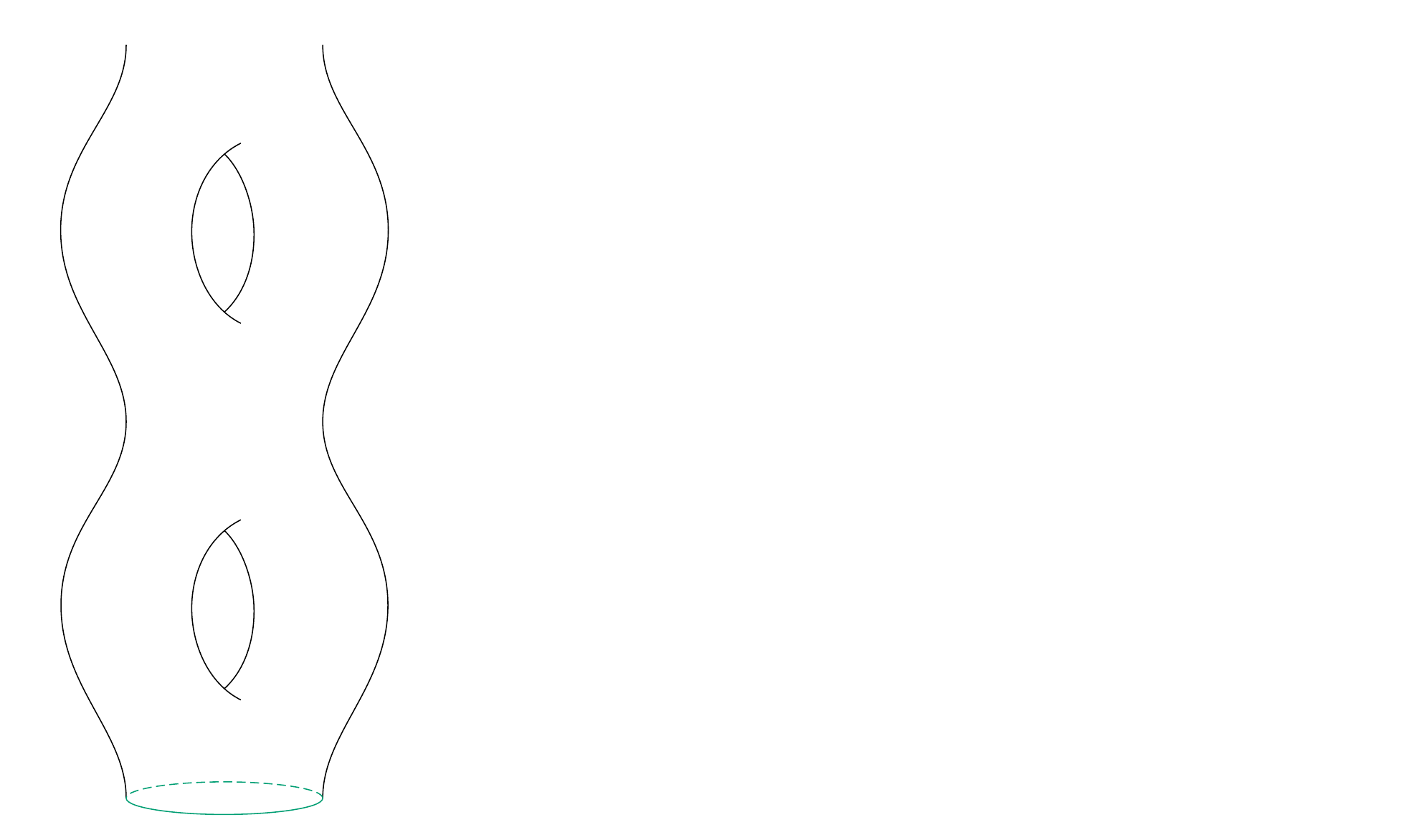}}\put(0.248,0.02){\color[rgb]{0,0.61960784,0.45098039}\makebox(0,0)[lt]{\lineheight{1.25}\smash{\begin{tabular}[t]{l}$\alpha$\end{tabular}}}}\put(0,0){\includegraphics[width=\unitlength,page=2]{SurfaceExample.pdf}}\put(0.244,0.564){\color[rgb]{0,0,0}\makebox(0,0)[lt]{\lineheight{1.25}\smash{\begin{tabular}[t]{l}$\delta$\end{tabular}}}}\put(0.012,0.5424){\color[rgb]{0,0,0}\makebox(0,0)[lt]{\lineheight{1.25}\smash{\begin{tabular}[t]{l}$\Sigma$\end{tabular}}}}\put(0.00080001,0.28){\color[rgb]{0,0,0}\makebox(0,0)[lt]{\lineheight{1.25}\smash{\begin{tabular}[t]{l}$g\colon \Sigma\curvearrowright$\end{tabular}}}}\put(0.00720001,0.1568){\color[rgb]{0.83529412,0.36862745,0}\makebox(0,0)[lt]{\lineheight{1.25}\smash{\begin{tabular}[t]{l}$\beta$\end{tabular}}}}\put(0,0){\includegraphics[width=\unitlength,page=3]{SurfaceExample.pdf}}\put(0.28151458,0.15745073){\color[rgb]{0,0,0}\makebox(0,0)[lt]{\lineheight{1.25}\smash{\begin{tabular}[t]{l}$\gamma$\end{tabular}}}}\put(0,0){\includegraphics[width=\unitlength,page=4]{SurfaceExample.pdf}}\put(0.7008,0.0656){\color[rgb]{0,0,0}\makebox(0,0)[lt]{\lineheight{1.25}\smash{\begin{tabular}[t]{l}$E_1$\end{tabular}}}}\put(0,0){\includegraphics[width=\unitlength,page=5]{SurfaceExample.pdf}}\put(0.9008,0.0656){\color[rgb]{0,0,0}\makebox(0,0)[lt]{\lineheight{1.25}\smash{\begin{tabular}[t]{l}$E_2$\end{tabular}}}}\put(0,0){\includegraphics[width=\unitlength,page=6]{SurfaceExample.pdf}}\put(0.81560762,0.05238823){\color[rgb]{0,0,0}\makebox(0,0)[lt]{\lineheight{1.25}\smash{\begin{tabular}[t]{l}$E_3$\end{tabular}}}}\put(0,0){\includegraphics[width=\unitlength,page=7]{SurfaceExample.pdf}}\put(0.78319995,0.4592){\color[rgb]{0,0,0}\makebox(0,0)[lt]{\lineheight{1.25}\smash{\begin{tabular}[t]{l}$\delta$\end{tabular}}}}\put(0,0){\includegraphics[width=\unitlength,page=8]{SurfaceExample.pdf}}\put(0.89776869,0.1786){\color[rgb]{0,0,0}\makebox(0,0)[lt]{\lineheight{1.25}\smash{\begin{tabular}[t]{l}$\gamma$\end{tabular}}}}\put(0,0){\includegraphics[width=\unitlength,page=9]{SurfaceExample.pdf}}\put(0.94231614,0.3028){\color[rgb]{0,0,0}\makebox(0,0)[lt]{\lineheight{1.25}\smash{\begin{tabular}[t]{l}$E_4$\end{tabular}}}}\put(0,0){\includegraphics[width=\unitlength,page=10]{SurfaceExample.pdf}}\put(0.82000004,0.56){\color[rgb]{0,0,0}\makebox(0,0)[lt]{\lineheight{1.25}\smash{\begin{tabular}[t]{l}$\Fix(\phi)$\end{tabular}}}}\put(0,0){\includegraphics[width=\unitlength,page=11]{SurfaceExample.pdf}}\put(0.7376,0.1784){\color[rgb]{0,0,0}\makebox(0,0)[lt]{\lineheight{1.25}\smash{\begin{tabular}[t]{l}$\beta$\end{tabular}}}}\put(0,0){\includegraphics[width=\unitlength,page=12]{SurfaceExample.pdf}}\put(0.78054744,0.3018){\color[rgb]{0,0,0}\makebox(0,0)[lt]{\lineheight{1.25}\smash{\begin{tabular}[t]{l}$E_5$\end{tabular}}}}\put(0,0){\includegraphics[width=\unitlength,page=13]{SurfaceExample.pdf}}\put(0.2942,0.3374){\color[rgb]{0,0,0}\makebox(0,0)[lt]{\lineheight{1.25}\smash{\begin{tabular}[t]{l}$f\colon G\to G$\end{tabular}}}}\put(0.3696,0.5496){\color[rgb]{0,0,0}\makebox(0,0)[lt]{\lineheight{1.25}\smash{\begin{tabular}[t]{l}$G$\end{tabular}}}}\put(0,0){\includegraphics[width=\unitlength,page=14]{SurfaceExample.pdf}}\put(0.38879995,0.1904){\color[rgb]{0,0,0}\makebox(0,0)[lt]{\lineheight{1.25}\smash{\begin{tabular}[t]{l}$E_1$\end{tabular}}}}\put(0,0){\includegraphics[width=\unitlength,page=15]{SurfaceExample.pdf}}\put(0.58879995,0.1904){\color[rgb]{0,0,0}\makebox(0,0)[lt]{\lineheight{1.25}\smash{\begin{tabular}[t]{l}$E_2$\end{tabular}}}}\put(0,0){\includegraphics[width=\unitlength,page=16]{SurfaceExample.pdf}}\put(0.50360756,0.17718823){\color[rgb]{0,0,0}\makebox(0,0)[lt]{\lineheight{1.25}\smash{\begin{tabular}[t]{l}$E_3$\end{tabular}}}}\put(0,0){\includegraphics[width=\unitlength,page=17]{SurfaceExample.pdf}}\put(0.60992465,0.3999334){\color[rgb]{0,0,0}\makebox(0,0)[lt]{\lineheight{1.25}\smash{\begin{tabular}[t]{l}$H_6$\end{tabular}}}}\put(0,0){\includegraphics[width=\unitlength,page=18]{SurfaceExample.pdf}}\put(0.44879995,0.4824){\color[rgb]{0,0,0}\makebox(0,0)[lt]{\lineheight{1.25}\smash{\begin{tabular}[t]{l}$\delta$\end{tabular}}}}\put(0,0){\includegraphics[width=\unitlength,page=19]{SurfaceExample.pdf}}\put(0.47279995,0.2744){\color[rgb]{0,0,0}\makebox(0,0)[lt]{\lineheight{1.25}\smash{\begin{tabular}[t]{l}$E_4$\end{tabular}}}}\put(0,0){\includegraphics[width=\unitlength,page=20]{SurfaceExample.pdf}}\put(0.58059995,0.2746){\color[rgb]{0,0,0}\makebox(0,0)[lt]{\lineheight{1.25}\smash{\begin{tabular}[t]{l}$E_5$\end{tabular}}}}\put(0,0){\includegraphics[width=\unitlength,page=21]{SurfaceExample.pdf}}\end{picture}\endgroup        \caption{A surface homeomorphism $g\colon \Sigma\to\Sigma$ with
        associated CT $f\colon G\to G$.}
      \label{fig:SurfaceExample}
    }
  \end{figure}
  
  The graph $\hat{S}(f)$ is shown on the right side of the figure.  It has four components, each of whose fundamental groups is a subgroup of $\F_5$ that is preserved by a particular $\Phi\in[\phi]$; we denote these subgroups by $K_3,\ldots, K_6$ according to the stratum of the CT.  We now compute the sets $\mathcal{L}_{K_i}$ and $\mathcal{E}_{K_i}$ in this example; rather than write out the edge paths, we will abuse notation and write the corresponding curve on the surface.  Indeed,
  \begin{equation*}
    \mathcal{L}_{K_3}=\{\beta,\gamma\}\qquad
    \mathcal{L}_{K_4}=\{E_4\beta\overline{E_4}\}\qquad
    \mathcal{L}_{K_5}=\{E_5\gamma\overline{E_5}\}\qquad
    \mathcal{L}_{K_6}=\varnothing.
  \end{equation*}
 And
    \begin{equation*}
    \mathcal{E}_{K_3}=\varnothing\qquad
    \mathcal{E}_{K_4}=\{E_4\beta\overline{E_4}\}\qquad
    \mathcal{E}_{K_5}=\{E_5\gamma\overline{E_5}\}\qquad
    \mathcal{E}_{K_6}=\{\delta\}.
  \end{equation*}
\end{example}

The reader may find it instructive to consider how these sets change
when $\Sigma$ is replaced with: the 2-complex obtained by ``making a
duplicate copy of $\Sigma$'', or the 2-complex obtained by cutting
$\Sigma$ along $\beta$ and then re-gluing the resulting upper boundary
$\beta^+$ to the lower boundary $\beta^-$ via with a 2-1 map of the
circle.

\begin{example}
Consider the following reducible automorphism:

\begin{align*}
\phi:  & a \rightarrow ab^{5} \\
  & b \rightarrow b \\
\end{align*}

Let $T$ be the universal cover of the rose whose loops are labeled $a$ and $b$. 
The quotient of the core $\Core(T, T\phi)/\F$ is shown in \Cref{coreoftwist},
	this was computed with~\cite{train-track}; the red edge paths of length
	3 are identifed.  The projection from this core to the horizontal
	direction is a homotopy equivalence (the edge labelled $a$ is the
	diagonal of a rectangle) which provides a marking. 
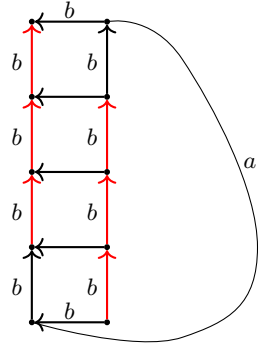
\begin{figure}[H]
\begin{tikzpicture}
 \tikzstyle{vertex} =[circle,draw,fill=black,thick, inner sep=0pt,minimum size=.5 mm]

   \node[vertex] (a1) at (0, 1) [label={[xshift=-0.2cm, yshift = 0.2cm] \small  $b$}]  {};
   \node[vertex] (a2) at (0, 2) [label={[xshift=-0.2cm, yshift = 0.2cm]  \small  $b$}]  {};
   \node[vertex] (a3) at (0, 3)  [label={[xshift=-0.2cm, yshift = 0.2cm] \small  $b$}]  {};
   \node[vertex] (a4) at (0, 4)  [label={[xshift=-0.2cm, yshift = 0.2cm]  \small $b$}]  {};
   \node[vertex] (a5) at (0, 5) {};
    \node[vertex] (b1) at (1, 1)  [label={[xshift=-0.2cm, yshift = 0.2cm] \small  $b$}]  {};
   \node[vertex] (b2) at (1, 2)  [label={[xshift=-0.2cm, yshift = 0.2cm] \small  $b$}]  {};
      \node[vertex] (b3) at (1, 3)  [label={[xshift=-0.2cm, yshift = 0.2cm] \small  $b$}]  {};
   \node[vertex] (b4) at (1, 4) [label={[xshift=-0.2cm, yshift = 0.2cm] \small  $b$}]  {};
   \node[vertex] (b5) at (1, 5){};

    \draw [->, thick] (a1)--(a2);
     \draw [->, thick, red] (a2)--(a3);
      \draw [->, thick, red] (a3)--(a4);
       \draw [->, thick, red] (a4)--(a5);
       
    \draw [->, thick] (b4)--(b5);
       \draw  [->, thick, red]  (b1)--(b2);
        \draw  [->, thick, red]  (b2)--(b3);
         \draw  [->, thick, red]  (b3)--(b4);
         
    \draw [->, thick] (b1)--(a1)  {};
    \node at (a1) [label={[xshift=0.5cm, yshift = -0.2cm] \small  $b$}]  {};
     \draw [->,thick] (b2)--(a2);
      \draw[->,thick]  (b3)--(a3);
       \draw[->,thick]  (b4)--(a4);
        \draw [->, thick] (b5)--(a5);
         \node at (a5) [label={[xshift=0.5cm, yshift = -0.2cm] \small  $b$}]  {};
          \node at (2.4, 3) [label={[xshift=0.5cm, yshift = -0.2cm] \small  $a$}]  {};
 
 \pgfsetplottension{0.75}
  \pgfplothandlercurveto
  \pgfplotstreamstart
  \pgfplotstreampoint{\pgfpoint{0cm}{1cm}}  
  \pgfplotstreampoint{\pgfpoint{2cm}{0.8cm}}   
    \pgfplotstreampoint{\pgfpoint{3cm}{2cm}} 
  \pgfplotstreampoint{\pgfpoint{2cm}{4.5cm}}
  \pgfplotstreampoint{\pgfpoint{1cm}{5cm}}
  \pgfplotstreamend
 \pgfusepath{stroke}

\end{tikzpicture}
\caption{$\Core(T, T\phi)/\F$ for a typical Dehn twist.\label{coreoftwist}}
\end{figure}
The core is shown in \Cref{cup} and can be embedded in a surface, the edge
labels indicate the horizontal marking (roughly) to guide the embedding.
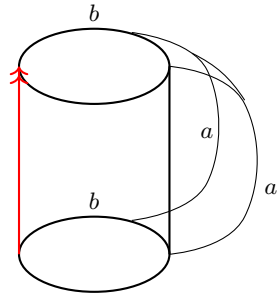
\begin{figure}[H]
\begin{tikzpicture}[scale=0.5]

 \draw[thick]  (0,0) ellipse (2 and 1);
   \draw [thick] (0,5) ellipse (2 and 1);
   
  \node at (0, 1.5) [label={[xshift=0cm, yshift = -0.4cm] \small  $b$}]  {};
  
   \node at (0, 6.5) [label={[xshift=0cm, yshift = -0.4cm] \small  $b$}]  {};
  
    \draw [thick] (2, 5) to (2, 0);
    \draw [->>, thick, red] (-2, 0) to (-2, 5);
    \draw(2.6,5.35) to[bend left=15](4, 4.1);
     \node at (2, 3) [label={[xshift=0.5cm, yshift = -0.2cm] \small  $a$}]  {};
      \node at (3.7, 1.5) [label={[xshift=0.5cm, yshift = -0.2cm] \small  $a$}]  {};
  
 \pgfsetplottension{0.80}
  \pgfplothandlercurveto
  \pgfplotstreamstart
  \pgfplotstreampoint{\pgfpoint{2cm}{5cm}}  
  \pgfplotstreampoint{\pgfpoint{4cm}{4cm}}   
    \pgfplotstreampoint{\pgfpoint{4cm}{1cm}} 
  \pgfplotstreampoint{\pgfpoint{2cm}{0cm}}
  \pgfplotstreamend
 \pgfusepath{stroke}
       
        \pgfsetplottension{0.75}
  \pgfplothandlercurveto
  \pgfplotstreamstart
  \pgfplotstreampoint{\pgfpoint{1cm}{5.9cm}}  
  \pgfplotstreampoint{\pgfpoint{3cm}{4.9cm}}   
    \pgfplotstreampoint{\pgfpoint{3cm}{1.9cm}} 
  \pgfplotstreampoint{\pgfpoint{1cm}{0.9cm}}
  \pgfplotstreamend
 \pgfusepath{stroke}

\end{tikzpicture}
\caption{The surface $S$ in which the Guirardel core embeds.\label{cup}}
\end{figure}
There is only one boundary component in this surface, which is associated via
the marking to the word $ab^{-1}a^{-1}b$ (half of the top and bottom $b$ . The surface is homeomorphic to a torus with one boundary component. 
One can check that:
\[\phi(ab^{-1}a^{-1}b) = ab^{-1}a^{-1}b.\]

\end{example}

\bibliographystyle{alpha}

\begin{bibdiv}
\begin{biblist}
\bib{behrstockbestvinaclay}{article}{
  author={Behrstock, Jason},
  author={Bestvina, Mladen},
  author={Clay, Matt},
  title={Growth of intersection numbers for free group automorphisms},
  journal={J. Topol.},
  volume={3},
  date={2010},
  number={2},
  pages={280--310},
  issn={1753-8416},
  review={\MR {2651361}},
  doi={10.1112/jtopol/jtq008},
}

\bib{EdgarThesis}{thesis}{
  author={Bering, Edgar A., IV},
  title={Compatible Trees and Outer Automorphisms of a Free Group},
  publisher={Thesis (Ph.D.)--University of Illinois at Chicago},
  date={2017},
  pages={164},
}

\bib{very-small-2}{article}{
  author={Bestvina, Mladen},
  author={Feighn, Mark},
  title={Outer Limits},
  status={preprint},
  year={1992},
}

\bib{BH95}{article}{
  author={Bestvina, M.},
  author={Handel, M.},
  title={Train-tracks for surface homeomorphisms},
  journal={Topology},
  volume={34},
  date={1995},
  number={1},
  pages={109--140},
  issn={0040-9383},
  review={\MR {1308491}},
  doi={10.1016/0040-9383(94)E0009-9},
}

\bib{CB88}{book}{
  author={Casson, Andrew J.},
  author={Bleiler, Steven A.},
  title={Automorphisms of surfaces after Nielsen and Thurston},
  series={London Mathematical Society Student Texts},
  volume={9},
  publisher={Cambridge University Press, Cambridge},
  date={1988},
  pages={iv+105},
  isbn={0-521-34203-1},
  review={\MR {0964685}},
  doi={10.1017/CBO9780511623912},
}

\bib{uniformsurgery}{article}{
  author={Clay, Matt},
  author={Qing, Yulan},
  author={Rafi, Kasra},
  title={Uniform fellow traveling between surgery paths in the sphere graph},
  journal={Algebr. Geom. Topol.},
  volume={17},
  date={2017},
  number={6},
  pages={3751--3778},
  issn={1472-2747},
  review={\MR {3709659}},
  doi={10.2140/agt.2017.17.3751},
}

\bib{clay-uyanik}{article}{
  author={Clay, Matt},
  author={Uyanik, Caglar},
  title={Simultaneous construction of hyperbolic isometries},
  journal={Pacific J. Math.},
  volume={294},
  date={2018},
  number={1},
  pages={71--88},
  issn={0030-8730},
  review={\MR {3743366}},
  doi={10.2140/pjm.2018.294.71},
}

\bib{very-small-1}{article}{
  author={Cohen, Marshall M.},
  author={Lustig, Martin},
  title={Very small group actions on ${\bf R}$-trees and Dehn twist automorphisms},
  journal={Topology},
  volume={34},
  date={1995},
  number={3},
  pages={575--617},
  issn={0040-9383},
  review={\MR {1341810}},
  doi={10.1016/0040-9383(94)00038-M},
}

\bib{train-track}{misc}{
  author={Coulbois, Thierry},
  title={\texttt {train-track}},
  type={Sage package},
  url={https://github.com/coulbois/sage-train-track},
}

\bib{cv86}{article}{
  author={Culler, Marc},
  author={Vogtmann, Karen},
  title={Moduli of graphs and automorphisms of free groups},
  journal={Invent. Math.},
  volume={84},
  date={1986},
  number={1},
  pages={91--119},
  issn={0020-9910},
  review={\MR {0830040}},
  doi={10.1007/BF01388734},
}

\bib{FLP}{collection}{
  author={Fathi, Albert},
  author={Laudenbach, Fran\c {c}ois},
  author={Po\'{e}naru, Valentin},
  title={Thurston's work on surfaces},
  series={Mathematical Notes},
  volume={48},
  note={Translated from the 1979 French original by Djun M. Kim and Dan Margalit},
  publisher={Princeton University Press, Princeton, NJ},
  date={2012},
  pages={xvi+254},
  isbn={978-0-691-14735-2},
  review={\MR {3053012}},
}

\bib{FH:Abelian}{article}{
  author={Feighn, Mark},
  author={Handel, Michael},
  title={Abelian subgroups of ${\rm Out}(F_n)$},
  journal={Geom. Topol.},
  volume={13},
  date={2009},
  number={3},
  pages={1657--1727},
  issn={1465-3060},
  review={\MR {2496054}},
  doi={10.2140/gt.2009.13.1657},
}

\bib{FH:RecogThm}{article}{
  author={Feighn, Mark},
  author={Handel, Michael},
  title={The recognition theorem for ${\rm Out}(F_n)$},
  journal={Groups Geom. Dyn.},
  volume={5},
  date={2011},
  number={1},
  pages={39--106},
  issn={1661-7207},
  review={\MR {2763779}},
  doi={10.4171/GGD/116},
}

\bib{FH:CTs}{article}{
  author={Feighn, Mark},
  author={Handel, Michael},
  title={Algorithmic constructions of relative train track maps and CTs},
  journal={Groups Geom. Dyn.},
  volume={12},
  date={2018},
  number={3},
  pages={1159--1238},
  issn={1661-7207},
  review={\MR {3845002}},
  doi={10.4171/GGD/466},
}

\bib{Fujiwara}{article}{
  author={Fujiwara, Koji},
  title={On the outer automorphism group of a hyperbolic group},
  journal={Israel J. Math.},
  volume={131},
  date={2002},
  pages={277--284},
  issn={0021-2172},
  review={\MR {1942313}},
  doi={10.1007/BF02785862},
}

\bib{guirardelcore}{article}{
  author={Guirardel, Vincent},
  title={C\oe ur et nombre d'intersection pour les actions de groupes sur les arbres},
  language={French, with English and French summaries},
  journal={Ann. Sci. \'{E}cole Norm. Sup. (4)},
  volume={38},
  date={2005},
  number={6},
  pages={847--888},
  issn={0012-9593},
  review={\MR {2216833}},
  doi={10.1016/j.ansens.2005.11.001},
}

\bib{GW:Loxodromics}{article}{
  author={Gupta, Radhika},
  author={Wigglesworth, Derrick},
  title={Loxodromics for the cyclic splitting complex and their centralizers},
  journal={Pacific J. Math.},
  volume={301},
  date={2019},
  number={1},
  pages={107--142},
  issn={0030-8730},
  review={\MR {4007375}},
  doi={10.2140/pjm.2019.301.107},
}

\bib{HandelMosher}{article}{
  author={Handel, Michael},
  author={Mosher, Lee},
  title={Subgroup decomposition in ${\rm Out}(F_n)$},
  journal={Mem. Amer. Math. Soc.},
  volume={264},
  date={2020},
  number={1280},
  pages={vii+276},
  issn={0065-9266},
  isbn={978-1-4704-4113-5},
  isbn={978-1-4704-5802-7},
  review={\MR {4089372}},
  doi={10.1090/memo/1280},
}

\bib{hempel}{book}{
  author={Hempel, John},
  title={3-manifolds},
  note={Reprint of the 1976 original},
  publisher={AMS Chelsea Publishing, Providence, RI},
  date={2004},
  pages={xii+195},
  isbn={0-8218-3695-1},
  review={\MR {2098385}},
  doi={10.1090/chel/349},
}

\bib{krstic-lustig-vogtmann}{article}{
  author={Krsti\'{c}, Sava},
  author={Lustig, Martin},
  author={Vogtmann, Karen},
  title={An equivariant Whitehead algorithm and conjugacy for roots of Dehn twist automorphisms},
  journal={Proc. Edinb. Math. Soc. (2)},
  volume={44},
  date={2001},
  number={1},
  pages={117--141},
  issn={0013-0915},
  review={\MR {1879214}},
  doi={10.1017/S0013091599000061},
}

\bib{lyndon-schupp}{book}{
  author={Lyndon, Roger C.},
  author={Schupp, Paul E.},
  title={Combinatorial group theory},
  series={Ergebnisse der Mathematik und ihrer Grenzgebiete, Band 89},
  publisher={Springer-Verlag, Berlin-New York},
  date={1977},
  pages={xiv+339},
  isbn={3-540-07642-5},
  review={\MR {0577064}},
}

\bib{MaclachlanHarvey}{article}{
  author={Maclachlan, C.},
  author={Harvey, W. J.},
  title={On mapping-class groups and Teichm\"{u}ller spaces},
  journal={Proc. London Math. Soc. (3)},
  volume={30},
  date={1975},
  number={part},
  part={4},
  pages={496--512},
  issn={0024-6115},
  review={\MR {374414}},
  doi={10.1112/plms/s3-30.4.496},
}

\bib{McCarthy}{article}{
  author={McCarthy, John D.},
  title={Normalizers and centralizers of pseudo-Anosov mapping classes},
  year={1994},
  status={preprint},
}

\bib{Nie86}{article}{
  author={Nielsen, Jakob},
  title={Investigations in the Topology of Closed Orientable Surfaces, I, II, and III},
  booktitle={Jakob Nielsen: collected mathematical papers.},
  series={Contemporary Mathematicians},
  note={Edited and with a preface by Vagn Lundsgaard Hansen},
  publisher={Birkh\"{a}user Boston, Inc., Boston, MA},
  date={1986},
  pages={xii + 459},
  isbn={0-8176-3140-2},
  review={\MR {0865335}},
}

\bib{Shenitzer}{article}{
  author={Shenitzer, Abe},
  title={Decomposition of a group with a single defining relation into a free product},
  journal={Proc. Amer. Math. Soc.},
  volume={6},
  date={1955},
  pages={273--279},
  issn={0002-9939},
  review={\MR {69174}},
  doi={10.2307/2032354},
}

\bib{Swarup}{article}{
  author={Swarup, G. A.},
  title={Decompositions of free groups},
  journal={J. Pure Appl. Algebra},
  volume={40},
  date={1986},
  number={1},
  pages={99--102},
  issn={0022-4049},
  review={\MR {825183}},
  doi={10.1016/0022-4049(86)90032-0},
}

\bib{Wu:NF}{article}{
  author={Wu, Ying-Qing},
  title={Canonical reducing curves of surface homeomorphism},
  journal={Acta Math. Sinica (N.S.)},
  volume={3},
  date={1987},
  number={4},
  pages={305--313},
  issn={1000-9574},
  review={\MR {930761}},
  doi={10.1007/BF02559911},
}

\bib{ye-pg}{article}{
  author={Ye, Kaidi},
  title={When is a polynomially growing automorphism of $F_n$ geometric?},
  year={2016},
  status={preprint},
  eprint={arXiv:1605.07390},
}

\end{biblist}
\end{bibdiv}

\end{document}